\DeclareFontFamily{OT1}{rsfs}{}
\DeclareFontShape{OT1}{rsfs}{n}{it}{<-> rsfs10}{}
\DeclareMathAlphabet{\curly}{OT1}{rsfs}{n}{it}
\DeclareFontFamily{U}{mathb}{\hyphenchar\font45}
\DeclareFontShape{U}{mathb}{m}{n}{
      <5> <6> <7> <8> <9> <10> gen * mathb
      <10.95> mathb10 <12> <14.4> <17.28> <20.74> <24.88> mathb12
      }{}
\DeclareSymbolFont{mathb}{U}{mathb}{m}{n}
\newcommand{\eqnum}{\refstepcounter{equation}\textup{\tagform@{\theequation}}}
\renewcommand\;{\hspace{.6pt}}
\newcommand\ZZ{\textstyle{\Z\big[\frac12\big]}}
\newcommand\PP{\mathbb P}
\newcommand\EE{\mathbb E}
\newcommand\LL{\mathbb L}
\newcommand\C{\mathbb C}
\newcommand\R{\mathbb R}
\newcommand\N{\mathbb N}
\newcommand\Z{\mathbb Z}
\newcommand\cO{\mathcal O}
\newcommand\cA{\mathcal A}
\newcommand\cE{\mathcal E}
\newcommand\cF{\mathcal F}
\newcommand\cI{\mathcal I}
\newcommand\cJ{\mathcal J}
\newcommand\m{\mathfrak m}
\renewcommand\({\big(}
\renewcommand\){\big)}
\newcommand\wt{\widetilde}
\newcommand{\so}{\ \ext@arrow 0359\Rightarrowfill@{}{\hspace{3mm}}\ }
\newcommand{\rt}[1]{\xrightarrow{\ #1\ }}
\newcommand\To{\longrightarrow}
\newcommand\into{\hookrightarrow}
\newcommand\INTO{\ \ar@{^(->}[r]<-.2ex>}
\newcommand{\Into}{\,\ensuremath{\lhook\joinrel\relbar\joinrel\rightarrow}\,}
\newcommand\onto{\to\hspace{-3mm}\to}
\newcommand\Onto{\To\hspace{-4.5mm}\to\ }
\renewcommand\Mapsto{\longmapsto}
\renewcommand\_{^{}_}
\newcommand\take{\!\smallsetminus}
\newcommand\Langle{\big\langle}
\newcommand\Rangle{\big\rangle}
\newfont{\bigtimesfont}{cmsy10 scaled \magstep5}
\newcommand{\bigtimes}{\mathop{\lower0.9ex\hbox{\bigtimesfont\symbol2}}}
\renewcommand\={\ =\ }
\newcommand\dbar{\overline\partial}
\newcommand\udot{^{\bullet}}
\DeclareMathSymbol{\lefttorightarrow}{3}{mathb}{"FC}
\DeclareMathSymbol{\righttoleftarrow}{3}{mathb}{"FD}
\newcommand\ev{\operatorname{ev}}
\newcommand\Gr{\operatorname{Gr}}
\newcommand\rk{\operatorname{rank}}
\newcommand\vir{\operatorname{vir}}
\newcommand\vd{\operatorname{vd}}
\newcommand\im{\operatorname{im}}
\newcommand\id{\operatorname{id}}
\renewcommand\Im{\operatorname{Im}}
\newcommand\Hom{\operatorname{Hom}}
\renewcommand\hom{\curly H\!om}
\newcommand\Ext{\operatorname{Ext}}
\newcommand\Sym{\operatorname{Sym}}
\renewcommand\Re{\operatorname{Re}}
\newcommand\beq[1]{\begin{equation}\label{#1}}
\newcommand\eeq{\end{equation}}
\newcommand\beqa{\begin{eqnarray*}}
\newcommand\eeqa{\end{eqnarray*}}
\newcommand\arXiv[1]{\href{http://arxiv.org/abs/#1}{arXiv:#1}}
\newcommand\mathAG[1]{\href{http://arxiv.org/abs/math/#1}{math.AG/#1}}
\makeatletter \@addtoreset{equation}{section} \makeatother
\renewcommand{\theequation}{\thesection.\arabic{equation}}
\newtheorem{defn}[equation]{Definition}
\newtheorem{thm}[equation]{Theorem}
\newtheorem{thm*}{Theorem}
\newtheorem{lem}[equation]{Lemma}
\newtheorem{prop}[equation]{Proposition}
\newtheorem{rmk}[equation]{Remark}
\title[Counting sheaves on Calabi-Yau 4-folds, II]{\ \\ \vspace{-2cm} Complex Kuranishi structures and \\ counting sheaves on Calabi-Yau 4-folds, II\vspace{-3mm}}
\author[J. Oh and R. P. Thomas]{Jeongseok Oh and Richard P. Thomas \vspace{3mm}\\ \emph{I\lowercase{n memory of} B\lowercase{umsig} K\lowercase{im, a wonderful mathematician and a wonderful human being}}}
\begin{document}
\begin{abstract}
We develop a theory of complex Kuranishi structures on projective schemes. These are sufficiently rigid to be equivalent to weak perfect obstruction theories, but sufficiently flexible to admit \emph{global} complex Kuranishi charts.

We apply the theory to projective moduli spaces $M$ of stable sheaves on Calabi-Yau 4-folds. Using real derived differential geometry, Borisov-Joyce produced a virtual homology cycle on $M$. In the prequel to this paper we constructed an algebraic virtual cycle on $M$.
We prove the cycles coincide in homology after inverting 2 in the coefficients. And when Borisov-Joyce's real virtual dimension is odd, their virtual cycle is 2-torsion. \vspace{-7mm}
\end{abstract}
\maketitle


\addtocontents{toc}{\protect\setcounter{tocdepth}{-1}}
\section*{Introduction} 

Let $M$ be a complex projective scheme, which in applications will be a moduli space.
In the theory of virtual cycles we describe $M$ as glued from local models. These express an open set $U\subset M$ as the zeros of a section $s$ of a bundle $E$ on a smooth ambient space $V$,
\beq{affmodel}
\xymatrix@=16pt{
& E\dto  \\
U \ \cong\ s^{-1}(0)\ \subset\hspace{-5mm} & V.\ar@/^{-2ex}/[u]_s}
\eeq
\subsection*{Kuranishi spaces \cite{FOOO, JoKur1}} Here the local models \eqref{affmodel} are $C^\infty$. Their germs (about $U\subset V$) are glued in a weak categorical sense that preserves the virtual dimension $v:=\dim V-\rk E$ (but not usually $\dim V$ or $\rk E$).

A compact $\mu$-Kuranishi space of \cite{JoKur1} admits a global Kuranishi chart \cite[Section 14 and Corollary 4.36]{Jvir} --- i.e. a global $C^\infty$ model \eqref{affmodel} with $U=M$. Its virtual cycle can be defined by perturbing $s$ to be transverse to the zero section of $E$ and taking the homology class of its zero locus.

\subsection*{Perfect obstruction theories \cite{BF, LT1}} Here we work algebraically and glue the \emph{derivatives} of algebraic local models \eqref{affmodel} about $M$. More precisely there should be a global 2-term complex of holomorphic vector bundles $E\udot=\{E^{-1}\to E^0\}$ on $M$ and a map $E\udot\to\LL_M$ which is locally quasi-isomorphic on the open sets $U$ to the following linearisation of \eqref{affmodel},
$$
\xymatrix@R=5pt@C=35pt{E^*|_U \ar[r]^{ds|_U}\ar[dd]_s& \Omega_V|_U \ar@{=}[dd]&& E\udot|_U \ar[dd] \\ && \simeq \\
I/I^2 \ar[r]^d& \Omega_V|_U && \LL_{\;U}.\!\!}
$$
Here $I\subset\cO_V$ is the ideal sheaf of $U$ and $\LL_{\;U}$ its truncated cotangent complex.

If there exists a \emph{global algebraic description} \eqref{affmodel} we can define an algebraic virtual cycle in $A_*(M)$ as the Fulton-MacPherson intersection of the graph $\Gamma_{\!s}\subset E$ with the zero section $0_E\subset E$. But this is rare. In the general case we take the limits of the graphs of $ts$,
\beq{localcone}
\lim_{t\to\infty}\Gamma_{\!ts}\ \subset\ E|_U, \,\text{ that is, the normal cone }\, C_{U/V}\ \subset\ E|_U,
\eeq
in each algebraic local model \eqref{affmodel}. Then we use the perfect obstruction theory to show they glue to a global cone $C$ inside $E_1:=(E^{-1})^*$ (which plays the role of $E|_M$, even when there is no global $E$). Intersecting it with $0_{E_1}$ defines the virtual cycle,
\beq{BFdef}
[M]^{\vir}\=0^{\;!}_{E_1}[C]\ \in\ A_{v}(M).
\eeq

\subsection*{Complex Kuranishi structures} To make \eqref{BFdef} easier to compute we would like to exploit the existence of global projective embeddings $M\subset\PP^N$ to find a \emph{global} Kuranishi chart \eqref{affmodel} which is as holomorphic as possible. (Then the virtual cycle becomes a localised Euler class of $E$.) So in this paper we develop a partially holomorphic version of Kuranishi structures. 

As we already noted it is too much to ask for the Kuranishi structure to be globally holomorphic,  so we define a \emph{complex Kuranishi structure} in a weaker way, following Joyce's $C^\infty$ definitions \cite{JoKur1} but using functions which lie in between smooth and holomorphic. These ``$\cA$\emph{-functions}" are $C^\infty$ on $V$ but holomorphic on a first order neighbourhood of $U$, in a sense made precise in Definition \ref{A} below. This turns out to be enough to relate them to perfect obstruction theories.

\begin{thm*}\label{K=P} Complex Kuranishi structures on $M$ (Definition \ref{cK}) are equivalent to weak perfect obstruction theories on $M$ (Definition \ref{wpot}).
\end{thm*}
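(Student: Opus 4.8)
The plan is to exhibit mutually inverse constructions passing between the two kinds of data --- at the level of the natural equivalence relations on each side (complex Kuranishi structures up to equivalence, weak perfect obstruction theories up to quasi-isomorphism) --- and to check that these constructions are functorial for morphisms, so that together they give an equivalence.

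\textbf{From a complex Kuranishi structure to a weak obstruction theory.} Starting from charts $(V_a,E_a,s_a)$ with $U_a\cong s_a^{-1}(0)$ and $\cA$-coordinate changes $\Phi_{ab}$, I would in each chart form the two-term complex
$$F_a\udot\ :=\ \big\{E_a^*|_{U_a}\ \xrightarrow{\ ds_a|_{U_a}\ }\ \Omega_{V_a}|_{U_a}\big\}$$
together with its tautological map to $\LL_{U_a}$. The one place where ordinary smooth Kuranishi gluing fails to keep the linearised data algebraic is the coordinate changes; but by Definition \ref{A} the $\Phi_{ab}$ are holomorphic on the first-order neighbourhood of $U_{ab}$, so $d\Phi_{ab}|_{U_{ab}}$ is holomorphic, in fact algebraic, and the induced maps of the $F_a\udot$ over $U_{ab}$ are quasi-isomorphisms of complexes of algebraic vector bundles. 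They satisfy the cocycle identity over triple overlaps only up to the chart $2$-morphisms, i.e.\ up to homotopy --- but that is exactly the slack a \emph{weak} obstruction theory allows, so by descent for perfect complexes on the projective scheme $M$, replacing the glued object by a genuine global $E\udot=\{E^{-1}\to E^0\}$ via twists by $\cO_M(n)$, the local maps to $\LL_{U_a}$ assemble into $\phi\colon E\udot\to\LL_M$. That $h^0(\phi)$ is an isomorphism and $h^{-1}(\phi)$ surjective is a local check inherited from the models \eqref{affmodel}.

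\textbf{From a weak obstruction theory to a complex Kuranishi structure.} Given $\phi\colon E\udot\to\LL_M$, I would first build local algebraic models: near a point, using $h^0(\phi)$ and a local presentation of $E^0$, realise $U_a$ as a closed subscheme of a smooth quasi-projective $V_a$ --- which, exploiting $M\subset\PP^N$, one may take open inside a vector bundle over a linear subspace --- with $\Omega_{V_a}|_{U_a}\cong E^0|_{U_a}$, extend $(E^{-1})^*|_{U_a}$ to a bundle $E_a$ on $V_a$, and pick $s_a\in\Gamma(V_a,E_a)$ with $s_a^{-1}(0)=U_a$ and $ds_a|_{U_a}$ realising $E^{-1}\to E^0$; this is the standard local presentation of an obstruction theory. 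The real work is the coordinate changes. On an overlap the two models carry quasi-isomorphic obstruction theories, so their linearisations are identified holomorphically \emph{along} $U_{ab}$, and the task is to extend this first-order-holomorphic identification to a map $\Phi_{ab}\colon V_a\dashrightarrow V_b$ near $U_{ab}$ that is $C^\infty$ but has $\dbar\Phi_{ab}$ vanishing to first order along $U_{ab}$ --- an $\cA$-morphism. I would construct it by patching the obvious local holomorphic extensions with a smooth partition of unity (the correction terms vanish along $U$ to the required order, which is what keeps the result an $\cA$-morphism), and then verify the $2$-cocycle conditions, producing a complex Kuranishi structure in the sense of Definition \ref{cK}.

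\textbf{The equivalence, and the main obstacle.} Composing the two constructions, I would exhibit from any complex Kuranishi structure a canonical equivalence to the one rebuilt from its obstruction theory, and dually a canonical quasi-isomorphism on the obstruction-theory side; the same comparison maps show both constructions respect morphisms, yielding the equivalence. The main obstacle is the middle step of the second direction: arranging the $\cA$-coordinate changes to be simultaneously smooth, holomorphic to first order along $U$ --- so that the linearisation reproduces the given $\phi$ on the nose rather than something merely homotopic to it --- and compatible on triple overlaps. This is precisely what the ``$\cA$-function'' notion is designed for (holomorphic enough to pin down the algebraic obstruction theory, flexible enough to be cut and pasted with partitions of unity), and confirming that it really does deliver a complex Kuranishi structure is the technical heart of the proof; the purely $C^\infty$ version of this bookkeeping is already substantial in \cite{JoKur1}, and the first-order holomorphicity must be maintained throughout it.
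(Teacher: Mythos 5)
Your first direction contains a genuine error that collides with the paper's own definitions. After forming the local complexes $F_a\udot=\{E_a^*|_{U_a}\to\Omega_{V_a}|_{U_a}\}$ and noting that the transition quasi-isomorphisms satisfy the cocycle condition only up to homotopy, you then try to ``by descent for perfect complexes'' assemble a \emph{global} $E\udot\to\LL_M$. But Definition \ref{wpot} asks only for the collection $\{E_i\udot\to\LL_{U_i}\}$ on a Stein cover together with isomorphisms $\Phi_{ij}$ in $D^b(\mathrm{Coh}\,U_{ij})$ satisfying the cocycle condition in $D^b(\mathrm{Coh}\,U_{ijk})$ --- it does \emph{not} ask for a global complex, and the paper explicitly says it cannot be globalised in general: ``Since the $E\udot_i$ are not sheaves the $\Phi_{ij}$ are not enough data to glue them to a global perfect obstruction theory,'' with Joyce's $\Ext^2$-twist example showing the failure. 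Derived-category cocycles lack the higher coherence data needed for descent; if your gluing worked, every weak perfect obstruction theory would be a perfect obstruction theory and the whole notion would collapse. The correct output of the K$\so$P direction is simply the local data $\{F_a\udot\to\LL_{U_a}\}$ together with the derived transition isomorphisms (which satisfy the strict derived cocycle condition precisely because homotopic chain maps coincide in $D^b$).

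In the second direction you also take a harder road than necessary and omit the argument that actually produces the cocycle condition. The paper reduces K$=$P to Theorem \ref{affKurpot}, which on a single Stein open set establishes a bijection between (local) perfect obstruction theories and global \emph{holomorphic} Kuranishi charts, \emph{and} between their (iso)morphisms --- with the morphism of charts lifting a given derived isomorphism shown to be \emph{unique} up to homotopy. Because $U_a$ and $U_{ab}$ may be taken Stein, all charts and all transition maps in the P$\so$K direction can be chosen genuinely holomorphic; no $\cA$-morphisms, no partitions of unity. It is then the uniqueness-of-lifts statement, not a patching argument, that forces the holomorphic transition maps to satisfy the $\sim$-cocycle condition of Definition \ref{cK}: both $(\psi_{jk},\Phi_{jk})\circ(\psi_{ij},\Phi_{ij})$ and $(\psi_{ik},\Phi_{ik})$ lift $\Phi_{jk}\circ\Phi_{ij}=\Phi_{ik}$, so they are homotopic. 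Your partition-of-unity construction would, if anything, make the cocycle condition \emph{harder} to verify, and it has no replacement for the uniqueness lemma. The $\cA$-function and partition-of-unity machinery is genuinely needed --- but for Theorem \ref{1} (the \emph{global} complex Kuranishi chart, where one must glue across all of $M$ and Stein arguments no longer apply), not for Theorem K$=$P.
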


Here a \emph{weak perfect obstruction theory} is the data of perfect obstruction theories on an open cover of $M$ that glue in a certain weak sense on overlaps; too weak to give a global perfect obstruction theory in general but strong enough that the local cones \eqref{localcone} glue and the virtual cycle \eqref{BFdef} is defined.

The added flexibility of these complex Kuranishi structures gives them one big advantage over a strictly algebraic or holomorphic definition, and over a perfect obstruction theory.

\begin{thm*}\label{1}
If $M$ is a projective scheme with perfect obstruction theory, it admits a \emph{global complex Kuranishi chart} $(V,E,s)$.
\end{thm*}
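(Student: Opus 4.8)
The strategy is to manufacture a global ambient space $V$ by combining two sources of smooth functions: the projective coordinates of an embedding $M\subset\PP^N$ (or rather, of a thickening of $M$), and the fibre coordinates coming from the local algebraic models $\eqref{affmodel}$. The perfect obstruction theory gives us, Zariski-locally, charts $(V_i,E_i,s_i)$ with $U_i\cong s_i^{-1}(0)$, and the theory of complex Kuranishi structures developed earlier in the paper tells us how these are glued by $\cA$-functions on first-order neighbourhoods of the $U_i$. So the first step is to invoke the equivalence of Theorem~\ref{K=P}: the weak perfect obstruction theory on $M$ is the same data as a complex Kuranishi structure, hence we already have a complex Kuranishi \emph{space} and the task is purely to globalise its chart. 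This reduces Theorem~\ref{1} to the Kuranishi-theoretic statement that a compact complex Kuranishi space admits a global complex Kuranishi chart — the exact analogue of Joyce's result \cite[Section 14]{Jvir} that a compact $\mu$-Kuranishi space admits a global $C^\infty$ chart, but now carried out with $\cA$-functions instead of arbitrary $C^\infty$ ones.

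**The construction.** Following that $C^\infty$ template, I would build $V$ as (an open neighbourhood of $M$ in) a vector bundle over a base assembled from finitely many charts. Concretely: cover $M$ by finitely many chart domains $U_i$, $i=1,\dots,n$; choose a partition-of-unity-type collection of $\cA$-functions subordinate to this cover (here one must check such $\cA$-partitions of unity exist — smoothness is no obstacle, and holomorphicity is only demanded to first order along $U$, which bump functions can satisfy since a function vanishing to second order along $U$ is ``holomorphic to first order'' trivially). Use the embedding $M\subset\PP^N$ to get a global space of sections — e.g. take $V$ to sit inside the total space of $\bigoplus_i E_i$ pulled back appropriately, glued over a base which is an open subset of a Grassmannian bundle or of $\PP^N$ itself, exactly as in \cite{Jvir} but tracking that all transition data are $\cA$-functions. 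The section $s$ is then patched from the $s_i$ using the $\cA$-partition of unity, and $E$ is the corresponding patched bundle; one verifies $s^{-1}(0)=M$ and that $ds$ along $M$ recovers the given obstruction theory (so that $(V,E,s)$ is genuinely a complex Kuranishi chart inducing the original structure). The compactness of $M$ is what makes the cover finite and hence $V$, $E$ finite-rank.

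**Main obstacle.** The delicate point is not the combinatorics of patching — that is Joyce's machine — but ensuring that every globalisation move stays inside the class of $\cA$-functions rather than drifting into merely-$C^\infty$ territory. Two places need care. First, the transition functions between the local algebraic charts $V_i$ are genuinely holomorphic on overlaps, but the interpolation against a partition of unity is not; one must check that the interpolation is holomorphic \emph{to first order along $M$}, which forces the partition-of-unity functions and all correction terms to be chosen so their non-holomorphic parts vanish to second order along $M$ — this is where the precise Definition~\ref{A} of $\cA$-functions does the work, and it is the heart of the proof. Second, one must confirm that the resulting $(V,E,s)$ induces back the \emph{same} weak perfect obstruction theory under the equivalence of Theorem~\ref{K=P}, i.e. that no information is lost or altered in globalising; this is a compatibility check at the level of first-order neighbourhoods, and again hinges on $\cA$-functions being rigid enough that their $1$-jets along $M$ are holomorphic. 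I expect the bulk of the actual writing to be this bookkeeping, with the topological/bundle-theoretic construction of $V$ being a relatively formal adaptation of the known $C^\infty$ case.
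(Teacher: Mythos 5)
Your plan is right in broad outline --- the ambient space should be a neighbourhood of $M$ in a vector bundle over (a neighbourhood of $M$ in) $\PP^N$, and partition-of-unity arguments do appear in the paper (Appendix~\ref{proof}, Theorem~\ref{extendE}) for extending holomorphic bundles and sections to $\cA_1$-data --- but as written it does not identify the obstruction that makes the construction non-trivial, and would stall there. First, a precision on your ``$\cA$-partitions of unity'': the sheaf $\cA_2$ is \emph{not} fine. A real-valued $\cA_2$-function restricts to a real holomorphic, hence locally constant, function on $U$, so there is no $\cA_2$-partition of unity subordinate to a genuine open cover (and in particular your assertion that bump functions vanish to second order along $U$ is not correct for a bump function subordinate to $V_i\subsetneq V$). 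What \emph{is} true, and is what the paper uses, is that $I_1$ and $I_1^2$ are $C^\infty$-modules and hence fine, so ordinary $C^\infty$ bump functions may be interposed wherever the quantity being patched is a \emph{difference} lying in $I_1$ or $I_1^2$. That mechanism handles the bundle $E$ and the section $s$ (local extensions agree along $U$, so their differences lie in $I_1$), and your description of that step is essentially correct. It does not, however, apply to the ambient space and its $\cA_2$-structure, which is where the gap lives.

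Concretely, the first infinitesimal neighbourhood of $M$ in any global chart $V$ must be a fixed holomorphic thickening $2M$, and Section~\ref{twoM} shows that $2M$ is the smooth thickening determined (via Proposition~\ref{conefree}) by a Kodaira--Spencer class twisted by $e\in\Ext^1\!\(\Omega_{\PP^N}|_M,K\)$, where $K=\ker\(E^0\to\Omega_{\PP^N}|_M\)$ in the normal form \eqref{normal} of the perfect obstruction theory. When $e\neq 0$ there is \emph{no} global smooth holomorphic variety $V\supset M$ realising this thickening, so the ambient $V$ is necessarily only $C^\infty$ and its $\cA_2$-structure cannot be conjured by formal gluing ``exactly as in [Jvir]''. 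The heart of the paper's proof of Theorem~\ref{1} (Section~\ref{secA2}, Lemma~\ref{off}, Theorem~\ref{summary}) is the observation that $e$ becomes a \v{C}ech coboundary $(e_i)$ after tensoring with the soft sheaf $C^\infty_{M_{\overline{J}}}$; these $e_i$ then untwist $2M$ to the standard thickening inside the total space of $\wt{\mathsf K}^*$, yielding the required $\cA_2$-embedding. Your proposal neither names $e$ nor supplies a substitute for this untwisting, and without it one cannot even define the $\cA_2$-structure on the putative global $V$. Relatedly, passing through Theorem~\ref{K=P} at the outset discards the algebraic normal form of $E^\bullet$ that the paper needs both here (to produce $e$) and in Sections~\ref{Cones}--\ref{sheaf4} (to relate the global chart to the Behrend--Fantechi cone).
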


Here $E$ is an $\cA$-bundle over $V$, which implies that $E|_M$ is naturally a \emph{holomorphic} bundle. A big advantage that complex Kuranishi structures have over their $C^\infty$ counterparts is that we can recover the (algebraic) Behrend-Fantechi cone in this holomorphic bundle by taking limits of graphs of sections as in \eqref{localcone}.

\begin{thm*}\label{2} In the setting of Theorem \ref{1} the Behrend-Fantechi algebraic cone $C\subset E|_M$ is the $t\to\infty$ limit (as a current) of the graphs $\Gamma_{\!ts}\subset E$ of the sections $ts$. Thus $\Gamma_{\!s}$ and $C$ are homologous in the total space of $E$.
\end{thm*}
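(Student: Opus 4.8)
The plan is to prove the current‑limit statement on a neighbourhood of each point of $M$ and then patch, since convergence and equality of currents are local conditions on the total space of $E$. Fix $p\in M$. By Theorem~\ref{K=P}, on a neighbourhood of $p$ the global complex Kuranishi chart $(V,E,s)$ of Theorem~\ref{1} is a chart for the same perfect obstruction theory as an \emph{algebraic} local model $(V_\alpha,E_\alpha,s_\alpha)$, and the two are therefore related by the standard moves of Kuranishi theory: stabilisation by a trivial factor $(\C^k,\underline{\C}^k,\mathrm{id})$, and $\cA$-changes of coordinates on $V$ and on $E$. For the algebraic model \eqref{localcone} already gives $\lim_{t\to\infty}\Gamma_{\!ts_\alpha}=C_{Z(s_\alpha)/V_\alpha}$ as cycles with multiplicities, and this normal cone is by construction the restriction $C|_{U_\alpha}$ of the Behrend-Fantechi cone. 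So the theorem reduces to showing that $\lim_{t\to\infty}[\Gamma_{\!ts}]$ exists and is transported correctly through the two moves.

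The heart of the argument is a \emph{first-order rigidity} property of the limit. For an $\cA$-section $\sigma$ of an $\cA$-bundle over a complex manifold with $Z(\sigma)=M$, the current $\lim_{t\to\infty}[\Gamma_{\!t\sigma}]$ exists and is closed, is supported on $E|_M$, and is invariant under the fibrewise rescaling $\mu_r$ (because $\mu_r\Gamma_{\!t\sigma}=\Gamma_{\!rt\sigma}$); moreover it depends only on the behaviour of $\sigma$ to first order along $M$. The point is that $\mu_t$ compresses $\Gamma_{\!t\sigma}$ into an arbitrarily thin tube around $M$ inside $E$, on which two $\cA$-sections agreeing to first order along $M$ differ by a term vanishing to second order along $M$, whose rescaled graph carries asymptotically vanishing mass; so the limit sees only the holomorphic $1$-jet of $\sigma$ along $M$. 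Granting this, stabilisation is harmless — it multiplies both $\lim\Gamma_{\!t\sigma}$ and the cone by the fibre $\{0\}\times\C^k$ over $Z(\mathrm{id})=\{0\}$ — while an $\cA$-coordinate change $(\phi,\Phi)$ sends $\Gamma_{\!t\sigma}$ to $\Gamma_{\!t\Phi_*\sigma}$ with $\Phi_*\sigma$ agreeing with $s_\alpha$ to first order along $M$, so that $\Phi_*\big(\lim_{t\to\infty}[\Gamma_{\!t\sigma}]\big)=\lim_{t\to\infty}[\Gamma_{\!ts_\alpha}]=[C_{Z(s_\alpha)/V_\alpha}]$. Since $\Phi$ restricts to a biholomorphism on $E|_M$ (a basic property of $\cA$-bundles and $\cA$-maps, Definition~\ref{A}), the local limit $\lim_{t\to\infty}[\Gamma_{\!ts}]$ is the positive holomorphic cycle $C|_{U_\alpha}$ inside the \emph{holomorphic} bundle $E|_M$ — the algebraic cone recovered directly, which is the payoff of working with $\cA$-objects. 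Patching these local statements gives $\lim_{t\to\infty}[\Gamma_{\!ts}]=[C]$ on the total space of $E$.

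The quantitative input underlying all of this is a \emph{uniform mass bound}: on every compact $K\subset E$ the masses of $\mu_t(\Gamma_s)\cap K$ are bounded independently of $t\in[1,\infty]$, which forbids escape of mass and makes the limit a genuine integral current. This follows from the same local comparison: near $p$ the family $\{\mu_t(\Gamma_s)\}$ is carried by an $\cA$-diffeomorphism to a family $\{\Gamma_{\!t\sigma_\alpha}\}$ that agrees to first order along $M$ with the deformation to the normal cone of $Z(s_\alpha)$ in $V_\alpha$ — a flat family over $\mathbb A^1$ (in the coordinate $1/t$), whose fibres have locally bounded volume — and a $C^\infty$ diffeomorphism distorts volumes on a compact set by a bounded factor. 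With this in hand the homology statement is formal: reparametrising by $u=1/t\in[0,1]$, the closure $\mathfrak Z:=\overline{\{\,(u,\Gamma_{\!s/u}):u\in(0,1]\,\}}\subset[0,1]\times E$ is an integral current of locally finite mass whose slices over $u\in(0,1]$ are cycles and whose slices over $u=1$ and $u=0$ are $\Gamma_s$ and $C$; hence $\partial\mathfrak Z=\{1\}\times\Gamma_s-\{0\}\times C$, and pushing forward along the proper projection $[0,1]\times E\to E$ exhibits $\Gamma_s-C$ as a boundary, so $[\Gamma_s]=[C]$ in $H^{\mathrm{BM}}_*(E;\Z)$.

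I expect the main obstacle to be this first-order rigidity property together with its companion mass bound. One must prove that holomorphy to first order along $M$ — the defining feature of an $\cA$-section — is exactly what is needed for the $t\to\infty$ scaling limit to forget all higher-order data and coincide with the algebraic normal cone (hence to be a holomorphic cycle), while still being weak enough to hold for the global chart of Theorem~\ref{1}. Making this precise relies on the equivalence of Theorem~\ref{K=P} being realised by honest $\cA$-maps between the global chart and the algebraic local models, not merely as an abstract correspondence of structures.
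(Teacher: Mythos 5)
Your proposal follows essentially the same route as the paper's Proposition~\ref{BJ2}: reduce to the local algebraic charts, observe that the global $\cA$-section agrees with the algebraic one to order $O(s^2)$, argue that the $t\to\infty$ scaling limit is insensitive to such a correction, then conclude by a Stokes/cobordism argument via the chain $\bigcup_{t\in[1,\infty)}\Gamma_{ts}$. The one difference of presentation is that you phrase the local reduction via Theorem~\ref{K=P} and abstract Kuranishi moves, whereas the paper sidesteps this because the global chart of Section~\ref{global} is built so that $s|_{V_i}=s_i+O(s_i^2)$ holds directly on each local piece (equation~\eqref{close}), with $\dim V_i$ and $\rk E_i$ constant — so no stabilisation is ever needed; your extra generality costs nothing but also adds nothing.

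The only caveat is that your ``first-order rigidity'' and ``uniform mass bound'' are asserted with a heuristic (thin-tube compression) rather than proved, and the justification you give for the mass bound — pushing through an $\cA$-diffeomorphism to an algebraic flat family agreeing to first order — implicitly assumes the very comparison it is meant to establish. The paper's Proposition~\ref{BJ2} supplies exactly what is missing: the pointwise estimate $|e_i|\le C_1|s_i|^2$ for $e_i:=s-s_i$, combined with $|ts_i|<\epsilon$ on $\Gamma_{ts_i}\cap U_i$, forces $|te_i|\le C_1\epsilon^2/|t|$, and applying Stokes on the interpolating chain $\Delta=\bigcup_{\lambda\in[0,t]}\Gamma_{ts_i+\lambda e_i}\cap U_i$ turns this into a bound $C_2\epsilon^2/|t|\to 0$ on the difference of integrals against a fixed test form. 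That explicit estimate is the genuine content of the theorem, and your outline correctly flags it as the remaining obstacle while correctly predicting that it should follow from the $\cA$-structure; so the proposal is a sound blueprint whose one open step is filled in by the paper.
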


Joyce produces his virtual cycle by intersecting $\Gamma_{\!s}$ with the zero section $0_E$ of $E$ \cite[Section 13.2]{Jvir}; this lies in the homology of a neighbourhood retract $V^\circ$ of $M\subset V$, rather than $M$ itself. In contrast we have the virtual cycle \eqref{BFdef} on $M$ and Theorem \ref{2} shows its pushforward to the global Kuranishi chart $V$ (or the neighbourhood retract $V^\circ$, or $E$) can be given the following simple description,
\begin{eqnarray}
\iota_*\ \colon\,A_{v}(M) &\To& H_{2v}(V^{\circ}), \nonumber \\
{[}M]^{\vir}\=0^{\;!}_{E_1}[C] &\Mapsto& [\Gamma_{\!s}]\cap[0_E]. \label{notvir}
\end{eqnarray}
This reproves results of \cite{LT2, Sie} equating algebraic and $C^\infty$ virtual cycles (though they worked in the more complicated orbifold setting) and suggests complex Kuranishi structures should be of independent interest. However, this not how we will use Theorem \ref{2} in the rest of our paper. Instead we will apply it in DT$^4$ theory on moduli spaces of stable sheaves on Calabi-Yau 4-folds, where the natural obstruction theory  is \emph{not perfect} and the virtual cycle \cite{BJ, OT1} is \emph{not} given by \eqref{notvir}.

\subsection*{Moduli spaces of sheaves on Calabi-Yau 4-folds.}
Let $M$ be a projective moduli space of stable sheaves $F$ on a Calabi-Yau 4-fold $X$. 
This admits a very special type of local model \eqref{affmodel} in which the bundle $E$ carries a nondegenerate quadratic form with respect to which $s$ is isotropic \cite{BG,BBJ,BBBJ}. This is called a \emph{Darboux chart},
$$
\xymatrix@=3pt{
& (E,q)\ar[dd]  \\ &&& \qquad q(s,s)\,=\,0. \\
U \ \cong\ s^{-1}(0)\ \subset\hspace{-5mm} & V\ar@/^{-2ex}/[uu]_s}
$$
The natural virtual cotangent bundle to associate to this chart is the self-dual\footnote{Note that under the isomorphism $E\cong E^*$ defined by the quadratic form $q$ the second arrow is the dual $(ds)^*$ of the first.} complex
\beq{toob}
T_V|_M\rt{ds}E|_M\rt{ds}T^*_V|_M.
\eeq
At a point $F\in M$ this is quasi-isomorphic to $\tau^{[-2,0]}\(R\Hom_X(F,F)^\vee[-1]\)$ and its self-duality is the Serre duality $R\Hom_X(F,F)^\vee\cong R\Hom_X(F,F)[4]$.

Even though holomorphic Darboux charts exist only locally, in \cite[Propositions 4.1 and 4.2]{OT1} we prove the existence of a global self-dual 3-term complex like \eqref{toob} on $M$,
\beq{starr}
E\udot\=\big\{E_0\rt{a}E^{-1}\rt{a^*}E^0\big\},
\eeq
representing the virtual cotangent bundle. Here $E_0:=(E^0)^*$ and $E^{-1}\cong E_1$ $\cong E|_M$ is a special orthogonal bundle. Discarding the $E_0$ term in degree $-2$ of \eqref{starr} gives the stupid truncation $\tau E\udot=\{E^{-1}\to E^0\}$ of $E\udot$. This defines a (stupid)  perfect obstruction theory to which we can apply Theorems \ref{1} and \ref{2}. In this way we are able to find an $\cA$-approximation to a \emph{global Darboux chart}. This is enough to give a global \emph{isotropic} normal cone
\beq{cown}
\lim_{t\to\infty}\Gamma_{\!ts}\=C\ \subset\ E_1\=E|_M.
\eeq


For simplicity assume for now that $r$ is even, or equivalently that the virtual dimension
\beq{vddef}
\vd\ :=\ \chi\(\tau^{[-2,0]}\(R\Hom_X(F,F)^\vee[-1]\)\)\=2\rk T_V-\rk E
\eeq
is \emph{even}. Taking the Fulton-MacPherson intersection of the cone $C$ with the zero section of $E$ would give the naive Behrend-Fantechi virtual cycle, and would be the wrong thing to do here.\footnote{Since we removed $E_0$ from $E\udot$ \eqref{starr} the virtual dimension $v=\vd-\rk E_0$ is wrong, for instance. It is also not invariant under replacing $E\udot$ by a quasi-isomorphic complex.}

The correct thing to do is to ``halve" \eqref{toob} by halving the orthogonal bundle $E$ at the same time as discarding the $T_V$ term. There are two ways to do this, one using algebraic geometry \cite{OT1} and Borisov-Joyce's original method using real geometry \cite{BJ}.

The algebro-geometric method uses an auxiliary  positive maximal isotropic subbundle $\Lambda$ of $E_1$ to halve it. (We may assume one exists by passing to an appropriate cover of $M$, and later pushing back down --- it is for this step that we have to invert 2.)  We can then use \emph{cosection localisation} \cite{KL} to localise the intersection of $C$ and $\Lambda\subset E_1$ to the zero section $M$ as in \cite[Section 3]{OT1}. This defines our algebraic virtual cycle \cite[Definition 4.4]{OT1},
\beq{chowdef}
[M]^{\vir}\=(-1)^{\rk\Lambda}\,C\cdot\Lambda\ \in\ A_{\frac12\!\vd}\big(M,\Z\big[\textstyle{\frac12}\big]\big).
\eeq
The real differential geometric method uses a maximal positive definite real subbundle $E^\R$ of $E=E^\R\oplus iE^\R$ to halve it. Projecting $s$ to a section $s^+$ of $E^\R$, the fact that $s$ is isotropic easily implies that the zero locus of $s^+$ is the same as that of $s$, i.e. $(s^+)^{-1}(0)\cong M$ set theoretically. We show in Section \ref{sheaf4} that Borisov-Joyce's virtual cycle is the intersection of its graph with the zero section of $E^\R$,
\beq{BJvar}
[M]^{\vir}_{BJ}\=\Gamma_{\!s^+}\!\cdot0_{E^\R}\ \in\ H_{\vd}(M,\Z).
\eeq
By the projection formula this is the intersection of $iE^\R$ and $\Gamma_{\!s}$ inside $E$. By \eqref{cown} this is the following intersection in $E_1=E|_M$,
\beq{realnos}
C\cdot iE_1^\R\ \in\ H_{\vd}(M,\Z).
\eeq
In Section \ref{ten} we exhibit a homotopy from $\Lambda$ to $E_1^\R$ inside $E_1$. After multiplying by $i$ this takes \eqref{chowdef} into \eqref{realnos} in such a way that no intersection points wander off to infinity. Thus

\begin{thm*}\label{BBJJ} The virtual cycles coincide in homology with coefficients $\Z\big[\frac12\big]$,
$$
[M]^{\vir}_{BJ}\=[M]^{\vir}\quad\mathrm{in\ \,}H_{\vd}\big(M,\Z\big[\textstyle{\frac12}\big]\big).
$$
\end{thm*}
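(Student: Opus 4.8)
The plan is to recognise both virtual cycles as intersection numbers of the \emph{same} isotropic cone $C$ of \eqref{cown} against two different real rank-$r$ subbundles of the holomorphic bundle $E_1=E|_M$, and then deform one subbundle into the other without letting any intersection point escape to infinity. Applying Theorems \ref{1} and \ref{2} to the stupid perfect obstruction theory $\tau E\udot=\{E^{-1}\to E^0\}$ of \eqref{starr} gives a global complex Kuranishi chart $(V,E,s)$ in which $E$ is an $\cA$-bundle with nondegenerate quadratic form $q$, the section $s$ is $q$-isotropic, $E|_M=E_1$ is the holomorphic special orthogonal bundle of \eqref{starr}, and
\[
C\=\lim_{t\to\infty}\Gamma_{\!ts}\ \subset\ E_1
\]
is an honest (indeed algebraic) complex subcone --- so invariant under scaling of the fibres of $E_1$, and with $q(c,c)=0$ for all $c\in C$ by isotropy of $s$. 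In these terms $[M]^{\vir}=(-1)^{\rk\Lambda}\,C\cdot\Lambda$ as in \eqref{chowdef}, defined in \cite{OT1} by cosection localisation from $q$, and the aim is to match it with \eqref{realnos}.

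First I would pin down the Borisov--Joyce side. By Section \ref{sheaf4}, $[M]^{\vir}_{BJ}=\Gamma_{\!s^+}\!\cdot0_{E^\R}$, where $E^\R\subset E$ is a maximal real subbundle on which $q$ is positive definite and $s^+$ is the $E^\R$-component of $s=s^++is^-$. Isotropy $q(s,s)=0$ and positive-definiteness of $q|_{E^\R}$ force $q(s^+,s^+)=q(s^-,s^-)$ and $q(s^+,s^-)=0$, hence $(s^+)^{-1}(0)=s^{-1}(0)=M$; the same computation gives $\Gamma_{\!ts}\cap iE^\R=0_E|_M$ for every $t\in[1,\infty)$, while $C\cap iE_1^\R=0_{E_1}$ because $q$ is negative definite on $iE_1^\R$ but vanishes on $C$. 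So intersecting the family $\{\Gamma_{\!ts}\}_{t\in[1,\infty]}$ of Theorem \ref{2} with $iE^\R$ is proper, with support uniformly on the compact zero section over $M$; combined with the compatibility of Gysin maps $\Gamma_{\!s^+}\!\cdot0_{E^\R}=\Gamma_{\!s}\cdot iE^\R$ (deform $s$ to $s^+$ inside $E$, the intersection staying over $M$) this gives
\[
[M]^{\vir}_{BJ}\=\Gamma_{\!s^+}\!\cdot0_{E^\R}\=\Gamma_{\!s}\cdot iE^\R\=C\cdot iE_1^\R\ \in\ H_{\vd}(M,\Z).
\]

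It remains to compare \eqref{chowdef} with \eqref{realnos}, which is the crux. Since $\Lambda$ is a \emph{complex} subbundle, $i\Lambda=\Lambda$, so it is enough to deform $\Lambda$ to $iE_1^\R$ through real rank-$r$ subbundles of $E_1$. Writing $h(v)=q(v,\bar v)$ for the Hermitian form of the real structure, positivity of the maximal isotropic $\Lambda$ means $h|_\Lambda>0$; one natural path is $\Lambda_t:=\{\,v+t\bar v:v\in\Lambda\,\}$, $t\in[0,1]$, with $\Lambda_0=\Lambda$, $\Lambda_1=E_1^\R$ and $q|_{\Lambda_t}=2t\,h|_\Lambda$, so $q$ is negative definite on $i\Lambda_t$ for every $t>0$. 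Exactly as in the previous paragraph this forces $C\cap i\Lambda_t=0_{E_1}$ for all $t\in(0,1]$, so $C\cdot i\Lambda_t$ is a compact intersection in $H_{\vd}(M)$, locally constant in $t$ and equal to $C\cdot iE_1^\R$. The delicate point is the limit $t\to0^+$, where $i\Lambda_t\to\Lambda$ and $C\cap\Lambda$ may acquire positive-dimensional excess; here one runs the same cosection localisation (from $q$) that defines \eqref{chowdef}, uniformly in $t\in[0,1]$, its degeneracy locus being $M$ for every $t$, so that the localised classes assemble into a homology in $H_{\vd}(M)$ with no mass escaping to infinity. Thus the family $\{i\Lambda_t\}$ carries $[M]^{\vir}=(-1)^{\rk\Lambda}\,C\cdot\Lambda$ to $C\cdot iE_1^\R$, the sign $(-1)^{\rk\Lambda}$ of \eqref{chowdef} being exactly what absorbs the discrepancy between the complex co-orientation of $\Lambda$ and the orientation of $E^\R$ used in \eqref{BJvar}; with the second paragraph this gives $[M]^{\vir}=[M]^{\vir}_{BJ}$. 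One last wrinkle: a positive maximal isotropic $\Lambda\subset E_1$ (and an orientation for either construction) need not exist over $M$ itself, so one passes to a finite cover where it does and pushes the identity back down --- the single place $2$ must be inverted --- yielding the equality in $H_{\vd}(M,\Z[\frac12])$. The main obstacle is precisely this last step: assembling the cosection-localised intersections into a genuine homology over the compact parameter interval (cosection localisation in families, with a support bound uniform in $t$) and nailing the orientation sign so it cancels the $(-1)^{\rk\Lambda}$ of \eqref{chowdef}; by contrast the middle paragraph is routine Gysin bookkeeping once Theorem \ref{2} and the isotropy estimate $C\cap iE_1^\R=0_{E_1}$ are in hand.
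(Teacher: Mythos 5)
Your opening paragraph correctly reproduces what the paper does in Theorems \ref{IC} and \ref{BJKur}: use Theorem \ref{2} (in its isotropic refinement) plus the isotropy estimate $q|_C\equiv0$ versus negative-definiteness of $q|_{iE^\R}$ to get $[M]^{\vir}_{BJ}=\Gamma_{\!s^+}\!\cdot0_{E^\R}=\Gamma_{\!s}\cdot iE^\R=C\cdot iE_1^\R$, and you also correctly identify the Edidin--Graham cover step as the one place $2$ is inverted. That part is essentially the paper's argument.

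The gap is in your ``crux'' paragraph. Your proposed homotopy $\Lambda_t=\{v+t\bar v:v\in\Lambda\}$ does indeed interpolate between $\Lambda_1=E_1^\R$ and $\Lambda_0=\Lambda$, with $q|_{\Lambda_t}=2t\Re h|_\Lambda>0$ for $t>0$, so $C\cap i\Lambda_t=0_{E_1}$ and $C\cdot i\Lambda_t$ is constant on $(0,1]$. But at $t=0$ the intersection $C\cap\Lambda$ is excess, and the class $[M]^{\vir}$ is \emph{not} the naive topological intersection $C\cdot\Lambda$ --- it is the \emph{cosection-localised} Gysin class $(-1)^n\,0^{\,!,\,\mathrm{loc}}_{p^*\Lambda^*,\,\tau_\Lambda}[C_{(\Lambda\cap C)/C}]$, which is only defined via holomorphic/algebraic cosection localisation on the total space of the holomorphic bundle $\Lambda\to M$. ``Running cosection localisation uniformly in $t$'' is not a meaningful operation for your real $C^\infty$ family $\Lambda_t$, and there is no way, by continuity from $t>0$ alone, to conclude that the limit of $C\cdot i\Lambda_t$ equals the cosection-localised class at $t=0$: the path $\Lambda_t$ runs directly into the degenerate locus $C\cap\Lambda$, and that is precisely what needs to be avoided.

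The paper sidesteps this by a different device (Theorem \ref{KiemLi}). First it performs the Fulton--MacPherson deformation to the normal bundle of $\Lambda\subset E_1$, degenerating $(C\subset E_1)$ to $(C_{(\Lambda\cap C)/C}\subset\Lambda\oplus\Lambda^*)$, so the excess intersection is isolated inside the central fibre. Then it invokes Kiem--Li's Proposition~A.1, which gives a \emph{topological} realisation of the cosection-localised Gysin class as $\Gamma_{\!\sigma_\psi}\cdot C_{(\Lambda\cap C)/C}$, where $\sigma_\psi$ is a specific section of $p^*\overline\Lambda\cong p^*\Lambda^*$ over the total space of $\Lambda$ whose graph \emph{never touches} the excess intersection --- the pairing $\langle\sigma_\psi,\tau_\Lambda\rangle$ is nonzero outside the zero section, while $\tau_\Lambda$ vanishes on $C_{(\Lambda\cap C)/C}$. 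From there one deforms $\psi\to1$ (so $\Gamma_{\!\sigma_1}$ is the diagonal of $\Lambda\oplus\overline\Lambda$, a maximal positive-definite real subbundle), then undoes the Fulton--MacPherson degeneration, carrying $\Gamma_{\!\sigma_1}$ through positive-definite subbundles to $E_1^\R$ while $C_{(\Lambda\cap C)/C}$ deforms back to $C$ through isotropic cones; at every stage the intersection is compact because positive-definite meets isotropic only at zero. So the deformation that replaces your $\Lambda_t$ lives in the total space of the Fulton--MacPherson family $\cE$, not in $E_1$ itself, and the endpoint is reached from the \emph{graph} side, never passing through $\Lambda$. The sign $(-1)^n$ then drops out of a concrete orientation calculation comparing the complex orientation on $\Lambda\oplus\Lambda^*$ with the real orientation on $E_1^\R$ induced from the complex orientation of $(E_1,q)$, rather than being asserted to ``absorb the discrepancy.'' You correctly flag this as the main obstacle, but the proposal as written does not supply the mechanism that makes the family argument valid.
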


When $\vd$ \eqref{vddef} is odd we define the algebraic virtual cycle to be zero, so the analogue of Theorem \ref{BBJJ} is that the Borisov-Joyce virtual cycle is $2^n$-torsion for some $n$. In fact we can do a little better in this case.

\begin{thm*}\label{Odd}
Suppose $\vd$ \eqref{vddef} is odd. Then the Borisov-Joyce virtual cycle $[M]^{\vir}_{BJ}\in H_{\vd}(M,\Z)$ is 2-torsion.
\end{thm*}

This effectively solves DT$^4$ theory in a half of all cases, showing the invariants vanish.


Although we fixed $M$ to be a projective moduli space of stable sheaves on a Calabi-Yau 4-fold for definiteness, all we really use in this paper is its $(-2)$-shifted symplectic structure to provide the local Darboux charts of \cite{BG,BBJ,BBBJ}. So our results apply equally to any $(-2)$-shifted symplectic projective derived scheme $M$.

\subsection*{Notation: complex analytic spaces}
Though we will eventually be interested only in complex projective schemes, we work throughout in the category of analytic spaces. So we will use the following terminology.\vspace{3mm}

\begin{tabular}{cl} \emph{Terminology} & \emph{Meaning} \\[2mm]
scheme & analytic space \\
variety & \emph{reduced} analytic space \\
subscheme & locally zeros of \emph{finitely many} holomorphic functions \\
projective scheme & its associated analytic space \\
Stein space & analytic space admitting a closed embedding in $\C^N$ \\
open set & in the Euclidean topology \\
$\cO_Y$ & the structure sheaf of holomorphic functions on $Y$
\end{tabular}\medskip

\noindent Demailly \cite{De} uses ``complex analytic scheme" for the first line and ``complex space" for the second. Forstneri\v c \cite{Fo} uses ``nonreduced complex space" and ``reduced complex space" respectively. The above definition of a Stein space $X$ is equivalent to two conditions: (1) the higher cohomology $H^{\ge1}$ of all coherent sheaves on $X$ vanishes, and (2) $X$ has finite embedding dimension: there is a uniform bound on the dimensions of its Zariski tangent spaces. These Stein spaces are the analytic analogues of affine schemes (whose associated analytic spaces are indeed examples).

Whenever we take a neighbourhood $U$ of a subspace $Z$ of a topological space $X$, we can (and will) always take $Z\subset U$ to be a deformation retract since $X$ will be a finite CW complex and $Z$ a subcomplex. When $Z$ is Stein we can take $U$ to be Stein by Siu's theorem \cite[Theorem 3.1.1]{Fo}, \cite{Siu}.

A complex $E\udot$ of coherent sheaves of $\cO_Y$-modules has $E^i$ in degree $i$. When the $E^i$ are locally free we let $E_\bullet:=(E\udot)^\vee=R\hom(E\udot,\cO_Y)$ denote the derived dual with $E_{-i}:=(E^i)^*$ in degree $-i$. Here we use $E^*$ for the dual of a vector bundle $E$, reserving ${}^\vee$ for the derived dual of a complex of sheaves or object of the derived category.

\subsection*{Acknowledgements} Heartfelt thanks go to Dominic Joyce for generous assistance, helpful suggestions, and a patient explanation of his theory of $\mu$-Kuranishi spaces over a lunch of finest Imperial cabbage and potatoes. We are also grateful to Dennis Borisov, Huai-Liang Chang, Daniel Huybrechts, Richard L\"ark\"ang, Tom Leinster, Nikolai Mishachev, Borislav Mladenov, Andr\'e Neves, Mohan Ramachandran, Vivek Shende and Ivan Smith for useful correspondence.

We note that Benjamin Volk has developed a theory of homotopy stably almost complex Kuranishi spaces \cite{Vo} which are presumably a weakening of our complex Kuranishi structures. Also note the recent development in symplectic geometry of global Kuranishi charts for moduli spaces of pseudo-holomorphic curves \cite{AMS}.

J.O. acknowledges support of a KIAS Individual Grant MG063002.
R.P.T. acknowledges support from a Royal Society research professorship and EPSRC grant EP/R013349/1.

J.O. thanks his advisor Bumsig Kim (1968--2021), who passed away during the preparation of this paper, for his constant support and encouragement.

\tableofcontents
\vspace{-1cm}

\addtocontents{toc}{\protect\setcounter{tocdepth}{1}}
\section{Some locally ringed spaces}
\subsection*{Local models} A $\mu$-Kuranishi space $M$ in the sense of Joyce \cite[Chapter 2]{JoKur1} is built from local models $(V,E,s)$ like \eqref{affmodel} in which the ambient space $V$ is a $C^\infty$ manifold, $E$ is a $C^\infty$ bundle over it, and $s$ is a $C^\infty$ section cutting out $U$ (an open subset of $M$). On overlaps these are glued in a weak sense that need not preserve $\dim V$ or $\rk E$, though the virtual dimension $\dim V-\rk E$ is preserved.

When $M$ is a complex projective scheme we would like a holomorphic version of this theory. We now take $(V,E,s)$ to be algebraic or holomorphic, but asking for the gluing maps to be holomorphic on overlaps as well gives something too rigid to allow for global charts, for instance. Since the parts of $V$ away from $U$ play no role, it makes sense to allow the gluing maps to be $C^\infty$ on $V\take U$ and only holomorphic close to $U$.

To make close contact with perfect obstruction theories,\footnote{A perfect obstruction theory may be thought of as a (holomorphic) Kuranishi structure \emph{to first order about $M$}, i.e. at the level of first infinitesimal neighbourhoods.} we need the gluing to preserve the class of functions on $V$ which are both holomorphic on $U$ and which have holomorphic derivatives (also on $U$) down holomorphic vector fields in $T_V|_U$. As a result we want the gluing maps to be

\begin{center}
$C^\infty$ on $V$ and \emph{holomorphic on $2U,$}
\end{center}

\noindent where $2U\subset V$ is the scheme-theoretic first infinitesimal neighbourhood of $U\subset V$ defined by its ideal sheaf $I_{2U}:=I_U^2\subset\cO_V$.

\subsection*{Global models} We would like to exploit the existence of global projective embeddings $M\subset\PP^N$ to get a \emph{global Kuranishi chart} for $M$ --- expressing it as a zero locus like \eqref{affmodel} \emph{globally}. This would involve gluing smooth varieties like $V$ by $C^\infty$ maps which are holomorphic on $2U\subset V$. This leads naturally to a concept of $C^\infty$ manifolds $V$ with
\beq{*}
\text{\emph{a holomorphic structure on the first infinitesimal neighbourhood }}2U.
\eeq
Instead of building such manifolds from local algebraic or analytic varieties $V$ as above, it makes sense to let $V$ be any $C^\infty$ manifold to begin with, building in the holomorphic structure \eqref{*} along $2U$ using the theory of locally ringed spaces. So this Section introduces these locally ringed spaces. \smallskip


\noindent\textbf{Remark.} When reading it helps to be guided by the simple idea \eqref{*} of specifying a holomorphic structure on the first infinitesimal neighbourhood $2U$ inside $V$. And when it gets confusing which of $U$ or $2U$ we are worrying about, the reader should keep in mind that only $U$ makes any global sense (the open sets $U$ glue to form the projective scheme $M,$ after all). The $2U\;$s usually fail to glue, and are merely there as a technical device to specify functions with the right amount of holomorphicity for our purposes.
In particular, while gluing maps between open sets $V$ need not be isomorphisms on $2U$, they are \emph{holomorphic} on $2U$.\medskip

We first introduce the relevant rings of functions, beginning with holomorphic functions on $2U$.

\subsection{Smooth thickenings}\label{2smooth} Suppose $U$ is a subscheme of a scheme $V$, with ideal sheaf $I\subset\cO_V$. We define its \emph{scheme theoretic doubling} $2U=2U_V$ inside $V$ to be the subscheme with ideal sheaf $I_{2U_V}:=I^2$. It is a square zero extension of $U$ by the sheaf $\cI:=I/I^2$:
\beq{sq0}
0\To\cI\rt\iota\cO_{2U_V}\To\cO_U\To0.
\eeq

\begin{defn}\label{2u} A \emph{smooth thickening $2U$ of $U$ of dimension $n$} is a square zero thickening of $U$ which, in a neighbourhood of any point $x\in U$, is isomorphic to a thickening $2U_V$ with $V$ a smooth $n$-dimensional variety.
\end{defn}

We then set $\dim2U:=n$, which is also the dimension of all the Zariski tangent spaces $T_x\;(2U)$.

When $V$ is smooth the complex
\beq{tru}
\big\{I/I^2\rt d\Omega_V|\_U\big\}\ \simeq\ \LL_{\;U}
\eeq
is (quasi-isomorphic to) the \emph{truncated cotangent complex} $\LL_{\;U}$.
By standard deformation theory \cite[Section III.1.2]{Ill} square zero extensions of $U$ by an $\cO_U$-module $\cI$ are classified by their Kodaira-Spencer extension classes\footnote{Illusie calls these Exal classes. He uses the full cotangent complex; this differs from our truncated cotangent complex $\LL_U$ only in degrees $\le-2$ so the $\Ext^1$s are the same.}
$$
\mathrm{KS}\ \in\ \Ext^1(\LL_{\;U},\cI).
$$

\begin{prop}\label{conefree}
A square zero thickening $\mathrm{KS}\,\in\Ext^1(\LL_{\;U},\cI)$ of $U$ by an $\cO_U$-module $\cI$ is a smooth thickening if and only if
$$
\mathrm{Cone}\,\big(\LL_{\;U}[-1]\rt{\mathrm{KS}}\cI\big)
$$
is quasi-isomorphic to a locally free sheaf of constant rank on $U$.
\end{prop}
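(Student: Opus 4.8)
The statement is local on $U$, so fix $x\in U$; the plan is to work in a local chart for a well-chosen embedding. Since $2U$ is an analytic space, near $x$ it admits a closed embedding into a smooth variety $W$ of some dimension $N$, and then the closed subscheme $U\subset 2U$ cut out by $\cI$ is a closed subscheme of $W$ too. Write $J'\subseteq J\subset\cO_W$ for the ideal sheaves of $2U$ and of $U$. Then $J^2\subseteq J'$ (because $(J/J')^2=\cI^2=0$), $\cI=J/J'$, $\cO_{2U}=\cO_W/J'$, and, $W$ being smooth, $\LL_U\simeq\{J/J^2\rt d\Omega_W|_U\}$ in degrees $-1,0$. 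By deformation theory \cite{Ill}, under $\Ext^1(\LL_U,\cI)=\coker\(\Hom(\Omega_W|_U,\cI)\rt{\circ\, d}\Hom(J/J^2,\cI)\)$ the class $\mathrm{KS}$ is represented by the quotient map $\pi\colon J/J^2\onto J/J'=\cI$; so the chain map $\LL_U[-1]\to\cI$ representing $\mathrm{KS}$ is $\pi$ in degree $0$ (degree $0$ of $\LL_U[-1]$ being $J/J^2$), and its mapping cone is the two-term complex $\{J/J^2\rt{(\pi,-d)}\cI\oplus\Omega_W|_U\}$ in degrees $-1,0$.

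The first genuine step is to cut this cone down to size. The complex $\{J'/J^2\rt d\Omega_W|_U\}$ sits inside it as a subcomplex — via $J'/J^2\into J/J^2$ in degree $-1$ and the inclusion of the second summand in degree $0$, compatibly with the differentials since $\pi$ annihilates $J'/J^2$ — and the quotient complex is $\{J/J'\rt\pi\cI\}=\{\cI\rt{\id}\cI\}$, which is acyclic. Hence $\Cone\(\LL_U[-1]\rt{\mathrm{KS}}\cI\)\simeq\{J'/J^2\rt d\Omega_W|_U\}$ in degrees $-1,0$, so this cone is quasi-isomorphic to a locally free sheaf of rank $r$ exactly when $d\colon J'/J^2\to\Omega_W|_U$ is injective with locally free cokernel of rank $r$ — equivalently, when $J'/J^2$ is a \emph{subbundle} of $\Omega_W|_U$, in which case $r=N-\rk(J'/J^2)$.

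For the forward implication: if $2U$ is a smooth thickening then near $x$ we have $2U\cong 2U_V$ with $V$ smooth, and applying the above with $W=V$ gives $J'=I^2=J^2$, hence $J'/J^2=0$ and the cone is $\Omega_V|_U$, locally free of rank $\dim V$; since $\dim V=\dim T_x(2U)$ is the same for every $x$, this rank is constant on $U$. For the converse: assume the cone is quasi-isomorphic to a locally free sheaf of constant rank, so by the above $J'/J^2$ is a subbundle of $\Omega_W|_U$. Now choose the embedding $2U\subset W$ to be \emph{minimal} at $x$ (i.e.\ take $N$ to be the embedding dimension of $2U$ at $x$): then $J'\subseteq\m_W^2$, whence $d(J')\subseteq\m_W\Omega_W$ and the subbundle $d(J'/J^2)$ lands inside $\m_U\Omega_W|_U$; but a subbundle of $\Omega_W|_U$ contained in $\m_U\Omega_W|_U$ has zero fibre at $x$, hence is the zero bundle. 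So $J'/J^2=0$ near $x$, i.e.\ $J'=J^2$, giving $\cO_{2U}=\cO_W/J^2=\cO_{2U_W}$; thus $2U\cong 2U_W$ near $x$ with $W$ smooth of dimension $N=r$. (One could avoid minimality by choosing instead $g_1,\dots,g_{N-r}\in J'$ whose differentials form a local frame of the subbundle $J'/J^2\subset\Omega_W|_U$, and checking that $V:=\{g_1=\dots=g_{N-r}=0\}\subset W$ is smooth of dimension $r$, contains $U$, and satisfies $J'=J^2+(g_1,\dots,g_{N-r})$, so that $2U\cong 2U_V$.) Running this near every point of $U$ exhibits $2U$ as a smooth thickening of dimension $r$.

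The crux is the converse, and in particular the two structural moves within it: the quasi-isomorphism $\Cone(\mathrm{KS})\simeq\{J'/J^2\to\Omega_W|_U\}$, which converts the abstract hypothesis into a concrete statement about $J'/J^2$, and the observation that a subbundle of $\Omega_W|_U$ contained in $\m_U\Omega_W|_U$ must vanish — this is what forces $J'$ to equal $J^2$ and so realises $2U$ as an honest doubling $2U_W$. The supporting ingredients — identifying $\mathrm{KS}$ with $\pi$ via deformation theory, the local minimal embedding of $2U$, and the cone computation — are routine; the only mild bookkeeping is that ``locally free of constant rank'' on the cone matches the single dimension $n$ of Definition \ref{2u}, which holds because the rank obtained at each point equals $\dim T_x(2U)$.
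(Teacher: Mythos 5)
Your proof is correct, and it takes a genuinely different route from the paper's. The paper starts from an auxiliary smooth embedding of $U$ (not $2U$) into some $V$, uses Stein-ness to lift $\mathrm{KS}$ to a genuine map $e\colon I/I^2\to\cI$, stabilises by acyclic complexes so that $e$ becomes a surjection, observes that $K:=\ker e$ injects as a subbundle of $\Omega_V|_U$, and then reconstructs the thickening as $2U_{V'}$ for $V'$ the vanishing locus of a frame $f_1,\dots,f_r$ of $K$ inside $V$; this requires the pushout description \eqref{sqext} of the extension determined by $e$. You instead embed $2U$ itself into a smooth $W$, so the $\mathrm{KS}$ class is automatically represented at the chain level by the quotient map $\pi\colon J/J^2\onto J/J'=\cI$ (the paper's formula \eqref{sqext}, applied to $\pi$, indeed returns $\cO_W/J'$, which confirms your identification). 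Your clean reduction $\mathrm{Cone}(\mathrm{KS})\simeq\{J'/J^2\to\Omega_W|_U\}$ via the acyclic quotient $\{\cI\rt{\id}\cI\}$ then turns the hypothesis into the statement that $J'/J^2$ is a subbundle of $\Omega_W|_U$, and your use of a \emph{minimal} embedding at $x$ kills it at the fibre, avoiding the stabilisation and the pushout construction entirely. The trade is: the paper's argument stays within one auxiliary embedding and is more "constructive" (it exhibits $V'\subset V$ explicitly), while yours is shorter and more conceptual, at the cost of invoking the local embeddability of $2U$ and the identification of $\mathrm{KS}$ with $\pi$ from Illusie. One small wording point: "$J'\subseteq\m_W^2$" and "$d(J'/J^2)$ lands inside $\m_U\Omega_W|_U$" are assertions about stalks/fibres at the single point $x$ (minimality gives $J'_x\subseteq\m_{W,x}^2$); what you actually use is that the fibre map $(J'/J^2)(x)\to\Omega_W(x)$ is both injective (subbundle) and zero (since $d(J'_x)\subseteq\m_{W,x}\Omega_{W,x}$), whence $J'/J^2$ vanishes \emph{near} $x$ — as your conclusion correctly states. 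The parenthetical alternative avoiding minimality is also sound and is in fact closer in spirit to the paper's argument.
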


\begin{proof}
The result is local, so we may assume $U$ is Stein. Fix a smooth thickening $2U_V$ arising from an embedding in a smooth variety $V$. As in \eqref{tru} we may represent $\LL_{\;U}[-1]$ by the complex $I/I^2\to\Omega_V|_U$ in degrees 0 and 1 respectively. The Kodaira-Spencer class of the thickening is the obvious map from this complex to $I/I^2$, whose cone $\Omega_V|_U$ is locally free.\medskip

Conversely, suppose the cone on $\mathrm{KS}\,\in\Hom(\LL_{\;U}[-1],\cI)$ is a locally free sheaf. We first describe the square zero extension defined by KS.

To do so, choose any auxiliary smooth embedding $U\into V$ and denote by $I$ the corresponding ideal. 
The exact triangle $I/I^2\to\Omega_V|_U\to\LL_{\;U}$ of \eqref{tru} and the vanishing of $\Ext^1(\Omega_V|_U,\cI)=0$ for $U$ Stein shows that KS lifts to an element $e\in\Hom(I/I^2,\cI)$. Combined with $\iota$ of \eqref{sq0} we get an inclusion
$$
\xymatrix{I/I^2\ \ar@{^(->}^-{\,\iota\;\oplus\;e\ }[r]& \,\cO_{2U_V}\oplus\,\cI}.
$$
We make the right hand side an algebra --- in which the left hand side is an ideal --- by viewing $\cI$ as an $\cO_{2U_V}$-module and setting its square to 0. Therefore the quotient is an algebra sitting in the exact sequence
\beq{sqext}
0\To\cI\To\frac{\cO_{2U_V}\!\oplus\cI}{I/I^2}\To\cO_U\To0,
\eeq
and its spectrum is the required  square zero extension.

Replacing $U\into V$  by $U\times\{0\}\into V\times\C^N$ adds the acyclic complex $\cO_U^{\oplus N}\rt{{}_\sim}\cO_U^{\oplus N}$ to our representative \eqref{tru} of $\LL_{\;U}$, and adding a surjection $\cO^{\oplus N}_U\to\cI$ to $e$ gives another representative of the Kodaira-Spencer class. Thus we may assume $e$ is a surjection.

Letting $K:=\ker(e)$, we get a diagram on $U$ with exact columns,
$$
\xymatrix@R=16pt{K \ar@{=}[r]& K \\
I/I^2 \ar[r]^d\ar@{->>}[d]_e\ar@{<-_)}[u]-<0pt,10pt><0.3ex>& \Omega_V|\_U \ar@{->>}[d]
\ar@{<-_)}[u]-<0pt,10pt><0.3ex> \\ \cI \ar[r]& \mathrm{Cone\;(KS).}}
$$
Since we assumed $\Omega_V|_U$ and Cone(KS) are locally free sheaves, the injection of sheaves $K\into\Omega_V|_U$ must be an injection of vector bundles. Shrinking $V$ if necessary, we may assume $K$ is a trivial bundle $\cO^{\oplus r}$ and write $K\to I/I^2$ as $([f_1],\dots,[f_r])$ where $f_i$ are sections of $I\subset\cO_V$. Thus our thickening of $U$ by $\cI$ is the scheme theoretic doubling $2U_{V'}$ of $U$ inside the zero locus $V'$ of $(f_1,\dots,f_r)$ in $V$ and, by the above diagram, $\LL_{\;U}$ is the complex $\{\cI\to\Omega_{V'}|_U\}$. Finally, $V'$ is smooth along $U$ since $(df_1,\dots,df_r)\colon\cO^{\oplus r}\to\Omega_V|_U$ is a vector bundle injection.
\end{proof}


\subsection{Schemes and $C^\infty$ functions}
Let $U\subset V$ be an embedding of a scheme $U$ into a smooth variety $V$ with ideal $I\subset \cO_V$. Denoting complex conjugation by $\overline{\phantom{a}}\,$, we get a corresponding ideal
\beq{Iinfty}
I_{\infty}\ :=\ C^\infty_V.\;I\,+\,C^\infty_V.\,\overline{I\,}\ \subset\ C^\infty_V
\eeq
in the sheaf $C^\infty_V$ of \emph{complex-valued} smooth functions on $V$. This suggests a natural definition\footnote{Beware there are many different alternatives. In place of $I_\infty$ we could have used $C^\infty_V\!.\;I$ or $C^\infty_V.\;I+C^\infty_V.\,\overline{\surd\;I\,}$ or even 
$j_*\;j^*\(C^\infty_V.\;I\,+\,C^\infty_V.\,\overline{\surd\;I\,}\,\)$ \cite{AL}. (Here $\surd\;I$ is the radical of $I$ and $j\colon V\take U_{\mathrm{sing}}\into V$ is the complement of the singular locus of the reduced space $U_{\mathrm{red}}$.)} of sheaves of $C^\infty$ functions on $U$.

\begin{defn}\label{SMFTNS}
The sheaf of complex-valued smooth functions on $U$ is
$$
C^\infty_U\ :=\ C^\infty_V/I_{\infty}.
$$
\end{defn}

\noindent Independence from the choice of embedding $U\subset V$ requires the following.
\begin{enumerate}
\item[(i)] Any embedding is, locally about a point $x\in U$, isomorphic to the product of one fixed embedding $U\subset T_x\;U$ and $\{0\}\subset\C^n$. (Here we have shrunk $U$ and $T_x\;U$ denotes its Zariski tangent space at $x$.) 
\item[(ii)] The construction is functorial: a holomorphic map $f\colon U\to U'$ induces a canonical $f^*\colon C^\infty_{U'}\to C^\infty_U$ compatible with composition.
\end{enumerate}
By (i) we can compute $C^\infty_U$ locally, with different embeddings giving isomorphic sheaves. Then (ii) shows these isomorphisms are canonical, so the local sheaves glue canonically to a global sheaf on $U$.

We prove (ii) locally, so we may assume $f\colon U\to U'$ lifts to a holomorphic map $\wt f\colon V\to V'$ between the smooth ambient varieties $V\supset U,\ V'\supset U'$. (For instance we may shrink $V'$ to be an open set in $\C^n$ with coordinates $z_1,\dots,z_n$. Then, on shrinking $V$ if necessary, it admits holomorphic functions $\wt f_i$ extending $f^*z_i$ on $U$. Thus, after possibly shrinking $V$ further, we can take $\wt f:=(\wt f_1,\dots,\wt f_n)\colon V\to V'$.) That $\wt f$ extends $f$ means
$$
\wt f^*I'\ \subseteq\ I,
$$
where $I$ and $I'$ are the ideals of $U\subset V$ and $U'\subset V'$. Thus $\wt f^*\,\overline{I\,}'\subseteq\overline{I\,}\!$, inducing the required map
$$
f^*\,\colon\,\frac{C^\infty_{V'}}{C^\infty_{V'}.\;I'+C^\infty_{V'}.\;\overline{I\,}\;'}\ \To\ \frac{C^\infty_V}{C^\infty_V\!.\;I+C^\infty_V\!.\;\overline{I\,}}\,.
$$
To prove uniqueness we change the choices $\wt f_i$ by $\varepsilon_i\in I$ and assume (by further shrinking) that $V\subset\C^N$, so that subtraction makes sense and we can apply Hadamard's Lemma. For any $g\in C^\infty_{V'}$ this gives
$$
g\circ\(\wt f+\varepsilon\)-g\circ\wt f\=\Langle a,\varepsilon\Rangle+\Langle b,\,\overline\varepsilon\Rangle\ \in\ C^\infty_V\!.\;I\,+\,C^\infty_V\!.\,\overline{I\,}\!.
$$
Here $a$ and $b$ are $(\C^n)^*$-valued $C^\infty$ functions on $V$, which pair with the $\C^n$-valued $\varepsilon$ and $\overline\varepsilon$. Thus $f^*g$ is the same whether we define it via $\wt f$ or $\wt f+\varepsilon$.
\smallskip

The real reason that $\cO_U$ functorially determines $C^\infty_U$ is because it \emph{generates it as a $C^\infty$-ring} in the sense of \cite{Joinfty}. That is, given $N\in\N,\ F\in C^\infty_{\C^N}$ and $f_1,\ldots,f_N\in\(C^\infty_V\){}^{\oplus N}$ we can form $F\(f_1,\ldots,f_N\)\in C^\infty_V$. Hadamard's Lemma shows that altering the $f_i$ by elements of $I_\infty$ does the same to $F\(f_1,\ldots,f_N\)$, so it descends to a map $F\colon\(C^\infty_U\){}^{\oplus N}\to C^\infty_U$. Lemma \ref{IdealFact} below verifies that $\cO_U\subset C^\infty_U$, so by restriction we get a map
$$
F\,\colon\,\cO_U^{\oplus N}\To\,C^\infty_U.
$$
By varying $F$ and taking $N\ge\dim V$ these maps clearly generate $C^\infty_U$ if $U$ is sufficiently small to admit holomorphic coordinates.\medskip

Complex conjugation acts on $C^\infty_V$ and $I_\infty$ \eqref{Iinfty} and therefore $C^\infty_U$. Taking fixed sections defines sheaves of real-valued functions
$$
I_{\infty,\R}\ \subset\ C^\infty_{V,\;\R} \quad\text{and}\quad
C^\infty_{U,\;\R}\=C^\infty_{V,\;\R}\big/I_{\infty,\R}
$$
such that
$$
I_{\infty}\=I_{\infty,\R}\otimes\_{\R}\C, \quad C^\infty_V\=C^\infty_{V,\;\R}\otimes\_{\R}\C \quad\text{and}\quad C^\infty_U\=C^\infty_{U,\;\R}\otimes\_{\R}\C.
$$

\smallskip
The motivation for our choice of Definition \ref{SMFTNS} will be the application to \emph{real} Kuranishi spaces. 
Suppose that $U$ is cut out of $V$ by a holomorphic section $s\in H^0(E)$ of a holomorphic vector bundle $E$. That is, $s\colon E^*\to\cO_V$ has image $I\subset\cO_V$. Thinking of $s$ as a smooth section of the corresponding $C^\infty$ bundle $\mathsf E_{\;\C}:=E\otimes\_{\cO_V\!}C^\infty_V$,
\beq{sentence}
\text{the image of }\,s\,\colon\,\mathsf E_{\;\C}^*\ \To\ C^\infty_V\ \text{ is }\ C^\infty_V\!.\;I
\eeq
--- i.e. it is \emph{strictly smaller} than $I_\infty=C^\infty_V.\;I+C^\infty_V.\,\overline{\!I\;}$.
However, if we ignore the complex (and holomorphic) structure on $E$ and consider it as a \emph{real} $C^\infty$ vector bundle $\mathsf E_\R$  of twice the rank (whose sheaf of sections is the same $E\otimes\_{\cO_V\!}C^\infty_V$, but considered as a module over $C^\infty_{V,\R}\subset C^\infty_V$) then $s$ defines a section of it
\beq{realsec}
s\,\colon\,\mathsf{E}^*_\R\ \To\ C^\infty_{V,\;\R},\ \text{ with image }\ I_{\infty,\R}.
\eeq
(Tensoring with $\C$ then shows that $s+\overline s\colon\mathsf E_{\;\C}^*\oplus\overline{\mathsf E}_{\;\C}^*\to C^\infty_V$ has image $C^\infty_V.\;I+C^\infty_V.\;\overline{I\,}=I_{\infty}$.) 
\medskip


The next Lemma shows $\cO_U\subset C^\infty_U$ and that the data of $V$ and the ideal $I_\infty\subset C^\infty_V$ determine the scheme structure on $U$.

\begin{lem}\label{IdealFact}
The holomorphic functions in $I_\infty$ are just $I\subset\cO_V\subset C^\infty_V$,
$$
\cO_V\cap I_{\infty}\=I.
$$
\end{lem}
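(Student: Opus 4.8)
The plan is to prove the nontrivial inclusion $\cO_V\cap I_\infty\subseteq I$ by passing to Taylor series and invoking faithful flatness of a completion; the reverse inclusion $I\subseteq\cO_V\cap I_\infty$ is immediate since $I\subseteq\cO_V$ and $I\subseteq I_\infty$. As this is a statement about subsheaves of $C^\infty_V$, it suffices to work at the stalk at an arbitrary $x\in V$. If $x\notin U$ then $I_x=\cO_{V,x}$ and there is nothing to prove, so I may assume $x\in U$ and choose holomorphic coordinates identifying a neighbourhood of $x$ with an open set in $\C^n$ and $x$ with $0$, so that $\cO_{V,0}\cong\C\{z_1,\dots,z_n\}$. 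The key consequence of $x\in U$ is that $I_0\subseteq\m_0$, the maximal ideal; in particular the holomorphic generators of $I_0$, hence those of $\overline{I_0}$, vanish at $0$.

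Now let $h\in\cO_{V,0}\cap(I_\infty)_0$ and write $h=\sum_i a_i f_i+\sum_j b_j\overline{g_j}$ with $a_i,b_j\in C^\infty_{V,0}$ and $f_i,g_j\in I_0$. I would apply the formal Taylor expansion at $0$,
$$
j^\infty_0\,\colon\,C^\infty_{V,0}\To\C[[z_1,\dots,z_n,\overline{z_1},\dots,\overline{z_n}]],
$$
which is a ring homomorphism carrying holomorphic germs into $\C[[z]]$ and anti-holomorphic germs into $\C[[\overline z]]$, and then compose with the algebra homomorphism $\C[[z,\overline z]]\to\C[[z]]$ sending every $\overline{z_l}$ to $0$. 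Since $g_j(0)=0$, we have $j^\infty_0(\overline{g_j})=\overline{j^\infty_0(g_j)}\in(\overline{z_1},\dots,\overline{z_n})$, so the second sum dies, leaving
$$
h\=\sum_i c_i f_i\ \in\ I_0\cdot\C[[z]]
$$
for suitable $c_i\in\C[[z]]$, where $h$ is identified with its (convergent) Taylor series. Thus $h$ lies in the extension of $I_0$ to the $\m_0$-adic completion $\widehat{\cO_{V,0}}=\C[[z]]$.

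To conclude, I would use that $\C[[z]]$ is faithfully flat over the Noetherian local ring $\cO_{V,0}=\C\{z\}$ (being its $\m_0$-adic completion), so that $I_0\,\C[[z]]\cap\cO_{V,0}=I_0$, whence $h\in I_0$. Since this holds at every stalk, $h\in I$ as a section of the ideal sheaf, giving $\cO_V\cap I_\infty\subseteq I$. I expect the only genuinely delicate point to be the passage that kills the anti-holomorphic variables: this is exactly where $x\in U$ enters, via $I_0\subseteq\m_0$, so that the generators $\overline{g_j}$ of $\overline I$ have no constant term; the rest is the standard faithful flatness of a completion. (One could alternatively cite a Malgrange-type faithful flatness of $C^\infty$ germs over holomorphic germs, but the formal power series route keeps the argument self-contained.)
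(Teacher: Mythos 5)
Your proof is correct, but it takes a genuinely different route from the paper's. The paper invokes Hironaka's principalisation theorem to find a modification $\rho\colon V'\to V$ after which $\rho^*I$ is locally a monomial ideal $(z_1^{d_1}\cdots z_k^{d_k})$; it then pulls back the decomposition $f=\sum a_ib_i+\sum c_i\,\overline{d_i}$, observes that along $\{z_1=\dots=z_k=0\}$ all the holomorphic $z$-derivatives of $\rho^*f$ up to orders $d_i-1$ vanish (the $\bar z$-terms contribute nothing there), and concludes by holomorphy that $\rho^*f$ is divisible by $z_1^{d_1}\cdots z_k^{d_k}$, hence $f\in\rho_*\rho^*I=I$. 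Your route keeps everything at a single stalk: you apply the Taylor map $C^\infty_{V,0}\to\C[[z,\bar z]]$ followed by $\bar z\mapsto 0$, which kills the $\bar I$-part precisely because $I_0\subset\m_0$ (so the conjugated generators have no constant term), yielding $h\in I_0\,\C[[z]]$; then $I_0\,\C[[z]]\cap\C\{z\}=I_0$ by faithful flatness of the completion of the Noetherian local ring $\C\{z\}$. The underlying idea — holomorphy of $h$ together with the vanishing of the antiholomorphic generators forces the antiholomorphic contribution to drop out of the Taylor data — is the same, but you trade resolution of singularities for a standard piece of commutative algebra. Your argument is shorter, stays local, and avoids the heavy machinery; the paper's is more geometric and works directly on $V'$ without ever leaving convergent power series. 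Both are valid proofs, and yours is if anything the more elementary one given how standard faithful flatness of completions is.
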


\begin{proof}
By Hironaka's principalisation theorem, 
there exists a composition of smooth blowups $\rho:V'\to V$ such that any given point of $\rho^{-1}($supp\,$I)$ is the origin of a choice of local holomorphic coordinates $z_1,\ldots, z_r$  with $\rho^*I=\(z_1^{d_1}\ldots\;z_k^{d_k}\)$, where $1\le k\le r$ and $d_1,\ldots,d_k>0$.

Given $f\in \cO_V\cap I_\infty$ we may use \eqref{Iinfty} to write it locally as $\sum a_i.b_i+\sum c_i.\overline d_i$, with $a_i,c_i\in C^\infty_V$ and $b_i,d_i\in I$. Pulling back we find
$$
\rho^*f\=F_1.z_1^{d_1}\!\ldots z_k^{d_k}\ +\ F_2.\bar{z}_1^{d_1}\!\ldots\bar{z}_k^{d_k}\quad\text{for some }F_i\in C^\infty_{V'}.
$$
Hence along $z_1=0=\dots=z_k$ we have $\rho^*f=0=\partial_{z_i}^j\(\rho^*f\)$ for all $i$ and all $j\le d_i-1$. Since $\rho^*f$ is holomorphic this means it is divisible by $z_1^{d_1}\!\ldots z_k^{d_k}$. Thus $\rho^*f\in\rho^*I$; since $I=\rho_*\rho^*I$ this proves $f\in I$.
\end{proof}

\subsection{$\cA$-functions on smooth varieties}\label{Afn}
As before we fix schemes $U\subset V$ with $V$ smooth.
We will use a class of functions that lies between $\cO_V$ and $C^\infty_V$, consisting of functions which are holomorphic on $U$ and $C^\infty$ on $V\take U$. There are at least two natural choices, corresponding to the sheaves of algebras
\beq{algs}
\cO_V\,\ \subset\,\ \cO_V+C^\infty_V.\;I\,\ \subset\,\ \cO_V+I_{\infty}\,\ \subset\,\ C^\infty_V.
\eeq
Notice these inclusions are \emph{strict} $\subsetneq$ when $U\ne\emptyset$, because $I_\infty=C^\infty_V\!.\;I+C^\infty_V\!.\ \overline{\!I\;}$. The third is really the algebra of all functions which are smooth on $V$ and holomorphic on $U$ since, by Lemma \ref{IdealFact},
$$
\cO_V+I_{\infty}\=C^\infty_V\times\_{C^\infty_U}\cO_U.
$$
Therefore, on replacing $U$ by $2U$, this is the maximal class of functions that realises the slogan \eqref{*}. However, for the purposes of constructing Kuranishi structures, \eqref{sentence} means it will be important for us to use the smaller class\footnote{For instance if $I\subset\cO_V$ is $(x^2)\subset\C[x]$ then $\overline x^2\in\cO_V\!+I_\infty$ but $\overline x^2\not\in\cO_V\!+C^\infty_V\!.I$.} of more restricted functions given by the second algebra in \eqref{algs}.

\begin{defn}\label{A} Let $\cA_{\;U/V}$ be the sheaf of algebras $\cO_V+(C^\infty_V\!.\;I)$ on $V$.
\end{defn}

We will use both $\cA_1:=\cA_{\;U/V}$ and $\cA_2:=\cA_{\;2U/V}=\cO_V+(C^\infty_V\!.\;I^2)$; the latter is our choice of a class of functions that satisfies \eqref{*}. Let
$$
I_1\ :=\ C^\infty_V\!.\;I\ \subset\ \cA_1
$$
denote the kernel of the restriction map from 
$\cA_1$-functions on $V$ to \emph{holomorphic} functions on $U$:
\beq{restric}
0\To I_1\To\cA_1\To\cO_U\To0.
\eeq
Replacing $U\subset V$ by $2U\subset V$ the ideal $I_1$ becomes $I_1^2\subset\cA_2$, sitting in the exact sequence
\beq{restric2}
0\To I_1^2\To\cA_2\To\cO_{2U}\To0.
\eeq
We denote by $I_2$ the kernel of $\cA_2\onto\cO_U$,
$$
0\To I_2\To\cA_2\To\cO_U\To0.
$$
Then $I_2=I+I^2_1=(\cO_V+I_1).\;I=\cA_1.\;I$, so $I_2$ is an ideal in $\cA_1$ but not in $C^\infty_V$. In fact
$$
I_1\=C^\infty_V.\;I_2 \quad\text{and}\quad \cA_1\=\cA_2+I_1,
$$
so \eqref{restric2} determines $I_2,\,I_1,\,I_\infty=I_1+\,\overline{\!I\;}\!_1$ and $\cA_1$. Hence 
we build our next definition around \eqref{restric2}.

%

\subsection{$\cA_2$-embeddings} 
Our ambient spaces will be locally ringed spaces which take the above form $(V,\cA_2)$ locally. We could define them by gluing such local pieces, but it is more natural to relax the condition that the local pieces $V$ be smooth varieties and work globally with a real $C^\infty$ manifold $V$ from the beginning.

\begin{defn}\label{UinV}
Fix a scheme $U$ and let $V$ be a real $C^\infty$ manifold. An $\cA_2$-embedding $\iota\colon U\into V$ is the data of
\begin{itemize}[leftmargin=11mm]
\item a smooth thickening $2U\!$ of $U\!$ in the sense of Definition \ref{2u},
\item an embedding of locally ringed spaces\;\footnote{Often called a closed immersion of locally ringed spaces. It amounts to an injection of sets $\iota\colon U\into V$ and a surjective algebra homomorphism $\pi\colon C^\infty_V\onto C^\infty_{2U}$ such that $\pi\(\;\overline f\;\)=\overline{\pi(f)}$ and $\pi(f)(x)=f(\iota(x))$ for all $x\in U$ and $f\in C^\infty_V$. Of course $(2U,C^\infty_{2U})$ is just $U$ with the sheaf of rings $C^\infty_{2U}$; the $2$ is solely for emphasis.} $(2U,C^\infty_{2U})\into(V,C^\infty_V)$, and
\item a sheaf of subalgebras $\cA_2\subset C^\infty_V$ closed under holomorphic operations,\footnote{That is $H(f_1,..,f_n)\in\cA_2$ when $f_j\in\cA_2$ and $H\colon\C^n\to\C$ is a $C^\infty$ function which is holomorphic on a neighbourhood of any point of $\im\($\scalebox{1.5}{$\times$}${\!}_i\;f_i\)$.}
\end{itemize}
satisfying $\dim\_{\;\R\!}V=2\dim 2U$ and the following analogue of \eqref{restric2}. Set
$$
I_2\ :=\ \ker\(\cA_2\to C^\infty_U\) \quad\text{and}\quad I_1:=C^\infty_V\!.\;I_2.
$$
Then $I_1$ should be locally finitely generated over $C^\infty_V$ and the composition $\cA_2\to C^\infty_V\to C^\infty_{2U}$ should have image $\cO\_{2U}\subset C^\infty_{2U}$ and kernel $I_1^2\subset C^\infty_V$.
\end{defn}

(The $\dim\_{\;\R\!}V=2\dim 2U$ condition rules out pathologies like taking $U$ to be the origin in $V:=\C^2_{x,y}$ with $\cO_{2U}=\C[x,y]/(x^2,y)$ and $\cA_2=x^2C^\infty_V+\C[x]$. Here the $\cA_2$ functions would all be constant on the $y$-axis $\{x=0\}$.)\smallskip

Given an $\cA_2$-embedding we set
\beq{A1A2I}
\cA_1\ :=\ \cA_2+I_1
\eeq
and note that this makes $I_2$ an $\cA_1$-module. Since $C^\infty_V\to C^\infty_U$ preserves complex conjugation it follows from its definition that
\beq{inclus1}
I_\infty\ :=\ I_1+\,\overline{\!I\;}\!_1\ \subseteq\ \ker\(C^\infty_V\to C^\infty_U\).
\eeq
Since $I_1^2=\ker(\cA_2\to C^\infty_{2U})\subset\ker\(C^\infty_V\to C^\infty_{2U}\)$, we also have
\beq{inclus2}
I_1^2+\,\overline{\!I\;}\!\_1\hspace{-.8mm}^2\ \subseteq\ \ker\(C^\infty_V\to C^\infty_{2U}\).
\eeq
It will follow from Proposition \ref{sane} below that these inclusions (\ref{inclus1}, \ref{inclus2}) are equalities.

Of course by design the local holomorphic model $2U_V\into V$ of Section \ref{Afn} satisfies the conditions of Definition \ref{UinV}, essentially by \eqref{restric2}. The only thing to check is closedness under holomorphic operations. So fix $\cA_2\ni f_j=h_j+g_j$ with $h_j\in\cO_V$ and  $g_j\in I_1^2$, a point $x\in U\subseteq V$ and $H\colon\C^n\to\C$ holomorphic on a neighbourhood of $(f_1(x),\dots,f_n(x))$. Then by Hadamard's Lemma in this neighbourhood,
$$
H\(f_1,...,f_n\)\=H\(h_1,...,h_n\)+\sum\nolimits_ia_ig_i,
$$
where the $a_i$ are made from the holomorphic derivatives $\partial_iH$  and there are no $\bar{g}_i$ terms because the antiholomorphic derivatives $\dbar_iH$ vanish. The first term on the right hand side is holomorphic and the second is in $I_1^2$, so their sum is in $\cA_2$.

Next we show a converse: that any $\cA_2$-embedding is \emph{locally} isomorphic to a holomorphic embedding $2U_V\into V$ such that the sheaves $I_1\subset\cA_1\subset C^\infty_V$ and $I_2\subset\cA_2\subset C^\infty_V$ recover the sheaves of the same name defined in Section \ref{Afn}. Therefore any \emph{global} $\cA_2$-embedding can be written as a union of local holomorphic models glued over overlaps by $C^\infty$ automorphisms that preserve the $\cA_2$ subsheaf.

\begin{prop}\label{sane}
Let $U\into V$ be an $\cA_2$-embedding and $x\in U$. On shrinking $V\supseteq U\ni x$ we may assume $U\into V$ is a holomorphic embedding of Stein varieties and the $\cA_2$-embedding is just $2U_V\into V$.
\end{prop}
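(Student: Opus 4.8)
The plan is to manufacture an honest holomorphic ambient structure near $x$ out of the given smooth thickening $2U$, transport it to $V$ via a well-chosen system of $\cA_2$-functions, and then check that all the sheaves of Definition \ref{UinV} become the ones of Section \ref{Afn}. First I would invoke Definition \ref{2u}: after shrinking $V$, fix an isomorphism $2U\cong 2U_W$ with $W\subseteq\C^n$ open, $n=\dim 2U$, $I_W\subset\cO_W$ the ideal of $U$, and coordinates $z_1,\dots,z_n$ on $W$ normalised so that $z_i(x)=0$. Since the composite $\cA_2\to C^\infty_V\to C^\infty_{2U}$ has image $\cO_{2U}$, I can choose lifts $w_i\in\cA_2$ of the $z_i|_{2U}$, defined near $x$, with $w_i(x)=0$.

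The key geometric point is that $w:=(w_1,\dots,w_n)\colon V\to\C^n$ restricts to a $C^\infty$ diffeomorphism from a neighbourhood of $x$ onto an open set $\Omega\ni 0$. I would prove this by a dimension count on real Zariski cotangent spaces. The locally ringed space embedding gives a surjection $C^\infty_V\onto C^\infty_{2U}$, hence a surjection $\m_x/\m_x^2\onto\n_x/\n_x^2$, where $\m_x\subset C^\infty_{V,x}$ and $\n_x\subset C^\infty_{2U,x}$ are the maximal ideals. The source has real dimension $2n$ since $\dim_\R V=2\dim 2U$. For the target, $C^\infty_{2U}\cong C^\infty_W/(I_W^2)_\infty$ and $(I_W^2)_\infty$ lies in the square of the maximal ideal of $C^\infty_{W,x}$, so $\n_x/\n_x^2$ is just the real cotangent space of the complex $n$-manifold $W$ at $x$, again of dimension $2n$. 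Hence the surjection is an isomorphism; since the classes of $z_1|_{2U},\dots,z_n|_{2U}$ and their conjugates form a $\C$-basis of $\n_x/\n_x^2\otimes_\R\C$, so do the classes of $w_1,\dots,w_n,\overline{w_1},\dots,\overline{w_n}$ in $\m_x/\m_x^2\otimes_\R\C$. Thus $dw_1,\dots,dw_n,d\overline{w_1},\dots,d\overline{w_n}$ are independent at $x$ and the inverse function theorem applies. Pulling back the standard complex structure on $\Omega$ through $w$ turns $V$ into a complex manifold with holomorphic coordinates $w_i$; since $w|_{2U}$ agrees with the embedding $2U\cong 2U_W\hookrightarrow\C^n$, after one further shrink to a Stein neighbourhood on which $U$ is closed, $U\subseteq V$ becomes a closed Stein subscheme with ideal $I$ and $2U$ becomes its scheme-theoretic doubling $2U_V$.

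The remaining, and I expect hardest, step is to identify the subalgebra $\cA_2$ with $\cO_V+C^\infty_V\,I^2$ of Definition \ref{A}. Closedness under holomorphic operations together with $w_i\in\cA_2$ gives $\cO_V\subseteq\cA_2$, and lifting through $\cA_2\to\cO_{2U}=\cO_V/I^2$ — whose kernel is $I_1^2$ — yields $\cA_2=\cO_V+I_1^2$; dually $I_2=I+I_1^2$ and $I_1=C^\infty_V\,I+C^\infty_V\,I_1^2$, so the $C^\infty_V$-module $N:=I_1/(C^\infty_V\,I)$ satisfies $N=I_1 N$. Now the local finite generation of $I_1$ lets me apply the determinant form of Nakayama's lemma: there is $u\in 1+I_1$ with $uN=0$; since every element of $I_1\subseteq I_\infty$ vanishes along $U$, after shrinking $V$ the function $u$ is a unit, forcing $N=0$, i.e.\ $I_1=C^\infty_V\,I$. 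Then $I_1^2=C^\infty_V\,I^2$, so $\cA_2=\cO_V+C^\infty_V\,I^2$, $\cA_1=\cO_V+C^\infty_V\,I$ and $I_2=I+C^\infty_V\,I^2$, matching Section \ref{Afn}; Lemma \ref{IdealFact} applied to $I$ and to $I^2$ then identifies $\ker(C^\infty_V\to C^\infty_U)$ with $I_1+\overline{I_1}$ and $\ker(C^\infty_V\to C^\infty_{2U})$ with $I_1^2+\overline{I_1^2}$, so \eqref{inclus1} and \eqref{inclus2} become equalities as promised. Everything other than the cotangent dimension count and this Nakayama argument — which is precisely what rules out spurious antiholomorphic contributions to $\cA_2$ — is routine shrinking of Stein neighbourhoods.
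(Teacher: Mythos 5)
Your proof is correct and follows essentially the same strategy as the paper's: lift the holomorphic coordinates of the local smooth thickening $2U\cong 2U_W$ to $\cA_2$-functions $w_i$ on $V$, use them to identify $V$ with $W$ near $x$ as ringed spaces, and then run a Nakayama argument to show $I_1 = C^\infty_V\cdot I$ and hence that $\cA_2$ is the standard sheaf $\cO_V + C^\infty_V I^2$. Your cotangent-space dimension count is a clean justification of the step the paper merely asserts (independence of $d(\Re X_i), d(\Im X_i)$ at $x$), and your appeal to the determinant form of Nakayama is precisely the matrix-inversion argument the paper writes out by hand.
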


\begin{proof}
Throughout we allow ourselves to shrink $U,V$ as necessary, while insisting they contain $x$. Thus we may assume the smooth thickening $2U$ in Definition \ref{UinV} is isomorphic to $2U_{V'}$, where $V'\supset U$ is an open subset of $\C^n,\ n=\dim 2U,$ with ideal $J\subset\cO_{V'}$. In particular $V$ and $V'$ have the same real dimension $2n$.

Let $x_1,\dots,x_n$ be the holomorphic coordinate functions on $V'$. Their restrictions to $\cO_{2U}\subset C^\infty_{2U}$ lift to functions $X_1,\dots,X_n\in\cA_2\subset C^\infty(V)$, inducing a map $V\to\C^n\cong\R^{2n}$. The derivatives of $\Re X_i,\,\Im X_i$ are linearly independent real 1-forms at $x\in U\subset V$ so the map is an isomorphism near $x$. Thus, shrinking $V$ (and $V'$) if necessary, we may assume the map is an embedding of $V$ as an open subset of $\R^{2n}$ with the same image as $V'\subset\R^{2n}$. Thus we get an isomorphism $\phi\colon V\rt\sim V'$ which is the identity on $C^\infty_{2U}$ (since the latter is generated by $\{\Re x_i,\,\Im x_i\}$ as a $C^\infty$ ring \cite{Joinfty}) and $\phi^*\cO_{V'}\subset\cA_2$ (because $\phi^*(x_i)=X_i\in\cA_2$ and $\cA_2$ is closed under holomorphic operations). This gives a commutative diagram
$$\xymatrix@R=15pt{
0 \ar[r]& \phi^*J \ar[r]\ar@{_(->}[d]& \phi^*\cO_{V'} \ar[r]\ar@{_(->}[d]& \cO_U \ar@{=}[d]\ar[r]& 0 \\
0 \ar[r]& I_2 \ar[r]& \cA_2 \ar[r]& \cO_U \ar[r]& 0.\!}
$$
In particular, multiplying by $C^\infty_V$ it follows that $\phi^*J_1\subseteq I_1$.

Both $\phi^*J_2$ and $I_2$ map to $\cO_{2U}$ with the same image $\cI=\ker(\cO_{2U}\to\cO_U)$ and kernels $\phi^*J_1^2\subseteq I_1^2$ respectively. Thus
\beq{incl}
I_2\=\phi^*J_2+I_1^2 \quad\text{and so}\quad I_1\=\phi^*J_1+I_1^2
\eeq
by multiplying by $C^\infty_V$. We claim that 
by the usual Nakayama-type argument this implies $I_1\subseteq\phi^*J_1$. In fact Definition \ref{UinV} ensures $I_1$ admits local generators $e_1,\dots,e_n$, so \eqref{incl} gives
$$
e_i\=f_i+\sum\nolimits_{j=1}^n a_{ij}e_j \quad\text{for some}\quad f_i\in\phi^*J_1,\ a_{ij}\in I_1.
$$
Since the matrix $A:=\(\delta_{ij}-a_{ij}\)_{i,j=1}^n$ is the identity on $U$ it is invertible on a neighbourhood. Hence by shrinking $V$ if necessary we have
$$
e_i\=\sum\nolimits_{j=1}^n(A^{-1})_{ij}f_j\ \in\ \phi^*J_1,
$$
so $I_1\subseteq\phi^*J_1$. We already observed $\phi^*J_1\subseteq I_1$, so $I_1=\phi^*J_1$. Thus, working inside $C^\infty_V$, the subsheaves $\cA_2$ and $\phi^*\cA_2(V')$ both map to $C^\infty_{2U}$ with the same image $\cO_{2U}$ and the same kernel $I_1^2=\phi^*J_1^2$, so they are the same.
\end{proof}

In summary an $\cA_2$-embedding induces the following embeddings of locally ringed spaces,
\begin{align}
(U,\cO_U)\ &\Into\ (V,\cA_1) \hspace{8mm}\text{with ideal}\ I_1=C^\infty_V\!.I_2, \nonumber \\
(U,\cO_U)\ &\Into\ (V,\cA_2) \hspace{8mm}\text{with ideal}\ I_2, \nonumber \\
(U,C^\infty_U)\ &\Into\ (V,C^\infty_V) \hspace{6.5mm}\text{with ideal}\ I_\infty=I_1+\,\overline{\!I\;}\!_1, \label{ideals} \\
(2U,\cO_{2U})\ &\Into\ (V,\cA_2) \hspace{8mm}\text{with ideal}\ I_1^2, \nonumber \\
\text{and}\quad (U,\cO_U)\ &\Into\ (2U,\cO_{2U}) \hspace{3.7mm}\text{with ideal}\ \cI:=I_2/I_1^2.\nonumber
\end{align}

\subsection{$\cA_1$-bundles}
We can define an $\cA_1$-bundle $E$ of rank $r$ on $(V,\cA_1)$ to be a vector bundle with transition functions in $GL(r,\cA_1)$. We can define the $\cA_1$-sections of $E$ to be those which are locally $r$-tuples of functions in $\cA_1$; this condition is then preserved by the transition functions.

Thus $E$ is equivalent to a locally free sheaf of $\cA_1$-modules (its sheaf of $\cA_1$-sections). From \eqref{ideals} we have the short exact sequence
$$
0\To I_1\To\cA_1\To\cO_U\To0
$$
generalising \eqref{restric} in the holomorphic setting. This gives
\beq{restr}
0\To E\otimes_{\cA_1}I_1\To E\To E|\_U\To0
\eeq
for some rank $r$ locally free sheaf of $\cO_U$-modules $E|_U$. Thus there is a natural notion of restriction of $E$ to give a \emph{holomorphic bundle} $E|_U$ on $U$.

The added flexibility of $\cA_1$ (over $\cO_V$) means that this operation is onto: in Appendix \ref{proof} we prove the following Theorem, showing that holomorphic bundles on $U$ extend to $\cA_1$-bundles on a neighbourhood $V\supset U$. (Notice this is a global, rather than local, result: $U$ could be a projective scheme which we do not shrink even as we allow its neighbourhood $V\supset U$ to shrink.)

\begin{thm}\label{extendE} Let $F$ be a holomorphic vector bundle over an open set $U$ of a projective scheme. Fix an $\cA_2$-embedding $U\subset V$ as in Definition \ref{UinV}. Shrinking $V\!$ if necessary, there exists an $\cA_1$-bundle $E$ on $V$ with $E|_U\cong F$.
\end{thm}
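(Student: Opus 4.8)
The plan is to build $E$ as a $C^\infty$ extension of $F$ that is simultaneously holomorphic along the first infinitesimal neighbourhood. Fix a finite cover of $U$ by open sets $U_\alpha$ over which $F$ is trivial, so $F$ is determined by holomorphic transition functions $g_{\alpha\beta}\in GL(r,\cO_{U_\alpha\cap U_\beta})$. By the local structure of $\cA_2$-embeddings (Proposition \ref{sane}) we may shrink $V$ so that, over each $U_\alpha$, the embedding is a holomorphic model $2U_{V_\alpha}\into V_\alpha$ with $V_\alpha$ Stein. Since $\cO_{U_\alpha}\subset\cA_1|_{V_\alpha}$ surjects onto $\cO_{U_\alpha}$ with kernel $I_1$, and $GL(r)$ is a smooth submanifold of $\mathrm{Mat}(r\times r,\C)$, we can lift each $g_{\alpha\beta}$ to a section $G_{\alpha\beta}\in GL(r,\cA_1(V_\alpha\cap V_\beta))$ (choose any $C^\infty$ lift of the entries that is holomorphic along $2U$ --- this exists because $\cA_1=\cO_V+C^\infty_V.I$ and $C^\infty_V\to C^\infty_U$ is surjective --- then observe invertibility is an open condition that holds on $U$, so holds on a neighbourhood after shrinking $V$). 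The first task is then to correct the $G_{\alpha\beta}$ so that they satisfy the cocycle condition on triple overlaps \emph{exactly}, not just modulo $I_1$.

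The key step is this cocycle correction, and it is where the $\cA_1$ flexibility (over $\cO_V$) is essential. On triple overlaps $G_{\alpha\beta}G_{\beta\gamma}G_{\gamma\alpha}=\id+h_{\alpha\beta\gamma}$ with $h_{\alpha\beta\gamma}\in\mathrm{Mat}(r\times r,I_1)$, since the $g_{\alpha\beta}$ form an honest holomorphic cocycle. I would argue that the obstruction to fixing this lives in $H^2$ of a sheaf of $\cA_1$- (or $C^\infty_V$-) modules on $V$, which vanishes because $V$ retracts onto the projective scheme $U$ and --- more to the point --- because $C^\infty_V$ is a fine sheaf (admits partitions of unity), so $H^{\ge1}(V,\cM)=0$ for any sheaf $\cM$ of $C^\infty_V$-modules, in particular for $E\otimes_{\cA_1}I_1$ once a candidate $E$ is in hand; more carefully, one runs the standard Čech argument iteratively, at each stage killing the cocycle error modulo $I_1^k$ using a $C^\infty$ partition of unity subordinate to the $U_\alpha$ and the fact that $I_1^k/I_1^{k+1}$ is a $C^\infty_U$-module, hence acyclic. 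This is essentially the $C^\infty$ (or rather $\cA_1$-linear) analogue of the fact that any topological vector bundle admits at most one smooth structure, and the partition-of-unity acyclicity does all the work. This iterative correction is the main obstacle: one must check that the corrections can be chosen in $\cA_1$ (not merely $C^\infty_V$) at every stage, which uses that $I_1=C^\infty_V.I_2$ and that the relevant quotient sheaves are genuinely $\cO_U$- or $C^\infty_U$-modules, so that lifts along $\cA_1\onto\cO_U$ or $C^\infty_V\onto C^\infty_U$ exist.

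Once the $G_{\alpha\beta}$ form a genuine $\cA_1$-cocycle, they glue the trivial bundles $\cA_1^{\oplus r}|_{V_\alpha}$ to an $\cA_1$-bundle $E$ on $V$ (after shrinking $V$ to the union of the $V_\alpha$). By construction the transition functions reduce modulo $I_1$ to the $g_{\alpha\beta}$, so the restriction $E|_U$ in the sense of \eqref{restr} is the holomorphic bundle with transition functions $g_{\alpha\beta}$, i.e. $E|_U\cong F$. Finally I would remark that the global (non-shrinking-of-$U$) nature of the statement is automatic: the cover $\{U_\alpha\}$ of $U$ is fixed and finite, only the ambient neighbourhoods $V_\alpha\supset U_\alpha$ (and their union $V$) are shrunk, which is harmless since everything in sight --- surjectivity of $\cA_1\to\cO_U$, invertibility of the $G_{\alpha\beta}$, existence of partitions of unity --- is stable under such shrinking. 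The one point requiring a little care is that the iterative correction converges, i.e. terminates or is controlled in the $I_1$-adic sense; since $I_1$ need not be nilpotent one should either appeal to Borel summation / a Whitney-type argument to produce a genuine $C^\infty$ correction realizing a prescribed formal power series in $I_1$, or --- more simply --- observe that it suffices to get the cocycle condition to hold exactly, which can be arranged in one step after passing to $\cA_1$ because $GL(r,\cA_1)$ fits in an exact sequence of (non-abelian) sheaves whose relevant $H^1$ and $H^2$ vanish by the fineness of $C^\infty_V$; I expect the authors take the latter, cleaner route.
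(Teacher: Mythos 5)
Your reduction to a direct $GL(r,\cA_1)$-cocycle correction matches the paper's argument for rank $1$ but has a genuine gap for rank $\ge 2$. For rank $1$ the paper does almost exactly what you sketch: lift the holomorphic transition functions to $\cA_1$-functions $\Phi_{ij}$, observe the cocycle error $\delta_{ijk}=\Phi_{jk}\Phi_{ik}^{-1}\Phi_{ij}$ is close to $1$ after shrinking, and kill it in one step by multiplying by $\exp\big(-\sum_kh_k\log\delta_{kij}\big)$ with $\{h_k\}$ a partition of unity. That single step works precisely because $GL(1)$ is abelian: the $\log$ converts the multiplicative $4$-overlap identity for $\delta$ into the additive identity that makes the partition-of-unity sum telescope. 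For rank $\ge 2$ the lifted transition matrices do not commute and there is no matrix $\log/\exp$ that reduces the problem to an abelian \v Cech computation. Your iterative fallback only produces a formal solution (corrections modulo $I_1^k$ for each $k$), and since $I_1$ is not nilpotent this neither terminates nor converges in any topology you invoke; a Whitney- or Newton-type estimate would be needed to pass from formal to actual, and none is supplied. Your proposed one-step alternative via vanishing of non-abelian $H^1$ and $H^2$ of $GL(r,\cA_1)$ is also unjustified: $GL(r,\cA_1)$ and the kernel of its reduction to $GL(r,\cO_U)$ are sheaves of non-abelian \emph{groups}, not $C^\infty_V$-modules, so fineness of $C^\infty_V$ gives no acyclicity to quote, and non-abelian $H^2$ is not a group with a vanishing theorem available in this generality.

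The paper's actual route for rank $\ge 2$ sidesteps the non-abelian cocycle problem entirely. Having settled rank $1$, it twists $F$ by a line bundle to make it globally generated, realises $F$ as the pullback of the universal bundle by a holomorphic $f\colon U\to\Gr$, and extends the \emph{map} rather than the transition functions: it takes local Stein holomorphic extensions $F_i\colon V_i\to\Gr$, chooses a $C^\infty$ reference map $s\colon V\to\Gr$ close to all the $F_i$ (Lemma~\ref{lemeps}), linearises a neighbourhood $(\Gr\times\Gr)^\circ$ of the diagonal via a fibrewise-holomorphic chart $e\colon T\Gr\to(\Gr\times\Gr)^\circ$, and averages $e_s^{-1}(F_i)$ with a partition of unity. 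The fibrewise holomorphicity of $e$ is what makes the Hadamard-lemma estimates close in $\cA_1$. This is a genuinely different, one-step construction that avoids any convergence issue; the paper itself flags the obstruction to your approach by emphasising that rank $1$ works because ``its transition functions commute''.
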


In other words, $F$ can be extended locally to (trivial) holomorphic bundles $E_i$ over the neighbourhoods $V_i$, and while the $(V_i,E_i)$ need not glue holomorphically, they do glue as $\cA_2$-neighbourhoods and $\cA_1$-bundles respectively.\medskip

We can also restrict more complicated $\cA_1$-modules on $V$ to give coherent sheaves of $\cO_U$-modules on $U$. For instance the $\cA_1$-modules defined in \eqref{ideals} sit in the exact sequence 
\beq{I_2I}
0\To I_1^2\To I_2\To\cI\To0.
\eeq
So given an $\cA_1$-bundle $E$, applying \eqref{I_2I} to the $\cA_1$-module $E\otimes_{\cA_1}I_2$ shows it has a restriction map to the coherent sheaf $E|_U\otimes \cI$ on $U$:
\beq{restrE}
0\To E\otimes_{\cA_1} I_1^2\To E\otimes_{\cA_1}I_2\To E\big|_U\otimes \cI\To0.
\eeq
The kernel $E\otimes_{\cA_1}I_1^2$ is a $C^\infty$-module, since $I_1$ is. Therefore it is a \emph{fine} sheaf, due to the existence of partitions of unity, and
\beq{surgj}
\Gamma_{\!\cA_1}\big(E\otimes_{\cA_1} I_2\big)\Onto\Gamma\big(E|_U\otimes \cI\big)\To0
\eeq
is a surjection. (We use the notation $\Gamma_{\!\cA_1}$ to emphasise that $E\otimes_{\cA_1} I_2$ is a sheaf of $\cA_1$-modules.) One consequence that we will find useful later is the following.

\begin{thm}\label{extsec} In the setting of Theorem \ref{extendE}, fix a surjection 
\beq{surjonto}
F^*\Onto\cI.
\eeq
Then, shrinking $V$ if necessary, there exists a surjection $E^*\onto I_2$ of $\cA_1$-modules which restricts on $U$ to \eqref{surjonto}.
\end{thm}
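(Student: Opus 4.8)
The plan is to realise an $\cA_1$-module map $E^*\to I_2$ as a global section of $E\otimes_{\cA_1}I_2$, lift the given datum on $U$ using the surjectivity \eqref{surgj}, and then upgrade the lift to a genuine surjection by Nakayama's lemma along $U$ followed by shrinking $V$.

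Write $E$ for the $\cA_1$-bundle of Theorem \ref{extendE}, with $E|_U\cong F$; its dual $E^*$ is again an $\cA_1$-bundle, and for any sheaf of $\cA_1$-modules $M$ we have $\hom_{\cA_1}(E^*,M)\cong E\otimes_{\cA_1}M$. Thus an $\cA_1$-module homomorphism $E^*\to I_2$ is exactly a global section of $E\otimes_{\cA_1}I_2$, while the given surjection \eqref{surjonto} is a global section of $E|_U\otimes_{\cO_U}\cI$. I would then use \eqref{surgj}, the surjectivity of $\Gamma_{\!\cA_1}(E\otimes_{\cA_1}I_2)\to\Gamma(E|_U\otimes\cI)$ coming from fineness of the kernel $E\otimes_{\cA_1}I_1^2$ in \eqref{restrE}, to pick $s\in\Hom_{\cA_1}(E^*,I_2)$ whose restriction to $U$ is \eqref{surjonto}.

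Next I would identify this restriction precisely. Because $I_1=C^\infty_V\!.\;I_2$ we get $I_1I_2=C^\infty_V\!.\;I_2^2=I_1^2$, so $I_2\otimes_{\cA_1}\cO_U=I_2/I_1I_2=I_2/I_1^2=\cI$; under this identification $s\otimes_{\cA_1}\cO_U\colon F^*\to\cI$ is the given surjection, hence $\coker(s)\otimes_{\cA_1}\cO_U=0$, i.e. $\coker(s)=I_1\cdot\coker(s)$. Now $\coker(s)$ is locally finitely generated over $\cA_1$, being a quotient of $I_2$, which in a local holomorphic model $2U_V\subset V$ of Proposition \ref{sane} is $\cA_1.\;I$ for the coherent ideal $I\subset\cO_V$ of $U$. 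At each $x\in U$ the stalk $\cA_{1,x}$ is a local ring whose maximal ideal contains $I_{1,x}$: a germ $h+g$ with $h\in\cO_{V,x}$, $g\in(C^\infty_V\!.\;I)_x$ is a unit whenever $h(x)\ne0$, while all of $I_2$, hence $I_1$, vanishes along $U$. Nakayama's lemma then gives $\coker(s)_x=0$ for every $x\in U$, and since $\coker(s)$ is locally finitely generated its stalks vanish on an open $V'\supseteq U$. Replacing $V$ by $V'$ (and shrinking further to a Stein deformation retract of $U$ as in our conventions) makes $s\colon E^*\onto I_2$ the desired $\cA_1$-module surjection restricting on $U$ to \eqref{surjonto}.

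The hard part will be this last step, and in particular the realisation that surjectivity of $s$ only \emph{near $U$} is needed: the theorem permits shrinking $V$, which is what lets us ignore $s$ on $V\take U$, where $I_2=\cA_1$ and surjectivity would otherwise be a real constraint (a nonvanishing-section condition on $E$). One must also check with care that $\cA_{1,x}$ is local with $I_{1,x}$ in its maximal ideal, and that $\coker(s)$ is genuinely finitely generated over $\cA_1$ --- the point at which Proposition \ref{sane} and coherence of the holomorphic ideal of $U$ are used.
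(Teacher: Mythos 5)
Your proposal is correct and follows essentially the same route as the paper's proof: lift the surjection $F^*\onto\cI$ to an $\cA_1$-map $E^*\to I_2$ via the surjectivity \eqref{surgj}, observe that the image already fills $I_2$ modulo $I_1^2$, and then conclude by a Nakayama-type argument after shrinking $V$. The only difference is cosmetic: the paper phrases the Nakayama step concretely by writing the equality $I_2 = J_2 + I_1^2$ and inverting an explicit matrix $\id - (a_{ij})$ near $U$ (exactly as in the proof of Proposition \ref{sane}), whereas you invoke abstract Nakayama on the stalks of the local ring $\cA_{1,x}$ after checking the hypotheses — which is the same argument dressed differently, and you correctly flagged the hypotheses (locality of $\cA_{1,x}$, finite generation of $\coker(s)$) as the points that need checking.
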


\begin{proof}
By \eqref{surgj} we can lift $F^*\onto\cI$ to an $\cA_1$-section $E^*\to I_2$. Denote its image by $J_2\subseteq I_2\subset C^\infty_V$. The images of $J_2\to C^\infty_{2U}$ and $I_2\to C^\infty_{2U}$ are both $\cI$, with the kernel of the second map $I_1^2$. Therefore $I_2=J_2+I_1^2$ just as in \eqref{incl}. The same Nakayama-type argument then shows $J_2=I_2$ after shrinking $V$.
\end{proof}
%

\smallskip
We use $\mathsf T_{\!V}$ to denote the \emph{real} tangent bundle of a $C^\infty$ manifold $V$. When $V$ is a complex manifold it has a holomorphic tangent bundle $T_V$; ignoring its complex (and holomorphic) structure makes it a real $C^\infty$ bundle canonically isomorphic to $\mathsf T_{\!V}$.

Next we note that when $U\into V$ is an $\cA_2$-embedding the restriction of $\mathsf T_{\!V}$ to $U$ naturally admits a holomorphic structure. We denote the resulting holomorphic bundle by $T_V|_U$ (and its dual by $\Omega_V|_U$) even though $V$ is not usually a complex manifold.

\begin{defn}\label{holotan}
Let $\Omega_V|_U$ denote the holomorphic bundle $\Omega_{2U}|_U$ and let $T_V|_U:=\(\Omega_{2U}|_U\)^*$. These are isomorphic, as real $C^\infty$ vector bundles, to the restrictions of the real (co)tangent bundles $\mathsf T^*_{\!V},\,\mathsf T_{\!V}$ to $(U,C^\infty_{U,\R})$ respectively.
\end{defn}

We can prove this locally. The holomorphic local chart of Proposition \ref{sane} gives an exact sequence $I^2/I^4\to\Omega_V|_{2U}\to\Omega_{2U}\to0$ of coherent sheaves. By the Leibnitz rule the first map $d$ factors through $I.\;\Omega_V|_{2U}$, so restricting the sequence to $U$ gives the required isomorphism
\beq{Omegiso}
\rt0\Omega_V|_U\rt\sim\Omega_{2U}|_U\To0.
\eeq
Finally the holomorphic bundle $\Omega_V|_U$ is isomorphic as a real $C^\infty$ bundle to $\mathsf T_{\!V}^*|\_U$ in the usual way: we embed 
the $(1,0)$-forms $\Omega_V|_U$ into the complex 1-forms $\mathsf T_{\!V}^*|\_U\otimes\C$ and take real parts.

For any $f\in\cA_2$ this also allows us to define a \emph{holomorphic} section
\beq{holodf}
df|_U\ \in\ H^0\(\Omega_V|_U\).
\eeq
 
\section{Complex Kuranishi neighbourhoods}
We give a partially holomorphic version of Joyce's definition \cite[Definition 2.2]{JoKur1} of a $\mu$-Kuranishi neighbourhood, using the $\cA_2$-embeddings of Definition \ref{UinV}.

\begin{defn} \label{K}
Let $M$ be a projective scheme. A complex Kuranishi neighbourhood of a point $x\in M$ is an open neighbourhood $x\in U\subseteq M$ and quadruple $(V,E,s,\iota)$, where
\begin{itemize}
\item $V$ is a $C^\infty$ manifold,
\item $\iota\colon U\into V$ is an $\cA_2$-embedding, defining subsheaves $I_1\subset\cA_1\subset C^\infty_V$ and $I_2\subset\cA_2\subset C^\infty_V$ as in \emph{(\ref{ideals})},
\item $E$ is an $\cA_1$-bundle over $V$, and
\item $s$ is a section of the sheaf $E\otimes_{\cA_1}I_2$
\end{itemize}
inducing a surjection of $\cA_1$-modules
\beq{surjA}
E^*\rt{\,s\,}\hspace{-5.5mm}\To I_2.
\eeq
\end{defn}

We call $v:=\dim V-\rk E$ the virtual dimension of the Kuranishi neighbourhood.
The surjection \eqref{surjA} allows us to think of $s$ as cutting out $U\subset V$ in a \emph{scheme-theoretic sense} even though $(E,s)$ are not necessarily holomorphic. We picture the neighbourhood as 
$$
\xymatrix@=16pt{
&& E\dto  \\
U \ar[r]^-{\iota\ }_-{\sim\ } & s^{-1}(0)\ \subset\hspace{-5mm} & V,\ar@/^{-2ex}/[u]_s}
$$
If $U$ and $V$ are Stein we call $(V,E,s,\iota)$ a \emph{Stein Kuranishi neighbourhood} of $x$. If $U=M$ we call it a \emph{global Kuranishi chart}.

Joyce's definition of $\mu$-Kuranishi neighbourhood is obtained in the same way by weakening the holomorphic requirements as follows. He takes $M\supseteq U$ to be topological spaces, $V,E,s$ to be $C^\infty$, requires $\iota$ to be a homeomorphism to its image, and replaces \eqref{surjA} by $s^{-1}(0)=\iota(U)$ \emph{as sets}.\footnote{This does not actually lose the ``scheme-theoretic" information on $U$ since we have the section $s$.} (In fact Joyce uses $\iota^{-1}\colon s^{-1}(0)\to U$ and calls it $\psi$.) In particular \emph{a complex Kuranishi neighbourhood is naturally a $\mu$-Kuranishi neighbourhood}.

\subsection*{Maps and morphisms}
We will need Joyce's $O(s),\,O(s^2)$ notation. Let $\cA$ denote any one of $C^\infty_V$, $\cA_1$, $\cA_2$ or $\cO_V$.
 
\begin{defn}\cite[Definition 2.1]{JoKur1}\label{defO} Given an $\cA$-section $s$ of an $\cA$-bundle $E$ on a smooth manifold $V\!$, we say an $\cA$-section $t$ of $F$ is
\begin{itemize}
\item $O(s)$ if there exists an $\cA$-map $\pi\colon E\to F$ such that $\pi(s)=t$,
\item $O\big(s^{\,2}\big)$ if there is an $\cA$-map $\pi\colon E^{\otimes2}\to F$ such that $t=\pi(s\otimes s)$,
\end{itemize}
in a neighbourhood of any point $x\in V$.
\end{defn}

Joyce \cite[Definition 2.3]{JoKur1} defines morphisms of $\mu$-Kuranishi neighbourhoods covering a map of topological spaces $U_1\to U_2$. For our purposes (of defining transition maps between neighbourhoods) we only need a definition of morphisms covering the identity map $U_{12}\to U_{12}$ between schemes.  Here we use the usual notation $U_{i_1i_2\cdots i_n}:=U_{i_1}\cap U_{i_2}\cap\cdots\cap U_{i_n}$ for overlaps. We first define \emph{maps} of Kuranishi neighbourhoods; later morphisms will be defined to be certain equivalence classes of maps.

\begin{defn}\label{KNdef}
Let $x\in M$ be a point of a projective scheme. Suppose $(V_1,E_1,s_1,\iota_1)$,$\,(V_2,E_2,s_2,\iota_2)$ are complex Kuranishi neighbourhoods for open neighbourhoods $U_1,\,U_2$ of $x$.
A map $(V_1,E_1,s_1,\iota_1)\to(V_2,E_2,s_2,\iota_2)$ over $U_{12}$ is a triple $(V_1^\circ,\psi,\Phi)$, where
\begin{itemize}
\item $V_1^\circ\subset V_1$ is an open neighbourhood\;\footnote{So we allow $s_1^{-1}(0)\cap V_1^\circ$ to be larger than $\iota_1(U_{12})$.
} of $\iota_1(U_{12})$,
\item $\psi\colon V_1^\circ\to V_2$ is an $\cA_2$-map such that, as maps of locally ringed spaces,
\beq{psicomp}
\psi\circ\iota_1\=\iota_2\,\colon\,(U_{12},\cO_{U_{12}})\To(V_2,\cA_2),
\eeq
\item $\Phi\colon E_1|\_{V_1^\circ}\to\psi^*E_2$ is an $\cA_1$-map of vector bundles such that
\item $\Phi\big(s_1|_{V_1^\circ}\big)=\psi^*(s_2)+O(s_1^2)$.
\end{itemize}
\end{defn}
Some explanation is needed here. Firstly, by an $\cA_2$-map we of course mean $\psi$ is a map of locally ringed spaces $(V_1^\circ,\cA_2)\to(V_2,\cA_2)$.

The condition \eqref{psicomp} implies $\psi^*$ maps the ideal sheaf $I_2\subset\cA_2(V_2)$ of \eqref{ideals} to $I_2\subset\cA_2(V_1^\circ)$. Since $\cA_1=\cA_2+C^\infty_V.\;I_2$ \eqref{A1A2I} this shows $\psi^*$ maps $\cA_1(V_2)$ to $\cA_1(V_1)$. Therefore $\psi$ is also an $\cA_1$-map: a map of locally ringed spaces
$$
\psi\,\colon\,(V_1^\circ,\cA_1)\To(V_2,\cA_1).
$$
In particular $\psi^*E_2$ is an $\cA_1$-bundle, as used in the third bullet point. 

So we are allowing ourselves to shrink $V_1$ (to $V_1^\circ$ --- i.e. we really only use the germ of the neighbourhood $V_1$ of $\iota_1(U_{12})$) whereupon we get a commutative diagram of locally ringed spaces
$$
\xymatrix@R=18pt@C=0pt{
E_1 \ar[d]\ar[rr]^{\Phi} && E_2 \ar[d] \\
(V_1^\circ,\cA_1) \ar[rr]^{\psi}\ar@{<-^)}[dr]+<-5pt,13pt><-.5ex>_-{\iota_1} && (V_2,\cA_1) \ar@{<-_)}[dl]+<5pt,13pt><.5ex>^-{\iota_2} \\ & \hspace{-4mm}\(U_{12},\cO_{U_{12}}\).\hspace{-4.5mm}}
$$
In particular, using $\iota_1,\,\iota_2$  to identify $(U_{12},\cO_{U_{12}})$ as a subspace of both $(V_1^\circ,\cA)$ and $(V_2,\cA)$ (where $\cA$ can be either of $\cA_1$ or $\cA_2$) we see that
\beq{psiid}
\psi\big|_{U_{12}}\=\id\_{\;U_{12}}.
\eeq
Conversely, all we know about
\beq{psi2U}
\psi\big|_{2(U_{12})_{V_1}}\,\colon\,2(U_{12})_{V_1}\To 2(U_{12})_{V_2}
\eeq
is that it is \emph{holomorphic} and restricts to the identity on $U_{12}$, because $\psi$ is an $\cA_2$-map. (The moral is that while the $U$s glue, the $2U$s do not in general, but their holomorphic structure is preserved.) Importantly, this means that
\beq{Dpsi}
D\psi\big|_{U_{12}}\,\colon\,T_{V_1}\big|_{U_{12}}\To T_{V_2}\big|_{U_{12}}
\eeq
is a holomorphic map of holomorphic bundles; cf. Definition \ref{holotan} and \eqref{holodf}.

Joyce uses the notation $(V_{ij},\phi_{ij},\hat\phi_{ij})$ in place of our $(V_1^\circ,\psi,\Phi)$, and allows them all to be $C^\infty$. In particular, \emph{any holomorphic map of complex Kuranishi neighbourhoods defines a morphism of $\mu$-Kuranishi neighbourhoods in the sense of \cite[Definition 2.3]{JoKur1}.}\medskip

Suppose we are given maps
$$
(V_1,E_1,s_1,\iota_1)\rt{(V_1^\circ,\psi_{12},\Phi_{12})}(V_2,E_2,s_2,\iota_2)\rt{(V_2^\circ,\psi_{23},\Phi_{23})}(V_3,E_3,s_3,\iota_3)
$$
over open subsets $U_{12}$ and $U_{23}$ respectively. If we shrink $V_1^\circ$ to $V_1^{\bullet}:=\psi_{12}^{-1}(V_2^\circ)$, so that it maps into $V_2^\circ$ under $\psi_{12}$, then we can define the composition over $U_{123}$ in the obvious way.

\begin{defn} We define the composition to be
$$
\Big(V_1^{\bullet},\,\psi_{23}\circ\psi_{12}|\_{V_1^{\bullet}},\,\Phi_{23}\circ\Phi_{12}|\_{V_1^{\bullet}}\Big)
$$
from $(V_1,E_1,s_1,\iota_1)$ to $(V_3,E_3,s_3,\iota_3)$ over $U_{123}$.
\end{defn}
\noindent Since this is a genuine composition of maps (of germs), it is associative.

\subsection*{Homotopies}
We will next identify maps related by an equivalence relation which may be thought of as a homotopy between them (in the sense of homotopies between maps of chain complexes, rather than homotopies of topological spaces).

So fix $M$ a projective scheme, $x\in M$ a point, and two complex Kuranishi neighbourhoods $(V_1,E_1,s_1,\iota_1),$ $(V_2,E_2,s_2,\iota_2)$ for open neighbourhoods $U_1,\,U_2$ of $x$. Recall the holomorphic bundles $\Omega_{V_i}|_{U_i},\,T_{V_i}|_{U_i}$ of Definition \ref{holotan}.

Denote by $[s_1]\in\Gamma\(E_1|_{U_1}\otimes\cI\)$ the image of $s_1$ under the restriction map \eqref{restrE}. Composing with the map $d$ to $E_1\otimes\Omega_{V_1}|_{U_1}$ gives $ds_1$, which we can consider as a holomorphic map $T_{V_1}|_{U_1}\to E_1|_{U_1}$.
\begin{defn}\label{Homotopy}
Let $(V_1^\circ,\psi,\Phi)$ and $\(\wt V_1^\circ,\wt\psi,\wt\Phi\)$ be two maps $(V_1,E_1,s_1,\iota_1)\to(V_2,E_2,s_2,\iota_2)$ over $U:=U_{12}$. A homotopy between these maps is a holomorphic map $H\colon E_1|_{U}\to T_{V_2}|_{U}$ such that the vertical maps of horizontal 2-term complexes
$$\xymatrix@R=7pt{
T_{V_1}\big|_{U} \ar[dd]_{D\psi|\_U}\ar[r]^{ds_1}& E_1\big|_{U} \ar[dd]^{\Phi|\_U}\ar@{..>}[ddl]_H && T_{V_1}\big|_{U} \ar[dd]_{D\wt\psi|\_U}\ar[r]^{ds_1}& E_1\big|_{U} \ar[dd]^{\wt\Phi|\_U} \\ && \mathrm{and} \\
T_{V_2}\big|_{U} \ar[r]^{ds_2}& E_2\big|_{U} && T_{V_2}\big|_{U} \ar[r]^{ds_2}& E_2\big|_{U}}
$$
differ by $H$, considered as a chain homotopy.
\end{defn}

Here we are again abusing notation slightly, identifying $U=U_{12}$ with a subset of each of $V_1^\circ,\,\wt V_1^\circ\subset V_1$ and $V_2$ via $\iota_1$ and $\iota_2$ so that, by \eqref{psiid}, $\psi|_{\iota_1(U)}\colon U\to U$ is just $\id_{\;U}$. We recall from \eqref{Dpsi} that $D\psi|_{U}$ and $D\wt\psi|_{U}$ are holomorphic. \medskip

Explicitly then, Definition \ref{Homotopy} is the following two identities,
\begin{eqnarray} \label{psieqn}
D\wt\psi|\_{U} &=& D\psi|\_{U}+H\circ ds_1, \\
\wt\Phi|\_{U} &=& \Phi|\_{U}+ds_2\circ H. \label{phieqn}
\end{eqnarray}

We can rephrase \eqref{psieqn} in terms of the holomorphic maps $\psi|\_{2U_{V_1}},\ \wt\psi|\_{2U_{V_1}}$ of \eqref{psi2U}, which act $2U_{V_1}\to 2U_{V_2}$. Namely \eqref{psieqn} is equivalent to
\beq{psieqn2}
\text{on restriction to }2U_{V_1},\ \wt\psi\ \mathrm{and}\ \psi\ \mathrm{differ\ by}\ H[s_1].
\eeq
Here $H[s_1]$ is the image of $[s_1]$ under
\beq{Hs1}
\Gamma\big(E_1|_{U}\otimes\cI\big)\rt{H\otimes1}\Gamma\big(T_{V_2}|\_{U}\otimes\cI\big),
\eeq
and holomorphic maps $2U_{V_1}\to2U_{V_2}$ which restrict on $U$ to $\id\colon U\to U$ form a torsor over the right hand group. This correspondence is defined by the actions on functions:
\beq{ans1}
\wt\psi^*(f)\ =\ \psi^*(f)+\Langle H[s_1],\;\psi^*df\Rangle \quad\text{for }f\in\cO_{2U_{V_2}},
\eeq
using the isomorphism \eqref{Omegiso} and the pairing between $T_{V_2}|_{U}$ and $\Omega_{V_2}|_{U}$. \medskip

\begin{rmk} \normalfont
Below we will identify maps which admit a homotopy $H$ between them. This allows us to shrink $V_1^\circ$, so our complex Kuranishi neighbourhoods really only see the  germ of $V_1$ around $U$. In fact Definition \ref{Homotopy} uses only the first infinitesimal neighbourhood $2U_{V_1}$ of $U\subset V_1$, so it would have been natural to base the theory of complex Kuranishi structures on ambient spaces which are locally scheme-theoretic doublings inside smooth varieties.\footnote{In retrospect this is more-or-less what Joyce's d-manifolds do, in a $C^\infty$ setting. If we had read \cite{Jvir} sooner we might have written a theory of \emph{holomorphic d-manifolds} instead. However our complex Kuranishi spaces have the advantage of making the intersection theory of Sections \ref{Cones}-\ref{sheaf4} much easier, once we have the global Kuranishi chart of Section \ref{global}.} Such a theory would also be easier to connect to perfect obstruction theories. Our choices were made to keep things as close as possible to Joyce's Kuranishi theory, in order to make contact with \cite{BJ} later.
\end{rmk}

For maps over $U=U_{12}$ we write $(V_1^\circ,\psi,\Phi)\sim\(\wt V_1^\circ,\wt\psi,\wt\Phi\)$ if there exists a homotopy $H$ between them. Using $H=0$ proves the binary relation $\sim$ is reflexive. Considering
$$
-H\,\colon\,E_1|\_U\To T_{V_2}|\_U
$$
to define a homotopy in the opposite direction proves symmetry. Finally, to compose the homotopies
$$
(V_1^\circ,\psi,\Phi)\overset{H}{\scalebox{2}[1]{$\ \sim\ $}}\(\wt V_1^\circ,\wt\psi,\wt\Phi\)\overset{\wt H}{\scalebox{2}[1]{$\ \sim\ $}}\(\overline V_1^\circ,\overline\psi,\overline\Phi\)
$$
we use $(H+\wt H\;)\colon E_1|_U\to T_{V_2}|_U$, proving transitivity. 
Therefore $\sim$ is an equivalence relation.

\begin{defn}\label{morf} A morphism of complex Kuranishi neighbourhoods over $U_{12}$ is a $\sim$-equivalence class of maps over $U_{12}$.
\end{defn}

We note that composition of maps also descends to equivalence classes of maps. Fix complex Kuranishi neighbourhoods over $U_1,\,U_2,\,U_3$ and maps between them over $U_{12}$ and $U_{23}$,
$$
(V_1,E_1,s_1,\iota_1)\xrightarrow[(\wt V_1^\circ,\wt\psi_{12},\wt\Phi_{12})]{(V_1^\circ,\psi_{12},\Phi_{12})}(V_2,E_2,s_2,\iota_2)\rt{(V_2^\circ,\psi_{23},\Phi_{23})}(V_3,E_3,s_3,\iota_3).
$$
Given a homotopy $H\colon E_1|_{U_{12}}\to T_{V_2}|_{U_{12}}$ between the first two over $U_{12}$, the composition 
$$
E_1|\_{U_{123}}\rt{H}T_{V_2}|\_{U_{123}}\rt{D\psi\_{23}}T_{V_3}|\_{U_{123}}
$$
is a  homotopy from the upper composition to the lower over $U_{123}$. Similarly given maps 
$$
(V_1,E_1,s_1,\iota_1)\xrightarrow{(V_1^\circ,\psi_{12},\Phi_{12})}(V_2,E_2,s_2,\iota_2)\xrightarrow[(\wt V_2^\circ,\wt\psi_{23},\wt\Phi_{23})]{(V_2^\circ,\psi_{23},\Phi_{23})}(V_3,E_3,s_3,\iota_3),
$$
and a homotopy $H\colon E_2|_{U_{23}}\to T_{V_3}|_{U_{23}}$ between the latter two, the composition
$$
E_1|\_{U_{123}}\rt{\Phi_{12}}E_2|\_{U_{123}}\rt{H}T_{V_3}|\_{U_{123}}
$$
is a homotopy from the upper composition to the lower over $U_{123}$.

We next check that morphisms in our sense naturally induce morphisms in the sense of Joyce. In particular our coordinate changes are also coordinate changes in the sense of \cite[Definition 2.6]{JoKur1}.

\begin{prop}\label{prp} If two maps $(V_1^\circ,\psi,\Phi)$, $(\wt V_1^\circ,\wt\psi,\wt\Phi)$ of complex Kuranishi neighbourhoods are equivalent under $\sim$ they define the same morphism of $\mu$-Kuranishi neighbourhoods in the sense of \,\cite[Equation 2.1]{JoKur1}.
\end{prop}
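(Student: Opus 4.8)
The plan is to upgrade the chain homotopy $H$ of Definition~\ref{Homotopy} to the data witnessing Joyce's equivalence of maps. Recall that, by \cite[Equation~2.1]{JoKur1}, two $C^\infty$ maps $(V_1^\circ,\psi,\Phi)$ and $(\wt V_1^\circ,\wt\psi,\wt\Phi)$ of $\mu$-Kuranishi neighbourhoods represent the same morphism if, after shrinking to a common open neighbourhood $\dot V\subset V_1^\circ\cap\wt V_1^\circ$ of $\iota_1(U_{12})$, there is a bundle map $\Lambda\colon E_1|_{\dot V}\to\psi^*\mathsf T_{V_2}$ with
$$
\wt\psi\=\psi+\Lambda\circ s_1+O(s_1^2) \quad\text{and}\quad \wt\Phi\=\Phi+\psi^*(ds_2)\circ\Lambda+O(s_1).
$$
Under the forgetful map to $\mu$-Kuranishi neighbourhoods our $s_1$ becomes the real section $\mathsf E_{1,\R}^*\to C^\infty_{V_1,\R}$ of \eqref{realsec}, whose image is $I_{\infty,\R}$; so an $O(s_1^k)$-function is exactly an element of $I_\infty^k$, and an $O(s_1)$-bundle map has entries in $I_\infty$. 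Thus the task splits into two parts: build $\Lambda$ out of $H$, and check the two displayed identities modulo $I_\infty^2$ and $I_\infty$ respectively. The key simplification is that, modulo these ideals, each identity depends only on the restriction to $U_{12}$ --- resp.\ to the first infinitesimal neighbourhood $2(U_{12})_{V_1}$ --- where we have precisely equations \eqref{psieqn} and \eqref{phieqn} of Definition~\ref{Homotopy}.

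First I would construct $\Lambda$. By \eqref{psiid} both $\psi$ and $\wt\psi$ restrict to $\id_{U_{12}}$, so $\psi^*\mathsf T_{V_2}\big|_{U_{12}}\cong\mathsf T_{V_2}\big|_{U_{12}}$ and the real bundle map underlying $H$ is already a section of $\hom(E_1,\psi^*\mathsf T_{V_2})$ over $U_{12}$. As $\hom(E_1,\psi^*\mathsf T_{V_2})$ is a $C^\infty$ (indeed $\cA_1$-) bundle on $V_1^\circ$, a partition of unity --- or, if one prefers to stay inside the complex Kuranishi formalism, the surjectivity \eqref{surgj} --- extends it, after shrinking $V_1^\circ$ to $\dot V:=V_1^\circ\cap\wt V_1^\circ$, to a bundle map $\Lambda\colon E_1|_{\dot V}\to\psi^*\mathsf T_{V_2}$ with $\Lambda|_{U_{12}}=H$. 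Restricting the maps to $\dot V$ does not change the morphisms they represent (use the zero homotopy), so this is harmless.

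For the first identity I would restrict to $2(U_{12})_{V_1}$, where by \eqref{psieqn2}--\eqref{ans1} the holomorphic maps $\psi,\wt\psi\colon 2(U_{12})_{V_1}\to 2(U_{12})_{V_2}$ agree on $U_{12}$ and differ by $H[s_1]$: for a holomorphic coordinate function $f$ on $V_2$ we have $\wt\psi^*f-\psi^*f\equiv\langle H[s_1],\psi^*df\rangle\pmod{I_1^2}$, hence a fortiori $\pmod{I_\infty^2}$. An arbitrary $g\in C^\infty_{V_2}$ can, near $2(U_{12})_{V_2}$, be written as $G(z_1,\dots,z_n,\bar z_1,\dots,\bar z_n)$ with the $z_i$ holomorphic coordinates (lifted to $\cA_2$-functions via Proposition~\ref{sane}, which realises $V_2$ as an open subset of $\C^n$) and $G$ smooth; applying Hadamard's Lemma to $\wt\psi^*g-\psi^*g$ and inserting the congruence above for each $z_i$ and $\bar z_i$ yields $\wt\psi^*g-\psi^*g\equiv\langle\Lambda(s_1),\psi^*dg\rangle\pmod{I_\infty^2}$, where $\Lambda|_{U_{12}}=H$, the section $s_1$ restricts to the relevant image under \eqref{restrE}, and the real vector $\Lambda(s_1)$ is matched with $H[s_1]$ via the real-part identification of Definition~\ref{holotan}. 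This is Joyce's first equation. I expect this bootstrapping step --- passing from the holomorphic torsor relation \eqref{ans1}, which only sees holomorphic functions on $2U$ and works modulo $I_1^2$, to a statement about all of $C^\infty_{V_2}$ modulo $I_\infty^2$, together with the precise identification of $O(s_1^k)$ with $I_\infty^k$ via \eqref{realsec} --- to be the only genuinely delicate point; everything else is formal translation.

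The second identity drops out of \eqref{phieqn}. Since $\psi(\iota_1(U_{12}))=\iota_2(U_{12})\subset s_2^{-1}(0)$, the pullback $\psi^*(ds_2)$ restricts on $U_{12}$ to the holomorphic linearisation $ds_2\colon T_{V_2}|_{U_{12}}\to E_2|_{U_{12}}$; so, using $\Lambda|_{U_{12}}=H$, equation \eqref{phieqn} says exactly that the bundle map $\wt\Phi-\Phi-\psi^*(ds_2)\circ\Lambda$ vanishes on $U_{12}$, hence has entries in the ideal $I_\infty$ of $U_{12}\subset V_1$ --- i.e.\ it is $O(s_1)$. This is Joyce's second equation, so $(V_1^\circ,\psi,\Phi)$ and $(\wt V_1^\circ,\wt\psi,\wt\Phi)$ represent the same morphism of $\mu$-Kuranishi neighbourhoods.
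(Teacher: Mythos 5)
Your proposal is correct and follows the paper's proof essentially verbatim: lift the restriction of $H$ to a global bundle map $\Lambda$ via fineness, deduce the $\wt\Phi$-identity directly from \eqref{phieqn} since vanishing on $U_{12}$ places the discrepancy in $I_{\infty,\R}=O(s_1)$, and deduce the $\wt\psi$-identity by bootstrapping \eqref{ans1} from $\cA_2$-functions to all of $C^\infty_{V_2}$ via Hadamard's Lemma. The only details you elide that the paper spells out are the factor of $2$ (the paper lifts $2H$, not $H$, to compensate for the $\tfrac12$ in the real-versus-complex contraction diagram \eqref{halfcommute}) and the need to pick a connection $\nabla$ on $\mathsf E_2$ so that $\psi^*(\nabla s_2)$ is a well-defined bundle map off $U_{12}$, in place of your $\psi^*(ds_2)$.
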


To prove this we need some notation for real vector bundles. (These are unavoidable since $\mathsf T_{\!V}$ is not in general a complex bundle.)

When $U\into V$ is an $\cA_2$-embedding we recall the holomorphic bundle $T_V|_U$ of Definition \ref{holotan}. As noted there, when we forget about its complex (and holomorphic) structure and consider it as a real $C^\infty$ bundle (of twice the rank) it is naturally isomorphic to the restriction $\mathsf T_{\!V}|_U$ of the real tangent bundle of $V$. In terms of sheaves of sections, this means we have an isomorphism of $C^\infty_{U,\R}$-modules
$$
T_V|_U\otimes\_{\cO_U}C^\infty_{U}\ \cong\ \mathsf T_{\!V}\otimes\_{C^\infty_{V,\R}}C^\infty_{U,\R}.
$$
By ignoring their complex structures we can also think of the $\cA_1$-bundles $E_1,E_2$ as real $C^\infty$ bundles (of twice the rank). When we do so we denote them by $\mathsf E_1,\mathsf E_2$ respectively. We use the same notation for their sheaves of sections $E_i\otimes_{\cA_1}C^\infty_{V_i}$, which are locally free modules over $C^\infty_{V_i,\R}\subset C^\infty_{V_i}$.

For $V_i$ complex (vector spaces or bundles) we use the natural embedding
$$
T_2\colon V_1\otimes_\C V_2\,\Into V_1\otimes_\R V_2,\ \quad v_1\otimes_\C v_2\,\Mapsto\,\tfrac12\big[ v_1\otimes_\R v_2-(iv_1)\otimes_\R(iv_2)\big],
$$
with left inverse $v_1\otimes_\C v_2\longmapsfrom v_1\otimes_\R v_2$. This expresses the space $V_1\otimes_\C V_2$ of complex linear tensors (those on which the $i\otimes_\R1$ and $1\otimes_\R i$ actions agree) inside the space of all real tensors $V_1\otimes_\R V_2$ as the $-1$ eigenspace of $i\otimes i$.

Similarly we will need the analogue $T_3\colon V_1\otimes_\C V_2\otimes_\C V_3\Into V_1\otimes_\R V_2\otimes_\R V_3$ for triple tensor products; this maps $v_1\otimes\_\C v_2\otimes\_\C v_3$ to
$$
\tfrac14\big[ v_1\otimes\_\R v_2\otimes\_\R v_3-(iv_1)\otimes\_\R(iv_2)\otimes\_\R v_3-(iv_1)\otimes\_\R v_2\otimes\_\R(iv_3)-v_1\otimes\_\R(iv_2)\otimes\_\R(iv_3)\big]
$$
with left inverse $v_1\otimes\_\C v_2\otimes\_\C v_3\longmapsfrom v_1\otimes\_\R v_2\otimes\_\R v_3$.

Suppose $V$ is a complex vector space with dual $V^*$. For clarity we temporarily denote $V$ by $V_\R$ when we forget its complex structure and consider only the underlying real vector space. Then its real dual $(V_\R)^*$ can be identified with $V^*$ (as a real vector space) via the real linear isomorphisms
$$
\xymatrix{V^*\ \ar[r]<.4ex>^(.4){\Re}& \ (V_\R)^*, \ar[l]<.4ex>} \qquad f\,\Mapsto\,\Re f, \qquad g-ig(i\ \cdot\ )\,\longmapsfrom\,g.
$$
For $V,W$ complex we combine this map $\Re$ with $T_3$ across the top of the following diagram,
\beq{halfcommute}
\xymatrix@C=70pt{V\otimes\_\C V^*\otimes\_\C W \ar[r]^(.45){(1\otimes\Re\otimes1)\circ T_3}\ar[d]_(.54){\ev\_\C\!}^{\!\otimes1} & V_\R\otimes\_\R (V_\R)^*\otimes\_\R W_\R \ar[d]_(.54){\ev\_\R\!}^{\!\otimes1} \\
W \ar[r]^{\frac12\!\id}& W_\R.\!\!}
\eeq
Then a computation shows that it commutes if we make the lower map $\frac12\!\id$.


\begin{proof}[Proof of Proposition \ref{prp}] 
We work over $U=U_{12}$. The homotopy $H$ of Definition \ref{Homotopy} is a holomorphic section of $E_1^*|\_U\otimes\_{\cO_U}T_{V_2}|\_U$. Forgetting the complex (and holomorphic) structure we may think of $2H$ as a smooth section of $\mathsf{E}_1^*|\_U\otimes\mathsf{T}\_{\!V_2}|\_U$.\footnote{Throughout this proof, unless $\otimes$ and Hom are otherwise decorated, they will denote $\otimes\_{C^\infty_\R}$ and $\Hom\_{C^\infty_\R}$ respectively. The 2 in $2H$ is to cancel the $\tfrac12$ in \eqref{halfcommute}.} It lifts to a section
$$
\Lambda\ \in\ \Gamma\big(\mathsf{E}_1^*\otimes\psi^*\;\mathsf{T}_{\!V_2}\big)\ \cong\ \Hom\!\(\mathsf{E}_1,\psi^*\;\mathsf{T}_{\!V_2}\),
$$
because $\mathsf{E}_1^*\otimes\psi^*\mathsf{T}_{V_2}\otimes I_{\infty,\R}$ is a $C^\infty_{V_1,\;\R}$-module and hence a fine sheaf by the existence of partitions of unity.
We will prove this $\Lambda$ satisfies \eqref{fye2} and \eqref{pseye2} below.  Together these give \cite[Equation 2.1]{JoKur1}, thus proving that $\Lambda$ defines an equivalence $(V_1^\circ,\psi,\Phi)\sim(\wt V_1^\circ,\wt\psi,\wt\Phi)$ in Joyce's sense. Therefore they give the same morphism of $\mu$-Kuranishi neighbourhoods. \medskip

Firstly, pick a connection $\nabla$ on $\mathsf E_2$ and set
$$
\epsilon\ :=\ \wt\Phi-\Phi-\psi^*\nabla(s_2)\circ\Lambda\ \in\ \Hom(\mathsf E_1,\psi^*\mathsf E_2).
$$
Here we pair $\Lambda\in\Gamma(\mathsf{E}_1^*\otimes\psi^*\mathsf{T}_{V_2})$ and $\psi^*\nabla(s_2)\in\Gamma(\psi^*(\mathsf T^*_{V_2}\otimes\mathsf{E}_2))$ by contracting across $\psi^*\mathsf{T}_{V_2}$.
Now $\psi|_U=\id_U$, so \eqref{phieqn} and \eqref{halfcommute} imply that $\epsilon$ vanishes on $U$,
$$
\epsilon\ \in\ 
\Gamma\(\mathsf{E}_1^*\otimes\psi^*\mathsf{E}_{2}\otimes I_{\infty,\R}\).
$$
We claim this means $\epsilon=O(s_1)$. The morphism $\mathsf{E}_1^*\rt{s_1}I_{\infty,\R}$ is onto by \eqref{realsec}, so tensoring with $\mathsf{E}_1^*\otimes\psi^*\mathsf{E}_2$ gives the surjection
$$
\mathsf{E_1^*\otimes\psi^*E_2\otimes E_1^*}\rt{1\otimes1\otimes s_1\!}\hspace{-3mm}\to\mathsf{E_1^*\otimes\psi^* E_2}\otimes I_{\infty,\R}.
$$
The kernel is a $C^\infty_{\R}$-module and so fine. Thus $\epsilon$ is in the image of a section of the left hand side, which, by \cite[Definition 2.1(i)]{JoKur1}, means it is $O(s_1)$:
\beq{fye2}
\wt\Phi\=\Phi+\psi^*\(\nabla s_2\)\circ\Lambda+O(s_1).
\eeq
\smallskip

For the second identity we note that by \eqref{psieqn2} and \eqref{halfcommute},
$$
\wt\psi\=\psi+\Lambda[s_1]\quad\text{on }2U_{V_1}.
$$
\smallskip
Here we pair $\Lambda\in\Gamma(\mathsf{E}_1^*\otimes\psi^*\mathsf{T}_{V_2})$ and $s_1\in\Gamma(\mathsf{E}_1\otimes I_{\infty,\R})$ by \vspace{-1mm} contracting across $\mathsf{E}_1$ to give
$$
\Lambda[s_1]\ \in\ \Gamma\big(\psi^*\mathsf{T}_{V_2}\otimes I_{\infty,\R}\big)
$$
whose restriction to $2U$ is --- by \eqref{halfcommute} --- the contraction
$$
H[s_1]\ \in\ \Gamma\(T_{V_2}|_U\otimes_{\cO_U}\cI\)\ \subset\ \Gamma\(\psi^*T_{V_2}|_{2U}\)\ \subset\ \Gamma\(\psi^*\mathsf{T}_{V_2}|_{2U}\)
$$ 
of \eqref{Hs1}. By \eqref{ans1} we know
$$
f\circ\wt\psi-f\circ\psi-\Langle\psi^*(df),\,H[s_1]\Rangle\=0 \ \text{ on }\ 2U_{V_1}
$$
for any $f\in \cO_{2U_{V_2}}$, so
\beq{DIFFREF}
f\circ\wt\psi-f\circ\psi-\Langle\psi^*(df),\,\Lambda[s_1]\Rangle
\ \in\ I_1^2+\,\overline{\!I\;}^2_{\!1}\ \subset\ (I_\infty)^2
\eeq
for any $f\in\cA_2(V_2)$. By taking complex conjugates it also holds for $\overline f\in\cA_2(V_2)$.
We claim this implies \eqref{DIFFREF} holds for any $f\in C^\infty_{V_2}$.

By working locally and using Proposition \ref{sane} we may assume $V_2$ is an open neighbourhood of $0\in\C^n$ with holomorphic coordinates $x_1,\dots,x_n$ which lie in $\cA_2$. Given $f\in C^\infty_{V_2}$, by Hadamard's Lemma we can write
\begin{align}\nonumber
f({\bf x}+{\bf y})-f({\bf x})\=\,&\sum_{i=1}^n\left(\frac{\partial f}{\partial x_i}({\bf x})y_i+\frac{\partial f}{\partial\overline x_i}({\bf x})\overline y_i\right) \\ \label{Hadam2} &+\sum_{i,j=1}^n\left(a_{ij}({\bf x},{\bf y})y_iy_j+b_{ij}({\bf x},{\bf y})y_i\overline y_j+c_{ij}({\bf x},{\bf y})\overline y_i\overline y_j\right)
\end{align}
for ${\bf x},\,{\bf y}$ in some smaller neighbourhood $\mathring V_2$ of $0\in V_2$ and some $a_{ij},b_{ij},c_{ij}\in C^\infty_{\mathring V_2\times\mathring V_2}$.
We apply this to ${\bf x}=\psi$ and ${\bf y}=\widetilde\psi-\psi$ so the left hand side of \eqref{Hadam2} is $f\circ\widetilde\psi-f\circ\psi$. The first term on the right hand side becomes
\beqa
&& \!\sum_{i=1}^n\left[\frac{\partial f}{\partial x_i}({\bf x})\(x_i\circ\widetilde\psi-x_i\circ\psi\)+\frac{\partial f}{\partial\overline x_i}({\bf x})\(\overline x_i\circ\widetilde\psi-\overline x_i\circ\psi\)\right] \\
\!&\stackrel{\eqref{DIFFREF}}=&\!
\sum_{i=1}^n\left[\frac{\partial f}{\partial x_i}({\bf x})\Langle\psi^*(dx_i),\,\Lambda[s_1]\Rangle+\frac{\partial f}{\partial\overline x_i}({\bf x})\Langle\psi^*(d\overline x_i),\,\Lambda[s_1]\Rangle\right]\hspace{-3mm}\mod(I_\infty)^2 \\
&=& \Langle\psi^*(df),\,\Lambda[s_1]\Rangle\hspace{-1mm}\mod(I_\infty)^2.
\eeqa
In the second line we applied \eqref{DIFFREF} to $x_i\in\cA_2(V_2)$ and $\overline x_i$, giving a formula that also shows the second sum in \eqref{Hadam2} lies in $(I_\infty)^2$ because $s_1$ lies in $I_\infty$.

Thus \eqref{DIFFREF} holds for any $f\in C^\infty_{V_2}$. In particular, for any $f\in C^\infty_{V_2,\R}$,
$$
\varepsilon\ :=\ f\circ\wt\psi-f\circ\psi-\Langle\psi^*(df),\,\Lambda[s_1]\Rangle\ \in\ (I_{\infty,\R})^2.
$$
But $\mathsf{E}_1^*\rt{s_1}I_{\infty,\R}$ is onto, so tensoring with $\mathsf E_1^*$ shows that
\beq{os21}
\mathsf{E_1^*\otimes E_1^*}\rt{s_1\otimes s_1}(I_{\infty,\R})^2
\eeq
is also onto. Thus $\varepsilon$ lifts to a section $\sigma$ of $\mathsf{E_1^*\otimes E_1^*}$ such that $\varepsilon=\langle\sigma,s_1\otimes s_1\rangle$, proving that
\beq{os22}
f\circ\wt\psi\=f\circ\psi+\Langle\psi^*(df),\,\Lambda[s_1]\Rangle+O(s_1^2).
\eeq
In the language of \cite[Definition 2.1(v)]{JoKur1} this says
\beq{pseye2}
\wt\psi\=\psi+\Lambda[s_1]+O(s_1^2).\qedhere
\eeq
\end{proof}


\section{Perfect obstruction theories} In this Section we will show that locally --- on Stein open sets --- complex Kuranishi neighbourhoods are equivalent to perfect obstruction theories.

Let $M$ be a scheme with  truncated cotangent complex $\LL_M$. Given any embedding $M\into V$ into a smooth variety with ideal $I$, the complex $I/I^2\to\Omega_V|_M$ is a representative of $\LL_M$.

Recall \cite{BF} a perfect obstruction theory on $M$ is a ``virtual cotangent bundle" $E\udot\in D^b(\mathrm{Coh}\,M)$ of perfect amplitude\footnote{For us, $M$ will always have enough locally free sheaves, so that we can resolve $E\udot$ and replace it by a quasi-isomorphic two term complex of vector bundles $E\udot=\{E^{-1}\to E^0\}$.} $[-1,0]$ and a morphism
\beq{pot}
E\udot\To\LL_M
\eeq
in $D^b(\mathrm{Coh}\,M)$ which induces an isomorphism $h^0(E\udot)\rt\sim h^0(\LL_M)=\Omega_M$ and a surjection $h^{-1}(E\udot)\onto h^{-1}(\LL_M)$.

By a \emph{morphism} between perfect obstruction theories $F\udot\to\LL_M$ and $E\udot\to\LL_M$ we mean a commutative diagram
\beq{mapot}\xymatrix@R=15pt@C=5pt{
F\udot \ar[rr]\ar[dr]&& E\udot\ar[dl] \\ & \LL_M}
\eeq
in $D^b(\mathrm{Coh}\,M)$. We call it an isomorphism if the horizontal arrow is a quasi-isomorphism.

\begin{thm}\label{affKurpot} A global complex Kuranishi chart on $M$ endows it with a perfect obstruction theory.

When $M$ is Stein this sets up one-to-one correspondences between\footnote{Holomorphic Kuranishi charts are defined the same way as $\mu$- and complex Kuranishi charts by using holomorphic data everywhere. We do not give the details as we do not use them in this paper except to state a stronger result in this Theorem.}
\begin{itemize}
\item global complex Kuranishi charts, global holomorphic Kuranishi charts, and perfect obstruction theories,
\item isomorphisms of global complex Kuranishi charts, holomorphic isomorphisms of global holomorphic Kuranishi charts, and isomorphisms of perfect obstruction theories.
\end{itemize}
\end{thm}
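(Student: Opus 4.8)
The plan is to prove the two assertions in turn: that a global complex Kuranishi chart carries a perfect obstruction theory, and then, when $M$ is Stein, that this assignment is an equivalence of groupoids onto perfect obstruction theories, with the holomorphic charts forming an equivalent subgroupoid. For the first assertion I would start from a global complex Kuranishi chart $(V,E,s,\iota)$, so $U=M$. By \eqref{restr} the restriction $E|_M$ is a holomorphic bundle and by Definition~\ref{holotan} so is $\Omega_V|_M$; writing $\cI:=I_2/I_1^2$, the smooth thickening $2U_V$ is smooth in the sense of Proposition~\ref{conefree}, so $\LL_M$ is represented by $\{\cI\xrightarrow{d}\Omega_V|_M\}$ in degrees $-1,0$. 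The surjection \eqref{surjA}, restricted to $M$ via \eqref{restrE}--\eqref{surgj} (using $I_1I_2\subseteq I_1^2$), gives a \emph{surjection} $[s]\colon E^*|_M\twoheadrightarrow\cI$; composing with $d$ recovers the holomorphic map $(ds|_M)^*\colon E^*|_M\to\Omega_V|_M$. I would then take the obstruction theory to be $E\udot:=\{E^*|_M\xrightarrow{(ds|_M)^*}\Omega_V|_M\}$ together with the chain map $([s],\mathrm{id})\colon E\udot\to\LL_M$; that this is a perfect obstruction theory is a one-line diagram chase from the surjectivity of $[s]$, which forces $(ds|_M)^*$ and $d$ to have equal images and kernels mapping onto one another, giving an isomorphism on $h^0$ and a surjection on $h^{-1}$.

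For the converse, with $M$ Stein, I would follow the Behrend--Fantechi recipe, exploiting that Stein spaces behave like affine schemes. After adding an acyclic complex (any bundle on a Stein space is a summand of a trivial one) one may assume $E^0$ is trivial; choosing a closed embedding $M\hookrightarrow\C^N=:V_0$ with $\Omega_{V_0}|_M\cong E^0$, and performing the standard elementary transformations --- unobstructed because $H^{\ge1}$ of coherent sheaves vanishes on $M$ --- one obtains a representative $E\udot=\{E^{-1}\xrightarrow{\phi}\cO_M^N=\Omega_{V_0}|_M\}$ for which \eqref{pot} has the shape $(q,\mathrm{id})$ with $q\colon E^{-1}\twoheadrightarrow\cI$ surjective and $\phi=d\circ q$ ($q$ is surjective because \eqref{pot} is onto on $h^{-1}$). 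Now take $\iota$ to be this holomorphic embedding, viewed as an $\cA_2$-embedding and, if desired, shrunk to a Stein neighbourhood $V$; extend the holomorphic bundle $F:=(E^{-1})^*$ to an $\cA_1$-bundle $E$ on $V$ by Theorem~\ref{extendE}; and lift $q\colon F^*=E^{-1}\twoheadrightarrow\cI$ to a surjection $E^*\twoheadrightarrow I_2$ by Theorem~\ref{extsec}, i.e.\ to a section $s$ as in Definition~\ref{K}. By construction the obstruction theory attached to $(V,E,s,\iota)$ by the first part is the one we started with, so the round trip ``obstruction theory $\to$ chart $\to$ obstruction theory'' is the identity. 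Running the same argument with holomorphic, rather than $\cA_1$-, bundle extensions on Stein neighbourhoods (elementary since $H^{\ge1}$ vanishes) produces a holomorphic chart realising the same obstruction theory.

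It remains to match morphisms and conclude that the round trip ``chart $\to$ obstruction theory $\to$ chart'' is the identity on isomorphism classes. A map of charts $(V_1^\circ,\psi,\Phi)$ induces the chain map $\big((D\psi|_M)^*,\,\Phi|_M^*\big)\colon E\udot_2\to E\udot_1$; it is compatible with the maps to $\LL_M$, and by \eqref{psieqn}--\eqref{phieqn} a homotopy $H$ (Definition~\ref{Homotopy}) dualises to a chain homotopy, so $\sim$-equivalent maps induce the \emph{same} morphism of obstruction theories and isomorphisms go to isomorphisms --- this gives faithfulness. For fullness one represents a given morphism of obstruction theories by an honest chain map between the two-term representatives coming from the charts (possible since $M$ is Stein), dualises it, and then constructs $\psi$ --- an $\cA_2$-map restricting to $\mathrm{id}_M$ with the prescribed holomorphic derivative along $M$, cf.\ \eqref{psiid}--\eqref{Dpsi} --- and $\Phi$ realising this first-order data, checking $\Phi(s_1)-\psi^*s_2=O(s_1^2)$ because $s_1\otimes s_1$ generates the relevant square of ideals (exactly the mechanism of Proposition~\ref{prp} and \eqref{realsec}). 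Faithfulness together with the identity in the previous paragraph then forces ``chart $\to$ obstruction theory $\to$ chart'' to be the identity on isomorphism classes; the holomorphic column is pinned down because every complex chart on Stein $M$ is isomorphic, through the equivalence, to the holomorphic chart realising its obstruction theory.

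The hard part will be the fullness step: manufacturing, from a morphism of perfect obstruction theories, an actual morphism of complex Kuranishi charts --- i.e.\ a partially holomorphic map $\psi$ of ambient manifolds with a prescribed $1$-jet along $M$ and an accompanying bundle map $\Phi$ for which the quadratic error $\Phi(s_1)-\psi^*s_2$ is genuinely $O(s_1^2)$. This is where the flexibility of the $\cA$-functions of Section~\ref{Afn} and the vanishing of $H^{\ge1}$ on Stein spaces are both indispensable. The Behrend--Fantechi normalisation of the obstruction-theory representative in the essential-surjectivity step and the reconciliation of the holomorphic and complex columns are routine but need care; neither should be a genuine difficulty.
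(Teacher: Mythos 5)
Your proposal is correct and follows essentially the same route as the paper: the K\,$\Rightarrow$\,P direction via the commuting square $\{E^*|_M\to\Omega_V|_M\}\to\{\cI\to\Omega_V|_M\}$ with $[s]$ surjective, essential surjectivity by normalising the POT representative over a Stein embedding and extending via Theorems~\ref{extendE}/\ref{extsec} (or Propositions~\ref{steinbdl}/\ref{steinext} in the holomorphic case), and the full--faithful step via the homotopy/$O(s^2)$ mechanism. The step you correctly flag as hard --- producing $\psi$ with the prescribed $1$-jet --- is resolved in the paper by passing through the Kodaira--Spencer comparison diagram \eqref{KScom}, which yields the map of thickenings $2M_V\to 2M_W$ that is then lifted by Proposition~\ref{steinext}; this is the one ingredient your sketch leaves implicit.
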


\begin{proof}
We begin with the easy part: showing that global complex Kuranishi charts induce perfect obstruction theories, and that morphisms between global complex Kuranishi charts induce morphisms between perfect obstruction theories.
This part works for any scheme $M$; we do not need it to be Stein.
\smallskip

\noindent\textbf{K$\so$P.}
Fix a global complex Kuranishi chart $(V,E,s,\iota)$ for $M$. (Throughout we will suppress $\iota$, just writing $s^{-1}(0)=M\subset V$.) Since $(E,s)$ are $\cA_1$ we know $E|_M$ \eqref{restr} and $s|_M\in\Gamma(E^*|_M\otimes\cI)$ \eqref{restrE} are holomorphic. This defines the perfect obstruction theory
$$
\xymatrix@R=15pt{
E^*|_M \ar[r]^-{ds}\ar@{->>}[d]_s& \Omega_V|_M \ar@{=}[d] \\
\cI \ar[r]^-d& \Omega_V|_M.\!}
$$
Now fix a morphism of global complex Kuranishi charts for $M$. By Definition \ref{morf} this is an equivalence class of the maps of Definition \ref{KNdef}. We pick one $(\psi,\Phi)\colon(V,E,s)\to(W,F,t)$, where $\psi$ is an $\cA_2$-map from an open neighbourhood $V^\circ\subset V$ of $M$ to $W$, and $\Phi$ is an $\cA_1$ map $E\to\psi^*F$ such that $\Phi(s)=\psi^*(t)+O(s^2)$. Then we get maps of complexes
\beq{mappots}
\xymatrix@R=5pt{
E^*|_M \ar[r]^-{ds}\ar@{->>}[dd]_s& \Omega_V|_M \ar@{=}[dd]&&& 
\psi^*F^*|_M \ar[r]^-{\psi^*dt}\ar@{->>}[dd]_{\psi^*t}& \psi^*\Omega_W|_M \ar@{=}[dd] \\
&&&\ar@{=>}[l]_{\ \Phi^*}^{\ \psi^*} \\
\cI \ar[r]^-d& \Omega_V|_M &&& \psi^*(\cJ) \ar[r]^-{\psi^*d}& \psi^*\Omega_W|_M\!}
\eeq
by applying $\Phi^*$ between the top left corners of the diagrams and $\psi^*$ between the other corners. Here $\cJ$ is the ideal of $M\subset 2M_W$ generated by $t$. The key point is that $\Phi(s)=\psi^*(t)+O(s^2)$ implies
$$
[\Phi(s)]\=[\psi^*(t)] \quad\text{in}\quad \psi^*F^*|_M\otimes I_2/I_1^2\=\psi^*F^*|_M\otimes\cI,
$$
so \eqref{mappots} commutes.
Since $\psi|_M=\id\colon M\to M$ and the cotangent complex is functorial, the map on the bottom row is $\id\colon\LL_M\to\LL_M$ in $D^b(\mathrm{Coh}\,M)$. Thus \eqref{mappots} is a morphism of perfect obstruction theories in the sense of \eqref{mapot}.

Given another map of complex Kuranishi charts in the same equivalence class, by Definition \ref{Homotopy} the maps of complexes on the top row of \eqref{mappots} differ by a chain homotopy $H$. Thus they induce the same map in $D^b(\mathrm{Coh}\,M)$. Given an isomorphism of complex Kuranishi charts we can apply this construction to the inverse morphism to get a map of perfect obstruction theories which is inverse to the one we already produced (since the above construction respects composition). Thus isomorphisms induce isomorphisms.
\smallskip


\noindent\textbf{P$\so$K.} Conversely, suppose $M$ is Stein and carries a perfect obstruction theory \eqref{pot}; we will construct a global \emph{holomorphic} Kuranishi chart that induces it. Fix an embedding $M\into V$ into a smooth variety with ideal $I$. Shrinking $V$ if necessary we may assume that $V$ is also Stein. Since $M$ is Stein, $E\udot$ admits a global free resolution $\{E^{-1}\to E^0\}$ and the morphism \eqref{pot} is represented by a genuine map of complexes
$$\xymatrix@R=15pt{
E^{-1} \ar[r]\ar[d]& E^0 \ar[d] \\
I/I^2 \ar[r]^-d& \Omega_V|_M.}
$$
By adding the acyclic complex $\cO_M^{\oplus N}\rt\sim\cO_M^{\oplus N}$ to $E\udot$ we may assume that the vertical arrows are surjective. Thus $K:=\ker\big(E^0\to\Omega_V|_M\big)$ is locally free. Since $h^0(E\udot)\to h^0(\LL_M)$ is an isomorphism and $M$ is Stein the bundle injection $K\into E^0$ lifts to a bundle injection $K\into E^{-1}$; replacing both $E^i$ by their (locally free) quotients by $K$ we have now represented \eqref{pot} by
\beq{bed}
\xymatrix@R=15pt{
E^{-1} \ar[r]\ar@{->>}[d]& \Omega_V|_M \ar@{=}[d] \\
I/I^2 \ar[r]^-d& \Omega_V|_M.\!}
\eeq
The surjectivity of the left hand vertical arrow follows from the isomorphism $h^0(E\udot)\rt\sim h^0(\LL_M)$ and the surjectivity of $h^{-1}(E\udot)\onto h^{-1}(\LL_M)$.

By shrinking $V$ if necessary we may assume it admits a holomorphic bundle $E$ such that $E|_M\cong(E^{-1})^*$; see Proposition \ref{steinbdl} in Appendix \ref{appA}.
Since $I$ surjects onto (the pushforward from $M$ to $V$ of) $I/I^2$ we may choose a lift of $E^{-1}\onto I/I^2$ to a holomorphic map
$$
E^*\rt{s}\hspace{-3mm}\to I
$$
which is a surjection after possibly shrinking $V\!$, by the Nakayama Lemma. Therefore $s$ is a section of $E$ cutting out $M\subset V$, giving us a global holomorphic Kuranishi chart 
\beq{gchart}
(V,E,s,\iota),
\eeq
where $\iota$ is the inclusion $M\subset V$. And its associated per\-fect obstruction theory is \eqref{bed} by construction, and this is the one \eqref{pot} we started with.\medskip

Next suppose we have two global complex Kuranishi charts $(V,E,s)$ and $(W,F,t)$, and an isomorphism of the induced perfect obstruction theories on $M$: a commutative diagram in $D^b(\mathrm{Coh}\,M)$
\beq{mappp}
\xymatrix@R=15pt{ E\udot \ar[d]& F\udot \ar[d]\ar[l]_\sim \\
\LL_M & \LL_M,\!\! \ar[l]_-{\id}}
\eeq
where $E\udot=\{E^*|_M\rt{ds}\Omega_V|_M\}$ and $F\udot=\{F^*|_M\rt{dt}\Omega_W|_M\}$.
We need to show that $(V,E,s),\ (W,F,t)$ are canonically isomorphic as complex Kuranishi charts, and furthermore that the isomorphism can be taken to be holomorphic if $(V,E,s)$ and $(W,F,t)$ are holomorphic.

Since $M$ is Stein and $E\udot,\,F\udot$ are complexes of vector bundles, we can find holomorphic maps of complexes representing three of the arrows in \eqref{mappp}:
\beq{2dgs}
\xymatrix@C=2pt{
& E\udot \ar[d]+<0pt,12pt>&&&& F\udot \ar[llll]\ar[d]+<0pt,12pt> \\
\big\{\cI \ar[rr]_-d&& \Omega_V|_M\big\} &\ \ &\ar@{:>}[ll] \big\{\cJ \ar[rr]_-d&& \Omega_W|_M\big\},\!}
\eeq
where $\cJ$ is the ideal sheaf of $M\subset 2M_W$. The dashed arrow is currently just a map in the derived category. Define the sheaf
$$
K\ :=\ \ker\big(F^*|_M\Onto\cJ\big)\ =\ \ker\big(h^{-1}(F\udot)\Onto h^{-1}(\LL_M)\big).
$$
Consider the diagonal map of complexes in \eqref{2dgs} defined by the composition $F\udot\to E\udot\to\{\cI\to\Omega_V|_M\}$. It kills $K$ so factors through the quotient $\{(F^*|_M)/K\to\Omega_W|_M\}$ of $F\udot$ by $K$. But this is $\{\cJ\to\Omega_W|_M\}$.

Therefore we have filled in the horizontal dotted arrow in \eqref{2dgs} with a genuine map of complexes representing $\id\colon\LL_M\to\LL_M$. Mapping each of the two complexes on the bottom row of \eqref{2dgs} to its first term in degree $-1$ thus gives a commutative diagram
\beq{KScom}\xymatrix@R=15pt{
\LL_M \ar[d] & \LL_M \ar[l]_\id\ar[d] \\ \cI[1] & \cJ[1].\! \ar[l]_\alpha}
\eeq
As in Section \ref{2smooth} the vertical arrows are the Kodaira-Spencer extension classes in
$$
\Ext^1(\LL_M,\cI) \quad\text{and}\quad \Ext^1(\LL_M,\cJ)
$$
of the thickenings $2M_V$ and $2M_W$ of $M$ by the square zero ideals $\cI$ and $\cJ$ respectively. Since $\alpha$ \eqref{KScom} maps one to the other, it induces a map 
\beq{2map}
2M_V\To2M_W
\eeq
between the thickenings (restricting to the identity on $M\subset2M_V$) by the functoriality of their description \eqref{sqext}. It fits into the commutative diagram
$$
\xymatrix@R=15pt{
0 \ar[r]& \cJ \ar[r]\ar[d]_\alpha& \cO_{2M_W} \ar[r]\ar[d]& \cO_M \ar@{=}[d]\ar[r] & 0 \\
0 \ar[r]& \cI \ar[r]& \cO_{2M_V} \ar[r]& \cO_M \ar[r] & 0.\!}
$$
Since $M$ is Stein we may assume that $V$ and $W$ are too. Let $I,\,J$ denote the ideals of $M$ inside them, so $\cI=I/I^2$ and $\cJ=J/J^2$.
Since $W$ is smooth we can lift \eqref{2map} to a holomorphic map
\beq{psi9}
\psi\ \colon\,V\To W
\eeq
after possibly shrinking $V$; see Proposition \ref{steinext} in Appendix \ref{appA}. By Proposition \ref{steinbdl} we may further assume the holomorphic bundles and map $E\to F$ in (the dual of) \eqref{2dgs} are the restrictions to $M$ of holomorphic bundles (which we denote by the same letters $E,F$) and a holomorphic map
\beq{phi9}
\Phi\ \colon\,E\To\psi^*F \ \text{ over }\,V.
\eeq
So we can now compare $\Phi(s)$ and $\psi^*(t)$. Restricting $s,\,t$ to $2M_V,\,2M_W$ they become sections $[s],\,[t]$ of $E^*|_M\otimes I/I^2$ and $F^*|_M\otimes J/J^2$ respectively. The commutativity of \eqref{2dgs} says that $\psi^*[t]=\Phi[s]$, so
$$
\psi^*(t)\ =\ \Phi(s)\mod \psi^*F\otimes I^2.
$$
But the surjectivity of $E^*\onto I$ implies the surjectivity of
$$
\psi^*F\otimes E^*\otimes E^*\rt{1\otimes s\otimes s}\hspace{-3mm}\to\psi^*F\otimes I^2\To 0,
$$
so there exists a section $\pi\in\Gamma(\psi^*F\otimes E^*\otimes E^*)=\Hom(E\otimes E,\psi^*F)$ such that
$$
\psi^*(t)\ =\ \Phi(s)+\pi(s\otimes s).
$$
By Definition \ref{defO} this says $\psi^*(t)=\Phi(s)+O(s^2)$, so we get a morphism $\Psi\colon(V,E,s)\to(W,F,t)$ of complex Kuranishi neighbourhoods in the sense of Definition \ref{KNdef}. This morphism induces the map of perfect obstruction theories \eqref{mappots}, which by construction is the map \eqref{mappp} we started with.

We claim the equivalence class of this morphism $\Psi$ is unique. Fix another morphism $\Psi'\colon(V,E,s)\to(W,F,t)$ inducing the same map $F\udot\to E\udot$ in $D^b(\mathrm{Coh}\,M)$. Since $M$ is Stein, $\Psi$ and $\Psi'$ induce the same \emph{homotopy class} of maps of chain complexes $F\udot\to E\udot$ across the top of \eqref{2dgs}.
Thus the morphisms of chain complexes
$$
(E\udot)^\vee\To\big(F\udot\big)^\vee
$$
induced by $\Psi$ and $\Psi'$ differ by a homotopy $H$, so $\Psi\sim\Psi'$ are equivalent by Definition \ref{Homotopy}.

Applying the same construction to the map of perfect obstruction theories inverse to \eqref{mappp}, and noting it is compatible with compositions, shows the unique morphism $\Psi$ is an isomorphism of global complex Kuranishi charts. Thus 
the global complex Kuranishi chart \eqref{gchart} is unique up to unique isomorphism. And we have shown everything can be chosen to be holomorphic, since $\psi$ and $\Phi$ in \eqref{psi9} and \eqref{phi9} are holomorphic. \medskip

Finally, replacing $F\udot$ by $E\udot$ in \eqref{mappp}, we have shown that an automorphism of perfect obstruction theories induces a unique automorphism of global complex Kuranishi charts. And again it can be chosen to be a holomorphic automorphism of global holomorphic Kuranishi charts.
\end{proof}

\section{Complex Kuranishi spaces} Now that we have the local models for complex Kuranishi spaces (i.e. local complex Kuranishi neighbourhoods) we show how to glue them using coordinate changes.

\begin{defn}\label{cc} Fix complex Kuranishi neighbourhoods $(V_1,E_1,s_1,\iota_1)$ and $(V_2,E_2,s_2,\iota_2)$ of $U_1,\,U_2$ respectively. A \emph{coordinate change} between them is a morphism $(V_1,E_1,s_1,\iota_1)\to(V_2,E_2,s_2,\iota_2)$ over $U_{12}$ which admits a morphism in the opposite direction such that the two compositions are equivalent to $\id_{(V_i,E_i,s_i,\iota_i)}$ over $U_{12}$.
\end{defn}

Notice these coordinate changes do \emph{not} preserve the dimension of the ambient space $V$ in general, though they do preserve the virtual dimension
\beq{v}
v\ :=\ \dim_{\C}V-\rk E
\eeq
because a homotopy between maps, as defined in Definition \ref{Homotopy}, induces a quasi-isomorphism between virtual tangent bundles $\{T_V|_U\to E_1|_U\}$. 

We define a complex Kuranishi space to be a set of complex Kuranishi neighbourhoods glued together by coordinate changes satisfying a cocyle condition. For our purposes it will be sufficient to restrict attention to those for which the underlying scheme is projective. 

\begin{defn}\label{cK}  Let $M$ be a complex projective scheme. A \emph{complex Kuranishi structure} on $M$ of virtual dimension $v$ is
\begin{itemize}
\item a finite open cover $\{U_i\}_{i\in I}$ for $M$,
\item a complex Kuranishi neighbourhood structure $(V_i,E_i,s_i,\iota_i)$ of virtual dimension $v$ on each $U_i$, and
\item for each $i,j\in I$ such that $U_{ij}\ne\emptyset$, a coordinate change $(V_{ij}^\circ,\psi_{ij},\Phi_{ij})$ from $(V_i,E_i,s_i,\iota_i)$ to $(V_j,E_j,s_j,\iota_j)$ over $U_{ij}$,
\end{itemize}
such that $(V_{jk}^\circ,\psi_{jk},\Phi_{jk})\circ(V_{ij}^\circ,\psi_{ij},\Phi_{ij})\sim(V_{ik}^\circ,\psi_{ik},\Phi_{ik})$ over $U_{ijk}$.
\end{defn}

We call the data of $M$, plus a complex Kuranishi structure on it, a ``complex Kuranishi space".

We do not need a definition of morphisms in the full generality of Joyce's morphisms of $\mu$-Kuranishi spaces \cite[Definition 2.21]{JoKur1} since our underlying space $M$ is fixed. For our purposes it is sufficient to define isomorphisms of complex Kuranishi structures on $M$. So suppose we have two complex Kuranishi structures $\big(I,U_i,(V_i,E_i,s_i,\iota_i),(V_{ij}^\circ,\psi_{ij},\Phi_{ij})\big)$ and $\big(I',U_i',(V_i',E_i',s_i',\iota_i'),(V'_{ij}\!^\circ,\psi'_{ij},\Phi'_{ij})\big)$. An isomorphism between them is just compatible coordinate changes between them -- i.e. coordinate changes which agree on overlaps.

\begin{defn} An isomorphism between two complex Kuranishi structures is, for each $i\in I,\,i'\in I'$ such that $U_i\cap U'_{i'}\ne\emptyset$, a coordinate change
$$
f_{ii'}\colon(V_i,E_i,s_i,\iota_i)\To(V'_{i'},E'_{i'},s'_{i'},\iota'_{i'})
$$
over $U_i\cap U'_{i'}$ in the sense of Definition \ref{cc}, satisfying two conditions.
\begin{itemize}
\item For every $j\in I$ with $U_{ij}\cap U'_{i'}\ne\emptyset$,
$$
f_{ji'}\circ(V_{ij}^\circ,\psi_{ij},\Phi_{ij})\ \sim\ f_{ii'} \quad\mathrm{over}\quad U_{ij}\cap U'_{i'}.
$$
\item For every $j'\in I'$ with $U_i\cap U'_{i'j'}\ne\emptyset$,
$$
f_{ij'}\ \sim\ (V'_{i'j'}\hspace{-2.5mm}^\circ\,,\psi'_{i'j'},\Phi'_{i'j'})\circ f_{ii'} \quad\mathrm{over}\quad U_i\cap U'_{i'j'}.
$$
\end{itemize}
\end{defn}

These are inverted and composed in the obvious way, giving an equivalence relation on complex Kuranishi structures on $M$.
Since the definition follows \cite[Definition 2.21]{JoKur1} verbatim (only differing in our use of $\cA$-functions in place of $C^\infty$) it is clear that \emph{isomorphic complex Kuranishi structures on $M$ define isomorphic $\mu$-Kuranishi spaces in the sense of Joyce}.

\subsection{Equivalence with weak perfect obstruction theories}
So complex Kuranishi spaces are made by gluing complex Kuranishi charts. Refining these charts, we may assume that they are \emph{Stein}. Then they are equivalent to perfect obstruction theories, and the gluing maps are equivalent to isomorphisms of perfect obstruction theories, as we showed in Theorem \ref{affKurpot}. Therefore complex Kuranishi spaces are equivalent to local perfect obstruction theories, glued together by isomorphisms.

\begin{defn}\label{wpot} A \emph{weak perfect obstruction theory} on a projective scheme $M$ is a collection of perfect obstruction theories $E_i\udot\to\LL_{\;U_i}$ on a Stein open cover $U_i$ of $M$, together with isomorphisms (``gluing maps") on overlaps $U_{ij}$,
\beq{is}
\Phi_{ij}\ \in\ \Hom_{D^b(\mathrm{Coh}\,U_{ij})\!}\big(E\udot_i|\_{U_{ij}},E\udot_j|\_{U_{ij}}\big)
\eeq
which intertwine the maps to $\LL_{\;U_{ij}}$ and satisfy the cocycle condition
\beq{cocyle}
\Phi_{jk}\big|_{U_{ijk}}\circ\Phi_{ij}\big|_{U_{ijk}}\ =\ \Phi_{ik}\big|_{U_{ijk}}\quad\mathrm{in}\ D^b(\mathrm{Coh}\,U_{ijk}).
\eeq
\end{defn}

In this language, Theorem \ref{affKurpot} proves Theorem \ref{K=P}: that weak perfect obstruction theories are equivalent to complex Kuranishi structures.

In particular, a complex Kuranishi structure is weaker than a perfect obstruction theory.\footnote{Dominic Joyce pointed out it is a kind of a sheafification of a perfect obstruction theory, and suggested the example of a smooth scheme $M$ with an obstruction bundle $F$ and class $\alpha\in\Ext^2(\Omega_M,F)$. This defines a triangle $F[1]\to E\udot_\alpha\to\Omega_M$ and so a perfect obstruction theory. It depends on $\alpha$ but not as a \emph{weak} perfect obstruction theory.} Since the $E\udot_i$ are not sheaves the $\Phi_{ij}$ are \emph{not} enough data to glue them to a global perfect obstruction theory. However the local obstruction sheaves Ob$_i=h^1((E\udot_i)^\vee)$ are sheaves, so do glue via the automorphisms $h^1(\Phi_{ij}^\vee)$ to define a global obstruction sheaf Ob.

In fact there is now a slew of definitions close to, but not equivalent to, perfect obstruction theories, all of which give virtual cycles. They are totally ordered $(1)\Rightarrow(2)\Rightarrow(3)\Rightarrow(4)\Rightarrow(5)$ as follows.
\begin{enumerate}
\item \emph{Quasi-smooth derived structure} \cite{TVe}. The truncation $\pi_0$ gives $M$ a perfect obstruction theory, but in general there is no converse \cite{Sch}.
\item \emph{Perfect obstruction theory} \cite{BF}.
\item \emph{Weak perfect obstruction theory} $\cong$ \emph{complex Kuranishi structure}.
\item \emph{Almost perfect obstruction theory} \cite{KS}. This requires the isomorphisms \eqref{is} \'etale locally but relaxes the cocycle condition \eqref{cocyle}, requiring only that it holds on for the $h^1(\Phi_{ij}^\vee)$ --- so the obstruction sheaf $h^1((E\udot)^\vee)$ glues.
\item \emph{Semi-perfect obstruction theory} \cite{CL}. Here there need not be isomorphisms \eqref{is} on overlaps\footnote{We thanks Huai-Liang Chang for showing us an explicit example of a semi-perfect obstruction theory ``glued" from \emph{non-isomorphic} perfect obstruction theories.} but there should be isomorphisms of obstruction sheaves satisfying the cocycle condition and respecting obstruction maps in the sense of \cite[Definition 3.1]{CL}.
\end{enumerate}

\section{Global complex Kuranishi charts}\label{global}
Suppose $M$ is projective with perfect obstruction theory $E\udot\to\LL_M$. In this Section we will construct a global complex Kuranishi chart for $M$ such that the weak perfect obstruction theory corresponding to it by Theorem \ref{K=P} is precisely $E\udot\to\LL_M$. This will prove Theorem \ref{1}. We go slowly, step by step.

\subsection*{Normal form of the perfect obstruction theory.} It will be important in Section \ref{Cones} that --- to begin with --- we work entirely in the algebraic category; this will enable us to relate our global complex Kuranishi chart to the Behrend-Fantechi cone. So we begin the same way here. Pick a projective embedding $M\subset\PP^N$ with ideal $J$, so $\{J/J^2\to\Omega_{\PP^N}|_M\}$ is a representative of $\LL_M$. Then replacing $E\udot$ by a sufficiently negative locally free resolution, we may assume $E\udot\to\LL_M$ is represented by a genuine map of complexes of sheaves
$$\xymatrix@=16pt{
\dots \ar[r]& E^{-2} \ar[r]& E^{-1} \ar[r]\ar[d]& E^0 \ar[d] \\
&& J/J^2 \ar[r]& \Omega_{\PP^N}\big|_M.}
$$
We may assume the vertical arrows are surjections by adding acyclic complexes $\cO(-i)\rt{\id}\cO(-i)$ to $E\udot$ for $i\gg0$ if necessary. Then we may replace $E^{-1}$ by its quotient by im$\,E^{-2}$ to get a normal form
\beq{normal}\xymatrix@=16pt{
E^{-1} \ar[r]\ar@{->>}[d]& E^0 \ar@{->>}[d]<-.5ex> \\
J/J^2 \ar[r]^-d& \Omega_{\PP^N}\big|_M.\!\!}
\eeq
Let $K$ be the algebraic vector bundle kernel of the right hand vertical arrow. Since we are working \emph{globally} (rather than over an affine or Stein open set) we cannot make the top row of \eqref{normal} smaller by dividing by the acyclic complex $K\xrightarrow\sim K$ as we did in \eqref{bed}, because $K\into E^0$ will not lift to $K\into E^{-1}$ in general. Instead\footnote{This issue complicates the construction of our global Kuranishi chart. On a first reading it is advisable to pretend that $K=0$, or at least that the Kodaira-Spencer class $e$ \eqref{ee} is zero so we can take our smooth ambient space to be the total space of a bundle $\wt K^*$ over a neighbourhood of $M$ in $\PP^N$ (here $\wt K|_M=K$). In the general case the twist by the Kodaira-Spencer class will mean there is no global holomorphic smooth ambient space, just the first order approximation $2M$ produced in Section \ref{twoM}.} 
we make the bottom row \emph{bigger} by replacing $\PP^N$ by a certain square zero thickening by the sheaf $K$ along $M$.  Let
\beq{ee}
e\ \in\ \Ext^1\!\(\Omega_{\PP^N}\big|_M,K\)\rt{d^*}\Ext^1(J/J^2,K)\ \ni\ d^*(e)
\eeq
be the (Kodaira-Spencer) extension class of the short exact sequence
\beq{eeses}
0\To K\To E^0\To\Omega_{\PP^N}|_M\To0.
\eeq
Then $d^*(e)$ defines an extension
\beq{idef}
\cI\ :=\ J/J^2\times\_{\Omega_{\PP^N}|\_M}\!E^0
\eeq
of $J/J^2$ by $K$ fitting into the commutative diagram
$$\xymatrix@R=15pt{
\,K \ar@{=}[r]& \,K \\
\cI \ar[r]\ar@{->>}[d]\ar@{<-_)}[u]+<0pt,-8pt>& E^0 \ar@{->>}[d]\ar@{<-_)}[u]+<0pt,-8pt> \\
J/J^2 \ar[r]& \Omega_{\PP^N}\big|_M.}
$$
This describes a quasi-isomorphism from the 2-term complex on the central row to that on the bottom row, which is $\LL_M$. And $\cI$ \eqref{idef} admits a natural map from $E^{-1}$ by \eqref{normal}, allowing us to put the perfect obstruction theory into the new algebraic normal form
\beq{normf}\xymatrix@R=3pt{
E\udot\ar[dd] && E^{-1} \ar[r]\ar@{->>}[dd]& E^0 \ar@{=}[dd] \\ &= \\
\LL_M && \cI \ar[r]& E^0.\!}
\eeq
The central vertical arrow is a surjection because $h^{-1}(E\udot)\to h^{-1}(\LL_M)$ is.
\subsection{The global smooth thickening $2M$.}\label{twoM} The lower horizontal complex representing $\LL_M$ in \eqref{normf} has a canonical map to $\cI[1]$, giving a Kodaira-Spencer class
\beq{KSJ}
\LL_M[-1]\To\cI
\eeq
whose cone $E_0$ is locally free. By Proposition \ref{conefree} this defines a smooth holomorphic thickening $2M$ of $M$.

The structure of this $2M$ is easy to understand \emph{locally}. Choose
$$
\text{a cover of } M \text{ by a finite number of Zariski open affine sets } M_i,
$$
sufficiently small that the bundles $E_1,\,K,\,\Omega_{\PP^N}$ are all trivial on restriction to each $M_i$. The extension classes $e,\,d^*(e)$ of \eqref{ee} are zero on each $M_i$ and
$K_i:=K|_{M_i}$ is the restriction of the trivial algebraic bundle $\wt K_i$ on a neighbourhood of $M_i\subset\PP^N$.
Then on $M_i$ the Kodaira-Spencer class \eqref{KSJ} is
\beq{KSf}
\LL_{M_i}[-1]\rt{(f,0)}J/J^2\oplus K,
\eeq
where $f$ is the Kodaira-Spencer class of the smooth thickening $(2M)_{\PP^N}$. Therefore \eqref{KSf} defines the smooth thickening $(2M_i)_{\wt K^*_i}$ of $M_i$ inside\vspace{-1mm} the total space of $\wt K^*_i$. It is the scheme-theoretic union of $(2M_i)_{\PP^N}$ and the first order neighbourhood of $M_i$ inside the total space of $K^*_i$ (and thus independent of the choice of $\wt K_i$).\footnote{This is basically the observation $(x^2,y)\cap(x,y^2)=(x,y)^2$. Precisely it is $(J^2+\m)\cap(J+\m^2)=J^2+\m.J+\m^2\subset\cO_{\PP^N}[z_1,\dots,z_k]$, where $z_1,\dots,z_k$ are local trivialising sections of $K^*_i$ and $\m=(z_1,\dots,z_k)$ is the ideal of the zero section $M_i\subset K_i$.} That is,
\beq{2Mloc}
2M\ \stackrel{\mathrm{loc}}=\ (2M)_{\PP^N}\cup\_M(2M)_{K^*}.
\eeq
It is important to note that in general this is not true \emph{globally} holomorphically because of the twisting by $e$, which we describe next.\medskip

We represent $e\in H^1\(T_{\PP^N}|\_M\otimes K\)$ as a \v Cech cocyle $e_{ij}\in\Gamma\(M_{ij},T_{\PP^N}|\_M\otimes K\)$, and reglue $(2M)_{\PP^N}\cup_M(2M)_{K^*}$ over $M_{ij}$ by the automorphism defined by its action on functions by
\beq{reglue}
\ \cO_{(2M)_{\PP^N}\cup\_M(2M)_{K^*}}\=\cO_{(2M)_{\PP^N}}\oplus K \righttoleftarrow
\qquad (f,g)\Mapsto
\(f,g+\langle df,e_{ij}\rangle\).\!\!
\eeq
Here we consider $df$ as a section of $\Omega_{\PP^N}\big|_M\cong\Omega_{(2M)_{\PP^N}}\big|_M$ by \eqref{Omegiso}.

Since the $e_{ij}$ satisfy the cocyle condition this regluing defines a global scheme $(2M)_{\PP^N}\cup\_M(2M)_{K^*\!,\,e}$ such that
$$
2M\=(2M)_{\PP^N}\cup\_M(2M)_{K^*\!,\,e}\;,
$$
and such that, by construction,
\beq{key}
I_{M/2M}\rt{d}\Omega_{2M}|_M \,\text{ is the map }\, \cI\To E^0 \,\text{ of }\, \eqref{normf}.
\eeq

\subsection{The $\cA_2$-embedding}\label{secA2}
Given a holomorphic bundle $K$ on a scheme $M$ we use the notation
$$
\mathsf{K}\ :=\ K\otimes\_{\cO_M}C^\infty_M
$$
for all three of (i) the $C^\infty$ bundle underlying $K$, (ii) its total space, and (iii) its sheaf of sections.

In Section \ref{twoM} we constructed a scheme $2M$ from the Kodaira-Spencer element $e\in H^1\(T_{\PP^N}|_M\otimes_{\cO_M}K\)$ which is the 
extension class of the sequence of holomorphic bundles \eqref{eeses} on $M$,
$$
0\To K\To E^0\To\Omega_{\PP^N}|_M\To0.
$$
By Theorem \ref{extendE} the holomorphic bundle $K$ over $M$ extends to an $\cA_1$-bundle $\wt{\mathsf K}$ over a neighbourhood $\mathring\PP^N\supset M$. 
In this Section we will show how to give an $\cA_2$-embedding of $2M$ into $\wt{\mathsf K}^*$ to prove the following result.

\begin{thm}\label{summary} The total space of $\wt{\mathsf K}^*$ admits a sheaf of algebras $\cA_2\subset C^\infty_{\wt{\mathsf K}^*}$\vspace{-1mm} and an $\cA_2$-embedding $\phi\colon(2M,\cO_{2M})\into(\wt{\mathsf K}^*\!,\,\cA_2)$.
\end{thm}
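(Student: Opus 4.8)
The plan is to assemble $(\wt{\mathsf K}^*,\cA_2,\phi)$ by gluing the evident local holomorphic models, using as gluing data precisely the ingredients that built $2M$ in Section \ref{twoM}. First I would fix the affine cover $\{M_i\}$ of that Section --- on which $K$, $\Omega_{\PP^N}$ trivialise and the classes $e, d^*(e)$ of \eqref{ee} restrict to $0$ --- together with neighbourhoods $\mathring M_i\subset\mathring\PP^N$ over which $\wt{\mathsf K}$ is a trivial holomorphic bundle $\wt K_i$. Write $V_i$ for the total space of $\wt K_i^*$ over $\mathring M_i$, a smooth variety of complex dimension $N+\rk K$, and $I_{M_i}\subset\cO_{V_i}$ for the ideal of the zero section $M_i$. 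On $V_i$ I would take
$$
\cA_2^{(i)}\ :=\ \cO_{V_i}+C^\infty_{V_i}\!\cdot I_{M_i}^2\ \subset\ C^\infty_{V_i},
$$
the sheaf of Section \ref{Afn} realising the holomorphic $\cA_2$-embedding $2(M_i)_{V_i}\into V_i$, and let $\phi_i\colon 2M|_{M_i}\into V_i$ be the canonical inclusion, identifying $2M|_{M_i}$ with the scheme doubling $2(M_i)_{V_i}$ via \eqref{2Mloc} and the footnote there. Locally, then, everything is already an honest holomorphic $\cA_2$-embedding of the kind discussed in Section \ref{Afn}.

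Next I would glue. The $V_i$ are patched using the transition maps of the $\cA_1$-bundle $\wt{\mathsf K}^*$, \emph{reglued exactly as in \eqref{reglue}}: over $\mathring M_{ij}:=\mathring M_i\cap\mathring M_j$ the transition is $\theta_{ij}:=\tau_{ij}\circ\Psi_{ij}$, where $\tau_{ij}$ is the $\cA_1$-bundle transition (restricting on $M_{ij}$ to the holomorphic transition of $K$) and $\Psi_{ij}$ is the shear $(x,\xi)\mapsto(x+\wt e_{ij}(\xi),\xi)$ obtained by contracting an $\cA_1$-section $\wt e_{ij}$ of $\mathsf T_{\mathring\PP^N}\!\otimes\wt{\mathsf K}$ over $\mathring M_{ij}$ with the fibre coordinate $\xi$, where $\wt e_{ij}$ restricts on $M_{ij}$ to the \v Cech cocycle $e_{ij}$ (such $\cA_1$-extensions exist exactly as in the proof of Theorem \ref{extendE}). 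The three things to check:
\begin{itemize}
\item $\theta_{ij}$ \emph{is an $\cA_2$-map}, i.e.\ $\theta_{ij}^*\cA_2^{(j)}=\cA_2^{(i)}$. The key point is that the fibre coordinates lie in $I_{M_i}$, so the $C^\infty_{\mathring\PP^N}\!\cdot I_{M/\mathring\PP^N}$-part of $\tau_{ij}$ and the $\cA_1$-coefficients of $\wt e_{ij}$ can only contribute terms in $\cA_1^{\mathring\PP^N}\!\cdot I_{M_i}\subseteq\cO_{V_i}+C^\infty_{V_i}\!\cdot I_{M_i}^2$; and since $\theta_{ij}$ carries $I_{M_j}$ into $I_{M_i}$, pulled-back holomorphic functions land in $\cA_2^{(i)}$ and the pull-back of $C^\infty_{V_j}\!\cdot I_{M_j}^2$ lands in $C^\infty_{V_i}\!\cdot I_{M_i}^2$. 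This is exactly where the flexibility of $\cA_1$-bundles over a rigid holomorphic one is used.
\item \emph{the cocycle condition} $\theta_{jk}\circ\theta_{ij}=\theta_{ik}$ holds, after possibly adjusting the $\wt e_{ij}$ by sections of $C^\infty_{\mathring\PP^N}\!\cdot I_{M/\mathring\PP^N}\otimes\wt{\mathsf K}\otimes\mathsf T_{\mathring\PP^N}$ (which do not change $e_{ij}$): the $\tau_{ij}$ already satisfy their cocycle condition, the $\Psi_{ij}$ compose additively in the $\wt e_{ij}$ up to higher order, and $(e_{ij})$ is a cocycle, so the residual failure lies in a fine sheaf (a module over $C^\infty_\R\!\cdot I$, cf.\ the partition-of-unity argument around \eqref{surgj}) and is therefore a coboundary.
\item \emph{the $\phi_i$ glue}: by construction $\theta_{ij}$ preserves the zero-section doublings and restricts on them to precisely the regluing \eqref{reglue} defining $2M$, so $\theta_{ij}\circ\phi_i=\phi_j$ on $2M|_{M_{ij}}$.
\end{itemize}
This produces the $C^\infty$ manifold $\wt{\mathsf K}^*$ (the total space of $\wt{\mathsf K}^*$, reglued over $2M$ in the sense of \eqref{2Mloc}--\eqref{reglue}), a subsheaf $\cA_2\subset C^\infty_{\wt{\mathsf K}^*}$, and an embedding of locally ringed spaces $\phi\colon(2M,\cO_{2M})\into(\wt{\mathsf K}^*,\cA_2)$.

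It remains to verify the conditions of Definition \ref{UinV}. The identity $\dim_\R\wt{\mathsf K}^*=2(N+\rk K)=2\rk E^0=2\dim 2M$ is the dimension count of \eqref{eeses}. Closure of $\cA_2$ under holomorphic operations, local finite generation of $I_1=C^\infty_{\wt{\mathsf K}^*}\!\cdot I_2$, and the requirement that $\cA_2\to C^\infty_{2M}$ have image $\cO_{2M}$ with kernel $I_1^2$ are all local statements; and locally, after the $\cA_2$-isomorphisms $\tau_{ij},\Psi_{ij}$, $(\wt{\mathsf K}^*,\cA_2)$ is the holomorphic model $2(M_i)_{V_i}\into V_i$ of Section \ref{Afn}, for which these properties follow from \eqref{restric2} and Lemma \ref{IdealFact}, so the proof of Proposition \ref{sane} applies verbatim. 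I expect the main obstacle to be the middle bullet above --- choosing the regluing data $\wt e_{ij}$ so that $\theta_{ij}$ is simultaneously an $\cA_2$-map, a cocycle, and restricts on $2M$ to \eqref{reglue}. This is the Kodaira--Spencer twist by $e$, which is exactly why there is no global holomorphic ambient space (cf.\ the discussion following \eqref{2Mloc}): the class $e\in\Ext^1(\Omega_{\PP^N}|_M,K)$ is genuinely nonzero holomorphically, but becomes harmless once $\cA_1$-coefficients are allowed, because the relevant kernel sheaves are fine. Everything else is routine local checking or dimension bookkeeping already carried out in Section \ref{twoM}.
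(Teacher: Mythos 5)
Your approach is genuinely different from the paper's and has gaps at exactly the two points you flag as "obstacles." The paper \emph{never reglues the ambient manifold}. It takes the total space of the $\cA_1$-bundle $\wt{\mathsf K}^*$ as a fixed $C^\infty$ manifold, takes the \emph{standard} ideal $I_1 = J.C^\infty_{\wt{\mathsf K}^*}+\wt{\mathsf K}.C^\infty_{\wt{\mathsf K}^*}$ of the zero section (so $C^\infty_{\wt{\mathsf K}^*}/(I_1^2+\overline{I_1}^{\;2})$ is the \emph{untwisted} thickening $C^\infty_{(2M)_{\wt{\mathsf K}^*}}$), and absorbs the Kodaira--Spencer twist by $e$ entirely into the choice of the subalgebra $\cA_2\subset C^\infty_{\wt{\mathsf K}^*}$. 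Concretely, one observes that $e$ dies in $H^1(T_{\PP^N}|_M\otimes K_J)$ with $K_J := K\otimes_{\cO_M}C^\infty_J$ because $C^\infty_J$ is fine, producing local (non-holomorphic) sections $e_i$ with $\delta(e_i)=e_{ij}$; the $e_i$ then define local algebra automorphisms $\phi_i$ of $C^\infty_{J^2}\oplus K_J\subset C^\infty_{\wt{\mathsf K}^*}/I_1^2$ satisfying $\Phi_{ij}=\phi_j^{-1}\circ\phi_i$, and $\cA_2$ is the pullback of $\phi(\cO_{2M})$ \eqref{A2df}. The key difference: the $\phi_i$ form a $0$-cochain bounding $\Phi_{ij}$, so no cocycle condition on the $(\phi_i)$ or $(e_i)$ is ever needed --- the relevant cocycle condition is the one on $(\Phi_{ij})$, which comes for free from the holomorphic gluing of $\cO_{2M}$.

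Your route reglues the $V_i$ by nonlinear diffeomorphisms $\theta_{ij}=\tau_{ij}\circ\Psi_{ij}$, and here lie two genuine problems. First, a $1$-cochain of \emph{diffeomorphisms of total spaces} is not a $1$-cochain with values in a (fine) sheaf of $C^\infty$-modules: the failure $\theta_{ik}^{-1}\circ\theta_{jk}\circ\theta_{ij}$ lives in the automorphism groupoid of the germ, and "residual failure in a fine sheaf $\Rightarrow$ coboundary" only applies after a careful linearisation which the shears (nonlinear in $\xi$, and themselves composed with the nonlinear $\tau_{ij}$) do not obviously admit. Second, even granting a cocycle solution, the reglued space is not evidently the total space of $\wt{\mathsf K}^*$: the shears $\Psi_{ij}$ move the \emph{base} direction and are not bundle maps, so the reglued object is not a bundle over $\mathring\PP^N$ at all, let alone the particular $\cA_1$-bundle $\wt{\mathsf K}^*$ fixed by Theorem \ref{extendE}. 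You would have to separately produce a diffeomorphism back to $\wt{\mathsf K}^*$ and show it preserves $I_1$; the paper avoids both problems by construction. Finally, you omit the injectivity of $\phi$ (Lemma \ref{off}), which is needed to know one has an embedding of locally ringed spaces rather than merely an algebra homomorphism.
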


In particular we will find $\phi$ induces a $C^\infty$ isomorphism\vspace{-.7mm} $C^\infty_{2M}\cong C^\infty_{(2M)_{\wt{\mathsf K}^*}}$ between the twisted thickening $2M$ and the \emph{untwisted} thickening\footnote{This depends only on the holomorphic data $(2M)_{\PP^N}$ and $K$ by \eqref{2Mloc}.} $\im\phi=(2M)_{\wt{\mathsf K}^*}$ of $M$ inside $\wt{\mathsf K}^*$ (corresponding to the trivial Kodaira-Spencer class $e=0$).\vspace{-.5mm} That is, we can \emph{non-holomorphically} untwist $e$; see Lemma \ref{off} below.\smallskip

Using the $\C^*$ action on $\wt{\mathsf K}^*$ we decompose the smooth functions on it as
$$
C^\infty_{\wt{\mathsf K}^*}\=C^\infty_{\mathring\PP^N}\ \oplus\ \wt{\mathsf K}.C^\infty_{\wt{\mathsf K}^*}
\ +\ \overline{\wt{\mathsf K}}\,.C^\infty_{\wt{\mathsf K}^*}.
$$
Using this we let $I_1$ be the ideal generated by $J$ in the first factor and the whole second factor,
\beq{I1def}
I_1\ :=\ J.C^\infty_{\wt{\mathsf K}^*}\,+\,\wt{\mathsf K}.C^\infty_{\wt{\mathsf K}^*}.
\eeq
This is the standard $I_1\subset C^\infty_{\wt{\mathsf K}^*}$ corresponding to the\vspace{-1mm} standard embedding $M\subset\mathring\PP^N\subset\wt{\mathsf K}^*$ in the zero section, with
$$
\frac{C^\infty_{\wt{\mathsf K}^*}}{I_1^2+\overline{I_1\!}^{\;2}}\=C^\infty_{(2M)_{\wt{\mathsf K}^*}}.
$$
Notice $(2M)_{\wt{\mathsf K}^*}$ is the \emph{untwisted} thickening of $M$ corresponding to the trivial Kodaira-Spencer class $e=0$. Therefore we next modify the holomorphic structure along the first order neighbourhood of $M$ inside $\wt{\mathsf K}^*$ to change it from $(2M)_{\wt{\mathsf K}^*}$ to $2M$. So $I_1$ is standard but the $\cA_2$-structure will not be.

By the exact sequence \eqref{restric2}, to define an $\cA_2$-structure on $\wt{\mathsf K}^*$ with the given $I_1$ \eqref{I1def} we just need an inclusion of algebras $\cO_{2M}\into C^\infty_{\wt{\mathsf K}^*}\big/I_1^2$ with the right properties. We can then set $\cA_2:=C^\infty_{\wt{\mathsf K}^*}\times\_{C^\infty_{\wt{\mathsf K}^*}/I_1^2}\cO_{2M}$.\medskip

For this we use the sheaves of algebras
$$
C^\infty_J\ :=\ \frac{C^\infty_{\mathring\PP^N}}{J.C^\infty_{\mathring\PP^N}}\quad\text{and}\quad C^\infty_{\!J^2}\ :=\ \frac{C^\infty_{\mathring\PP^N}}{J^2.C^\infty_{\mathring\PP^N}}\,.
$$
Note the first is an $\cO_M=\cO_{\mathring\PP^N}/J$-module, so we may form
$$
K_J\ :=\ K\otimes\_{\cO_M}C^\infty_J\=\wt{\mathsf K}\big/J.\wt{\mathsf K}.
$$
Its $C^\infty_J$-module structure makes it a $C^\infty_{\!J^2}$-module, thus defining an algebra $C^\infty_{\!J^2}\oplus K_J$ which by \eqref{I1def} is a subalgebra of $C^\infty_{\wt{\mathsf K}^*}/I_1^2$,
$$
C^\infty_{\!J^2}\oplus K_J\ \subset\ C^\infty_{\wt{\mathsf K}^*}\big/I_1^2.
$$
So we will embed $\cO_{2M}\into C^\infty_{\!J^2}\oplus K_J$. Locally we have the embedding
$$
\cO_{2M_i}\ \cong\ \cO_{(2M_i)_{\PP^N}}\oplus K|_{M_i}\ \Into\ C^\infty_{\!J^2}|_{M_i}\oplus K_J|_{M_i}
$$
by the obvious diagonal map; we will twist this to take account of $e$.

The map 
$H^1\(T_{\PP^N}|_M\otimes_{\cO_M}K)\to H^1\(T_{\PP^N}|_M\otimes_{\cO_M}K_J\)=0$ takes $e$ to zero since $C^\infty_J$ is fine. Thus we can write the \u Cech cocyle $(e_{ij})$ as a non-holomorphic \u Cech coboundary: there exists
\beq{mathsfei2}
e_i\,\in\,\Gamma\(M_i,\,T_{\PP^N}|_M\otimes_{\cO_M}K_J\)
\ \text{ such that }\ e_i|\_{M_{ij}}-e_j|\_{M_{ij}}\=e_{ij}.
\eeq
These $e_i$ define \emph{non-holomorphic} algebra automorphisms
\beq{phii}
\phi_i\,\colon\,C^\infty_{\!J^2}\oplus K_J\,\righttoleftarrow \qquad (f,k)\ \Mapsto\ \(f,\,k+\Langle\partial f,e_i\Rangle\).
\eeq
These \emph{untwist} the gluing \eqref{reglue} in the sense that $\Phi_{ij}=\phi_j^{-1}\big|_{M_{ij}}\circ\phi_i\big|_{M_{ij}}$ by \eqref{mathsfei2}. Therefore, if we map $\cO_{2M_j}$ to $(C^\infty_{\!J^2}\oplus K_J)|_{M_j}$ by $\phi_j$ over $M_j$, then over $M_{ij}$ the gluing $\Phi_{ij}$ \eqref{reglue} glues this to $\phi_j\circ\Phi_{ij}=\phi_i$. So we get a well-defined map
\beq{cup}
\phi\=\bigcup\nolimits_i\phi_i\,\colon\,\cO_{2M}\Into C^\infty_{\!J^2}\oplus K_J\ \subset\ C^\infty_{\wt{\mathsf K}^*}\big/I_1^2.
\eeq
Hence we get a sheaf of algebras
\beq{A2df}
\cA_2\ :=\ C^\infty_{\wt{\mathsf K}^*}\times\_{C^\infty_{\wt{\mathsf K}^*}/I_1^2}\phi\(\cO_{2M}\)\ \subset\ C^\infty_{\wt{\mathsf K}^*}
\eeq
which we will check defines an $\cA_2$-embedding of $2M$ as in Definition \ref{UinV}.

\begin{lem}\label{off}
There exists a lift $\wt\phi\colon C^\infty_{2M}\to C^\infty_{(2M)_{\wt{\mathsf K}^*}}$ of the\vspace{-4pt} composition
$\phi'\colon\cO_{2M}\stackrel\phi\into C^\infty_{\wt{\mathsf K}^*}\big/I_1^2\onto C^\infty_{\wt{\mathsf K}^*}\big/\(I_1^2+\overline{I_1\!}^{\;2}\)=C^\infty_{(2M)_{\wt{\mathsf K}^*}}$ giving a commutative diagram
$$
\xymatrix@R=15pt@C=40pt{
\cO_{2M} \ar@{^(->}[dr]^-{\phi'} \\
C^\infty_{2M} \ar@{<-_)}[u]-<0pt,10pt>\ar[r]^(.4)\sim_(.4){\wt\phi}& C^\infty_{(2M)_{K^*}}}
$$
with $\wt\phi$ an isomorphism. In particular, $\phi'$ is injective.
\end{lem}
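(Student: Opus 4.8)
The plan is to reduce the statement to a local computation. Both $C^\infty_{2M}$ and $C^\infty_{(2M)_{\wt{\mathsf K}^*}}$ are, by Definition \ref{SMFTNS}, computed from the ambient real manifold $\wt{\mathsf K}^*$ by quotienting out the corresponding $I_\infty$-type ideals. For the untwisted thickening $(2M)_{\wt{\mathsf K}^*}$ this ideal is $I_1^2+\overline{I_1\!}^{\,2}$ with $I_1$ the standard ideal \eqref{I1def}. For the twisted $2M$, the identification of Section \ref{twoM} (in particular the local description \eqref{2Mloc} together with the $e$-twisted regluing \eqref{reglue}) shows that $2M$ is obtained from $(2M)_{\wt{\mathsf K}^*}$ by the holomorphic-coordinate change that is the linear-over-$\cO_{2M}$ part of $\phi_i$. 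Since $\phi$ is built precisely to untwist this gluing (the relation $\Phi_{ij}=\phi_j^{-1}\circ\phi_i$ of \eqref{mathsfei2}–\eqref{phii}), composing the holomorphic inclusion $\phi'$ with the $C^\infty$ structure should land inside $C^\infty_{(2M)_{\wt{\mathsf K}^*}}$ in a way that extends to an isomorphism of the full $C^\infty$-rings.

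Concretely I would argue as follows. First, check that $\phi'$ is well defined, i.e. that the local maps $\phi_i\colon\cO_{2M_i}\to (C^\infty_{J^2}\oplus K_J)|_{M_i}\subset C^\infty_{\wt{\mathsf K}^*}/I_1^2$, pushed forward to $C^\infty_{\wt{\mathsf K}^*}/(I_1^2+\overline{I_1\!}^{\,2})$, glue over $M_{ij}$; this is exactly the content of \eqref{mathsfei2}, since $\Phi_{ij}\circ(\text{local }\phi_i)=\phi_j$ on overlaps. So $\phi'=\bigcup_i\phi_i$ descends to $C^\infty_{(2M)_{\wt{\mathsf K}^*}}$. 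Second, observe that $\phi'$ is a morphism of $\cO_M$-algebras and that $C^\infty_{2M}$ is by construction (Definition \ref{SMFTNS} applied to $2M$) generated as a $C^\infty$-ring by $\cO_{2M}$ — indeed by the pullbacks of holomorphic coordinates on any local smooth ambient variety for $2M$. Since $\cO_{2M}\to C^\infty_{(2M)_{\wt{\mathsf K}^*}}$ factors through $\phi'$ and the target is a $C^\infty$-ring, the universal property of the $C^\infty$-ring generated by $\cO_{2M}$ produces a unique $C^\infty$-ring homomorphism $\wt\phi\colon C^\infty_{2M}\to C^\infty_{(2M)_{\wt{\mathsf K}^*}}$ extending $\phi'$, making the triangle commute. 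Third, to see $\wt\phi$ is an isomorphism, work locally: on each $M_i$ the twist $e_i$ is $C^\infty$ but not holomorphic, and the map $(f,k)\mapsto(f,k+\langle\partial f,e_i\rangle)$ of \eqref{phii} is a $C^\infty$ algebra automorphism of $C^\infty_{J^2}\oplus K_J$ (its inverse being subtraction of $\langle\partial f,e_i\rangle$). Hence locally $\wt\phi$ is the restriction of a $C^\infty$ automorphism of the ambient thickened structure sheaf, so it is a local isomorphism; since the $C^\infty$ structures and the identification on $M$ are global, these local inverses patch, giving a global inverse. Injectivity of $\phi'$ then follows from injectivity of $\wt\phi$ on the subring $\cO_{2M}$ (equivalently from Lemma \ref{IdealFact}, which shows $\cO_{2M}\cap(\text{smooth ideal})$ is the holomorphic ideal).

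The main obstacle I anticipate is the globalisation step: checking carefully that the local untwisting automorphisms $\phi_i$ assemble into a single well-defined $C^\infty$-ring map and that its local inverses patch, i.e. that all the \v Cech bookkeeping in \eqref{reglue}, \eqref{mathsfei2} and \eqref{phii} really does cancel the cocycle $e_{ij}$ both for $\phi'$ and for its inverse. The purely local verification (that $\phi_i$ is an algebra automorphism and that $C^\infty_{2M}$ is generated as a $C^\infty$-ring by holomorphic functions, using Hadamard's Lemma exactly as in the proof of functoriality after Definition \ref{SMFTNS}) is routine; the subtlety is only in the gluing, and there the defining relation $\Phi_{ij}=\phi_j^{-1}\circ\phi_i$ is precisely what makes it go through.
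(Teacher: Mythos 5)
The proposal has a genuine gap in the step that produces the lift $\wt\phi$. You invoke ``the universal property of the $C^\infty$-ring generated by $\cO_{2M}$'' to extend $\phi'$ to a $C^\infty$-ring homomorphism. But $C^\infty_{2M}$ is only \emph{generated} by $\cO_{2M}$ as a $C^\infty$-ring; it is not \emph{free} on it. Generation gives you uniqueness of an extension, not existence. To extend, one must check that $\phi'$ is compatible with all the smooth relations satisfied by $\cO_{2M}$ inside $C^\infty_{2M}$ --- i.e.\ that whenever $F(z_1,\dots,z_n)\in I_\infty$ holds upstairs for local holomorphic coordinates $z_i$ and a smooth $F$, the same relation is killed by $\phi'$ in $C^\infty_{(2M)_{\wt{\mathsf K}^*}}$. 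This is precisely the non-trivial content of the lemma, and it is what the paper's explicit computation establishes. A universal-property shortcut is not available here.

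The reference to \eqref{phii} in your third step does not repair this. The map $\phi_i$ of \eqref{phii} is an automorphism of $C^\infty_{\!J^2}\oplus K_J$, a subalgebra of $C^\infty_{\wt{\mathsf K}^*}/I_1^2$ that only records holomorphic-direction information. To produce an automorphism of $C^\infty_{(2M)_{\wt{\mathsf K}^*}}=C^\infty_{\wt{\mathsf K}^*}\big/\(I_1^2+\overline{I_1\!}^{\;2}\)$ one must also control how the untwisting acts on the anti-holomorphic summand $\overline{\(K\otimes C^\infty_{M_{\;\overline{\!J\,}}}\)}$ and the mixed summand $\mathsf K\otimes\overline{\mathsf K}$, which do not appear in \eqref{phii} at all. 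The paper handles this by first establishing the local four-summand decomposition \eqref{lp} of $C^\infty_{2M}$, then verifying by an explicit local lift of $\Phi_{ij}$ (the model automorphism $(x,f)\mapsto(x+\langle f,e_{ij}(x)\rangle,f)$ of $\mathring\C^N\times A$) that the regluing acts on all four summands by formula \eqref{formu}, then observing that formula makes sense for \emph{non-holomorphic} coboundaries $e_i$, which yields the local automorphisms $\wt\phi_i$ in \eqref{fii} and, via the cocycle cancellation from \eqref{mathsfei2}, the global $\wt\phi$. That explicit work is what you would need to supply in place of the universal property; your intuition about the untwisting is correct, but the argument as written skips the part that actually requires proof.
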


\begin{proof}
We introduce a sheaf of algebras
\beq{Sdef}
C^\infty_{M_{\;\overline{\!J\,}}}\ :=\ \frac{C^\infty_{\mathring\PP^N}}{C^\infty_{\mathring\PP^N}.J+C^\infty_{\mathring\PP^N}.\,\overline{\!J\,}^2}\=
\frac{C^\infty_{(2M)_{\mathring\PP^N}}}{C^\infty_{(2M)_{\mathring\PP^N}}\!.\;J}\,.
\eeq
It is an $\cO_M$-module since it is an $\cO_{\mathring\PP^N}$-module which is annihilated by $J$. Then the local pieces \eqref{2Mloc} of $2M$ have
\beq{lp}
C^\infty_{2M}\ \stackrel{\mathrm{loc}}=\
C^\infty_{(2M)_{\mathring\PP^N}}\oplus\,\(K\otimes\_{\cO_M}C^\infty_{M_{\;\overline{\!J\,}}}\)
\,\oplus\,\overline{\(K\otimes\_{\cO_M}C^\infty_{M_{\;\overline{\!J\,}}}\)}
\,\oplus\,\(\mathsf K\otimes_{C^\infty_M}\overline{\mathsf K}\).
\eeq

These local pieces are glued over overlaps $M_{ij}$ by the holomorphic gluing map $\Phi_{ij}$ \eqref{reglue}. On $C^\infty_{2M_{ij}}$ we claim its action is given by
\begin{align}\nonumber
(f,k_1,k_2,k_3)\ \Mapsto\ \Big(f,\ k_1\,+\,&\Langle\partial f,e_{ij}\Rangle,\ k_2+\Langle\dbar f,\overline e_{ij}\Rangle, \\ \label{formu}
&k_3+\Langle\partial\dbar f,e_{ij}\overline e_{ij}\Rangle+
\Langle\dbar k_1,\overline e_{ij}\Rangle+\Langle\partial k_2,e_{ij}\Rangle\Big).
\end{align}
To make sense of this formula lift $f$ to $C^\infty_{\mathring\PP^N}$ where it is defined modulo $C^\infty_{\mathring\PP^N}.J^2+C^\infty_{\mathring\PP^N}.\,\overline{\!J\,}{}^2$. Thus $\partial f$ is a $C^\infty$ section of $\Omega_{\mathring\PP^N}$ defined modulo $C^\infty_{\mathring\PP^N}.J+C^\infty_{\mathring\PP^N}.\,\overline{\!J\,}{}^2$. (Similarly $\dbar f$ is well-defined modulo $C^\infty_{\mathring\PP^N}.J^2+C^\infty_{\mathring\PP^N}.\,\overline{\!J\,}$.) Hence by \eqref{Sdef}, $\partial f$ is well-defined in $\Omega_{\mathring\PP^N}|_M\otimes_{\cO_M}C^\infty_{M_{\;\overline{\!J\,}}}$. Pairing with the image of $e_{ij}$ under $T_{\mathring\PP^N}|_M\otimes K\to(T_{\mathring\PP^N}|_M\otimes K)\otimes_{\cO_M}C^\infty_{M_{\;\overline{\!J\,}}}$ gives $\Langle\partial f,e_{ij}\Rangle$ in the second summand $K\otimes\_{\cO_M}C^\infty_{M_{\;\overline{\!J\,}}}$ of \eqref{lp}. $\Langle\dbar f,\overline e_{ij}\Rangle$ and $\Langle\partial\dbar f,e_{ij}\overline e_{ij}\Rangle$ are similar.
Next $K\otimes\_{\cO_M}C^\infty_{M_{\;\overline{\!J\,}}}$ has a $\dbar$-operator $1\otimes\dbar$ taking values in $(K\otimes\overline{\Omega}_{\mathring\PP^N}|_M)\otimes_{\cO_M}C^\infty_{\mathring\PP^N}\big/\(C^\infty_{\mathring\PP^N}.J+C^\infty_{\mathring\PP^N}.\,\overline{\!J\,}\)=(K\otimes\overline{\Omega}_{\mathring\PP^N}|_M)\otimes_{\cO_M}C^\infty_M$. This pairs with the image of $\overline{e}_{ij}$ in $(T_{\mathring\PP^N}|_M\otimes K)\otimes_{\cO_M}C^\infty_M$ to give $\Langle\dbar k_1,\overline e_{ij}\Rangle$ in the last summand $\mathsf K\otimes_{C^\infty_M}\overline{\mathsf K}$ of \eqref{lp}. Finally $\Langle\partial k_2,e_{ij}\Rangle$ is similar.
 
To prove the claim \eqref{formu} we use the observation below Definition \ref{SMFTNS} that the map $\Phi_{ij}\colon C^\infty_{2M_{ij}}\righttoleftarrow$ can\vspace{-1pt} be computed locally. Thus by shrinking $M_{ij}\subset\mathring\PP^N$ we may assume $K$ is a trivial bundle and we may replace $\PP^N$ by $\C^N$, to which $K$ extends as $\C^N\times A$ for some vector space $A$. Shrinking $\C^N$ to an open neighbourhood $\mathring\C^N\supset M_{ij}$ we can lift $e_{ij}$ to a holomorphic map $\mathring\C^N\to A^{\oplus N}$, so $\langle f,e_{ij}(x)\rangle\in\C^N$ for $x\in\mathring\C^N$ and $f\in A^*$. Then $\Phi_{ij}$ lifts to the local holomorphic automorphism
$$
\mathring\C^N\times A\ni(x,f)\Mapsto\(x+\langle f,e_{ij}(x)\rangle,\,f\).
$$
Computing how this pulls back $C^\infty$ functions on $\mathring\C^N\times A$ gives the formula \eqref{formu} claimed.

Since the formula \eqref{formu} does not use the holomorphicity of $e_{ij}$ we can now use the (non-holomorphic) $e_i$ of \eqref{mathsfei2} to untwist $C^\infty_{2M}$. The $e_i$ define \emph{non-holomorphic} automorphisms $\wt\phi_i\colon C^\infty_{(2M_i)_{\wt{\mathsf{K}}^*}}\righttoleftarrow$ by a similar formula over $M_i$, taking $(f,k_1,k_2,k_3)$ to
\beq{fii}
\Big(f,\ k_1+\Langle\partial f,e_i\Rangle,\ k_2+\Langle\dbar f,\overline e_i\Rangle,\ k_3+\Langle\partial\dbar f,e_i\overline e_i\Rangle+
\Langle\dbar k_1,\overline e_i\Rangle+\Langle\partial k_2,e_i\Rangle\Big).
\eeq
As the identity plus a nilpotent map, this is invertible. An explicit formula for its inverse is
\begin{align*}
&(f,k_1,k_2,k_3)\Mapsto\Big(f,\ k_1-\Langle\partial f,e_i\Rangle,\ k_2-\Langle\dbar f,\overline e_i\Rangle,\\
&\qquad k_3-\Langle\partial\dbar f,e_i\overline e_i\Rangle-
\Langle\;\dbar \(k_1-\Langle\partial f,e_i\Rangle\),\overline e_i\Rangle-\Langle\partial \(k_2-\Langle\dbar f,\overline e_i\Rangle\),e_i\Rangle\Big).
\end{align*}
Then by \eqref{mathsfei2} we have
$$
\wt\phi_j|_{M_{ij}}\circ\Phi_{ij}\=\wt\phi_i|\_{M_{ij}}.
$$
So it defines a global (non-holomorphic!) isomorphism
\beq{cft}
\wt\phi\,=\,\bigcup\nolimits_i\wt\phi_i\,\colon\,C^\infty_{2M}\rt\sim C^\infty_{(2M)_{K^*}}.
\eeq

Since the local automorphism $\wt\phi_i$ \eqref{fii} is a lift of the composition
$$
\phi'_i\ \colon\,\cO_{2M_i}\stackrel{\eqref{cup}}{\Into} C^\infty_{\wt{\mathsf K}^*}/I_1^2\To C^\infty_{(2M_i)_{\wt{\mathsf{K}}^*}},
$$
the global isomorphism $\wt\phi$ \eqref{cft} is a lift of their gluing $\phi'$.
\end{proof}

So by \eqref{cft} we have an embedding of locally ringed spaces $\(2M,C^\infty_{2M}\)\into\(\wt{\mathsf K}^*,C^\infty_{\wt{\mathsf K}^*}\)$ such that the composition $\cA_2\subset C^\infty_{\wt{\mathsf K}^*}\onto C^\infty_{2M}$ has image $\cO_{2M}$ by \eqref{A2df} and Lemma \ref{off}. It has kernel $I_1^2$ by construction \eqref{A2df}.

Now we show that $I_1$ \eqref{I1def} arises in the way stipulated by Definition \ref{UinV}. As usual let $\cI\subset\cO_{2M}$ be the ideal of $M$, so that
$$
I_2\ :=\ \ker\(\cA_2\To C^\infty_M\)\=C^\infty_{\wt{\mathsf K}^*}\times\_{C^\infty_{\wt{\mathsf K}^*}/I_1^2}\cI
$$
and
$$
C^\infty_{\wt{\mathsf K}^*}.I_2\=C^\infty_{\wt{\mathsf K}^*}\times\_{C^\infty_{\wt{\mathsf K}^*}/I_1^2}\Big(\(C^\infty_{\wt{\mathsf K}^*}/I_1^2\).\;\cI\Big).
$$
Locally, on $M_i$, we have $\cI=J/J^2\oplus K\subset\cO_{(2M)_{\PP^N}}\oplus K$   
so we see from \eqref{I1def} that $\(C^\infty_{\wt{\mathsf K}^*}/I_1^2\).\;\cI=I_1/I_1^2$. Since $\phi_i$ \eqref{phii} clearly preserves $I_1/I_1^2\subset C^\infty_{\wt{\mathsf K}^*}/I_1^2$ we deduce that
$$
C^\infty_{\wt{\mathsf K}^*}.I_2\=C^\infty_{\wt{\mathsf K}^*}\times\_{C^\infty_{\wt{\mathsf K}^*}/I_1^2}\(I_1/I_1^2\)\=I_1.
$$\smallskip

Finally it remains to show that $H\(g_1,\ldots,g_n\)\in\cA_2$ when $g_1,\ldots,g_n\in\cA_2$ and $H\colon\C^n\to\C$ is holomorphic in a neighbourhood of any point of $\im\($\scalebox{1.5}{$\times$}${\!}_j\;g_j\)$. By the definition of $\cA_2$ \eqref{A2df}, it is enough to show that the projection of $H\(g_1,\ldots,g_n\)$ to $C^\infty_{\wt{\mathsf K}^*}/I_1^2$ lies in $\phi\(\cO_{2M}\)$.

Given $h_1,\ldots,h_n\in I_1^2$ and $x\in M$, Hadamard's Lemma applied to $H$ in a neighbourhood of $(g_1(x),\dots,g_n(x))$ gives
$$
H\(g_1+h_1,\ldots,g_n+h_n\)-H\(g_1,\ldots,g_n\)\=\sum\nolimits_ia_ih_i\ \in\ I_1^2,
$$
with no $\bar{h}_i$ terms because the antiholomorphic derivatives $\dbar_iH$ vanish. Thus $H$ descends to a map
$$
\overline H\ \colon\Big(C^\infty_{\wt{\mathsf K}^*}/I_1^2\Big)^{\oplus n}\To C^\infty_{\wt{\mathsf K}^*}/I_1^2,
$$
and we are left with showing that $\overline H$ preserves $\phi(\cO_{2M})\subset C^\infty_{\wt{\mathsf K}^*}/I_1^2$.

This is a local property, and $\overline H$ obviously preserves $\cO_{2M}$, so it is sufficient to prove that it commutes with $\phi_i$ over $2M_i$:
$$
\overline H\(\phi_i(u_1),\ldots,\phi_i(u_n)\)\=\phi_i\(\;\overline H(u_1,\ldots,u_n)\).
$$
This follows from another application of Hadamard's Lemma which, together with the formula \eqref{phii} for $\phi_i$, shows that, working modulo $I_1^2$,
\beqa
\overline H\(\phi_i(u_1),\ldots,\phi_i(u_n)\) \!\!&=&\!\!
H\(u_1,\ldots,u_n\)+\sum\nolimits_j(\partial_j H)\(u_1,\ldots,u_n\).\Langle\partial u_j, e_i\Rangle \\
\!\!&=&\!\! H\(u_1,\ldots,u_n\)+\Langle\partial\(H(u_1,\ldots,u_n)\), e_i\Rangle \\
\!\!&=&\!\! \phi_i\(\;\overline H(u_1,\ldots,u_n)\).
\eeqa

\subsection{The global complex Kuranishi chart}\label{gKc} We can now enhance $M$'s $\cA_2$-embedding of Theorem \ref{summary} to a global complex Kuranishi chart for $M$. Pick trivial algebraic bundles
\beq{tildeEi}
E_i\,\text{ over the total space of }\,\wt K_i^*
\eeq
extending the trivial algebraic bundles $E_1|_{M_i}$, and algebraic sections $s_i$ extending the section of $E_1\otimes\cI|_{M_i}$ given by the central vertical arrow of \eqref{normf}. Restricting to open neighbourhoods $V_i\subset\wt K_i^*$ of $M_i$ gives local holomorphic Kuranishi charts
\beq{lockuran}
(V_i,E_i,s_i).
\eeq
We would like to $\cA$-glue these to give a global chart. By Theorem \ref{extendE} there exists an $\cA_1$-bundle $E$ over some open neighbourhood $V$ of $M\subset\wt K^*$ with $E|_M\cong E_1$ and which is isomorphic over $V_i\cap V$ to $E_i\otimes\_{\cO_{V_i}}\cA_1$\vspace{-1mm} (since it is trivial over $V_i\cap V$). By shrinking $V$ and $V_i$ we may assume that $V_i=\wt K_i^*\cap V$ and that the surjection  $E^{-1}\onto\cI$ on $M$ of \eqref{normf} extends to a surjection of $\cA_1$-modules
$$
E^*\stackrel s\Onto I_2
$$
by Theorem \ref{extsec}. This gives our global complex Kuranishi chart $(V,E,s)$. By construction and \eqref{key} the associated perfect obstruction theory
\beq{Kpot}
\xymatrix@R=18pt{
E^*\big|_M\cong E^{-1} \ar[r]^{ds}\ar[d]_s& \Omega_V\big|_M\cong E^0 \ar@{=}[d]\\
I_2\big/I_1^2\cong\cI \ar[r]& \Omega_V\big|_M}
\eeq
is \eqref{normf}. This proves Theorem \ref{1}. 


\subsection{Purely holomorphic description}\label{phd}
In sum, starting with the local holomorphic charts $(V_i,E_i,s_i)$ \eqref{lockuran}, we have found $\cA_2$-gluings $\psi_{ij}$ of the $V_i$ and $\cA_1$-gluings $\Phi_{ij}$ of the $E_i$. Furthermore we have a section $s$ of the glued bundle $E$ whose restriction $s\colon E^*|_M\to\cI$ to $M_i$ is $s_i\colon E_i^*|_{M_i}\to\cI$. Thus $s|_{V_i}-s_i$ has image in $I_1^2$ so using the same argument as in (\ref{os21}, \ref{os22}),
\beq{close}
s|_{V_i}\=s_i+O\(s_i^2\)
\eeq
in the sense of Definition \ref{defO}. Thus the $(\psi_{ij},\Phi_{ij})$ glue the $s_i$, up to an $O(s_i^2)$ correction, to give $s$. By Definition \ref{KNdef} this means $(\psi_{ij},\Phi_{ij})$ glue the holomorphic Kuranishi charts $(V_i,E_i,s_i)$.

Both $\psi_{ij}$ and $\Phi_{ij}$ satisfy the cocycle condition and so give us the global ambient space $V$ and the global $\cA_1$-bundle $E$ over it. Then we have the global $\cA_1$-section $s$ defining the global Kuranishi chart $(V,E,s)$.

We end by noting that by Theorem \ref{affKurpot} we can replace each of these $\cA$-gluings $(\psi_{ij},\Phi_{ij})$ by a homotopic
\beq{holo'}
\text{ \emph{holomorphic} gluing }\,\(\psi'_{ij},\,\Phi'_{ij}\).
\eeq
The $\psi'_{ij}$ (or $\Phi'_{ij}$) do not generally satisfy the cocycle condition on their own, so we do not get a holomorphic structure on $V$ (or $E$). But as morphisms of complex Kuranishi charts, the pairs \eqref{holo'} do satisfy the cocycle condition up to homotopy in the sense of Definition \ref{morf}. They holomorphically glue the holomorphic Kuranishi charts $(V_i,E_i,s_i)$ to define the same complex Kuranishi structure as the global $\cA$-Kuranishi chart $(V,E,s)$ of Theorem \ref{1}.

\section{Cones}\label{Cones}
Again let $(M,E\udot)$ be a projective scheme with perfect obstruction theory.
From this data Beherend-Fantechi produce a cone $C\subset E_1:=(E^{-1})^*$ by pulling back their intrinsic normal cone from the stack $E_1/E_0$ (called $h^1/h^0\((E\udot)^\vee\)$ in \cite{BF}).

Their construction is algebraic, of course. We can describe it locally in terms of the algebraic Kuranishi charts \eqref{lockuran} made from the perfect obstruction theory. Let $I$ denote the ideal of $M_i\subset\wt K_i$, so that $I/I^2=\cI$. Then the cone is
\beq{coneJ}
\mathrm{Spec}\,\bigoplus_{n\ge0}I^n/I^{n+1}\ \subset\ \mathrm{Spec}\,\bigoplus_{i\ge0}\,\Sym^n(I/I^2)\=\mathrm{Spec}\,\bigoplus_{i\ge0}\,\Sym^n\cI\ \subset\ E_1,
\eeq
This glues over overlaps $M_{ij}$, even though $I$ does not in general. Since $\cI$ and $E_1$ are global sheaves over $M$, the right hand side of \eqref{coneJ} obviously glues. It is less obvious, but also true, that the left hand side glues to give the cone $C\subset E_1$ over $M$.

Denote the inclusion map from the total space of $E|_M$ to the total space of $E$ over $V\subset\wt K^*$ (the global Kuranishi chart of Section \ref{gKc}) by
\beq{j}
j\ \colon\,E_1\Into E. 
\eeq
Inside $E$ we will next show the Borel-Moore fundamental class $j_*[C]$ is homologous to the graph $\Gamma_{\!s}\subset E$ of $s$. For this we will need some notation.

\subsection*{$\epsilon$-neighbourhoods} Fix a projective subscheme $Z$ of a quasi-projective scheme $X$. Pulling back the Fubini-Study metric  from a projective algebraic embedding $X\into\PP^n$ defines a metric $d$ on $X$. Then we let
\begin{equation}\label{retract}
U_X^\epsilon(Z)\ :=\ \big\{x\in X\,\colon\, d(x,Z)<\epsilon\big\}
\end{equation}
with closure $\overline U_{\!X}^{\;\epsilon}(Z)=\{x\in X\colon d(x,Z)\le\epsilon\}$ and boundary
$$
\partial\overline U_{\!X}^{\;\epsilon}(Z)\ :=\ \big\{x\in X\,\colon\, d(x,Z)=\epsilon\big\}.
$$
This is empty if $Z=X$. We will always assume --- but often omit to say --- that $\epsilon>0$ is taken sufficiently small, and we permit ourselves to shrink it during proofs. Thus the choice of $d$ is unimportant, and $Z\subset U_X^\epsilon(Z)$ is a deformation retract. In particular the inclusion induces an isomorphism $H_*(Z)\cong H_*\big(U_X^\epsilon(Z)\big)$. 

\begin{prop}\label{BJ2} In the total space of $E$ over $V\!$, let $U=U_{E}^\epsilon(M)$. Then 
\begin{equation}\label{propr}
\big[\Gamma_{\!s}\cap\overline U\;\big]\ =\ \big[j_*\;C\cap\overline U\;\big]\ \in\ H_*\big(\,\overline U,\,\partial\overline U\take 0_{E}\big).
\end{equation}
Here $0_E$ is the zero section of $E$ and $\partial\overline U\take 0_{E}$ is shorthand for $\partial\overline U\take(\partial\overline U\cap0_E)$.
\end{prop}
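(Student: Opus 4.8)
The plan is to realise both $\Gamma_{\!s}\cap\overline U$ and $j_*C\cap\overline U$ as the two ends of a single one-parameter degeneration taking place \emph{inside} $\overline U$ --- namely the classical deformation to the normal cone, rewritten in terms of the fibrewise scaling on $E$. Let $m_t\colon E\to E$, $x\mapsto tx$, be the $\C^*$-action scaling the fibres of the $\cA_1$-bundle $E$; since the gluing maps $\Phi_{ij}$ are linear on fibres it is globally defined on the total space. Because $s$ cuts out $M$ scheme-theoretically we have $\Gamma_{\!s}\cap 0_E=M$, and $j_*C\cap 0_E=M$ (the vertex section of the cone), and $M$ lies in the interior of $\overline U$; so $\Gamma_{\!s}\cap\overline U$ and $j_*C\cap\overline U$ are genuine relative cycles in $\big(\overline U,\partial\overline U\take 0_E\big)$ to begin with.

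Concretely, inside $E\times[1,\infty]$ --- where $[1,\infty]\subset\PP^1$ is the closed arc --- I would form the chain
$$
\mathcal W\ :=\ \overline{\big\{\,\big(m_t(x),t\big)\ :\ x\in\Gamma_{\!s},\ t\in[1,\infty)\,\big\}}\ \cap\ \big(\overline U\times[1,\infty]\big),
$$
of real dimension $2\dim V+1$, whose slice over $t\in[1,\infty)$ is $\Gamma_{\!ts}\cap\overline U$ and whose slice over $t=\infty$ I claim is $j_*C\cap\overline U$. The last claim is verified \emph{locally}: over one of the affine pieces $M_i$ we have the algebraic model $(V_i,E_i,s_i)$ of \eqref{lockuran}, with $s=s_i+O(s_i^2)$ by \eqref{close}; the closure of $\{((v,t\,s_i(v)),t)\}$ in $E_i\times\PP^1$ is the standard deformation to the normal cone of $M_i\subset V_i$, whose fibre over $\infty$ is $C_{M_i/V_i}$, and the $O(s_i^2)$ correction is dominated by the leading term $t\,s_i$ and so does not alter the limit. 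By the gluing of the $C_{M_i/V_i}$ to the Behrend--Fantechi cone $C\subset E_1$ recalled around \eqref{coneJ}, the $t=\infty$ slices glue to $j_*C\cap\overline U$. The point of intersecting with $\overline U$ is that it is compact (for $\epsilon$ small) and that any point of $\Gamma_{\!ts}$ with $s(v)$ bounded away from $0$ leaves $\overline U$ as $t\to\infty$, so no mass of $\mathcal W$ escapes to infinity and the $t=\infty$ slice is supported over $M$.

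Granting that $\mathcal W$ is a legitimate chain with no boundary contribution from the closure process, Stokes then gives
$$
\partial\mathcal W\ =\ \big(\Gamma_{\!s}\cap\overline U\big)\ -\ \big(j_*C\cap\overline U\big)\ +\ \mathcal W\cap\big(\partial\overline U\times[1,\infty]\big),
$$
and the last term is disjoint from $0_E$: if $\big((v,t\,s(v)),t\big)$ with $t$ finite lies in $0_E$ then $s(v)=0$, so $v\in M\subset\mathrm{int}\,\overline U$, contradicting $(v,t\,s(v))\in\partial\overline U$; and over $t=\infty$ the slice is $C$, which meets $0_E$ only along $M\subset\mathrm{int}\,\overline U$. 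Hence $\mathcal W$ exhibits a homology $\Gamma_{\!s}\cap\overline U\sim j_*C\cap\overline U$ relative to $\partial\overline U\take 0_E$, which is \eqref{propr}.

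The main obstacle I expect is making the local-to-global and $\cA$-versus-algebraic steps rigorous at once: one must show (i) that $\mathcal W$, although built from the merely $\cA_1$ section $s$, is a bona fide (sub)analytic chain carrying a fundamental class whose only boundary is the three pieces above --- in particular that over each $V_i$ it agrees with the classical deformation to the normal cone up to a correction irrelevant in the $t\to\infty$ limit --- and (ii) that the $t=\infty$ slices computed chart by chart assemble to the \emph{global} cone $C$ of \eqref{coneJ}, i.e. that $\lim_{t\to\infty}\Gamma_{\!ts}=j_*C$ as currents. Controlling the $O(s_i^2)$ terms uniformly as $t\to\infty$ --- either by interpolating $s_u=s_i+u(s-s_i)$ and running the same scaling argument on the resulting two-parameter family, or by a direct $C^1$-closeness estimate for the rescaled graphs --- is the delicate point; everything else is bookkeeping with the compactness of $\overline U$ and the elementary fact that $\Gamma_{\!ts}$ meets $\partial\overline U$ away from $0_E$.
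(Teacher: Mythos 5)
Your proposal is correct and follows essentially the same path as the paper: consider the one-parameter family $\Gamma_{\!ts}$, $t\in[1,\infty)$, as a chain in $E$ bounding $\Gamma_{\!s}$ at $t=1$ and $j_*C$ at $t=\infty$, reduce the identification of the $t=\infty$ slice to the local algebraic charts $(V_i,E_i,s_i)$ where $s=s_i+O(s_i^2)$, and observe the whole chain avoids $\partial\overline U\cap 0_E$. The ``delicate point'' you flag --- controlling the $O(s_i^2)$ correction uniformly as $t\to\infty$ --- is exactly what the paper addresses, and precisely by your first suggested route: interpolating $\Delta=\bigcup_{\lambda\in[0,t]}\Gamma_{\!ts_i+\lambda e_i}$ with $e_i:=s-s_i$, bounding its thickness by $|te_i|\le C_1\epsilon^2/|t|$ on $\Gamma_{\!ts_i}\cap U_i$, and applying Stokes against test forms to show the two families of graphs have the same limit as integral currents (the algebraic limit $\Gamma_{\!ts_i}\to j_*C$ being standard from the Hilbert-scheme description of deformation to the normal cone).
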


\begin{proof}
Consider the union of the graphs
\beq{unions}
\bigcup_{t\in[1,\infty)}\Gamma_{\!ts}\ \subset\ E.
\eeq
This naturally defines a Borel-Moore chain in $E$ with boundary $\Gamma_{\!s}$ at $t=1$. We claim its closure, intersected with $\overline U$, defines a chain in $C_*(\overline U,\partial\overline U\take0_E)$ with additional boundary the fundamental class of $j_*\;C$ at $t=\infty$.

To prove this it is sufficient to work locally. Thus we may replace $(V,E)$ by $\(V_i:=V\cap\wt K_i^*,E_i\)$ \eqref{tildeEi}, $U=U_{E}^\epsilon(M)$ by $U_i:=U\cap E_i$ and $s$ by $s_i$ \eqref{lockuran}. These are all algebraic, and we recall from \eqref{close} that
\beq{close2}
s|_{V_i}\=s_i+O\(s_i^2\).
\eeq
It is standard that, in the Hilbert scheme of algebraic subschemes of $E_i$, there is a unique limit of the algebraic graphs $\Gamma_{\!ts_i}$ and it is the cone \eqref{coneJ} pushed forward by $j$ \eqref{j}. See for example \cite[Remark 5.1.1.]{Fu}. Therefore $j_*[C]$ is also the limit in the space of integral currents.\footnote{We thank Vivek Shende for suggesting we use integral currents. Since they carry fundamental homology classes they are a good replacement in the $C^\infty$ setting for the use of subschemes in the algebraic case.}
So we are left with checking that the estimate \eqref{close2} implies the graphs $\Gamma_{\!ts}$ and $\Gamma_{\!ts_i}$ have the same limits \emph{as integral currents} as $t\to\infty$.

We write $e_i:=s-s_i$. In any hermitian metric on $E_i$ we have an estimate
$$
|e_i|\ \le\ C_1|s_i|^2
$$
from \eqref{close2}. Thus on $\Gamma_{\!ts_i}\cap U_i\subset E_i$ we have
\begin{equation}\label{esti}
|ts_i|\ <\ \epsilon \quad\text{and so}\quad |te_i|\ \le\ \frac{C_1\epsilon^2}{|t|}\,.
\end{equation}
Fix any compactly supported $C^\infty$ test form $\sigma$ on $U_i$ and 
apply Stokes' theorem to the chain
$$
\Delta=\bigcup_{\lambda\in[0,t]}\Gamma_{\!ts_i+\lambda e_i}\ \cap\,U_i
$$
bounding the two graphs. Using \eqref{esti} to bound its thickness, and the fact that $\sigma$ is zero on $\partial\overline U_i$, we get the estimate
\begin{equation}\label{bownd}
\left|\int_{\Gamma_{\!ts_i}\cap\;U_i}\sigma-\int_{\Gamma_{\!ts\;}\cap\;U_i}\sigma\right|\ \le\ \left|\int_\Delta d\sigma\right|+\left|\int_{\overline\Delta\cap\;\partial\overline U_i}\sigma\right|\,\stackrel{\eqref{esti}}\le\,\frac{C_2\epsilon^2}{|t|}\,.
\end{equation}
As $t\to\infty$, \eqref{bownd} tends to zero, so
$$
\lim_{t\to\infty}\int_{\Gamma_{\!ts}\;\cap\;U_i}\sigma\ =\ 
\lim_{t\to\infty}\int_{\Gamma_{\!ts_i}\;\cap\;\overline U_i}\sigma
\ =\ \int_{C\;\cap\;\overline U_i}\sigma.
$$
Thus the integral currents $\Gamma_{\!ts}$ tend to $j_*\;C$ on $U_i$, as required. Finally we note that the graphs and $\Delta$ intersect $0_{E}$ (in the set-theoretic sense) only in $M$, so they all miss $\partial\overline U\cap 0_{E}$.
\end{proof}

\subsection*{Virtual cycles} For a general Joycian $\mu$-Kuranishi structure there is also a global Kuranishi chart \cite[Section 14 and Corollary 4.36]{Jvir}. In that setting the virtual cycle is defined topologically in the deformation retract $\overline U$ of $M$ by taking the homology class 
\beq{gammaU}
\big[\Gamma_{\!s}\cap\overline U\;\big]\ \in\ H_{\dim\_{\R\!}V}\big(\,\overline U,\,\partial\overline U\take0_{E}\big)
\eeq
and capping it with the Thom form
$$
\big[0_{E}\big]\ \in\ H^{\;\rk\_{\R\!}E}\big(\,\overline U,\,\partial\overline  U\take0_{E}\big).
$$
This defines the virtual cycle as their topological intersection
\beq{diffvc}
\Gamma_{\!s}\cdot0_{E}\ \in\ H_i\(\;\overline U,\Z\)\ \cong\ H_i(M,\Z), \qquad i\,=\,\dim\_{\R\!}V-\rk\_{\R\!}E.
\eeq
When $M$ is projective with perfect obstruction theory the graphs $\Gamma_{\!ts}$ limit to the cone $C$ as $t\to\infty$. This has the advantage of lying in $E_1=E|_M$ over $M$, instead of just being inside $E$ over $V$. Now $i=2v$ where $v$ is the virtual dimension $\dim\_{\C\!}V-\rk\_{\C\!}E$ and the 
Behrend-Fantechi virtual cycle is
\beq{BFvc}
0_{E_1}^{\;!}[C]\ \in\ A_v(M)\To H_{2v}(M,\Z).
\eeq

\begin{prop}\label{vc=vc}
The two virtual cycles \emph{(\ref{diffvc}, \ref{BFvc})} agree.
\end{prop}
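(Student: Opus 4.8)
The plan is to recognise both virtual cycles as the result of intersecting a Borel--Moore cycle with a zero section, to use Proposition \ref{BJ2} to replace $\Gamma_{\!s}$ by $j_*C$ in this picture, and then to invoke the compatibility of the algebraic Gysin homomorphism $0^{\,!}_{E_1}$ with its topological counterpart under the cycle class map.

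First I would unwind \eqref{diffvc}. The differential-geometric virtual cycle is the image of $[\Gamma_{\!s}\cap\overline U]\in H_{\dim_{\R}V}\big(\overline U,\partial\overline U\take0_E\big)$ under the cap product with the Thom class $[0_E]\in H^{\rk_{\R}E}\big(\overline U,\partial\overline U\take0_E\big)$, which lands in $H_{2v}$ of $\overline U\cap 0_E$, a neighbourhood of $M$ in $0_E=V$ that retracts onto $M$. By Proposition \ref{BJ2} we may replace $[\Gamma_{\!s}\cap\overline U]$ by $[\,j_*C\cap\overline U\,]$ without changing this image. So it suffices to show that the resulting class $(j_*C)\cdot 0_E\in H_{2v}(M)$ equals the image of $0^{\,!}_{E_1}[C]\in A_v(M)$ under the cycle class map $A_v(M)\to H_{2v}(M)$ of \eqref{BFvc}.

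Since $j_*C$ is supported in $E_1=E|_M$, and $E_1$ is the pullback of the bundle $E\to V$ along the closed embedding $M\subset V$, the Thom class of $E$ restricts along $j\colon E_1\into E$ to the Thom class of $E_1\to M$; by the projection formula for the proper (closed) map $j$ this yields $(j_*C)\cdot 0_E=C\cdot 0_{E_1}$ in $H_{2v}(M)$, where the right-hand side caps the Borel--Moore class $[C]$ on $E_1$ with the Thom class of $E_1$. This reduces the claim to the single vector bundle $\pi\colon E_1\to M$. Here no deformation to the normal cone is needed: $0^{\,!}_{E_1}$ is the inverse of the flat-pullback isomorphism $\pi^*\colon A_v(M)\to A_{\dim_{\C}V}(E_1)$, so $0^{\,!}_{E_1}[C]=(\pi^*)^{-1}[C]$; under the cycle class map $\pi^*$ becomes the topological Thom isomorphism $H_{2v}(M)\to H^{BM}_{2\dim_{\C}V}(E_1)$, and taking the preimage of $[C]$ is exactly $C\cdot 0_{E_1}$. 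The compatibility of the cycle class map with these pullbacks --- equivalently, with the refined Gysin homomorphisms of regular embeddings --- is standard; see \cite[Ch.~19]{Fu}. Chaining the equalities gives $\Gamma_{\!s}\cdot 0_E=(j_*C)\cdot 0_E=C\cdot 0_{E_1}=0^{\,!}_{E_1}[C]$ in $H_{2v}(M)$.

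The genuinely fiddly part, I expect, is the bookkeeping in the first two steps: matching the relative homology groups, Thom classes and neighbourhood retracts of \eqref{retract} that enter the topological definition \eqref{diffvc} with the Chow-group formalism of $0^{\,!}_{E_1}$, and checking that the support of $j_*C$ --- which meets $0_E$ set-theoretically only over $M$, exactly where $\Gamma_{\!s}$ does --- is concentrated enough that both cap products land in $H_{2v}(M)$ via the stated retractions. Given Proposition \ref{BJ2} and the input from \cite[Ch.~19]{Fu}, everything else is routine.
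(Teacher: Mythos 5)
Your proposal is correct and follows essentially the same route as the paper: Proposition \ref{BJ2} to swap $[\Gamma_{\!s}]$ for $j_*[C]$, the projection formula along $j$ to reduce to a cap product inside $E_1$, and compatibility of the refined Gysin map $0^{\,!}_{E_1}$ with its topological counterpart. The paper compresses the last step into the single equality $C\cdot 0_{E_1}=0^{\,!}_{E_1}[C]$; your expansion via the observation that $0^{\,!}_{E_1}$ is the inverse of $\pi^*$ and the cycle class map intertwines $\pi^*$ with the Thom isomorphism is just the standard justification the paper leaves implicit.
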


\begin{proof}
Proposition \ref{BJ2} equates \eqref{gammaU} with $[j_*\;C\cap\overline U\;]\in H_{\dim V}\big(\,\overline U,\,\partial\overline  U\take0_{E}\big)$ by \eqref{propr}. Thus \eqref{diffvc} equals the cap product
\[
H_{2v}\(\;\overline U,\Z\)\ \ni\ j_*\;C\cdot0_{E}\=j_*(C\cdot j^*0_E)\=j_*(C\cdot 0_{E_1})\=j_*(0_{E_1}^{\;!}[C]). \qedhere
\]
\end{proof}

See \cite{LT2, Sie} for an analogous result in the more difficult case of $M$ being a Deligne-Mumford stack. In fact Siebert \cite[Section 3]{Sie} finds a way to take a limit of the graphs $\Gamma_{\!ts}$ to define a generalised cone even in non-holomorphic settings; this recovers the Behrend-Fantechi cone when everything is algebraic.

\section{Moduli of sheaves on Calabi-Yau fourfolds}\label{sheaf4}

Unfortunately Proposition \ref{vc=vc} is not relevant for the topic of the rest of this paper --- namely the virtual cycles of \cite{BJ, OT1} on moduli spaces $M$ of stable sheaves on Calabi-Yau 4-folds. These are defined by intersecting $\Gamma_s$ (or $C$) \emph{not} with the zero section of $E$ (or $E|_M=E_1$) but \emph{with a half dimensional subbundle} instead.

Before describing the theory, we give a brief overview of the plan in the case that the virtual dimension $\vd$ \eqref{vddef} is even.

We will see we can take $E_1$ to be an $SO(2n,\C)$ bundle inside which $C$ is isotropic. Moreover $E$ and $s$ are made from patching together $SO(2n,\C)$ bundles and isotropic sections. Then 
\begin{enumerate}
\item[(i)] the Borisov-Joyce virtual cycle \cite{BJ} equals the intersection of $\Gamma_{\!s}$ with a maximal negative definite real subbundle $iE^\R\subset E$, while
\item[(ii)] the algebraic virtual cycle \cite{OT1} is $(-1)^n$ times by the intersection of $C$ with a positive\footnote{In the sense of \cite[Definition 2.2]{OT1}.} maximal isotropic subbundle $\Lambda\subset E_1=E|_M$.
\end{enumerate}
We prove (i) in Theorem \ref{BJKur}; here the intersection lies naturally in the zero section $V$ of $E$. In (ii) we have to use cosection localisation \cite{KL} to force the intersection into the zero section $M$ of $E|_M$.

We will prove the two cycles are equal in two main steps,
\begin{enumerate}
\item homotoping $\Gamma_{\!s}$ to $C$ as in Proposition \ref{BJ2} (Theorem \ref{IC})  \emph{and}
\item homotoping $E^\R\subset E$ to $\Lambda\subset E$ (Theorem \ref{KiemLi}).
\end{enumerate}
\medskip

Let $(X,\cO_X(1))$ be a smooth polarised Calabi-Yau 4-fold: $K_X\cong\cO_X$. Let $M=M_c(X)$ be the moduli space of $\cO_X(1)$-Gieseker stable sheaves $F$ on $X$ of a fixed Chern character $c$, selected so that all semistable sheaves with Chern character $c$ are stable. Thus $M$ is projective.

We recall the deformation theory of $M$ from \cite{OT1}. Given a universal twisted sheaf $\cF$ on $\pi:X \times M\to M$, we let
\beq{EE}
\EE\ :=\ \tau^{[-2,0]}\(R\pi_* R\hom (\cF, \cF)[3]\).
\eeq
Relative Serre duality down the map $\pi$ gives an isomorphism
\beq{duel}
\theta\,\colon\,\EE \rt\sim \EE^\vee[2] \quad\text{such that}\quad \theta\=\theta^\vee[2]\ \in\ \Ext^2(\EE,\EE^\vee).
\eeq
Using the Atiyah class of $\cF$ as in \cite{HT} defines a morphism
\begin{align}  \label{ObsTh}
\phi\,\colon\,\EE \To \LL_{M}
\end{align}
which, by \cite[Theorem 4.1]{HT}, is an obstruction theory in the sense of \cite[Definition 4.4]{BF}.
In other words, $h^0(\phi)\colon h^0(\EE) \to \Omega_M$ is an isomorphism, and $h^{-1}(\phi) : h^{-1}(\EE) \to h^{-1}(\mathbb{L}_{M})$ is a surjection. Its virtual dimension is \eqref{vddef},
\beq{vd}
\vd\ :=\ \rk\EE\=2-\chi(c,c).
\eeq

However $\phi$ is\emph{ not a perfect obstruction theory} because $\EE$ is perfect of amplitude $[-2,0]$ rather $[-1,0]$. (Equivalently, in general $\Ext^3(F,F)=\Ext^1(F,F)^*$ is nonzero for $F\in M$, so $\EE$ can have $h^{-2}\ne0$.) So we cannot apply \cite{BF} to get a virtual cycle.

To describe the virtual cycles of \cite{BJ, OT1} we use a standard form for the obstruction theory \eqref{ObsTh}.

\begin{prop}\cite[Propositions 4.1 and 4.2]{OT1}\label{form} Fix an embedding $M\subset\PP^N$ with ideal $J\subset\cO_{\PP^N}$. On $M$ there is a bundle $T$, an $SO(r,\C)$ bundle $E_1$, and a surjective map of complexes
\beq{E*E}
\xymatrix@R=2pt{
T \ar[r]^{a^*}& E_1 \ar@{->>}[dd]\ar[r]^a& \ T^* \ar@{->>}[dd] && E\udot\!\ar[dd] \\ &&&= \\
& J/J^2 \ar[r]^-d& \Omega_{\PP^N}\big|_{M}\hspace{-5mm} && \LL_M\!}
\eeq
which represents the obstruction theory $\EE\to\LL_{M}$ \eqref{ObsTh}. 

Here the quadratic form $q$ on $E_1$ gives the isomorphism $q\colon E_1\cong E_1^*$ used in defining $a^*$. The quasi-isomorphism $\EE\cong E\udot$ intertwines the self-dualities
$$
(\id_T,\,q,\,\id_{T^*})\ \colon\, E\udot\rt\sim E_\bullet[2] \quad\text{and}\quad \theta\ \colon\,\EE\xrightarrow[\eqref{duel}]{\ \sim\ }\EE^\vee[2].\vspace{-9mm}
$$
\ $\hfill\square$
\end{prop}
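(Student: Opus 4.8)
This is \cite[Propositions 4.1 and 4.2]{OT1}, so I only outline the construction, whose point is to upgrade the Serre-duality self-duality $\theta$ of \eqref{duel} to a \emph{strict} self-duality of an explicit three-term complex of bundles. First I would fix the projective embedding $M\subset\PP^N$ once and for all, so that $\LL_M$ is represented by the genuine two-term complex $\{J/J^2\xrightarrow{\,d\,}\Omega_{\PP^N}|_M\}$ in degrees $-1,0$. Since $M$ is projective it has enough locally free sheaves, so $\EE$ admits a locally free resolution; using that $\EE$ has perfect amplitude $[-2,0]$ I would replace it by a three-term complex $\{A^{-2}\to A^{-1}\to A^0\}$ of vector bundles and realise the obstruction theory map $\phi$ of \eqref{ObsTh} as an honest map of complexes. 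Adding acyclic complexes $\cO_M(-n)^{\oplus k}\xrightarrow{\,\id\,}\cO_M(-n)^{\oplus k}$ with $n\gg0$ to degrees $-1$ and $0$ makes $A^{-1}\twoheadrightarrow J/J^2$ and $A^0\twoheadrightarrow\Omega_{\PP^N}|_M$ surjective; since $\phi$ is already an isomorphism on $h^0$ and a surjection on $h^{-1}$, this is compatible with the induced map of complexes being degreewise surjective onto $\LL_M$.

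The heart of the argument is to make this representative \emph{strictly} self-dual. Using the homological algebra of symmetric perfect complexes together with the identity $\theta=\theta^\vee[2]$ of \eqref{duel}, I would first replace $\{A^{-2}\to A^{-1}\to A^0\}$ by a quasi-isomorphic three-term complex on which $\theta$ is realised by an honest symmetric map of complexes --- degree $0$ term dual to degree $-2$ term, the two outer differentials mutually dual, the middle map symmetric --- and then absorb the radical of the middle form into a hyperbolic acyclic summand, folding the offending parts of the differentials into extra generators. The outcome is a representative
$$
\big\{\, T\xrightarrow{\ a^{*}=q^{-1}a^{\vee}\ }E_1\xrightarrow{\ a\ }T^{*}\,\big\}
$$
in which $q$ is a non-degenerate quadratic form on $E_1$ inducing the self-duality $(\id_T,q,\id_{T^*})$, the non-degeneracy reflecting the fact that $\theta$ is a quasi-isomorphism. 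Throughout I would track the map to $\LL_M$, shrinking and modifying exactly as in the normal-form discussion of Section \ref{global} so that $E_1\twoheadrightarrow J/J^2$ and $T^*\twoheadrightarrow\Omega_{\PP^N}|_M$ remain surjective; the new summands are acyclic, so the complex still represents $\EE$ and the map still represents $\phi$.

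It remains to reduce the structure group of $(E_1,q)$ from $O(r,\C)$ to $SO(r,\C)$, which I would do using the canonical orientation data on moduli of sheaves on a Calabi-Yau $4$-fold, and to check that the self-duality intertwines with $\theta$ --- automatic, since $q$ is built from $\theta$. The main obstacle is the middle step: producing a strictly symmetric representative with a \emph{non-degenerate} middle pairing while simultaneously retaining the degreewise surjection onto the chosen representative of $\LL_M$. These two demands pull against each other, and reconciling them --- keeping careful control of which summands are acyclic padding and which carry the obstruction-theory content --- is where the real work lies; by comparison the orientation step is a citation rather than a genuine difficulty.
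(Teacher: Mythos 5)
The paper itself gives no proof here — it cites \cite[Propositions 4.1 and 4.2]{OT1} and closes the statement with $\square$ — so what is being assessed is whether your sketch would actually reconstruct that argument. The setup (resolve $\EE$ by bundles, add acyclic pieces to force degreewise surjections onto $\{J/J^2\to\Omega_{\PP^N}|_M\}$, then strictify the self-duality, then reduce to $SO(r,\C)$ via orientations) is the right shape, and the orientation step is legitimately a citation. The problem is the middle step, which you correctly identify as the real work but do not actually carry out.

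Concretely: suppose you have arranged what your sketch describes — a three-term complex with $A^0\cong(A^{-2})^*$, a strictly symmetric chain map $\alpha\colon A\udot\to A_\bullet[2]$ with the two outer components equal to the identity (``the two outer differentials mutually dual''), and a symmetric middle map $\alpha_{-1}\colon A^{-1}\to(A^{-1})^*$. Then contracting the injective first and surjective last differentials of $\Cone(\alpha)$ shows $\Cone(\alpha)$ is quasi-isomorphic to the two-term complex $\{A^{-1}\xrightarrow{\alpha_{-1}}(A^{-1})^*\}$, so acyclicity of the cone (i.e.\ $\theta$ being a quasi-isomorphism) already forces $\alpha_{-1}$ to be an isomorphism. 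There is no radical left to absorb; the whole content of the proposition is hiding in the claim that you can reach this normalized strict form globally. In the weaker situation where the outer components of $\alpha$ are \emph{not} identities, the radical of $\alpha_{-1}$ need not have constant rank (so it is not a subbundle), and even when it is a subbundle, ``absorbing it into a hyperbolic acyclic summand'' requires a global splitting of the complex. That is precisely the operation this paper flags as unavailable in Section \ref{global} around \eqref{normal}: one cannot globally split off the acyclic $K\xrightarrow{\sim}K$ because $K\into E^0$ need not lift to $E^{-1}$; the same lifting obstruction blocks your proposed move. The actual argument in \cite{OT1} gets around this by enlarging rather than quotienting, and that is the step your proposal elides — so the sketch, while pointing at the right circle of ideas and honestly flagging the difficulty, does not in fact bridge it.
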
\vspace{1mm}

Consider the stupid truncation $\tau E\udot$ of the obstruction theory \eqref{E*E},
\beq{*E}
\xymatrix@R=15pt{
E_1 \ar@{->>}[d]\ar[r]^a& T^*\!\! \ar@{->>}[d] \\
J/J^2 \ar[r]^-d& \Omega_{\PP^N}\big|_{M}.\hspace{-6mm}}
\eeq
As in \cite[Section 4.2]{OT1} this defines a stupid perfect obstruction theory\footnote{This perfect obstruction theory ignores the degree $-2$ term $T$ in $E\udot$, so has the wrong virtual dimension and is not quasi-isomorphism invariant. The papers \cite{BJ, OT1} effectively replace $E_1$ by different half-dimensional subbundles when discarding $T$. \label{fn}} for $M$ of virtual dimension
$$
v\=\rk T-\rk E_1\=\vd-\rk T,
$$
cf. \eqref{v}. By \cite{BF} this defines a cone
\begin{equation}\label{coneev}
C\ \subset\ E_1
\end{equation}
over $M$ by pulling back the intrinsic normal cone from $h^1/h^0\((\tau E\udot)^\vee\)=[E_1/T]$. Considering $q$ as a function on the total space of $E_1$ (quadratic on each fibre), \cite[Proposition 4.3]{OT1} shows it restricts to 0 on $C$.

\begin{prop}\label{iso}
The cone $C\subset E_1$ is isotropic: $q|_C\equiv0.\hfill\square$
\end{prop}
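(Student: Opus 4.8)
The plan is to reduce the statement to a purely local computation using the local algebraic Kuranishi charts \eqref{lockuran} built from the stupid perfect obstruction theory \eqref{*E}, and then to identify the defining ideal of the cone with something manifestly annihilated by $q$. First I would recall the classical description of the Behrend-Fantechi cone: over each affine $M_i$ (small enough that $E_1$, $T$ are trivial and the Kodaira-Spencer classes vanish) the stupid obstruction theory \eqref{*E} is represented by a genuine surjection of complexes, and the cone $C|_{M_i}\subset E_1|_{M_i}$ is $\operatorname{Spec}\bigoplus_{n\ge0} I^n/I^{n+1}$, where $I\subset\cO_{\wt K_i}$ is the ideal of $M_i$ inside a smooth ambient space, embedded in $\operatorname{Spec}\bigoplus_n\Sym^n\cI=E_1|_{M_i}$ via the surjection $a^*\colon E_1^*\onto\cI=I/I^2$ (where I am using $q$ to identify $E_1\cong E_1^*$). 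So a function on $E_1|_{M_i}$ restricts to zero on $C$ precisely when, after pulling back along this surjection, it lands in the Rees-type ideal; it suffices to show the fibrewise quadratic function $q$ does so.

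The key point, which I would extract from the self-duality in Proposition \ref{form}, is that the composite $a\circ a^*\colon T\to E_1\to T^*$ is zero (it is a two-term piece of a three-term complex $\EE\cong E\udot$), and dually that $a^*\circ\theta^{-1}\circ a=0$ in the appropriate sense --- concretely, that the section $s_i$ of $E_1|_{M_i}$ (equivalently the map $E_1^*\to\cO_{\wt K_i}$ cutting out $M_i$) is \emph{isotropic} for $q$: $q(s_i,s_i)=0$ as a function on $\wt K_i$. This isotropy of the cutting section is exactly the local Darboux/$(-2)$-shifted-symplectic structure coming from Serre duality \eqref{duel}, and it is what makes the whole construction a "stupid truncation of a self-dual complex" rather than an arbitrary perfect obstruction theory. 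Given that $s_i$ is isotropic, the function $q\in\Sym^2 E_1|_{M_i}^*$ pulls back under $E_1^*\onto\cI$, $x\mapsto s_i(x)$, to $q(s_i,s_i)\in I^2$; hence under the Rees algebra description $q$ vanishes on the subscheme $\operatorname{Spec}\bigoplus_n I^n/I^{n+1}$, i.e. $q|_C\equiv0$ over $M_i$. Since $C$ and $q$ are both global, this local vanishing glues to the global statement.

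Concretely the steps are: (1) pass to the local charts $(V_i,E_i,s_i)$ of \eqref{lockuran} built from \eqref{*E}, with $E_i$ an $SO(r,\C)$-bundle and $s_i$ the section cutting out $M_i$; (2) deduce from the self-duality in Proposition \ref{form} that $q(s_i,s_i)\in I^2$ (isotropy of the cutting section, equivalently $a\circ a^*=0$ plus Serre duality); (3) recall that $C|_{M_i}$ is the limit $\lim_{t\to\infty}\Gamma_{ts_i}$, i.e. the normal cone $C_{M_i/V_i}\subset E_1|_{M_i}$, whose ideal pulls back from $\bigoplus_n I^n/I^{n+1}$; (4) observe that a fibrewise polynomial function on $E_1$ restricts to $0$ on this cone iff its pullback along $x\mapsto s_i(x)$ lies in $\bigoplus_n I^n$, and check $q\mapsto q(s_i,s_i)\in I^2$ does; (5) conclude $q|_C\equiv0$ on each $M_i$, hence globally. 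I expect the main obstacle to be step (2): making precise, at the level of the explicit two-term complex \eqref{*E} rather than in the derived category, why the self-duality $\theta$ forces the image ideal $I=\operatorname{im}(s_i)$ to be isotropic for $q$ --- one has to track how the quasi-isomorphism $\EE\cong E\udot$ intertwines $\theta$ with $(\id_T,q,\id_{T^*})$ and unwind that $a$ and $a^*$ are genuinely adjoint under $q$, so that $q(s_i,s_i)=q(a^*(-),a^*(-))$ factors through $a\circ a^*=0$ modulo $I^2$. This is essentially \cite[Proposition 4.3]{OT1} and the argument there (or a short direct computation with the Koszul/Rees algebra) can be quoted.
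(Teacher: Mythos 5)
The overall reduction --- local charts, Rees algebra $\bigoplus_n I^n/I^{n+1}$, test the image of $q$ --- is the right framing, but there is a genuine gap in your key step (2) that is masked by an off-by-one error in the vanishing criterion of step (4). A degree-$n$ fibrewise polynomial $p$ on $E_1$ restricts to $0$ on $C=\Spec\bigoplus_n I^n/I^{n+1}$ iff its image under the degree-$n$ part of $\Sym^\bullet E_1^*\onto\bigoplus_n I^n/I^{n+1}$ vanishes, i.e.\ iff $p(s_i)\in I^{n+1}$, \emph{not} $I^n$: the latter is automatic because each component of $s_i$ already lies in $I$. For $q$ the real check is $q(s_i,s_i)\in I^3$; the check $q(s_i,s_i)\in I^2$ you run carries no content.

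And $q(s_i,s_i)\in I^3$ does \emph{not} follow from the self-dual complex of Proposition \ref{form} alone. The condition $a\circ a^*=0$ is equivalent only to the vanishing of the Hessian $q(\partial_i s,\partial_j s)|_M$, which for singular $M$ is strictly weaker than $q(s,s)\in I^3$ because $\Sym^2(I/I^2)\to I^2/I^3$ has a kernel. Concretely, let $M=\Spec\C[x]/(x^3)\subset V=\C$ so $I=(x^3)$, let $E_1=\cO_M e_1\oplus\cO_M e_2$ be hyperbolic, and let $\sigma\colon E_1\onto I/I^2\cong\cO_M$ send $e_1,e_2\mapsto1$. Then $a=d\circ\sigma$ sends $e_1,e_2\mapsto3x^2\,dx$, $a^*$ sends $\partial_x\mapsto3x^2(e_1+e_2)$, and $a\circ a^*(\partial_x)=18x^4\,dx=0$ in $\cO_M$; the $h^0,h^{-1}$ conditions check out, so this is a perfectly good self-dual $3$-term complex whose stupid truncation is a perfect obstruction theory. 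Yet the algebra surjection $\Sym^\bullet E_1^*\onto\bigoplus_n I^n/I^{n+1}\cong\cO_M[t]$ sends $f_1,f_2\mapsto t$, so the hyperbolic $q=2f_1f_2\mapsto 2t^2\neq0$ and the cone is \emph{not} isotropic. The input you are actually missing is the local Darboux theorem of \cite{BG,BBJ,BBBJ} (cf.\ Proposition \ref{03} and \cite[Example 5.16]{BBJ}), which produces charts in which $q(s_i,s_i)\equiv0$ \emph{exactly}. This uses the \emph{closedness} of the $(-2)$-shifted symplectic form, a strictly stronger datum than the Serre-duality pairing $\theta$ recorded in Proposition \ref{form}; your framing of the two as "equivalent" is where the argument breaks. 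That Darboux input, together with the chart-independence of $C$, is what \cite[Proposition 4.3]{OT1} actually uses, and it cannot be replaced by a Koszul/Rees-algebra computation from \eqref{*E} alone.
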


Using the stupid perfect obstruction theory \eqref{*E}, Section \ref{global} now gives us
\beq{ckC}
\text{a global complex Kuranishi chart }\,(V,E,s)\,\text{ for }M.
\eeq
Recall the ambient space $V$ is a neighbourhood of $M$ inside the total space of $\wt K^*$ --- an $\cA_1$-bundle over $\mathring\PP^N$ extending the dual of $K:=\ker\(T^*\onto\Omega_{\PP^N}|_M\)$ over $M$. Its cotangent sheaf satisfies $\Omega_V|_M\cong T^*$. Similarly $E$ is an $\cA_1$-bundle over $V$ extending $E_1$ over $M$, and $s$ is an $\cA_1$-section of $E$ whose restriction to $2M\subset V$ has derivative the map $a$ in \eqref{*E}. The induced diagram \eqref{Kpot},
$$
\xymatrix@R=16pt{
E^* \ar[r]^(.42){ds}\ar[d]_s& \Omega_V\big|_M \ar@{=}[d]\\
I_2/I_1^2 \ar[r]& \Omega_V\big|_M\;.\!\!}
$$
is precisely \eqref{*E}. Therefore
\beq{E*E2}
T_V|_M\rt{(ds)^*}E^*|_M\rt{ds}\Omega_V|_M
\eeq
is the top row $E\udot$ of \eqref{E*E}; in particular it is a complex.\medskip

\subsection*{Real virtual cycle}
Since $SO(r,\C)$ is homotopic to $SO(r,\R)$ there is a real bundle $E_1^\R$ underlying $E_1$ in the sense that
\beq{splitr}
E_1\=E_1^\R\oplus iE_1^\R
\eeq
with $E_1^\R\subset E_1$ a maximal ($\rk_\R E_1^\R=r$) positive definite real subbundle for $q$. In particular $q$ is real and \emph{negative definite} on $iE_1^\R$.

By contrast $q$ \emph{vanishes} on the isotropic cone $C\subset E_1$, so $C$ and $iE_1^\R$ can only intersect in the 0-section $M$. Thus we may intersect $C$ and $iE_1^\R$ inside $E_1$ by taking the homology class
$$
[C]\ \in\ H_{2\rk T}\Big(\;\overline U^{\;\epsilon}_{\!E_1}(M),\ \partial\overline U^{\;\epsilon}_{\!E_1}(M)\take iE_1^\R\Big),
$$
and capping with the Thom form
\beq{thom}
\big[iE_1^\R\big]\ \in\ H^r\Big(\;\overline U^{\;\epsilon}_{\!E_1}(M),\ \partial\overline U^{\;\epsilon}_{\!E_1}(M)\take iE_1^\R\Big).
\eeq
Here the sign of the Thom form of $iE_1^\R$ is fixed by a choice of orientation on the fibres of its normal bundle (the pullback of) $E_1^\R$. But (real) orientations on $E_1^\R$ are equivalent to complex orientations on the $SO(r,\C)$ bundle $E_1$, as described carefully in \cite[Section 2.2]{OT1}. And by \cite[Proposition 4.2]{OT1} complex orientations on $E_1$ are equivalent to complex orientations on $\EE$ \eqref{EE}. A choice of the latter is guaranteed by \cite{CGJ} and fixed once and for all throughout the papers \cite{BJ,OT1} to define their virtual cycles.\footnote{The definitions of complex orientations on orthogonal vector spaces and bundles (\cite[Equation 2.7]{BJ}, \cite[Definition 2.1]{OT1}) and on self-dual complexes (\cite[Definition 2.12]{BJ} and \cite[Equation 59]{OT1} use different sign conventions but are equivalent.}

\begin{defn} By \cite{CGJ} we may fix a complex orientation on $\EE$. As above this orients $E_1^\R$ and hence defines a Thom class \eqref{thom}. We then define the real virtual class to be 
\begin{equation}\label{defint1}
C\cdot iE_1^\R\ \in\ H_{\vd}(M,\Z).
\end{equation}
\end{defn}

To relate \eqref{defint1} to the Borisov-Joyce class \cite{BJ} (from which it will follow it is independent of choices) we show how to replace $C$ by $\Gamma_{\!s}$. For this we extend the splitting \eqref{splitr} over the neighbourhood $V$ of $M\subset\wt K^*$,
\beq{realstr}
E\=E^\R\oplus iE^\R,
\eeq
and extend the holomorphic quadratic form $q$ on $E_1$ to a $C^\infty$ quadratic form $q$ on $E$.
Shrinking $V$ if necessary we may assume that Re\,$q$ is positive definite on $E^\R$. Together the quadratic form and the real structure \eqref{realstr} define a hermitian metric $|\,\cdot\,|$ on $E$.

Pushing $C$ forward from $E_1=E|_M$ to $E$ via $j$ \eqref{j} and setting $U:=U^{\;\epsilon}_{\!E}(M)$ as before we can rewrite \eqref{defint1} by taking the homology class
$$
[j_*\;C]\ \in\ H_{\dim\_\R\!V}\(\;\overline U,\,\partial\overline U\take iE^\R\),
$$
and capping with the Thom form
\beq{2thom}
\big[iE^\R\big]\ \in\ H^{\rk E}\big(\overline U,\,\partial\overline U\take iE^\R\big)
\eeq
to give
\begin{equation}\label{defint15}
j_*\;C\cdot iE^\R\ \in\ H_{\vd}\(\;\overline U,\Z\)\ \cong\ H_{\vd}(M,\Z).
\end{equation}
But $j^*[iE^\R]=[iE_1^\R]$ so \eqref{defint1} and \eqref{defint15} coincide,
\beq{coincide}
C\cdot iE_1^\R\=j_*\;C\cdot iE^\R.
\eeq

Next we claim that for $0<\epsilon\ll1$ sufficiently small, $\Gamma_{\!s}$ also avoids\footnote{In fact the set-theoretic intersection of $\Gamma_{\!s}$ and $iE^\R$ is precisely $M$. With some work this follows from analysing the gluing in Appendix \ref{proof} applied to the local Darboux models $(E_i,s_i)$ of \cite{BBJ} and \cite[Equation 68]{OT1} consisting of isotropic sections $s_i$ of orthogonal bundles $E_i$ over the open cover $V_i$ of $V$.} $\partial\overline U\cap iE^\R$.
Since $\Gamma_{\!ts}$ limits to the isotropic cone $C$ as $t\to\infty$ we can replace $s$ by $ts$ for $t$ sufficiently large (or equivalently, by the $\C^*$-invariance of $C$, take $\epsilon$ sufficiently small) to ensure that on $\Gamma_{\!s}\cap\partial\overline U$,
\begin{itemize}
\item $q$ is as small as we like, and
\item $|s|^2$ is as close as we like to $\epsilon$.
\end{itemize}
So we can easily arrange for the following to hold,
$$
|q(s,s)|\ <\ |s|^2 \,\text{ on }\, \Gamma_{\!s}\cap\partial\overline U.
$$
Since the estimate only improves as we scale $s$ by $t>1$ we note for later the stronger inequality
\beq{t<<}
|q(e,e)|\ <\ |e|^2 \,\text{ for }\,e\,\in\,\Gamma_{\!ts}\cap\partial\overline U,\ \ \forall t\,\ge\,1.
\eeq 
Conversely, on $iE^\R$, by the definition of the Hermitian metric $|\,\cdot\,|$,
\beq{>>}
q(ie,ie)\=-|e|^2 \,\text{ for }\, ie\,\in \,iE^\R\cap\partial\overline U.
\eeq
Therefore $\Gamma_{\!s}$ and $iE^\R$ do not meet in $\partial\overline U$. So we may cap
$$
[\Gamma_{\!s}]\ \in\ H_{\dim_\R\!V}\(\;\overline U,\,\partial\overline U\take iE^\R\)
$$
with the Thom form  $\big[iE^\R\big]$ \eqref{2thom} to define
\beq{defint2}
\Gamma_{\!s}\cdot iE^\R\ \in\ H_{\vd}\(\;\overline U,\Z\)\ \cong\ H_{\vd}(M,\Z).
\eeq
Then the analogue of Proposition \ref{BJ2} for this intersection is the following.

%

\begin{thm}\label{IC} Under the above conditions \eqref{defint1} and \eqref{defint2} coincide,
\beq{train}
\Gamma_{\!s}\cdot iE^\R\=C\cdot iE_1^\R.
\eeq
\end{thm}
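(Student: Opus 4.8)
The plan is to mimic the proof of Proposition \ref{BJ2} almost verbatim, replacing the zero section $0_E$ by the subbundle $iE^\R$ throughout. The point of Proposition \ref{BJ2} was that the chain $\bigcup_{t\in[1,\infty)}\Gamma_{\!ts}$ is a Borel--Moore chain in $E$ whose boundary is $\Gamma_{\!s}$ at $t=1$ and (after taking closures) $j_*\,C$ at $t=\infty$, and that the intermediate graphs $\Gamma_{\!ts}$ never meet the ``forbidden'' locus. Here the forbidden locus is $\partial\overline U\cap iE^\R$, and the estimate that does the work is \eqref{t<<}: for every $t\ge1$, on $\Gamma_{\!ts}\cap\partial\overline U$ we have $|q(e,e)|<|e|^2$, whereas on $iE^\R$ \eqref{>>} gives $q(ie,ie)=-|e|^2$. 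Since these two conditions are incompatible, the whole family $\{\Gamma_{\!ts}\}_{t\in[1,\infty]}$, together with the homotopy chains connecting $\Gamma_{\!ts}$ to the local algebraic graphs $\Gamma_{\!ts_i}$, avoids $\partial\overline U\cap iE^\R$. Hence the union $\bigcup_{t\in[1,\infty)}\Gamma_{\!ts}\cap\overline U$ defines a chain in $C_*\big(\overline U,\,\partial\overline U\take iE^\R\big)$ exhibiting a homology between $[\Gamma_{\!s}\cap\overline U]$ and $[j_*\,C\cap\overline U]$ in $H_*\big(\overline U,\,\partial\overline U\take iE^\R\big)$.

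Concretely, I would first invoke the convergence statement already established in the proof of Proposition \ref{BJ2}: working in the local charts $(V_i,E_i,s_i)$ of \eqref{lockuran}, the integral currents $\Gamma_{\!ts_i}$ converge to the algebraic cone $C\subset E_1$ as $t\to\infty$, and by the estimate $|s-s_i|\le C_1|s_i|^2$ coming from \eqref{close2} the currents $\Gamma_{\!ts}$ have the same limit. This part is literally unchanged; nothing about it referenced $0_E$. Next I would observe that the relevant homology groups make sense: by \eqref{t<<} and \eqref{>>} the graph $\Gamma_{\!s}$ (indeed $\Gamma_{\!ts}$ for all $t\ge1$) and $iE^\R$ only meet set-theoretically inside $M$, so in particular they miss $\partial\overline U\cap iE^\R$, which is what allows the capping products \eqref{defint2} and \eqref{defint15} to be defined in the first place. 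Then Stokes' theorem applied to the family, exactly as in \eqref{bownd}, gives the homology $[\Gamma_{\!s}\cap\overline U]=[j_*\,C\cap\overline U]$ in $H_*\big(\overline U,\,\partial\overline U\take iE^\R\big)$.

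Finally I would cap both sides with the Thom form $[iE^\R]\in H^{\rk E}\big(\overline U,\,\partial\overline U\take iE^\R\big)$ of \eqref{2thom}. On the left this yields $\Gamma_{\!s}\cdot iE^\R$ as in \eqref{defint2}; on the right it yields $j_*\,C\cdot iE^\R$, which by \eqref{coincide} equals $C\cdot iE_1^\R$. This gives \eqref{train}.

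The only genuine subtlety, and the step I expect to require the most care, is verifying that the intermediate homotopy chains $\Delta=\bigcup_{\lambda\in[0,t]}\Gamma_{\!ts_i+\lambda e_i}\cap U_i$ used to compare $\Gamma_{\!ts}$ with the algebraic $\Gamma_{\!ts_i}$ also avoid $\partial\overline U\cap iE^\R$ --- so that the homology really takes place in the pair $\big(\overline U,\,\partial\overline U\take iE^\R\big)$ and not merely in $\overline U$. In Proposition \ref{BJ2} the analogous claim was easy because $\Delta$ meets the zero section only in $M$. Here one must check that for a point $e=ts_i+\lambda e_i\in\partial\overline U$ the incompatibility of $|q(e,e)|<|e|^2$ with membership in $iE^\R$ persists; since $e_i=O(s_i^2)$ the quadratic form $q(e,e)$ differs from $q(ts_i,ts_i)$ by a term of size $O(\epsilon^3/|t|)$, which is dominated by $|e|^2\approx\epsilon^2$ once $\epsilon$ is small, so the strict inequality $|q(e,e)|<|e|^2$ still holds on all of $\Delta\cap\partial\overline U$. (Here one uses that $q$ extends to a $C^\infty$ quadratic form on $E$ with $\mathrm{Re}\,q$ positive definite on $E^\R$, so $|e|^2$ and $|q|$ are comparable in the transverse directions.) With this bound in place, the same Stokes estimate \eqref{bownd} goes through and the proof is complete.
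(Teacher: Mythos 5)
Your proof is correct and follows the same line of reasoning as the paper's: use the chain $\Gamma=\bigcup_{t\in[1,\infty)}\Gamma_{ts}$ from Proposition \ref{BJ2} as a bounding chain, observe via \eqref{t<<} and \eqref{>>} that it misses $\partial\overline U\cap iE^\R$, conclude $[\Gamma_s]=j_*[C]$ in $H_*\big(\overline U,\partial\overline U\take iE^\R\big)$, then cap with the Thom class and invoke \eqref{coincide}.

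One small remark: the care you devote to checking that the auxiliary chains $\Delta=\bigcup_{\lambda\in[0,t]}\Gamma_{ts_i+\lambda e_i}\cap U_i$ also avoid $\partial\overline U\cap iE^\R$ is not actually required. Those chains play no role in the relative homology; they were used only in the Stokes estimate \eqref{bownd} to establish that the integral currents $\Gamma_{ts}$ converge to $j_*C$ as $t\to\infty$, and that convergence statement is absolute, not relative to any pair. The only chain that needs to stay clear of $\partial\overline U\cap iE^\R$ is $\Gamma$ itself (together with its $t=\infty$ limit), and that is exactly what \eqref{t<<} and \eqref{>>} give. So your argument is valid, but the paragraph on the $\Delta$ chains can be dropped without loss.
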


\begin{proof}
The proof of Proposition \ref{BJ2} showed the chain \eqref{unions},
$$
\Gamma\ :=\ \bigcup\nolimits_{t\in[1,\infty)}\Gamma_{\!ts},
$$
bounds $[\Gamma_{\!s}]-j_*[C]$. Comparing \eqref{t<<} with \eqref{>>} shows $\Gamma$ is disjoint from $iE^\R\cap\partial\overline U$. Thus we get the following strengthening of \eqref{propr},
$$
[\Gamma_s]\=j_*[C]\ \in\ H_{\dim V}\big(\overline U,\,\partial\overline U\take iE^\R\big).
$$
Capping with \eqref{2thom} now gives $\Gamma_{\!s}\cdot iE^\R=j_*\;C\cdot iE^\R$, which by \eqref{coincide} gives $C\cdot iE_1^\R$ as required.
\end{proof}

By the projection formula applied to the homology class of $\Gamma_{\!s}$ and the cohomology class of $0_{E^\R}$, the left hand side of \eqref{train} can be expressed as an intersection in $E^\R$.
Letting $s^+$ denote the projection of $s$ to $E^\R$ it gives
\beq{vir=vir}
\Gamma_{\!s^+}\cdot0_{E^\R}\=C\cdot iE^\R_1.
\eeq
The left hand side is the virtual cycle associated to the global $\mu$-Kuranishi chart
\beq{glKur}
\(V,E_1^\R,s^+\).
\eeq
The next Section is devoted to proving that this is a $\mu$-Kuranishi chart for $M$ compatible with (one of) Borisov-Joyce's $\mu$-Kuranishi structure on $M$. While their $\mu$-Kuranishi structures depend on choices, the cobordism (or homology) class of the resulting virtual cycle $[M]_{BJ}^{\vir}$ is uniquely defined by \cite[Corollary 3.19]{BJ}.

\section{The Borisov-Joyce virtual cycle}\label{BJvc}
In this Section we prove the following result.

\begin{thm}\label{BJKur} The global complex Kuranishi chart \eqref{glKur} induces one of the Borisov-Joyce $\mu$-Kuranishi structures of \cite[Theorem 3.15]{BJ} on $M$. Thus, by \eqref{vir=vir},
$$
[M]^{\vir}_{BJ}\=\Gamma_{\!s^+}\cdot0_{E^\R}\=C\cdot iE^\R_1.
$$
\end{thm}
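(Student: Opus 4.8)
The plan is to compare the global $\mu$-Kuranishi chart $(V,E_1^\R,s^+)$ constructed here with the local Darboux-type $\mu$-Kuranishi charts that Borisov-Joyce assemble in \cite{BJ}, and to exhibit coordinate changes (in the sense of $\mu$-Kuranishi spaces) between them which are compatible on overlaps. First I would recall the structure of the Borisov-Joyce construction: they cover $M$ by open sets $V_i^{BJ}$ carrying local holomorphic Darboux charts $(\wt V_i, E_i, q_i, s_i)$ with $E_i$ an orthogonal bundle and $s_i$ an isotropic section, then project $s_i$ to the positive real part $s_i^+$ of $E_i$ with respect to a compatible real structure, obtaining local $\mu$-Kuranishi charts $(\wt V_i, E_i^\R, s_i^+)$, and finally patch these using real derived-geometric coordinate changes. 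On our side, Section \ref{global} (applied to the stupid perfect obstruction theory \eqref{*E}) produces the local holomorphic Kuranishi charts $(V_i, E_i, s_i)$ of \eqref{lockuran} built from exactly the same local data --- the Darboux models of \cite{BBJ} and \cite[Equation 68]{OT1} --- together with $\cA$-gluings $(\psi_{ij}, \Phi_{ij})$, which by Section \ref{phd} may be taken holomorphic up to homotopy. So the two collections of local models agree on the nose (after refining covers to a common refinement), and the task is purely to compare the two systems of gluing maps.

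The key steps, in order, would be: (1) identify the local real bundles $E^\R|_{V_i}$ and sections $s^+|_{V_i}$ produced by our global splitting \eqref{realstr} with the Borisov-Joyce local data $(E_i^\R, s_i^+)$, using that $s|_{V_i} = s_i + O(s_i^2)$ from \eqref{close} and that projecting to the real part is compatible with this estimate --- here one uses that $s_i$ is isotropic so its zero locus equals that of $s_i^+$ set-theoretically, as already noted after \eqref{BJvar}; (2) show that our $\cA_2$/$\cA_1$-gluings $(\psi_{ij}, \Phi_{ij})$, forgotten to their underlying $C^\infty$ maps, define \emph{morphisms} of $\mu$-Kuranishi neighbourhoods in Joyce's sense --- this is exactly Proposition \ref{prp}, which says a homotopy class of our maps gives a well-defined Joycian morphism, and one checks these are coordinate changes (invertible) because the underlying perfect-obstruction-theory morphisms are isomorphisms by Theorem \ref{affKurpot}; (3) verify that these coordinate changes agree, up to Joyce's equivalence, with the Borisov-Joyce coordinate changes on overlaps --- since both are determined (up to the relevant equivalence) by the induced maps on the self-dual virtual cotangent complexes $\{T_{V_i}|_{U_{ij}} \to E_i|_{U_{ij}}\}$, which in turn come from the single obstruction theory $\EE \to \LL_M$ of \eqref{ObsTh}, they must coincide; (4) conclude that $(V, E_1^\R, s^+)$ is a global $\mu$-Kuranishi chart for one of the Borisov-Joyce $\mu$-Kuranishi structures, hence by \cite[Corollary 3.19]{BJ} computes $[M]^{\vir}_{BJ}$ as $\Gamma_{\!s^+}\cdot 0_{E^\R}$, which equals $C \cdot iE_1^\R$ by \eqref{vir=vir} (itself Theorem \ref{IC} plus the projection formula).

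The main obstacle I expect is step (3): matching our coordinate changes with Borisov-Joyce's ``on the nose'' up to the correct equivalence. The difficulty is that Borisov-Joyce work in real derived differential geometry and their coordinate changes carry $C^\infty$ (not holomorphic) information, including a choice of how the real structures on $E_i$ and $E_j$ are identified over overlaps and how the quadratic forms are matched; one has to check that our holomorphic-up-to-homotopy gluings, after applying the real splitting \eqref{realstr} and the extension of $q$, induce compatible identifications. The right framework is to observe that a $\mu$-Kuranishi coordinate change between Darboux-type charts is rigid enough to be pinned down (up to Joyce's equivalence) by the induced quasi-isomorphism of the 2-term complexes $\{T_{V_i}|_U \to E_i|_U\}$ respecting the orientation data, so one reduces step (3) to the statement that both constructions induce the \emph{same} such quasi-isomorphism --- which holds because both are built from the unique obstruction theory $\phi\colon\EE\to\LL_M$ and the orientation fixed once and for all via \cite{CGJ}. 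A secondary technical point, needing a little care, is that passing between our cover $\{V_i\}$ and the Borisov-Joyce cover requires a common refinement and a check that refinement does not change the $\mu$-Kuranishi structure, which is standard but should be stated; and one must also confirm the orientation conventions match (the excerpt's footnote on sign conventions flags exactly this), so that the Thom class \eqref{thom} used in \eqref{defint1} is the one Borisov-Joyce use.
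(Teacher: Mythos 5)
Your overall strategy is aligned with the paper's, and you correctly isolate step (3) as the crux. But the ``rigidity'' you invoke there is exactly the gap, not its resolution. Borisov-Joyce's coordinate change arises from the \emph{stupid truncation} of the 3-term self-dual complex map induced by the cdga equivalence $a_{ij}$ of \eqref{j0}, and stupid truncation is not a derived-category operation. Concretely, $a_{ij}$ and our $(\psi'_{ij},\Phi'_{ij})$ induce the same morphism of 3-term complexes in $D^b(\mathrm{Coh}\,M_{ij})$, so their chain representatives differ by a homotopy $h=(h_1,h_2)$ as in \eqref{dad}, with $h_2\colon E_i^*|_{M_{ij}}\to T_{V_j}|_{M_{ij}}$. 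On $E_i^*$ the two chain maps then differ by $ds_j\circ h_2+h_1\circ ds_i$. After discarding the degree $-2$ terms, only $h_1$ survives as a 2-term homotopy, and the summand $ds_j\circ h_2$ is not in general expressible as (2-term homotopy)$\,\circ\,ds_i$. So the truncated maps need not be homotopic, and hence need not induce the same morphism of 2-term virtual cotangent complexes --- your sentence ``both come from the same obstruction theory, so they must coincide'' assumes exactly what must be proved.

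The paper closes this gap with a lemma your sketch is missing: by composing $a_{ij}$ with a cdga automorphism of $\Lambda\udot E_i^*\otimes\Sym\udot(T_{V_i}[2])$ that is cdga-homotopic to the identity, one can arrange $h_2=0$. The construction lifts $h_2$ to $H\in\Hom(E_i^*,T_{V_i})$ over the Stein ambient $V_{ij}$, sets $f_t=\id+t\,\nabla s\circ H$ and $g_t=\id+t\,H\circ\nabla s$ on the degree $-1$ and $-2$ generators as in \eqref{deg1}--\eqref{deg2}, extends to a one-parameter family of cdga endomorphisms $F_t=\Lambda\udot f_t\otimes\Sym\udot(g_t[2])$, and produces an explicit cdga-level homotopy $H_t$ by Leibniz induction via \eqref{Hdef}. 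This lifting of a chain homotopy to the cdga level is the key idea absent from your proposal. Once $h_2=0$, the truncation of $a_{ij}$ is homotopic to $(\psi'_{ij},\Phi'_{ij})$, step (3) goes through, and the rest of your argument --- steps (1), (2), (4), Proposition \ref{prp}, \eqref{vir=vir}, and \cite[Corollary 3.19]{BJ} --- is sound. A more minor omission: you take for granted that the local holomorphic Kuranishi charts $(V_i,E_i,s_i)$ of \eqref{lockuran} carry Darboux structures compatible with Serre duality; this is Proposition \ref{03} and requires a short argument splitting $E\udot|_{M_i}$ into the minimal Darboux model of \cite[Example 5.16]{BBJ} plus an acyclic self-dual summand.
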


\subsection{Review of Borisov-Joyce's construction}\label{construct} We begin by describing Borisov-Joyce's $\mu$-Kuranishi structure on $M$. They rely heavily on the canonical $(-2)$-shifted symplectic derived scheme structure on $M$. The derived part of this structure is defined in \cite{TVa} as a refinement\footnote{Strictly speaking \cite{TVa} describes a derived moduli \emph{stack} of sheaves on $X$. We rigidify by removing the $\C^*$ stabilisers from the substack of stable sheaves to recover $M$. At the same time we remove $h^{1}\cong\mathrm{Lie}(\C^*)^*\otimes\cO$ from the derived cotangent bundle, and the dual copy of $\cO$ from $h^{-3}$, to get the $(-2)$-shifted symplectic derived scheme structure on $M$.} of the obstruction theory \eqref{ObsTh}. The $(-2)$-shifted symplectic structure is defined in \cite{PTVV} as a refinement of the Serre duality quadratic structure \eqref{duel}.

By \cite{BG,BBJ,BBBJ} this structure is equivalent to $M$ having
\beq{chts}
\text{a Zariski open cover $\{M_i\}$  by local \emph{Darboux charts} $(D_i,F_i,s_i)$}
\eeq
with a certain compatibility between them which we will describe in \eqref{j0} below. Here
\begin{itemize}
\item $D_i$ is a smooth scheme,
\item $F_i$ is a holomorphic \emph{orthogonal} bundle over it,
\item $s_i$ a holomorphic \emph{isotropic} section of $F_i$,
\end{itemize}
and we fix an isomorphism (which we are supressing in the notation) between the zero scheme of $s_i$ and $M_i$.

It is important that the Darboux charts \eqref{chts} should not really be thought of as Kuranishi charts --- this would \emph{not} give the Borisov-Joyce Kuranishi structure\footnote{Analogously in \cite{OT1} we do not take the Behrend-Fantechi virtual cycle \eqref{BFvc} that comes from considering $(V,E,s)$ as a global Kuranishi chart; instead we take \eqref{vird}.} on $M$. In fact thinking of them as Kuranishi charts would correspond to using the Koszul cdga
\beq{kosdga}
(\Lambda\udot F_i^*,s_i)\=\cdots\To\Lambda^2F_i^*\rt{s_i}F_i^*\rt{s_i}\cO_{D_i}
\eeq
to describe the derived structure on $M_i$, with virtual cotangent bundle
$$
F_i^*|_{M_i}\rt{ds_i} T^*_{D_i}|_{M_i}.
$$
But this is the stupid truncation of the actual virtual cotangent bundle
$$
T_{D_i}|_{M_i}\rt{ds_i}F_i^*|_{M_i}\rt{ds_i} T^*_{D_i}|_{M_i},
$$
which is the virtual cotangent bundle of the full cdga
\begin{multline}\label{-2dga}
\Lambda\udot F_i^*\otimes\Sym\udot\!\(T_{D_i}[2]\)\=\\
\xymatrix@C=14pt{
\cdots \ar[r]& \Lambda^3F_i^*\oplus(T_{D_i}\!\otimes\!\;F_i^*) \ar[rrr]|(.55){\;\tiny\Big(\!\!\!\begin{array}{cccc}s_i\!\!&\!\!\nabla s_i\\0\!\!&\!\!s_i\end{array}\!\!\!\!\Big)\;}
&&& \Lambda^2F_i^*\oplus T_{D_i} \ar[rr]^(.6){(s_i,\,\nabla s_i)}&& F_i^* \ar[r]^-{s_i}& \cO_{D_i}.\hspace{-4mm}}
\end{multline}
Here $\nabla$ is any holomorphic connection on $F_i$ with respect to which its quadratic form is parallel.\footnote{Beware we omit the differentials from the shorthand notation $\Lambda\udot F_i^*\otimes\Sym\udot\!\(T_{D_i}[2]\)$.} This is the correct cdga to assign to the Darboux chart \eqref{chts}, and any cdga \eqref{-2dga} made from a Darboux chart in this way carries a canonical $(-2)$-shifted symplectic derived structure. This describes the $(-2)$-shifted symplectic structure on $M_i$ which comes from restricting the $(-2)$-shifted symplectic structure of \cite{PTVV} on $M$. Stupidly truncating by removing the $T_{D_i}$ terms recovers the Koszul cdga \eqref{kosdga}.

%

The compatibility between (or gluing of) the Darboux charts \eqref{chts} is given by homotopy equivalences of the cdgas \eqref{-2dga}. Given a cover by Darboux charts, the $(-2)$-shifted symplectic derived structure on $M_i$ is equivalent to the one \eqref{-2dga} given by its Darboux chart \eqref{chts}; composing this equivalence with the inverse of the corresponding equivalence from the Darboux chart $(D_j,F_j,s_j)$ defines the transition map of cdgas
\beq{j0}
\Lambda\udot F_i^*\otimes\Sym\udot\!\(T_{D_i}[2]\)\Big|_{D_{ij}} \,\rt{a_{ij}}\ \Lambda\udot F_j^*\otimes\Sym\udot\!\(T_{D_j}[2]\)\Big|_{D_{ij}}.
\eeq
In general this will \emph{not} give an equivalence or gluing of the truncated cdgas \eqref{kosdga}, just some kind of gluing-modulo-lower-order-terms, as described carefully in \cite[Corollary 3.5]{BJ}.

Borisov-Joyce then ``halve" the derived structure on $M$ by
\begin{enumerate}
\item discarding the terms in degrees $\le-2$ --- i.e. truncating \eqref{-2dga} to \eqref{kosdga}, or considering the Darboux chart \eqref{chts} as a complex Kuranishi chart,
\item replacing each $F_i\cong F_i^\R\oplus iF_i^\R$ by a maximal real positive definite subbundle $F_i^\R$, and
\item projecting the $s_i\in\Gamma(F_i)$ to $s_i^+\in\Gamma(F_i^\R)$.
\end{enumerate}
In \cite[Corollary 3.11 and Theorem 3.15(vi)]{BJ} they show the $F_i^\R$ may be chosen so that the gluing-modulo-lower-degree-terms of \cite[Corollary 3.5]{BJ} projects to induce a genuine gluing of the resulting new $\mu$-Kuranishi charts $(D_i,F^\R_i,s^+_i)$. (This is a gluing in the sense of Definition \ref{cK}, but using $C^\infty$ maps and functions.) The resulting $\mu$-Kuranishi structure depends on various choices (such as the initial choice of local Darboux charts) but the induced virtual cycle is independent up to cobordism. \medskip
 
Our approach will be to first show that our local holomorphic Kuranishi charts $(V_i,E_i,s_i)$ of \eqref{lockuran} can be upgraded to Darboux charts as in \eqref{chts}. Yet we will also show that by their very construction they can be made to glue (not just up to lower order terms) as in Definition \ref{cK} to give the global non-holomorphic chart $(V,E,s)$ constructed in Section \ref{gKc}. Thus in this case the Borisov-Joyce gluing procedure described above will give what one might expect: step (1) will recover the global chart $(V,E,s)$, then steps (2) and (3) project it to the global chart $(V,E^\R,s^+)$ of \eqref{glKur}.\medskip

\subsection*{Our Darboux charts}
First we will show that the local holomorphic Kuranishi charts $(V_i,E_i,s_i)$ of \eqref{lockuran} can be made into Darboux charts. 

%
%

\begin{prop}\label{03}
Refining the cover $M=\cup_iM_i$ if necessary, we may choose an orthogonal structure on $E_i$ so that $s_i$ is an isotropic section and $(V_i,E_i,s_i)$ is a Darboux chart for $M_i\subset M$.
\end{prop}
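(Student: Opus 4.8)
The plan is to upgrade the local holomorphic Kuranishi charts $(V_i,E_i,s_i)$ of \eqref{lockuran}---which we recall were constructed from the \emph{stupid} truncation $\tau E\udot$ \eqref{*E} of the obstruction theory---to genuine Darboux charts, by comparing them with the Darboux charts $(D_i,F_i,s_i)$ guaranteed to exist by \cite{BG,BBJ,BBBJ} from the $(-2)$-shifted symplectic structure on $M$. The key point is that two holomorphic Kuranishi charts inducing the same perfect obstruction theory on a Stein (or sufficiently small affine) open set are isomorphic as holomorphic Kuranishi charts, by the P$\so$K part of Theorem \ref{affKurpot}. So the strategy is: (1) after refining the cover, arrange that each $M_i$ is small enough to carry both one of our charts $(V_i,E_i,s_i)$ and one of the Darboux charts $(D_i,F_i,s_i)$ of \cite{BBBJ}; (2) check that the \emph{stupid} perfect obstruction theory $\{F_i^*|_{M_i}\rt{ds_i}T^*_{D_i}|_{M_i}\}$ coming from the Koszul truncation \eqref{kosdga} of the Darboux chart agrees, after the identifications of Proposition \ref{form}, with the stupid perfect obstruction theory \eqref{*E} that we used to build $(V_i,E_i,s_i)$; (3) invoke Theorem \ref{affKurpot} to produce a holomorphic isomorphism of the two Kuranishi charts over $M_i$, and transport the orthogonal structure and isotropy of $s_i$ from $(D_i,F_i,s_i)$ to $(V_i,E_i,s_i)$ along it.

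For step (2), the comparison of stupid obstruction theories, I would argue as follows. The full perfect obstruction theory $\EE\to\LL_M$ \eqref{ObsTh} restricts on $M_i$ to the virtual cotangent bundle $T_{D_i}|_{M_i}\rt{ds_i}F_i^*|_{M_i}\rt{ds_i}T^*_{D_i}|_{M_i}$ of the cdga \eqref{-2dga}, by definition of the $(-2)$-shifted symplectic structure and its Darboux presentation. On the other hand, by Proposition \ref{form} the same $\EE\to\LL_M$ is represented globally by the self-dual three-term complex $T\rt{a^*}E_1\rt{a}T^*$ of \eqref{E*E}. Both are three-term self-dual complexes of bundles representing the same object of $D^b(\mathrm{Coh}\,M_i)$ with compatible self-dualities (the Serre-duality pairing \eqref{duel}), so after shrinking $M_i$ they are related by a quasi-isomorphism; one then checks this quasi-isomorphism can be taken to respect the self-dual structure, matching $q$ on $E_1|_{M_i}$ with the orthogonal form on $F_i|_{M_i}$ and $a$ with $ds_i$. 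Passing to the degree $[-1,0]$ stupid truncation---discarding $T$ on one side and $T_{D_i}$ on the other---then identifies \eqref{*E} with the Koszul obstruction theory of the Darboux chart. (There is a minor bookkeeping issue that $\rk E_1=r=\rk F_i$ need not hold on the nose; one absorbs the difference by stabilising, i.e. adding trivial orthogonal summands $\cO^{\oplus 2k}$ with the hyperbolic form and extending $s_i$ by zero, which changes neither the Kuranishi chart up to the equivalence of Definition \ref{morf} nor the orthogonal/isotropic structure.)

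For step (3): Theorem \ref{affKurpot} gives, on Stein $M_i$, a bijection between global holomorphic Kuranishi charts and perfect obstruction theories, with holomorphic isomorphisms of charts corresponding to isomorphisms of obstruction theories. Having matched the two stupid perfect obstruction theories in step (2), we obtain a holomorphic isomorphism $(V_i,E_i,s_i)\cong(D_i,F_i,s_i)$ of holomorphic Kuranishi charts over $M_i$---i.e. a holomorphic map $\psi_i\colon V_i^\circ\rt\sim D_i$ of neighbourhoods of $M_i$ and a holomorphic bundle isomorphism $\Phi_i\colon E_i|_{V_i^\circ}\rt\sim\psi_i^*F_i$ with $\Phi_i(s_i)=\psi_i^*(s_i)+O(s_i^2)$. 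We then \emph{define} the orthogonal structure $q_i$ on $E_i$ to be $\Phi_i^*$ of the pullback of the orthogonal form on $F_i$; since $\psi_i^*s_i$ is isotropic for $\psi_i^*q$ and $s_i=\Phi_i^{-1}\psi_i^*s_i$ modulo $O(s_i^2)$, we get $q_i(s_i,s_i)=0$ modulo $O(s_i^3)$, and then a small holomorphic perturbation of $s_i$ within its Kuranishi-chart equivalence class (or a shift of $q_i$ by an $s_i$-divisible term) makes $s_i$ exactly isotropic; this is the standard Darboux-normalisation argument of \cite{BBJ}. The result is that $(V_i,E_i,s_i)$ is a Darboux chart for $M_i$, as claimed. \emph{The main obstacle} I anticipate is step (2): making the comparison of the two self-dual three-term complexes $\EE|_{M_i}$ genuinely respect the Serre-duality self-duality---not merely be a quasi-isomorphism---so that the orthogonal forms match, rather than just the underlying perfect obstruction theories. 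This is where one must be careful, since the whole point of a Darboux chart (as emphasised after \eqref{kosdga}) is the extra symplectic/orthogonal data, which is invisible to the bare perfect obstruction theory; one needs the refined statement that the \cite{BBJ,BBBJ} Darboux presentations are unique up to symplectomorphism, localised to match our explicit normal form \eqref{E*E}.
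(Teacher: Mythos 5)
Your broad strategy matches the paper's — compare $(V_i,E_i,s_i)$ to a Darboux chart via Theorem~\ref{affKurpot} by matching the stupid truncations of the obstruction theory — but the step you flag as your ``main obstacle'' is exactly where the paper's real content lies, and you do not resolve it. You correctly observe that a mere quasi-isomorphism of the two self-dual three-term complexes representing $\EE|_{M_i}$ is useless here; one needs an identification that \emph{respects the orthogonal structures}. The paper supplies this by invoking the structure theorem \cite[Equation 70]{OT1}, which is not a generic uniqueness statement but an explicit \emph{orthogonal direct sum decomposition}
$$
E\udot\big|_{M_i}\ \cong\ Q\udot\oplus K\udot,
$$
where $Q\udot$ is the self-dual complex of the \emph{minimal} Darboux chart of \cite[Example 5.16]{BBJ} at the point $F\in M_i$ (so $ds|_0=0$ and the middle bundle is the trivial orthogonal bundle on $\Ext^2(F,F)$), and $K\udot=\{K^0\to K^1\to K_0\}$ is a self-dual \emph{acyclic} complex. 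Because the sum is orthogonal and respects both self-dualities, there is no need to ``check the quasi-isomorphism can be taken to respect the self-dual structure'' — the decomposition already does. Your ``stabilisation'' remark is morally the same as the paper's handling of $K\udot$, but you would need to prove the acyclic discrepancy can itself be made into an orthogonal summand; the paper gets this for free from the cited decomposition, then chooses an isotropic splitting of $K^1$ \cite[Footnote 7]{OT1} to exhibit $K\udot$ as the virtual cotangent bundle of the standard Darboux chart \eqref{affmod2} for $\{0\}\subset\C^k$.

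A second, smaller point: your step (3) transfers the quadratic form along an isomorphism satisfying only $\Phi_i(s_i)=\psi_i^*(t_i)+O(s_i^2)$, giving $q_i(s_i,s_i)=O(s_i^3)$ rather than $0$, and you propose to perturb $s_i$ or $q_i$. The cleaner fix (implicit in the paper) is to absorb the discrepancy into $\Phi_i$: if $\psi_i^*(t_i)-\Phi_i(s_i)=\pi(s_i\otimes s_i)$ for a holomorphic $\pi\in\Hom(E_i^{\otimes 2},\psi_i^*F_i)$, replace $\Phi_i$ by $\Phi_i+\pi(\,\cdot\,\otimes s_i)$, which is still an isomorphism near $M_i$ and sends $s_i$ to $\psi_i^*(t_i)$ on the nose, so the pulled-back form makes $s_i$ exactly isotropic. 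Perturbing $s_i$ is not what you want — you must keep the chart's section fixed since $(V_i,E_i,s_i)$ is already prescribed by the construction in Section~\ref{global}. In short: your outline is correct in shape, but the proof as written has a genuine gap at the self-duality comparison, which requires the explicit orthogonal decomposition from \cite{OT1} rather than an abstract uniqueness argument.
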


\begin{proof}
Fix a point $x\in M$ and $i$ such that $x\in M_i$. We will allow ourselves to shrink the neighbourhood $M_i$ of $x$ throughout the proof. Repeating the proof for any $x\in M$ and using compactness will give the required refinement of $\cup_i M_i$.

Let $F$ be the sheaf on $X$ corresponding to $x\in M$, and let $q$ be the quadratic form on $\Ext^2(F,F)$ given by Serre duality. 
By \cite[Example 5.16]{BBJ} there is an open neighbourhood $A$ of $0\in\Ext^1(F,F)$, and a map
\beq{ext2}
s\,\colon A\To\Ext^2(F,F) \,\text{ with }\, ds|\_0\,=\,0,\ q(s,s)\,=\,0 \,\text{ and }\,Z(s)\,\cong\,M_i,
\eeq
such that the obstruction theory on $M_i$ is quasi-isomorphic to
$$
\xymatrix@R=2pt{\hspace{8mm}
Q\udot \ \ :=\ \  \big\{T_A|_{M_i} \ar[r]^-{ds}& Q \ar[dd]^-s\ar[r]^-{(ds)^*}& \Omega_A|_{M_i}\big\}\!\! \ar@{=}[dd]<-.3ex> && \EE|_{M_i} \ar[dd] \\ &&& \cong \\
& I/I^2 \ar[r]^d& \Omega_A|_{M_i} && \LL_{\;M_i}.\!\!}
$$
Here $I$ is the ideal sheaf of $M_i\subset A$ and $Q$ is the trivial orthogonal bundle with fibre $\Ext^2(F,F)$. Its quadratic form gives an isomorphism $Q\cong Q^*$ which defines $(ds)^*$ and the self-duality $Q\udot\cong Q_\bullet[2]$ which is intertwined with the Serre duality of \eqref{duel} by the quasi-isomorphism $Q\udot\cong\EE|_{M_i}$.

Furthermore, in \cite[Equation 70]{OT1}
we showed that the self-dual complex $E\udot=\big\{T_V|_M\to E_1\to\Omega_V|_M\big\}$ of \eqref{E*E2} can be written --- on restriction to $M_i$ --- as the orthogonal direct sum of $Q\udot$ and a self-dual acyclic complex $K\udot=\{K^0\to K^1\to K_0\}$
$$
E\udot\ \cong\ Q\udot\oplus K\udot.
$$
Thus the self-duality $E\udot\cong E_\bullet[2]$ is the direct sum of the self-dualities $Q\udot\cong Q_\bullet[2]$ and $K\udot\cong K_\bullet[2]$.

Choosing an isotropic splitting of $K^1$ as in \cite[Footnote 7]{OT1} we may write $K\udot$ as $\{K^0\to K^0\oplus K_0\to K_0\}$ with the maps and orthogonal structure the obvious ones. Shrinking $M_i$ if necessary we may assume all our bundles are trivial, so that
$$
K\udot\ \cong\ \big\{\cO_{M_i}^{\oplus k}\rt{(1,0)}\cO_{M_i}^{\oplus k}\oplus\cO_{M_i}^{\oplus k}\rt{(0,1)}\cO_{M_i}^{\oplus k}\big\}
$$
is the (pullback to $M_i$ of the) virtual cotangent bundle associated to the following Darboux chart for the origin in $\C^k$,\vspace{-1cm}
\beq{affmod2}
\xymatrix@=20pt{
& \(\cO^{\oplus k}\oplus\cO^{\oplus k},q\)\ar[d]  \\
\{0\}\=s^{-1}(0)\ \subset\hspace{-15mm} & \C^k,\ar@/^{-2ex}/[u]_t}
\qquad \begin{array}{c} \\\\\\ q(\underline x,\underline y)\=\sum x_iy_i, \\ t(\underline z)\=(0,\underline z), \\ q(t,t)=0.\end{array}
\eeq
Thus $E\udot|_{M_i}\to\LL_{M_i}$ is isomorphic to the virtual cotangent bundle associated to the product Darboux chart \eqref{ext2}$\,\times\,$\eqref{affmod2}. In particular the perfect obstruction theory given by its stupid truncation $\tau E\udot|_{M_i}:=\big\{E_1\to\Omega_V|_M\big\}\to\LL_M$ is isomorphic to the perfect obstruction theory induced by the product \emph{Kuranishi} (rather than Darboux) chart \eqref{ext2}$\,\times\,$\eqref{affmod2}.

Thus Theorem \ref{affKurpot} gives an isomorphism between the Kuranishi charts $(V_i,E_i,s_i)$ and \eqref{ext2}$\,\times\,$\eqref{affmod2}, allowing us to transfer the Darboux structure over from the latter to the former.
\end{proof}

\subsection{A homotopy}
So now the $(-2)$-shifted symplectic derived structure on $M$ defines cdga gluing maps $a_{ij}$ between the cdgas \eqref{j0} induced by these local Darboux charts. All that we really know about $a_{ij}$ is that its induced action from the virtual cotangent bundle of $M_i$ to that of $M_j$ is the gluing map which gives the virtual cotangent bundle of $M$. By \eqref{E*E2} this is
\beq{lpot}
T_V|_M\rt{ds}E\rt{ds}T^*_V|_M,
\eeq
where $(V,E,s)$ is the global complex Kuranishi chart \eqref{ckC}.

Therefore the gluing equivalence induced by $a_{ij}$ between the horizontal 3-term complexes below is equal, in $D^b(\mathrm{Coh}\,M_{ij})$, to those induced by $(\psi_{ij},\Phi_{ij})$ and $(\psi_{ij}',\Phi_{ij}')$ \eqref{holo'}.
Thus the corresponding maps of complexes differ only by a homotopy $h=(h_1,h_2)$,
\beq{dad}
\xymatrix{
T_{V_i}|_{M_{ij}} \ar[r]^{ds_i}\ar[d]& E_i^*|_{M_{ij}} \ar@{..>}[dl]_(.55){h_2\!}\ar[d]\ar[r]^{ds_i}& \Omega_{V_i}|_{M_{ij}} \ar[d]\ar@{..>}[dl]_(0.55){h_1\!} \\
T_{V_j}|_{M_{ij}} \ar[r]^{ds_j}& E_j^*|_{M_{ij}} \ar[r]^{ds_j}& \Omega_{V_j}|_{M_{ij}}.\!
}\eeq
The $h_2$ term is a problem for us because we want to take stupid truncations (removing the left hand $T_V$ terms). The induced (truncated) maps between the resulting 2-term complexes need not be homotopic unless $h_2=0$. So we next show how to arrange this by lifting homotopies from virtual cotangent bundles on $M$ to the level of cdgas on $V$.

\begin{prop} By altering the equivalence $a_{ij}$ of \eqref{j0} by a homotopy if necessary, we may assume that $h_2=0$.
\end{prop}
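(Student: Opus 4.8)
The plan is to argue that the gluing equivalence $a_{ij}$ between the $(-2)$-shifted symplectic cdgas \eqref{-2dga} has just enough freedom in its choice of homotopy class that we can normalise away the component $h_2$ of the chain homotopy in \eqref{dad}. The key point is that the three terms of the virtual cotangent complex \eqref{lpot} come from a \emph{stupid truncation}: the $T_V$ term in degree $-2$ is not part of the data $(V,E,s)$ we care about, and the Darboux chart contains the extra symmetric-algebra factor $\Sym\udot(T_{D_i}[2])$ of \eqref{-2dga} precisely to carry it. So what we want is to exploit the freedom in the cdga-level equivalence, over the \emph{self-duality}, to kill the off-diagonal homotopy into the degree $-2$ piece.

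First I would set up the right space of equivalences: any two cdga equivalences $a_{ij},a_{ij}'$ inducing the same map in $D^b(\mathrm{Coh}\,M_{ij})$ on virtual cotangent complexes differ, at the level of the 3-term complexes of \eqref{dad}, by a chain homotopy $h=(h_1,h_2)$ with $h_1\colon\Omega_{V_i}|_{M_{ij}}\to E_j^*|_{M_{ij}}$ and $h_2\colon E_i^*|_{M_{ij}}\to T_{V_j}|_{M_{ij}}$. Conversely I would show that prescribing such an $h$ (compatible with the self-dualities, i.e. $h_2=\pm h_1^*$ under $q$) can be realised by altering $a_{ij}$ by a cdga homotopy. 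This is where the $\Sym\udot(T_{D_i}[2])$ factor of \eqref{-2dga} does its work: a degree $1$ coderivation-type homotopy of the full cdga restricts on the linear part to exactly a pair $(h_1,h_2)$ of this shape, and self-duality of the shifted symplectic structure forces $h_2$ and $h_1$ to be adjoint. So the realisable homotopies $(h_1,h_2)$ form a subgroup containing all pairs with $h_2=h_1^*$ (up to sign).

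Next I would observe that the homotopy $(h_1,h_2)$ produced in \eqref{dad} by comparing $a_{ij}$ with the holomorphic gluing $(\psi_{ij}',\Phi_{ij}')$ is itself compatible with the self-dualities, because both the cdga equivalence and the complex Kuranishi gluing respect them (the former by construction of the $(-2)$-shifted symplectic structure, the latter because $(V,E,s)$ is built from the self-dual normal form of Proposition \ref{form}, so $ds$ and $(ds)^*$ in \eqref{E*E2} are genuinely dual and $\Phi_{ij}'$ is orthogonal). Hence $h_2=\pm h_1^*$. Applying the previous paragraph, I can therefore alter $a_{ij}$ by a cdga homotopy realising $(-h_1,-h_2)$, after which the new comparison homotopy with $(\psi_{ij}',\Phi_{ij}')$ is $(0,0)$; in particular $h_2=0$.

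The main obstacle I anticipate is the second step: carefully identifying which homotopies of maps of 3-term complexes actually lift to homotopies of the \emph{cdga} equivalences \eqref{j0}, and checking that the self-dual ones always do. One must be a little careful because an arbitrary cdga homotopy perturbs all the higher terms of \eqref{-2dga} and one has to verify its induced effect on the linearisation is exactly $(h_1,h_1^*)$ with no further constraint; this is essentially an unravelling of the minimal-model/obstruction-theory dictionary for $(-2)$-shifted symplectic cdgas, and the sign bookkeeping in the self-duality (cf. the conventions of \cite[Section 2.2]{OT1}) is the fiddly part. Everything else — that two equivalences inducing the same derived-category map differ by a homotopy, and that altering $a_{ij}$ by a homotopy changes $(h_1,h_2)$ additively — is formal.
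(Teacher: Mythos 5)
The proposal routes through self-duality of the shifted symplectic structure, which turns out to be both unnecessary for the statement and a source of several unjustified claims; the paper's proof is structurally different and avoids all of them.

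The paper constructs, for \emph{any} lift $H\colon E^*\to T_V$ of $h_2$ over the Stein $V_{ij}$, a cdga automorphism $F_1=\id+[\delta,H]$ of $\Lambda\udot E^*\otimes\Sym\udot(T_V[2])$ (here $H$ is extended as a degree-$1$ derivation, and $[\delta,F_1]=0$ since $[\delta,H]$ commutes with $\delta$), together with an explicit cdga homotopy $H_t$ from $F_0=\id$ to $F_1$ defined inductively by the Leibniz rule \eqref{Hdef}. The induced map on the cotangent complex is $\id+[\delta,h]$ with $h=(0,h_2)$: a homotopy with trivial $h_1$-component and arbitrary $h_2$-component. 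Post-composing $(\psi'_{ij},\Phi'_{ij})$ with $F_1$ then cancels $h_2$ directly. No symplectic compatibility enters.

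Your argument has three concrete gaps. First, you assert that the realisable alterations are those inducing self-dual homotopies $h_2=\pm h_1^*$, but the paper shows $(0,h_2)$ is realisable for \emph{any} $h_2$, which is not self-dual unless $h_2=0$; so the self-duality constraint you impose is artificial, and you do not actually justify the converse direction (that every self-dual $h$ lifts to a cdga homotopy) which is the hard part of your route. The restriction comes from implicitly requiring the alteration to preserve the $(-2)$-shifted symplectic structure; the paper does not require this, since after stupid truncation only the $2$-term gluing is used. Second, your claim that the given homotopy $(h_1,h_2)$ is itself self-dual is not automatic: the chain homotopy between two self-dual maps is not unique, and an arbitrary representative need not be self-dual; one would have to symmetrise ($h\mapsto\tfrac12(h+h^\vee)$ with the right signs) to get a self-dual representative, a step you omit. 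Third, your conclusion forces the corrected comparison homotopy to be $(0,0)$, i.e.\ that the altered $a_{ij}$ and $(\psi'_{ij},\Phi'_{ij})$ literally agree on cotangent complexes. The Proposition only asks for $h_2=0$; trying to kill $h_1$ as well is what creates the need for the unproved realisation claim. The correct and simpler move is to realise $(0,h_2)$ directly, as the paper does.
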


\begin{proof}
We begin by solving this problem in a special case, where the equivalence is the identity map on the single cdga $\Lambda\udot E^*\otimes\Sym\udot(T_V[2])$ of \eqref{-2dga} --- with differential $\delta$ made from a section $s\in H^0(V,E)$ and $\nabla s$ as in \eqref{-2dga} --- and the homotopy $(h_1,h_2)$ is $(0,h)$ for some $h\in\Hom(E^*|_M, T_V|_M)$.

Assume $V$ is Stein and set $M=s^{-1}(0)\subset V$. We show how to modify the identity cdga map by a homotopy to alter the induced map \eqref{dad} on the virtual cotangent bundle (which is also the identity of course) by the homotopy $(0,h)$. Lift $h$ to some
$$
H\ \in\ \Hom(E^*, T_V).
$$
Using $\delta$ also for the differential in the virtual cotangent bundle \eqref{lpot}, we must define an automorphism of $\Lambda\udot E^*\otimes\Sym\udot(T_V[2])$, homotopic to the identity, such that the induced automorphism of its virtual cotangent bundle \eqref{lpot} is $\id+\,[\delta,h]$ on $M$. We use the identity $\id$ on $\cO_V$,
\beq{deg1}
f_t\ :=\ \id+\,t\;[\delta,H]\= \id+\,t\;\nabla s\circ H\ \colon\,E^*\To E^*
\eeq
on $E^*$, and 
\beq{deg2}
g_t\ :=\ \id+\,t\;[\delta,H]\= \id+\,t\;H\circ\nabla s\ \colon\,T_V\To T_V
\eeq
on $T_V$, all for fixed $t\in[0,1]$. On the free graded algebra $\Lambda\udot E^*\otimes\Sym\udot(T_V[2])$ on these generators, these define the graded algebra endomorphisms
$$
F_t\ :=\,\Lambda\udot f_t\otimes\Sym\udot(g_t[2])\,\colon\,\Lambda\udot E^*\otimes\Sym\udot(T_V[2])\To\Lambda\udot E^*\otimes\Sym\udot(T_V[2]).
$$
Since $[\delta,F_t]$ vanishes on the generators $\cO_V,\,E^*,\,T_V$, the Leibniz rule gives $[\delta,F_t]=0$. Thus the $F_t$ are actually cdga endomorphisms.

We claim they in fact fit into a \emph{homotopy} of cdga automorphisms, i.e. a map of cdgas
$$
F_t+dt\cdot H_t\ \colon\,\Lambda\udot E^*\otimes\Sym\udot(T_V[2])\To\Lambda\udot E^*\otimes\Sym\udot(T_V[2])\;[t,dt],
$$
where $t,\,dt$ have degrees 0 and 1 respectively, the differential on the right hand side is $\delta\otimes1+1\otimes dt\cdot\frac d{dt}(\ \cdot\ )$, and $\overset{\,_{\mbox{\Huge .}}}{F_t}=[\delta,H_t]$. 

To prove the claim we define $H_t$ to be $H\colon E^*\to T_V$ on $E^*$ and zero on $\cO_V$, $T_V$. We then extend it inductively to higher degrees by the formula
\beq{Hdef}
H_t(ab)\=H_t(a)F_t(b)+(-1)^{|a|}F_t(a)H_t(b)
\eeq
for homogeneous elements $a,b$ of degrees $|a|,|b|$. 
To prove $\overset{\,_{\mbox{\Huge .}}}{F_t}$ equals $[\delta,H_t]$ we check they agree on homogeneous elements of the cdga by induction on degree. They agree on the generators $\cO_V,\,E^*,\,T_V$ by (\ref{deg1}, \ref{deg2}). If they agree on $a$ and $b$ then the Leibniz rule and \eqref{Hdef} show they both give $[\delta,H_t](a)F_t(b)+F_t(a)[\delta,H_t](b)$ when acting on $ab$.

Thus we get a homotopy from $\id=F_0$ to $F_1$. By (\ref{deg1}, \ref{deg2}) $F_1$ is $\id+\,[\delta,H]$ on $E^*$ and $T_V$ and $F_1=\id$ on $\cO_V$. So it induces the map $\id+\,[\delta,h]$ on the virtual cotangent bundle on $M$, as required.\medskip

We apply this to $V=V_{ij}$ and $M=M_{ij}$ to prove the Proposition. Lift $h_2$ to $H_2\in\Hom\!\(E_i^*|_{V_{ij}},T_{V_j}|_{V_{ij}}\)$ and set
$$
H\ :=\ D\(\psi_{ij}'\)^{-1}\circ H_2\ \in\ \Hom\!\(E_i^*|_{V_{ij}},T_{V_i}|_{V_{ij}}\)
$$
so that
\beq{homcomp}
(\psi_{ij}',\Phi_{ij}')+[\delta,H_2]\=(\psi_{ij}',\Phi_{ij}')\circ\Big(\!\id+[\delta,H]\Big).
\eeq
The above construction gives a family of homotopic automorphisms $F_t$ of $\Lambda\udot E_i^*\otimes\Sym\udot\!\(T_{V_i}[2]\)\big|_{V_{ij}}$ with $F_0=\id$ and $F_1=\id+\,[\delta,H]$.
So replacing $(\psi_{ij}',\Phi_{ij}')$ by the homotopic
$$
(\psi_{ij}',\Phi_{ij}')\circ F_1\,\stackrel{\eqref{homcomp}}=\,(\psi_{ij}',\Phi_{ij}')+[\delta,H_2]
$$
we kill the $h_2$ term in the homotopy \eqref{dad} between the induced maps on virtual cotangent bundles.
\end{proof}

Therefore we may assume $a_{ij}$ induces a map of truncated complexes
$$
\xymatrix{
E_i^*|_{M_{ij}} \ar[d]\ar[r]^{ds_i}& \Omega_{V_i}|_{M_{ij}} \ar[d]\ar@{..>}[dl]_(0.55){h_1\!} \\
E_j^*|_{M_{ij}} \ar[r]^{ds_j}& \Omega_{V_j}|_{M_{ij}},\!
}
$$
just as $(\psi_{ij}',\Phi_{ij}')$ does, and the two are the same up to the homotopy $h_1$.

Thus the truncation of $a_{ij}$, mapping the truncation $(\Lambda\udot E_i^*,s_i)$ of \eqref{j0} to $(\Lambda\udot E_j^*,s_j)$ --- or equivalently mapping between\vspace{-.5mm} local holomorphic Kuranishi charts $(V_i,E_i,s_i)$ and $(V_j,E_j,s_j)$ over $V_{ij}$ --- is homotopic to $(\psi_{ij}',\Phi_{ij}')$ in the sense of Definition \ref{Homotopy}. So the stupid truncation of the description \eqref{j0} of the derived structure on $M$ defines the complex Kuranishi structure on $M$ given by the gluing $((\psi_{ij}',\Phi_{ij}'))$. By \eqref{holo'} this is the Kuranishi structure given by our global complex Kuranishi chart $(V,E,s)$ \eqref{ckC}.

$(V,E,s)$ is therefore the output of step (1) of the Borisov-Joyce construction described in Section \ref{construct}.
For step (2) we choose an extension $E=E^\R\oplus iE^\R$ over $V$ of the splitting $E_1=E_1^\R\oplus iE_1^\R$ on $M$. The latter satisfies condition $(*)$ of \cite[Section 3.3]{BJ} by \cite[Example 3.8]{BJ}, so the former does too (at least after shrinking $V$) because $(*)$ is an open condition \cite[Theorem 3.7(a)]{BJ}. Since the restrictions $E_i^\R\subset E_i$ of $E^\R\subset E$ to $V_i$ glue via our $\cA_1$ transition functions $\Phi_{ij}$ over $V_{ij}$, they satisfy the compatibility of \cite[Section 3.4]{BJ} over $V_{ij}$.
So for step (3) we let $s^+$ be the projection of $s$ under $E\onto E^\R$ to get $(V,E^\R,s^+)$ as a global $\mu$-Kuranishi chart for the Borisov-Joyce $\mu$-Kuranishi structure on $M$.

\section{Odd virtual dimension}\label{odd}
We are now ready to prove that $[M]^{\vir}_{BJ}$ is 2-torsion when $\vd$ is odd. We begin on any scheme $Y$ with an $SO(2n+1,\C)$ bundle  $(E,q,o)$ and an isotropic section $s$ with compact zero locus.\footnote{This will be applied later to the isotropic cone $C$ \eqref{coneev} and the pullback of the special orthogonal bundle $E_1$ with its tautological section $\tau\_{E_1}|\_C$.} Here $o\in\Gamma(\Lambda^{2n+1}E)$ is an \emph{orientation} on $(E,q)$ in the sense of \cite[Definition 2.1]{OT1}.

\subsection*{Maximal positive definite real subbundles} As in \cite[Section 2.2]{OT1}, $(E,q)$ admits a maximal positive definite real subbundle $E^\R\subset E$, unique up to homotopy. That is, $E^\R$ has real rank $2n+1$ and $q^\R:=q|_{E^\R}$ is positive definite (i.e. a real metric). Then $iE^\R\subset E$ is complementary to $E^\R\subset E$, defining a splitting
\beq{EE11}
E\ \cong\ E^\R\oplus iE^\R\=E^\R\otimes\_\R\C
\eeq
with respect to which $q=q^\R\otimes\C$. Furthermore \eqref{EE11} defines a complex conjugation operation on $E$ such that $q(\overline e_1,\overline e_2)=\overline{q(e_1,e_2)}$.


\subsection*{Hermitian metric and splittings} Thus $E$ inherits the hermitian metric
$$
h(\ \cdot\ ,\ \cdot\ )\ :=\ q\(\ \cdot\ ,\overline{\ \cdot\ }\).
$$
Then the complex conjugate $\overline\Lambda\subset E$ of any (not necessarily maximal) isotropic subbundle $\Lambda\subset E$ is also isotropic (but not holomorphic in general). Under the pairing $q$ it is isomorphic to $\Lambda^*$, giving a non-holomorphic embedding $\Lambda\oplus\Lambda^*\subset E$. Taking the orthogonal with respect to either $q$ or $h$ --- they give the same result --- gives a $\C$- (but not $\cO$-) linear splitting
\beq{Clin}
E\ \cong\ \Lambda\oplus\Lambda^*\oplus E_\perp,
\eeq
where $E_\perp=\Lambda^\perp/\Lambda$. With respect to this decomposition, $q$ is the direct sum of a nondegenerate quadratic form on $E_\perp$ and the canonical pairing between $\Lambda$ and $\Lambda^*$. Writing \eqref{Clin} as $E=E_\Lambda\oplus E_\perp$ we get a corresponding $\R$-linear splitting of real parts by taking the fixed loci of complex conjugation:
\beq{Rlin}
E^\R\ \cong\ E_\Lambda^\R\oplus E_\perp^\R.
\eeq


\subsection*{Orientation}
In a local chart let $\{e_i\}_{i=1}^m$ be a local basis of sections of $\Lambda$ with dual basis $\{f_i\}$ of $\Lambda^*$.
The orientation $o$ on $E$, together with the canonical orientation $(-i)^m e_1\wedge f_1\wedge\dots \wedge e_m\wedge f_m$ of \cite[Equation 18]{OT1} on $\Lambda\oplus\Lambda^*$, defines a canonical orientation $o_\perp$ on $E_\perp=\Lambda^\perp/\Lambda$ such that
\beq{orperp}
o\=(-i)^m e_1\wedge f_1\wedge\dots \wedge e_m\wedge f_m \wedge o\_\perp.
\eeq
Thus $E_\perp$ is an $SO(2n-2m+1,\C)$ bundle.


\subsection*{Vanishing intersection: divisorial case} 

Given an isotropic section $s$ of $E$ with \emph{compact} zero locus $Z(s)\subset Y$, let $U$ be an $\epsilon$-neighbourhood of $Z(s)$ inside the total space of $E\to Y$. We pair the homology class
$$
\big[\Gamma_{\!s}\big]\ \in\ H_{2\dim Y}\(\;\overline U,\partial\overline U\take iE^\R\)
$$
with the Thom class
$$
[iE^\R]\ \in\ H^{2n+1}\(\;\overline U,\partial\overline U\take iE^\R\)
$$
defined using the real orientation $o^\R$ on its normal bundle (the pullback of $E^\R$) induced from the complex orientation $o$ on $E$ as in \cite[Section 2.2]{OT1}. This defines
\beq{odd0}
\Gamma_{\!s}\cdot iE^\R\ \in\ H_{2\dim Y-2n-1}\(\;\overline U\)\ \cong\ H_{2\dim Y-2n-1}(Z(s)).
\eeq
We will show this is 2-torsion, first in the case that $Z(s)\subset Y$ is a Cartier divisor. This means the normal cone $C_{Z(s)/Y}\subset E_1$ is an isotropic \emph{line subbundle} $\Lambda\subset E$ homologous to $\Gamma_{\!s}$ in $(\;\overline U,\partial\overline U\take iE^\R)$ as in Theorem \ref{IC}, so
$$
\Gamma_{\!s}\cdot iE^\R\=\Lambda\cdot iE_1^\R.
$$
By \eqref{Rlin} and the projection formula this equals
$$
e(E^\R_\perp)\cap\big(\Lambda\cdot iE_\Lambda^\R\big).
$$
Since $E^\R_\perp$ is an $SO(2n-1,\R)$ bundle its Euler class is 2-torsion on the compact cycle $\Lambda\cdot iE_\Lambda^\R$, giving the result claimed.

\subsection*{General case}
To prove \eqref{odd0} is 2-torsion we separate the irreducible components of $Y$ into two types, following \cite[Theorem 5.2]{KP}. On those for which $s$ vanishes on the underlying reduced variety,
\beq{2one}
\Gamma_{\!s}\cdot iE^\R\=e(E^\R)\ \text{ is 2-torsion}
\eeq
because $iE^\R$ is an $SO(2n+1,\R)$ bundle on a compact space.

On the other components we blow up in the zero locus of $s$ to give $p\colon\wt Y\to Y$. Here the zero locus of $p^*s$ --- the exceptional divisor --- is Cartier, so by the last Section $\Gamma_{p^*s}\cdot p^*(iE^\R)$ is 2-torsion. Thus
\beq{2two}
p_*\big(\Gamma_{p^*s}\cdot p^*(iE^\R)\big)\ \text{ is 2-torsion}.
\eeq
Hence, by the projection formula, $\Gamma_{\!s}\cdot iE^\R$ is a sum of 2-torsion contributions (\ref{2one}, \ref{2two}) over all irreducible components of $Y$.


\subsection*{Odd dimensional Borisov-Joyce}
To apply this to Calabi-Yau 4-folds, we take $Y$ to be the total space of the isotropic cone $C\subset E_1$ \eqref{coneev} and the bundle $E$ to be the pullback of $E_1$ with its tautological isotropic section $s=\tau\_{E_1}\big|_C\,$. This has compact zero locus $M$.

Suppose now that $\vd$ \eqref{vd} is odd. In the notation of \eqref{E*E}, $\vd=2\rk T-\rk E_1$, so we find $2n+1:=\rk E_1$ is odd. We have just shown that $\Gamma_{\!s}\cdot iE_1^\R$ is 2-torsion. But by Theorem \ref{BJKur} this intersection is the Borisov-Joyce virtual cycle.

\begin{thm}\label{oddBJ} The Borisov-Joyce virtual cycle $[M]^{\vir}_{BJ}\in H_{\vd}(M,\Z)$ is 2-torsion when $\vd$ \eqref{vd} is odd.$\hfill\square$
\end{thm}\smallskip

Pridham \cite{Pr} has produced a refinement of $[M]^{\vir}_{BJ}$ (tensored with $\C[\![\hbar]\!]$ coefficients) using deformation quantisation. He shows it vanishes when $\vd$ is odd and the moduli space admits a \emph{global algebraic Darboux chart} --- a global chart \eqref{affmodel} in which everything is algebraic and $E$ has a nondegenerate quadratic form with respect to which the section $s$ is isotropic..

\section{\for{toc}{\hspace{-2mm}The algebraic virtual cycle}\except{toc}{The algebraic virtual cycle}} \label{ten}
Now let us assume that $\vd$ \eqref{vddef} is \emph{even}. Equivalently, $r=\rk E_1$ is even; call it $2n$.
The algebraic virtual cycle of \cite[Definition 4.4]{OT1} is then
\begin{equation}\label{vird}
[M]^{\vir}\ =\ \sqrt{0_{E_1}^{\,!}\!}\ [C]\ \in\ A_{\frac12\!\vd}\big(M,\Z\big[\textstyle{\frac12}\big]\big)
\end{equation}
using the square root Gysin map $\surd\;0_{E_1}^{\;!}$ of \cite[Definition 3.3]{OT1}.

We review Kiem-Park's alternative description of this cycle $\surd\;0_{E_1}^{\;!}[C]$. Recall that $C$ is equidimensional. If there exists an irreducible component of $M$ of dimension $\dim C$ it is in fact a smooth connected component $M_j\subseteq M$ on which $C$ is isomorphic to $M_j$ (because $C$ is locally the normal cone $C_{M/A}$ for some smooth $A\supseteq M$ of the same dimension; therefore $\supseteq$ is a local isomorphism).

On the complement of this locus let $p\colon\wt C\to C$ be the blow up of $C$ in the zero section $M\subset C$ and let $\bar p:=p|_{p^{-1}(M)}\colon p^{-1}(M)\to M$ be its restriction to the exceptional divisor --- the zero section of $\wt C\subset\bar p\;^*E_1$. Then by \cite[Theorem 5.2]{KP}
\beq{KiPa}
[M]^{\vir}\=\bar p_{*\!}\left(\;\sum\nolimits_i\sqrt{e}\;(\Lambda^\perp/\Lambda)\cap[F_i]\right)+\,\sum\nolimits_j\sqrt{e}\;(E_1)\cap[C_j]
\eeq
is a sum of two contributions from
\begin{itemize}
\item[($i$)] irreducible components $\wt C_i\subset\wt C$ on which the zero section is a Cartier divisor $F_i\subset\wt C_i$; here we push down 
$\sqrt{e}\;(\Lambda^\perp/\Lambda)$, where $\Lambda\subset \bar p\;^*E_1|_{F_i}$ is the \emph{isotropic} normal bundle of $F_i$ and $\Lambda^\perp/\Lambda$ is oriented as in \eqref{orperp}
\item[($j$)] the connected components $C_j\cong M_j$; here we take the Edidin-Graham class $\sqrt{e}\;(E_1)$.
\end{itemize}

We use this expression to relate $[M]^{\vir}$ to the real virtual class \eqref{defint1} after inverting 2.

\begin{thm} \label{KiemLi}
Under the cycle map
$$
\xymatrix@R=0pt{
A_{\frac12\!\vd}\big(M,\Z\big[\frac12\big]\big)\ \ar[r]&\ H_{\vd}\big(M,\Z\big[\frac12\big]\big), \\
\hspace{9mm}[M]^{\vir}\hspace{8mm} \ar@{|->}[r]&\ \ C\cdot iE_1^\R.\ }
$$
\end{thm}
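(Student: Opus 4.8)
The goal is to show that the algebraic virtual cycle $[M]^{\vir}=\sqrt{0_{E_1}^{\,!}}\,[C]$, defined via the square-root Gysin map on the isotropic cone $C\subset E_1$, maps under the cycle map $A_*(M,\ZZ)\to H_*(M,\ZZ)$ to the real virtual class $C\cdot iE_1^\R$. The plan is to reduce both sides to an intersection inside $E_1$ after passing to a cover on which $E_1$ acquires a maximal isotropic subbundle $\Lambda$, compute each side there, and then interpolate between $\Lambda$ and $iE_1^\R$ through a family of subbundles that $C$ meets only along $M$, so no intersection points escape to infinity.

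First I would recall the cosection-localisation definition of the square-root Gysin map from \cite[Definition 3.3, Section 3]{OT1}: choosing (after pulling back along Edidin--Graham's iterated quadric bundle $\rho\colon\widetilde M\to M$, and later pushing back down using $\tfrac1{2^n}\rho_*(h\cup-)$, which is where $2$ gets inverted) a positive maximal isotropic subbundle $\Lambda\subset E_1$, one has $\Lambda^\perp=\Lambda$ and the exact sequence $0\to\Lambda\to E_1\to\Lambda^*\to0$ of \eqref{Ses}. The isotropic cone $C$ lies in $\Lambda^\perp=\Lambda$ set-theoretically only over... — more precisely $C\cdot\Lambda$ is cosection-localised to the zero section because $q$ vanishes on $C$ while $q$ is nondegenerate pairing $\Lambda$ against $\Lambda^*$, forcing the scheme-theoretic intersection of $C$ with $\Lambda$ to be supported on $M$. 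Thus $[M]^{\vir}=(-1)^n\,C\cdot\Lambda\in A_{\frac12\vd}(M,\ZZ)$ as in \eqref{chowdef}. Under the cycle map this becomes the topological intersection $(-1)^n[C]\cap[\Lambda]$ inside $H_*$ of an $\epsilon$-neighbourhood of $M$ in the total space of $E_1$, computed against the Thom class of the normal bundle to $\Lambda$, which is $\Lambda^*\cong\overline\Lambda$ (via the hermitian metric $h$); the sign $(-1)^n$ is exactly the comparison of the complex orientation $o$ on $E_1$ with the canonical orientation $(-i)^n e_1\wedge f_1\wedge\cdots\wedge e_n\wedge f_n$ on $\Lambda\oplus\Lambda^*$ used in \cite[Section 2.2]{OT1} and recalled in Section \ref{odd}.

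Next I would construct the homotopy from $\Lambda$ to $iE_1^\R$ inside $E_1$ announced in the introduction (the content of Section \ref{ten}). Using the $\C$-linear splitting $E_1\cong\Lambda\oplus\Lambda^*$ (even-rank analogue of \eqref{Clin}), with $E_1^\R$ the real part $E_{1,\Lambda}^\R$ of $\Lambda\oplus\Lambda^*$ under the conjugation determined by the maximal positive-definite real subbundle, I would write down an explicit family of real rank-$2n$ subbundles $\Lambda_t$ of $E_1$, $t\in[0,1]$, with $\Lambda_0=\Lambda$ and $\Lambda_1=iE_1^\R$ — for instance rotating $\Lambda$ into $i\overline\Lambda$ fibrewise and combining with the standard rotation exhibiting $\Lambda\oplus\overline\Lambda\cong E_{1,\Lambda}^\R\oplus iE_{1,\Lambda}^\R$. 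The key estimate is that $C$ meets each $\Lambda_t$ only in the zero section $M$: on $\Lambda$ this is the cosection vanishing above; on $iE_1^\R$ it is $q|_{iE_1^\R}$ negative definite versus $q|_C\equiv0$ (Proposition \ref{iso}); for intermediate $t$ I would check that the relevant real quadratic form obtained by restricting $q$ (or $\Re q$, or a convex combination) stays definite enough to force the intersection with the isotropic $C$ to stay on $M$. Because $C$ is $\C^*$-invariant and conical, this also guarantees that inside a fixed $\epsilon$-neighbourhood $\overline U^\epsilon_{E_1}(M)$ the intersection points of the one-parameter family do not run off to the boundary, so $[C]\cap[\Lambda_t]$ is constant in $H_*(\overline U^\epsilon,\partial\overline U^\epsilon\take\Lambda_t)$ for $t\in[0,1]$; matching up orientations (the complex orientation $o$ on $E_1$ induces the real orientation $o^\R$ on $E_1^\R$, and the factor $(-i)^n$ in the canonical orientation on $\Lambda\oplus\Lambda^*$ is precisely what converts the sign $(-1)^n$ into the trivial sign for $iE_1^\R$) gives $(-1)^n\,[C]\cap[\Lambda]=[C]\cap[iE_1^\R]=C\cdot iE_1^\R$.

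The main obstacle I anticipate is the interpolation step: producing an explicit, globally-defined family $\Lambda_t$ of real subbundles connecting $\Lambda$ to $iE_1^\R$ and verifying uniformly in $t$ that $C$ meets $\Lambda_t$ only along $M$ (equivalently, controlling the sign/definiteness of $q$ restricted to $\Lambda_t$ along the family), while simultaneously keeping careful track of orientations and Thom classes so that the comparison of signs against Kiem--Li's $(-1)^{\rk\Lambda}$ in \cite[Appendix A]{KL} and the square-root Gysin normalisation of \cite[Definition 3.3]{OT1} comes out exactly right. The passage up and down the Edidin--Graham tower $\rho\colon\widetilde M\to M$ (needed because a global $\Lambda$ may not exist) is routine given \cite[Proposition 5]{EG} and the $\tfrac1{2^n}\rho_*(h\cup-)$ left inverse, but must be threaded through both the algebraic and topological intersections compatibly, which is where the $\ZZ$ coefficients are forced.
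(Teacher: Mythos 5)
Your reduction to the case where a maximal isotropic $\Lambda\subset E_1$ exists, and the push--pull along the Edidin--Graham cover with $\Z[\tfrac12]$ coefficients, match the paper. But there is a genuine gap at the heart of your plan, in the step where you claim that ``$q$ vanishes on $C$ while $q$ is nondegenerate pairing $\Lambda$ against $\Lambda^*$, forcing the scheme-theoretic intersection of $C$ with $\Lambda$ to be supported on $M$.'' This is false: $q$ vanishes identically on $\Lambda$ (that is what isotropic means), so the vanishing of $q$ on $C$ places no constraint whatsoever on $C\cap\Lambda$. In general $C\cap\Lambda$ is a cone strictly larger than the zero section $M$. This is precisely the reason the definition of $\sqrt{0_{E_1}^{\,!}}[C]$ in \cite[Definition 3.3]{OT1} does \emph{not} intersect $C$ with $\Lambda$ directly; instead it first deforms to the normal bundle of $\Lambda\subset E_1$ (so $C$ degenerates to the normal cone $C_{(\Lambda\cap C)/C}\subset\Lambda\oplus\Lambda^*$), and only then applies a cosection-localised Gysin map. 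Consequently your proposed family $\Lambda_t$ from $\Lambda$ to $iE_1^\R$ has no well-defined starting point: at $t=0$ the relative cycle $[C]\cap[\Lambda]$ in $H_*(\overline U^\epsilon,\partial\overline U^\epsilon\setminus\Lambda)$ does not exist, because $C$ meets $\Lambda$ in $\partial\overline U^\epsilon$. Your own definiteness criterion confirms this: $q|_{\Lambda}\equiv0$, so the restriction of $q$ to $\Lambda_t$ is not definite at $t=0$, and the estimate you want to run uniformly in $t$ fails at the endpoint.

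The paper's route is structured exactly to avoid this. It (a) degenerates $(C\subset E_1)$ to $(C_{(\Lambda\cap C)/C}\subset\Lambda\oplus\Lambda^*)$ via the deformation to the normal bundle of $\Lambda$; (b) invokes \cite[Lemma 3.1]{OT1} that the tautological cosection $\tau_\Lambda$ vanishes on $C_{(\Lambda\cap C)/C}$ and then uses \cite[Proposition A.1]{KL} to replace the cosection-localised Gysin class by a topological intersection with a \emph{graph} $\Gamma_{\sigma_\psi}$ (with $\psi(|v|)=|v|^{-2}$ for $|v|$ large), which by construction misses $C_{(\Lambda\cap C)/C}$ away from $M$ even though $\Lambda$ itself does not; (c) deforms $\psi$ to $1$, so that $\Gamma_{\sigma_1}$ becomes the diagonal in $\Lambda\oplus\overline\Lambda$, which \emph{is} a maximal positive definite real subbundle and carries a definiteness estimate; and (d) undoes the degeneration, deforming $\Gamma_{\sigma_1}$ through positive definite subbundles to $E_1^\R\subset E_1$ while $C_{(\Lambda\cap C)/C}$ returns to $C$ through isotropic cones, so the intersection stays in $M$ throughout. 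Steps (a)--(c) are the essential mechanism you are missing; they are what turn the ill-posed intersection with $\Lambda$ into a well-posed one with a definite subbundle, and the interpolation to $iE_1^\R$ can only start once you are already at a definite subbundle, not at $\Lambda$.
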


\begin{proof}
We rewrite the (images in homology of the) summands in \eqref{KiPa}. \smallskip

\emph{Case} ($i$): the cone $\wt C_i\subset\bar p\;^*E_1|_{F_i}$ is just $\Lambda$. As in \eqref{Clin} there is an orthogonal $C^\infty$ splitting
$$
\bar p\;^*E_1\big|_{F_i}\=\(\Lambda\oplus\Lambda^*\)\oplus\(\Lambda^\perp/\Lambda\)\ =:\ E_\Lambda\oplus E_\perp.
$$
Here $\Lambda$ is the obvious maximal isotropic of the first summand $E_\Lambda=\Lambda\oplus\Lambda^*$, so by \eqref{Rlin} and the projection formula,
$$
\Lambda\cdot\bar p\;^*(iE_1^\R)\big|_{F_i}\=\Lambda\cdot(iE_\perp^\R\oplus iE_{\Lambda}^\R)\=e(E^\R_\perp)\cap\big(\Lambda\cdot iE_\Lambda^\R\big)\=e(E^\R_\perp)\cap[F_i].
$$
By \cite[Proposition 3]{EG} $e(E^\R_\perp)=\sqrt{e}(E_\perp)=\sqrt{e}\;(\Lambda/\Lambda^\perp)$, so in sum
\beq{i}
\(\wt C\cdot\bar p\;^*(iE_1^\R)\)\big|_{F_i}\=\sqrt{e}\;(\Lambda/\Lambda^\perp)\cap[F_i].
\eeq

\smallskip
\emph{Case} ($j$): any one of these components $C_j$ is the zero section of $E_1|_{M_j}$, so intersecting it with $iE_1^{\R}|_{M_j}$ using its Thom form as in \eqref{defint1} gives
\beq{j}
C_j\cdot iE_1^{\R}\=e\;(E_1^{\R})\cap[C_j]\=\sqrt{e}\;(E_1)\cap[C_j]
\eeq
in homology, the last equality being \cite[Proposition 3]{EG}.
\medskip

The sum of $\bar p_*$\eqref{i} and \eqref{j} gives $C\cdot iE_1^\R$ on the left hand side, by the projection formula, and \eqref{KiPa} on the right hand side.
\end{proof}

Combining Theorem \ref{KiemLi} with Theorem \ref{BJKur} gives
$$
[M]^{\vir}\=[M]^{\vir}_{BJ}\ \in\ H_{\vd}\(M,\ZZ\),
$$
thus proving Theorem \ref{BBJJ} when $\vd$ is even.

\appendix
\section{Another proof of Theorem \ref{KiemLi}}

For completeness we include a very different proof of Theorem \ref{KiemLi} from an earlier version of this paper. We unravel the cosection localisation definition of the virtual cycle and then use a result of Kiem-Li \cite[Appendix A]{KL} to relate it to the real virtual class \eqref{defint1} after inverting 2.

Just as in \cite[Section 3]{OT1} we may assume without loss of generality that $E_1$ admits a maximal isotropic subbundle
\beq{Ses}
0\To\Lambda\To E_1\To\Lambda^*\To0,
\eeq
which is positive in the sense of \cite[Definition 2.2]{OT1}.\footnote{\label{footor} By \cite[Proposition 2.3]{OT1} this means the isomorphism of real bundles given by the composition $\Lambda\into E_1\cong E_1^\R\oplus iE_1^\R\onto E_1^\R$ takes the real orientation on $\Lambda$ --- induced by its complex structure --- to the real orientation on $E_1^\R$ induced by the complex orientation on $E$ as in \cite[Section 2.2]{OT1}.} If not we pull back to the cover $\rho\colon\widetilde M\to M$ of \cite[Equation 19]{OT1}, where there is a tautological such subbundle $\Lambda_\rho\subset\rho^*E_1$, and prove the Theorem there. Then we push the result down to $M$ by $2^{1-n}\rho_*(h\cup\ \cdot\ )$, where $h\in A^{n(n-1)}(\wt Y, \Z)$ is a canonical class with $\rho_*h=2^{n-1}$  \cite[Equation 20]{OT1} (so that $2^{1-n}\rho_*(h\cup\ \cdot\ )$ a left inverse of $\rho^*$); this is where we need to use $\Z[1/2]$ coefficients.

We first recall how the class \eqref{vird} is defined in \cite[Definition 3.3]{OT1}. It uses \cite[Equation 29]{OT1} as a way to intersect $C$ with $\Lambda$ inside $E_1$. We spell out the construction. Working on the total space $\pi\colon E_1\to M$, we use the tautological section $\tau\_{E_1}$ of $\pi^*E_1$, restricted to $C\subset E_1$. The projection of $\tau\_{E_1}|\_C$ to $\pi^*\Lambda^*$ has zero locus $\Lambda\cap C$. We linearise about this zero locus, Fulton-MacPherson style, by degenerating
\begin{equation}\label{degn}
\Big(C\ \subset\ E_1\Big)\ \rightsquigarrow\ 
\Big(C\_{\Lambda\cap C/C}\ \subset\ \Lambda\oplus\Lambda^*\Big).
\end{equation}
Here both pairs are isotropic cones inside orthogonal bundles, where on the right hand side we use the standard quadratic form on $\Lambda\oplus\Lambda^*$ given by pairing $\Lambda$ and $\Lambda^*$. This degeneration is the deformation to the normal bundle of $\Lambda$ in the total space of $E_1$ \cite[Section 5.1]{Fu},
\begin{equation}\label{degnc}\xymatrix@R=15pt{
\cE\ :=\ \mathrm{Bl}_{\Lambda\times\{0\}}\big(E_1\times\C\big)\take\overline{(E_1\times\{0\})} \ar[r]^-r\ar[d]& E_1 \\ \C.\!}
\end{equation}
The general fibre over $0\ne t\in\C$ is $E_1$. The special fibre over $t=0$ is $\Lambda\oplus\Lambda^*$ --- the total space of the normal bundle to $\Lambda\subset E_1$. Considering the quadratic form on the fibres of $E_1\to M$ to be a section $q$ of $\Sym^2\Omega_{E_1/M}$, we get the nondegenerate quadratic form
$$
t^{-1}r^*(q)\ \in\ \Gamma\big(\Sym^2\;\Omega_{\cE/\C}\big)
$$
on the fibres of \eqref{degnc}. It is a degeneration of the quadratic form $q$ on the general fibre to the standard quadratic form on the special fibre $\Lambda\oplus\Lambda^*$. Taking the proper transform of $C\times\C$ in \eqref{degnc} gives the degeneration \eqref{degn} of $C\subset E_1$ to $C\_{(\Lambda\cap C)/C}\subset\Lambda\oplus\Lambda^*$,
the normal cone of $(\Lambda\cap C)\subset C$.
Since the generic fibres are all isotropic, so is this central fibre.

We consider $\Lambda\oplus\Lambda^*$ to be the pullback bundle $p^*\Lambda^*$, containing $C\_{(\Lambda\cap C)/C}$. To intersect the latter with the 0-section of the former we use the cosection defined by the tautological section of $p^*\Lambda$,
$$
\tau\_\Lambda\,\colon\ p^*\Lambda^*\To\cO_\Lambda.
$$
The fact that $C\subset E_1$ is isotropic becomes, after the degeneration \eqref{degnc}, the identity \cite[Lemma 3.1]{OT1},
\beq{cosec7}
\tau\_\Lambda\big|_{C\_{(\Lambda\cap C)/C}}\ \equiv\ 0.
\eeq
Thus we may apply \cite[Proposition 1.3]{KL} to define a class
\begin{equation}\label{csloc}
0_{p^*\Lambda^*\!,\,\tau\_\Lambda}^{\;!,\,\mathrm{loc}}\big[C\_{(\Lambda\cap C)/C}\big]\ \in\ A_{\frac12\!\vd}(M)
\end{equation}
cosection localised to the 0-section $M\subset\Lambda$ --- i.e. the zero locus of the section $\tau\_\Lambda$ of $p^*\Lambda$. In \cite{OT1} we defined the algebraic virtual cycle to be
$$
[M]^{\vir}\=\sqrt{0_{E_1}^{\,!}}\,[C]\ :=\ (-1)^n\,0_{p^*\Lambda^*\!,\,\tau\_\Lambda}^{\;!,\,\mathrm{loc}}\big[C\_{(\Lambda\cap C)/C}\big].
$$\smallskip

Using \cite[Proposition A.1]{KL} this can be given the following more topological description.

Pick a hermitian metric on $\Lambda$, thus inducing one on $\Lambda^*$ and defining an isomorphism $\Lambda^*\cong\overline\Lambda$ (where $\overline\Lambda$ is $\Lambda$ as a set and an $\R$-bundle, but with the conjugate complex structure --- i.e. $i$ acts on $\overline\Lambda$ by multiplication by $-i$ on $\Lambda$). For a smooth function $\psi\colon[0,\infty)\to(0,\infty)$ consider the section $\sigma_\psi$ of $p^*\overline\Lambda\cong p^*\Lambda^*$ over the total space of $\Lambda$ which takes $v\in\Lambda$ to $\psi(|v|)v\in p^*\overline\Lambda$. Its graph is
\beq{grarf}
\Gamma_{\!\sigma_\psi}\ :=\ \Big\{\big(v,\psi(|v|)v\big)\,:\,v\in\Lambda\Big\}\ \subset\ \Lambda\oplus\overline\Lambda.
\eeq
Choosing $0<\delta\ll1$ and $\psi(|v|)=|v|^{-2}$ for $|v|>\delta$, 
the section $\sigma_\psi$ satisfies the condition required in \cite[Appendix A]{KL},
\begin{equation}\label{non0}
\langle\sigma_\psi,\tau\_\Lambda\;\rangle\ =\ 1
\end{equation}
on the complement of the neighbourhood $U^\delta_\Lambda(M)$ \eqref{retract}. Since $\tau\_\Lambda$ vanishes on $C\_{(\Lambda\cap C)/C}$ \eqref{cosec7} but equals $\psi(|v|)|v|^2$ on $\Gamma_{\!\sigma_\psi}$, the cone and the graph intersect only in the zero section $M\subset\Lambda\oplus\overline\Lambda$. Therefore, setting $U=U^\epsilon_{\!\Lambda\oplus\overline\Lambda}(M)$, we can define the topological intersection
\begin{equation}\label{graphpsi}
\Gamma_{\!\sigma_\psi}\cdot C\_{(\Lambda\cap C)/C}\ \in\ H\_{\vd}(U)\ \cong\ H_{\vd}(M)
\end{equation}
in the usual way as the cap product of the Thom class
\beq{yusual}
\big[\Gamma_{\!\sigma_\psi}\big]\ \in\ H^{2n}\(\;\overline U,\partial\overline U\take\Gamma_{\!\sigma_\psi}\) \quad\text{and}\quad \big[C\_{(\Lambda\cap C)/C}\big]\ \in\ H_{2\rk T}\(\;\overline U,\partial\overline U\take\Gamma_{\!\sigma_\psi}\).
\eeq
(Here $T$ is the bundle in the normal form of \eqref{E*E}.)
By \cite[Proposition A.1]{KL} the intersection \eqref{graphpsi} is the image of \eqref{csloc} under the cycle map $A_{\frac12\!\vd}(M)\to H_{\vd}(M)$ so
$$
[M]^{\vir}\=(-1)^n\;\Gamma_{\!\sigma_\psi}\cdot C\_{(\Lambda\cap C)/C}.\vspace{1mm}
$$

Now deform $\psi$ to the constant function $1$. This deforms the right hand side of \eqref{non0} from 1 to $|v|^2$ through $(0,\infty)$. Thus it is always nonzero, whereas $\tau\_{\Lambda}$ vanishes on $C\_{(\Lambda\cap C)/C}$ \eqref{cosec7}, so again $C\_{(\Lambda\cap C)/C}$ and $\Gamma_{\!\sigma_\psi}$ do not meet outside $M\subset U$. Thus the construction \eqref{yusual} of the intersection \eqref{graphpsi} deforms too, so the homology class of $[M]^{\vir}$ is
$$
(-1)^n\;\Gamma_{\!\sigma_1}\cdot C\_{(\Lambda\cap C)/C}\ \in\ H\_{\vd}(U)\ \cong\ H_{\vd}(M).
$$
But $\Gamma_{\!\sigma_1}\subset\Lambda\oplus\Lambda^*$ is the diagonal in $\Lambda\oplus\overline\Lambda$ --- a maximal positive definite real subbundle. As we deform away from the central fibre of \eqref{degnc} back to the general fibre we can deform $\Gamma_{\!\sigma_1}\subset\Lambda\oplus\Lambda^*$ through other positive definite subbundles to $E_1^\R\subset E_1$ because the space of maximal positive definite subspaces of a quadratic vector space is contractible.\vspace{-.6mm}
As we do so $C\_{(\Lambda\cap C)/C}$ deforms back to $C$ through isotropic cones, which thus miss $E_1^\R$ outside $M$. Therefore again the construction \eqref{yusual} of the intersection \eqref{graphpsi} deforms too, with the homology class of $[M]^{\vir}$ becoming
\beq{orE1}
(-1)^nE_1^\R\cdot C\ \in\ H_{\vd}(M).
\eeq
Here we have to specify the orientation on the normal bundle $iE_1^\R$ to $E_1^\R\subset E_1$ induced by our construction.

To do so we note $\Lambda\oplus\Lambda^*$ has a complex orientation induced by its deformation \eqref{degnc} to $E_1$. This induces a canonical real orientation on the maximal positive definite real subbundle $\Gamma_{\!\sigma_1}\subset\Lambda\oplus\Lambda^*$ by \cite[Section 2.2]{OT1}. On deformation through the graphs $\Gamma_{\!\sigma_\psi}$ \eqref{grarf} back to $\Gamma_{\!\sigma_0}=\Lambda$ this becomes the standard orientation induced by the complex structure on $\Lambda$ as in Footnote \ref{footor}. So once we deform to $E_1^\R\subset E_1$ we have the canonical real orientation on $E_1^\R$ and the canonical real orientation on $E_1$ induced by its complex structure. Together these induce an orientation on the normal bundle $iE_1^\R$ to $E_1^\R\subset E_1$ which differs\footnote{This is the calculation that $(e_1\wedge_\R\dots\wedge_\R e_{2n})\wedge_\R(ie_1\wedge_\R\dots\wedge_\R ie_{2n})$ is equal to $(-1)^n(e_1\wedge_\R ie_1)\wedge_\R(e_2\wedge_\R ie_2)\wedge_\R\dots$, where $\{e_i\}\subset E_1^\R$ is an oriented basis for $E_1$ in the complex sense, or equivalently for $E_1^\R$ in the real sense; see \cite[Section 2.2]{OT1}.} by the sign $(-1)^n$ from the one coming from the isomorphism $\times i\colon E_1^\R\to iE_1^\R$.

So applying $\times i$ to $E_1$, which preserves $C\subset E_1$ and their real orientations induced by their complex structures, takes \eqref{orE1} to the required class
\[
iE_1^\R\cdot C\ \in\ H_{\vd}(M).
\]

\section{Some extension results} \label{appA}
\subsection{Stein extension results}
We thank Richard L\"ark\"ang and Mohan Ramachandran for expert assistance with the following examples of Oka's principle. Recall that by a Stein space we mean a (not necessarily reduced) complex analytic space which is isomorphic to a closed analytic subspace of some $\C^N$.

\begin{prop}\label{steinext}
Given holomorphic maps of analytic schemes
$$
\xymatrix@C=30pt@R=15pt{\ X_0 \ar[r]^{f_0}& Y \\ X \ar@{<-_)}[u]-<0pt,9pt>\ar@{-->}[ur]_f}
$$
with $Y$ smooth and $X_0,\,Y$ Stein, we may shrink $X$ to a Stein neighbourhood of $X_0$ such that there is a a holomorphic map $f\colon X\to Y$ with $f|_{X_0}=f_0$.
\end{prop}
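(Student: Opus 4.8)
The statement is a version of Oka's principle: the obstruction to extending a holomorphic map into a smooth Stein target is purely topological, and here it vanishes because we are only asking for a germ along $X_0$. The plan is to reduce to the case $Y=\C^n$ by embedding. Since $Y$ is smooth and Stein, it carries a holomorphic embedding $Y\into\C^n$ for some $n$ (Remmert), and moreover a smooth Stein subvariety of $\C^n$ admits a holomorphic tubular neighbourhood: there is an open set $W\supset Y$ in $\C^n$ and a holomorphic retraction $\pi\colon W\to Y$ with $\pi|_Y=\id_Y$ (this is standard Oka theory — see \cite{Fo}, or it follows by exponentiating a holomorphic splitting of $0\to T_Y\to T_{\C^n}|_Y\to N_{Y/\C^n}\to 0$, which exists since $Y$ is Stein). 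So it suffices to extend the composite $X_0\to Y\into\C^n$ to a holomorphic map $X\to W$ on a neighbourhood of $X_0$, and then postcompose with $\pi$; after shrinking $X$ so that the image lands in $W$, this produces $f=\pi\circ(\text{extension})$ with $f|_{X_0}=\pi\circ f_0=f_0$.

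Thus the problem becomes: extend $n$ holomorphic functions. Concretely, $f_0$ corresponds to $n$ elements of $\cO_{X_0}(X_0)$, i.e. to a morphism of sheaves of algebras $\cO_{X_0}\to$ (coordinate functions). Because $X_0\subset X$ is a closed analytic subspace with ideal sheaf $\cJ\subset\cO_X$, we have the exact sequence $0\to\cJ\to\cO_X\to\cO_{X_0}\to 0$ of coherent sheaves on $X$. First I would shrink $X$ to a Stein open neighbourhood of $X_0$ (possible since $X_0$ is Stein, e.g. by Siu's theorem as already invoked in the paper, or simply because the statement is local in $X$ around $X_0$). On a Stein space $H^1$ of any coherent sheaf vanishes (Cartan's Theorem B), so $H^1(X,\cJ)=0$ and the restriction map $\cO_X(X)\onto\cO_{X_0}(X_0)$ is surjective. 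Applying this to each of the $n$ coordinate components of $f_0$ yields holomorphic functions $g_1,\dots,g_n\in\cO_X(X)$ restricting to the components of $\iota\circ f_0$, hence a holomorphic map $g=(g_1,\dots,g_n)\colon X\to\C^n$ with $g|_{X_0}=\iota\circ f_0$, whose image meets $W$ along $X_0$.

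The only remaining point is to arrange that the image of $g$ actually lands in $W$ (not merely near $X_0$), so that $\pi\circ g$ makes sense. Since $g^{-1}(W)$ is an open set containing $X_0$, I would invoke once more the fact (used repeatedly in the paper, via Siu's theorem \cite{Siu}, \cite[Theorem 3.1.1]{Fo}) that $X_0$ admits arbitrarily small \emph{Stein} open neighbourhoods in $X$; replacing $X$ by such a neighbourhood contained in $g^{-1}(W)$, the map $f:=\pi\circ g|_X\colon X\to Y$ is holomorphic and satisfies $f|_{X_0}=\pi\circ\iota\circ f_0=f_0$. The main obstacle is conceptual rather than computational: one must have the holomorphic tubular neighbourhood $(W,\pi)$ of $Y$ in $\C^n$ available — this is where smoothness of $Y$ is essential and where Oka's principle does the real work — after which everything reduces to Cartan's Theorems A and B on Stein spaces. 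Everything else (embedding $Y$, shrinking to Stein neighbourhoods, lifting finitely many functions) is routine.
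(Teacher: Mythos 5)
Your proposal is correct and follows essentially the same route as the paper: embed $Y\into\C^N$, extend the coordinate functions of $f_0$ to a neighbourhood of $X_0$ in $X$ (the paper states this tersely; your Cartan-B justification fills in the implicit step), compose with a holomorphic tubular-neighbourhood retraction onto the smooth Stein subvariety $Y$ (the paper cites Siu's Corollary 1 for this), and finally shrink to a Stein neighbourhood of $X_0$. The only caveat is that your parenthetical alternative justification for the tubular neighbourhood — "exponentiating a holomorphic splitting" — is not quite a complete argument on its own, but this does not matter since you also give the correct reference.
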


\begin{proof}
Composing $f_0$ with an embedding $Y\into\C^N$ we get $N$ holomorphic functions on $X_0$ which we can extend to a neighbourhood of $X_0\subset X$. So by shrinking $X$ we may assume we have lifted $f_0$ to $f\colon X\to\C^N$.

Since $Y$ is smooth and Stein it has a holomorphic tubular neighbourhood $Y\,\raisebox{-2pt}{$\stackrel\longleftarrow\into$}\,\mathring\C^N\subset\C^N$ by \cite[Corollary 1]{Siu}. Shrinking $X$ we may assume $f$ maps it into $\mathring\C^N$, then composing with $\mathring\C^N\to Y$ gives a map $X\to Y$ extending $f_0$. Finally $X_0\subset X$ admits a Stein neighbourhood by the main theorem of \cite{Siu}, so we use this to replace $X$.
\end{proof}

\begin{prop}\label{steinbdl}
Fix a $E_0$ a holomorphic bundle over $X_0\subset Y$, with $X_0$ Stein and $Y$ smooth. Then there is a holomorphic vector bundle $E$ over a Stein neighbourhood $X\supset X_0$ such that $E|_{X_0}\cong E_0$.
\end{prop}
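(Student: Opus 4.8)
The plan is to turn the problem into one about extending an idempotent matrix-valued holomorphic function and then correcting the extension to a genuine idempotent by holomorphic functional calculus.

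First I would realise $E_0$ as the image of a holomorphic idempotent. Since $X_0$ is Stein, the coherent sheaf $E_0$ is generated by finitely many global sections (Cartan's Theorem~A), giving a surjection $\cO_{X_0}^{\oplus n}\onto E_0$ of locally free sheaves whose kernel $K$ is again locally free; the extension $0\to K\to\cO_{X_0}^{\oplus n}\to E_0\to0$ splits because $\Ext^1_{\cO_{X_0}}(E_0,K)=H^1(X_0,\hom(E_0,K))=0$ on the Stein space $X_0$ (Theorem~B). Thus $E_0=\im(p_0)$ for some $p_0\in\Gamma\big(X_0,\End(\cO^{\oplus n})\big)$ with $p_0^2=p_0$. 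Next, by Siu's theorem \cite{Siu} the Stein subspace $X_0$ of the complex manifold $Y$ has a Stein open neighbourhood $X_1\subseteq Y$; on $X_1$ the ideal sheaf $\cI$ of $X_0$ has $H^1(X_1,\cI)=0$, so $\Gamma(X_1,\cO)\onto\Gamma(X_0,\cO)$, and extending the $n^2$ entries of $p_0$ produces $\tilde p\in\Gamma\big(X_1,\End(\cO^{\oplus n})\big)$ with $\tilde p|_{X_0}=p_0$. This $\tilde p$ is not idempotent, but $\tilde p^2-\tilde p$ vanishes on $X_0$.

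The heart of the argument is a Riesz-projection correction. On the reduction of $X_0$ the idempotent $p_0$ has eigenvalues $r:=\rk E_0$ ones and $n-r$ zeros, so $\det(zI-p_0)=z^{n-r}(z-1)^r$, which is a unit on $X_0\times\{\tfrac14\le|z-1|\le\tfrac12\}$. Since whether a holomorphic function is a unit depends only on its values on the reduction, and the annulus is compact, after shrinking $X_1$ to a smaller neighbourhood $X'$ of $X_0$ the function $\det(zI-\tilde p)$ is a unit on $X'\times\{\tfrac14\le|z-1|\le\tfrac12\}$; hence $(zI-\tilde p)^{-1}$ exists and is holomorphic in $(x,z)$ there. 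I then set
$$
q\ :=\ \frac1{2\pi i}\oint_{|z-1|=1/2}(zI-\tilde p)^{-1}\,dz\ \in\ \Gamma\big(X',\End(\cO^{\oplus n})\big).
$$
The standard resolvent-identity computation, carried out with the two concentric circles $|z-1|=\tfrac14$ and $|z-1|=\tfrac12$, gives $q^2=q$; restricting to $X_0$ and using $(zI-p_0)^{-1}=\tfrac1z(I-p_0)+\tfrac1{z-1}p_0$ gives $q|_{X_0}=p_0$.

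Finally, $E:=\im(q)=\ker(I-q)$ is a direct summand of $\cO_{X'}^{\oplus n}$, hence a holomorphic vector bundle on $X'$; its rank equals $\tr q$, which is locally constant and equals $r$ along $X_0$, so after shrinking $X'$ it is constantly $r$, and $E|_{X_0}=\im(p_0)=E_0$ by construction. One more application of Siu's theorem \cite{Siu} replaces $X'$ by a Stein neighbourhood $X\supseteq X_0$, and restricting $E$ to $X$ finishes the proof. I expect the only genuinely delicate point to be the functional-calculus step over a possibly non-reduced Stein space — that invertibility of $zI-\tilde p$ on the relevant circles is detected on the reduction, and that the Riesz projection of the non-idempotent extension $\tilde p$ is a holomorphic idempotent restricting to $p_0$; the rest is a routine combination of Cartan's Theorems~A and~B with Siu's neighbourhood theorem.
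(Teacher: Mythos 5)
Your proof is correct, but it takes a genuinely different route from the paper's. The paper pulls $E_0$ back along a deformation retraction to get a topological bundle on a Stein neighbourhood, invokes Grauert's Oka principle to endow it with a holomorphic structure agreeing with $E_0$ on the reduction $X_0^{\mathrm{red}}$, and then bootstraps up the infinitesimal filtration $X_0^{\mathrm{red}}=X_0^1\subset\dots\subset X_0^n=X_0$ using the vanishing of $H^1(\cE nd\,E\otimes I_i)$ to show the extension to each thickening is unique, hence equals $E_0$. You instead realise $E_0$ as the image of a holomorphic idempotent on a trivial bundle, extend the idempotent arbitrarily (losing idempotence), and recover a genuine idempotent via the Riesz-projection contour integral; non-reducedness is handled in one stroke because invertibility of $zI-\tilde p$ is detected on the reduction. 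Your route trades the Oka principle for elementary resolvent calculus and gives an essentially explicit $E$, which is appealing; what it buys at some cost is the step where you present $E_0$ as a direct summand of $\cO^{\oplus n}$ for \emph{finite} $n$. Cartan's Theorem~A gives generation by global sections but not by finitely many; you need the Forster--Ramspott type theorem that on a Stein space of finite embedding dimension a rank-$r$ bundle is generated by finitely many sections (this applies here since the paper's Stein spaces embed closedly in some $\C^N$). With that citation corrected, the argument is complete: the resolvent-identity computation, the restriction $q|_{X_0}=p_0$ using $(zI-p_0)^{-1}=\tfrac1z(I-p_0)+\tfrac1{z-1}p_0$, local freeness of $\im(q)$ as a direct summand of $\cO^{\oplus n}$ over the Noetherian local rings $\cO_x$, local constancy of its rank, and the final application of Siu are all sound.
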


\begin{proof}
By \cite{Siu} a Stein neighbourhood $X\supset X_0$ exists, and may be embedded $X\into\C^N$. Then there exists a Stein neighbourhood $\mathring\C^N$ of $X_0$ such that $X_0\subset\mathring\C^N$ is a deformation retract. Pulling $E_0$ back by the retraction gives a topological bundle $E$ on $\mathring\C^N$, which by Grauert's Oka principle \cite[Theorem 8.2.1(ii)]{Fo} admits a holomorphic structure which agrees on $X_0^{\mathrm{red}}$ with $E_0|_{X_0^{\mathrm{red}}}$.

So we have a bundle $E|_X$ over $X$ such that $E|_{X_0}$ and $E_0$ are isomorphic on restriction to $X_0^{\mathrm{red}}$. What remains is to deduce that they are isomorphic on $X_0$. Filter $X_0$ by
$$
X_0^{\mathrm{red}}\=X_0^1\ \subset\ \dots\ \subset\ X_0^n\=X_0
$$
so that $X_0^i\subset X_0^{i+1}$ has square zero ideal sheaf $I_i$. Extensions of the bundle $E|_{X_0^i}$ from $X_0^i$ to $X_0^{i+1}$ form a torsor over $H^1\(\cE nd\, E|_{X_0^i}\otimes I_i\)$. Since $X_0^i$ is Stein this vanishes, so the extension $E|_{X_0^{i+1}}$ is unique. By induction the extension $E|_{X_0^n}$ is unique and so equals $E_0$.
\end{proof}

\subsection{Extending $\cA_1$-line bundles}
We first prove Theorem \ref{extendE} when $F$ has rank 1. This means its transition functions commute, so we will be able to use a standard partition of unity argument on the $C^\infty_V$-module $I_1$.

We cover $V=\bigcup_iV_i$ by sufficiently small open sets $V_i$ that (i) the $F|_{U_i:=V_i\cap U}$ are all trivial and (ii) the $V_i$ admit Stein structures compatible with the $\cA_2$ structure on $V\!$. That is, 
the $C^\infty$ structure on $V_i$ is that restricted from $V$\!, and the two holomorphic structures on $2U\cap V_i$ --- induced by the Stein structure on $V_i$ and the $\cA_2$ structure on $V$ --- agree.

Thus $F$ is defined by holomorphic transition functions $\phi_{ij}$ over $U_{ij}$ satisfying $\phi_{jk}\phi_{ik}^{-1}\phi_{ij}\equiv1$ on $U_{ijk}$. Shrinking $V_i$ if necessary we may lift these to $\cA_1$ functions $\Phi_{ij}\colon V_{ij}\to\C^*$ such that on $V_{ijk}$ the cocyle 
$$
\delta_{ijk}\ :=\ \Phi_{jk}\Phi_{ik}^{-1}\Phi_{ij} \quad\text{satisfies}\quad \big|\delta_{ijk}-1\big|\ <\ 1.
$$  
In particular we may use the branch of log on $\C\,\take(-\infty,0]$ which satisfies $\log1=0$ to define new smooth transition functions
\beq{po1}
\Psi_{ij}\ :=\ \exp\left(-\sum\nolimits_kh_k\log\delta_{kij}\right)\cdot\Phi_{ij}\ \text{ on }V_{ij},
\eeq
where the $\{h_k\}$ form a smooth partition of unity for the cover $V=\bigcup_kV_k$. Since $\delta_{ijk}-1\in I_1$ and $\log(1+z)\in(z)\cdot\cO$ we see that $\log\delta_{ijk}\in I_1$. Similarly $e^z-1\in(z)\cdot\cO$ then shows that $\Psi_{ij}-\Phi_{ij}\in I_1$. Thus
$$
\Psi_{ij}\ \in\ \cA_1 \quad\text{and}\quad \Psi_{ij}\big|_{U_{ij}}\=\phi_{ij}.
$$
Of course the $\Psi_{ij}$ were constructed so that the usual cocycle argument\footnote{Taking logs in \eqref{po1} turns the unfamiliar multiplicative argument into the more familiar additive one that a cocyle valued in a fine sheaf is a coboundary.} --- using the identity $\delta_{jk\ell}\;\delta_{ik\ell}^{-1}\delta_{ij\ell}\;\delta_{ijk}^{-1}=1$ --- gives $\Psi_{jk}\Psi_{ik}^{-1}\Psi_{ij}=1$ on $V_{ijk}$. Thus the $\Psi_{ij}$ define $\cA_1$-transition functions for a line bundle on $V$ which restricts to $F$ on $U\!$.

\subsection{Extending $\cA_1$-bundles}\label{proof} 
To prove Theorem \ref{extendE} for a rank $r$ bundle $F$ we may now replace it by  its twist by any holomorphic line bundle on $U$. This extends to an $\cA_1$-bundle on $V$ if and only if $F$ does, because we have already shown the line bundle extends.

So we may assume that $F$ is generated by its global sections, and hence is the pullback of the universal bundle by a holomorphic map $f\colon U\to\Gr$ to a Grassmannian $\Gr\;(\C^N,r)$. We will extend $f$ to an $\cA_1$-map\footnote{That is $F^*\colon C^\infty_{\Gr}\to C^\infty_V$ maps $\cO_{\Gr}$ in the former to $\cA_1$ in the latter.} $F\colon V\to\Gr$ in several steps. This will prove Theorem \ref{extendE}, with the holomorphic tranistion functions of the universal bundle on Gr pulling back to $\cA_1$-transition functions on $V\!$. \medskip

As before we pick a Stein cover $V=\bigcup_iV_i$ with holomorphic extensions $F_i\colon V_i\to\Gr$ of $f|_{U_i}\colon U_i\to\Gr$.

To glue these via a partition of unity argument we need a linear structure on open sets in Gr. Fix a point $0\to S\to\C^N\to Q\to0$ which we call $S\in\Gr$ for short. The standard hermitian metric on $\C^N$ defines the orthogonal splitting $\C^N\cong S\oplus Q$. We have the (linear!) open set
\beqa
e_S\,\colon\,T_S\Gr\ \cong\ \Hom(S,Q) &\Into& \Gr, \\
\phi\quad &\Mapsto& \im(\id\oplus\;\phi),
\eeqa
where $\id\oplus\;\phi\colon S\into S\oplus Q\cong\C^N$. Thus the origin $\phi=0$ maps to $S\in\Gr$.

This puts a linear structure on the Zariski open set of points $[S'\subset\C^N]\in\Gr$ such that $S'\cap S^\perp=\{0\}$. Allowing the point $[S\into\C^N]$ to vary we get an open neighbourhood of the diagonal $\Delta\subset\Gr\times\Gr$,
$$
(\Gr\times\Gr)^\circ\ :=\ \big\{(S,T)\in\Gr\times\Gr\ \colon\,T\cap S^\perp=\{0\}\big\},
$$
and a $C^\infty$ isomorphism
\begin{eqnarray}\label{open}
e\,\colon\,T\Gr &\rt\sim& (\Gr\times\Gr)^\circ, \\
(S,\phi) &\Mapsto& \(S,\,\im(\id\oplus\;\phi)\). \nonumber
\end{eqnarray}
It is holomorphic on each tangent fibre and maps the 0-section to $\Delta$.

Our first (very rough) attempt to glue the $F_i$ together is a map $s\colon V\to\Gr$ which is sufficiently close to $F_i$ over $V_i$ that each $(s,F_i)$ maps $V_i$ to a neighbourhood of the diagonal $\Delta$.

\begin{lem}\label{lemeps}
After possibly shrinking $V$ there exists a $C^\infty$ map $s\colon V\to\Gr$ such that $\(s|_{V_i},F_i\)\colon V_i\to\Gr\times\Gr$ factors through $(\Gr\times\Gr)^\circ$ for all $i$.
\end{lem}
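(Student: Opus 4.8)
The plan is to build $s$ by averaging the local holomorphic extensions $F_i$ in an ambient Euclidean space and then projecting back to $\Gr$. Since $\Gr$ is a smooth compact manifold, I would fix once and for all a closed $C^\infty$ embedding $\Gr\subset\R^M$ (for instance $S\mapsto$ orthogonal projection onto $S$, landing in the symmetric matrices), together with a tubular neighbourhood $W\supset\Gr$ and its $C^\infty$ nearest-point retraction $r\colon W\to\Gr$. Because $(\Gr\times\Gr)^\circ$ is open and contains the compact diagonal $\Delta$, there is a $\delta>0$ with
$$
\big\{(S,T)\in\Gr\times\Gr\,:\,|S-T|_{\R^M}<2\delta\big\}\ \subset\ (\Gr\times\Gr)^\circ,
$$
and we may also shrink $\delta$ so that the $\delta$-neighbourhood of $\Gr$ in $\R^M$ lies inside $W$.

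First I would arrange that the $F_i$ are mutually $\delta$-close on overlaps. Each $F_i$ restricts to $f$ on $U_i$, so $F_i$ and $F_j$ agree on $U_{ij}$; since $U$ is compact and the $F_i$ are continuous, after shrinking $V$ (and correspondingly each $V_i$) we may assume
$$
|F_i(x)-F_j(x)|_{\R^M}\ <\ \delta \qquad\text{for all }i,j\text{ and all }x\in V_{ij}.
$$
This is the same kind of shrinking used in the line bundle case to make $|\delta_{ijk}-1|<1$. Next I would pick a $C^\infty$ partition of unity $\{h_i\}$ subordinate to the cover $\{V_i\}$ of $V$ and set, in the ambient space,
$$
\wt s\ :=\ \sum\nolimits_i h_i\,F_i\ \colon\,V\To\R^M,
$$
a locally finite, hence $C^\infty$, sum. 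Then I would define $s:=r\circ\wt s\colon V\to\Gr$, which is $C^\infty$ provided $\wt s$ maps into $W$.

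The estimate that makes this work is the following. Fix $x\in V$ and any $i$ with $x\in V_i$. For every $j$ with $h_j(x)>0$ we have $x\in\operatorname{supp}h_j\subset V_j$, hence $x\in V_{ij}$ and $|F_j(x)-F_i(x)|_{\R^M}<\delta$. Since $\wt s(x)$ is a convex combination of these $F_j(x)$, it follows that $|\wt s(x)-F_i(x)|_{\R^M}<\delta$; in particular $\wt s(x)$ is within $\delta$ of $\Gr$, so $\wt s(x)\in W$ and $|r(\wt s(x))-\wt s(x)|_{\R^M}=d(\wt s(x),\Gr)<\delta$. Therefore $|s(x)-F_i(x)|_{\R^M}<2\delta$, so $(s(x),F_i(x))\in(\Gr\times\Gr)^\circ$. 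As this holds for every $i$ with $x\in V_i$, the map $(s|_{V_i},F_i)\colon V_i\to\Gr\times\Gr$ factors through $(\Gr\times\Gr)^\circ$, as required.

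The only genuinely delicate point is the shrinking step: one must ensure that the inequality $|F_i-F_j|<\delta$ can be imposed on the \emph{entire} overlap $V_{ij}$, simultaneously for all pairs, after passing to a single neighbourhood $V$ of $U$. This follows from continuity of the finitely many functions $|F_i-F_j|$, their vanishing along $U_{ij}$, and compactness of $U$, exactly in the spirit of the partition-of-unity argument already carried out for line bundles; everything else above is routine.
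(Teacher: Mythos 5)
Your proof is correct, but the mechanism producing $s$ is genuinely different from the paper's. Both proofs start the same way: shrink $V$ so that the pairwise differences of the $F_i$ are uniformly small on the overlaps $V_{ij}$ (using that they agree along $U_{ij}$). After that the paths diverge. You produce $s$ by an explicit averaging construction: fix a smooth embedding $\Gr\subset\R^M$ and a tubular neighbourhood with nearest-point retraction $r$, form the ambient convex combination $\wt s=\sum_i h_iF_i$ with a partition of unity subordinate to $\{V_i\}$, and set $s=r\circ\wt s$; the triangle-inequality estimate $|s-F_i|<2\delta$ on $V_i$ then forces $(s|_{V_i},F_i)$ into $(\Gr\times\Gr)^\circ$. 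The paper instead forms, inside $V\times\Gr$, the subset $\bigcup_i\bigcup_{x\in V_i}\{x\}\times B_{3\epsilon}(F_i(x))$. Because the centres $F_i(x)$ are pairwise within $\epsilon$ of each other, each fibre is a union of metric balls each containing the other centres, hence (geodesically star-shaped and so) contractible; the paper then invokes the existence of a global section of this fibrewise-contractible ``bundle'' over the manifold $V$, the $F_i$ providing local sections. Your route is more elementary and concrete --- nothing beyond partitions of unity, a tubular neighbourhood, and triangle inequalities --- while the paper's is shorter but leans on a soft selection-type fact. Both share the same acknowledged delicate step, namely arranging the shrinking of $V$ to control all $|F_i-F_j|$ simultaneously on all $V_{ij}$, which you flag appropriately.
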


\begin{proof}
We use the Fubini-Study metric of diameter 1 on Gr and fix $0<\epsilon\ll1$ so that the $3\epsilon$-neighbourhood $U^{3\epsilon}(\Delta)$ of $\Delta$ is contained in $(\Gr\times\Gr)^\circ$.

Consider the $F_i$ as (partially defined) sections of the trivial bundle $V\times\Gr\to V$. Any two of them $F_i,\,F_j$ are simultaneously defined on $V_{ij}\subset V$ and equal on $U_{ij}\subset V_{ij}$. Therefore, shrinking $V$ if necessary, we may assume their graphs are within $\epsilon$ of each other on $V_{ij}$. Thus
$$
\bigcup\nolimits_i\bigcup\nolimits_{x\in V_i}\ \{x\}\times B_{\;3\epsilon}\(F_i(x)\)\ \subset\ V\times\Gr
$$
is a bundle over $V$ with contractible fibres --- nonempty unions of balls each of which contains the centre of the other balls --- and local sections connecting them. It therefore admits a global section $s\colon V\to\Gr$.
\end{proof}

Think of \eqref{open} as an isomorphism of bundles over the first factor $\Gr\ni S$. Pulling back by $s$ gives the isomorphism of bundles
$$
e_s\ \colon\,s^*(T\Gr)\ \rt\sim\ s^*(\Gr\times\Gr)^\circ
$$
over $V$. The $F_i$ define sections of the latter over $V_i$, so we can now use the linear structure on the fibres of the former --- and a partition of unity $\{h_i\}$ subordinate to the cover $V=\bigcup_iV_i$ --- to glue them. This gives the section
\beq{Fdef}
F\ :=\ e_s\left(\;\sum\nolimits_ih_i\,e_s^{-1}(F_i)\right)\ \text{ of }\  s^*(\Gr\times\Gr)^\circ.
\eeq
Here we ignore the $i$th term wherever $h_i=0$; since $h_i\ne0$ only on $V_i$ the sum is well defined in $T\Gr$ by Lemma \ref{lemeps}.

The composition of $s^*(\Gr\times\Gr)^0\subset s^*(\Gr\times\Gr)\cong V\times\Gr$ with projection to the second factor defines our $C^\infty$ map $F\colon V\to\Gr$.

\begin{prop}
$F\colon V\to\Gr$ is an $\cA_1$-map with $F|_U=f$.
\end{prop}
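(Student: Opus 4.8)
The plan is to verify the two assertions in turn, the first by unwinding the definition \eqref{Fdef} and the second by a pair of Hadamard expansions after reducing to a local holomorphic model. For $F|_U=f$ I would argue as follows: over $U_i$ we have $F_i|_{U_i}=f|_{U_i}$, so by Lemma \ref{lemeps} (together with openness of $(\Gr\times\Gr)^\circ\subset\Gr\times\Gr$) the section $\sigma_f:=e_s^{-1}(f)$ is defined over a neighbourhood of $U$ and agrees over each $U_i$ with $e_s^{-1}(F_i)$; since the $h_i$ form a partition of unity, $\sum_ih_i\,e_s^{-1}(F_i)$ restricts on $U$ to $\sigma_f$, and applying $e_s$ and then projecting $s^*(\Gr\times\Gr)^\circ\to\Gr$ to the second factor gives $F|_U=f$.

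For the $\cA_1$-property I would work locally. The map $F$ is manifestly $C^\infty$, so there is nothing to check near a point $x\notin U$ (there $I$ is locally the unit ideal, so $\cA_1=C^\infty_V$). So fix $x\in U$. By Proposition \ref{sane} I may shrink $V$ about $x$ so that $U\into V$ becomes a holomorphic embedding of Stein varieties with ideal $I\subset\cO_V$ and $\cA_1=\cO_V+C^\infty_V\!.\,I$; since $U$ is Stein I may shrink further so that $f$ extends to a holomorphic map $g\colon V\to\Gr$ with $g|_U=f$, and (by Lemma \ref{lemeps} and continuity) so that $(s,g)$ factors through $(\Gr\times\Gr)^\circ$, giving a $C^\infty$ section $\sigma_g:=e_s^{-1}(g)$ of $s^*(T\Gr)$ over $V$. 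As $g(x)=f(x)=F(x)$, after a last shrink $F$, $g$ and all the $F_i$ land in one holomorphic coordinate chart $w=(w_1,\dots,w_m)$ of $\Gr$ about $f(x)$.

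The crux is then the claim that, in any $C^\infty$ local trivialisation of $s^*(T\Gr)$ near $x$, the section $\sum_ih_i\,e_s^{-1}(F_i)-\sigma_g$ has all coefficients in $C^\infty_V\!.\,I$. To establish it I would first note that over $V_i$ the holomorphic maps $F_i$ and $g$ agree on $U$, so $w_k\circ F_i-w_k\circ g\in I$ for each $k$; since for fixed $v$ the map $e_{s(v)}^{-1}$ is holomorphic in the $\Gr$-variable, one application of Hadamard's Lemma writes $e_s^{-1}(F_i)-e_s^{-1}(g)$ as a $C^\infty_V$-linear combination of the $w_k\circ F_i-w_k\circ g$, hence with coefficients in $C^\infty_V\!.\,I$; multiplying by $h_i$ (supported in $V_i$), summing over $i$, and using $\sum_ih_i\equiv1$ gives the claim. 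Then I would apply Hadamard's Lemma a second time, now to the map $e_{s(v)}$ — holomorphic in the tangent-vector variable for fixed $v$ — to obtain $w_k\circ F-w_k\circ g\in C^\infty_V\!.\,I$, whence $w_k\circ F\in\cO_V+C^\infty_V\!.\,I=\cA_1$ for all $k$ and $F$ is an $\cA_1$-map near $x$, hence on all of $V$.

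Finally $E:=F^*(\text{universal bundle on }\Gr)$ is then an $\cA_1$-bundle, its transition functions being the $F$-pullbacks of the holomorphic transition functions on $\Gr$, and $E|_U\cong f^*(\text{universal bundle})$ is the holomorphic bundle we started from, which — after undoing the line-bundle twist made earlier — completes the proof of Theorem \ref{extendE}. The one genuinely delicate point is the double Hadamard bookkeeping: one must trivialise $s^*(T\Gr)$ near $x$, a chart of $\Gr$ near $f(x)$, and embed $V$ locally in $\C^N$ so that subtraction makes sense, and then use crucially that $e$ is \emph{fibrewise holomorphic}, which is what prevents antiholomorphic derivatives from appearing and keeps the error terms inside $C^\infty_V\!.\,I$ rather than the larger ideal $I_\infty=C^\infty_V\!.\,I+C^\infty_V\!.\,\overline{I}$.
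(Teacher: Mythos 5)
Your proof is correct and follows essentially the same strategy as the paper's: reduce to a local Stein comparison with a holomorphic reference map, apply Hadamard's Lemma twice (once in the $e_s^{-1}$ direction and once in the $e_s$ direction), and use the fibrewise holomorphicity of $e$ to suppress the antiholomorphic error terms that would otherwise land in $I_\infty$ rather than $I_1$. The only cosmetic differences are that you apply the two Hadamard expansions in the opposite order and use a freshly chosen local holomorphic extension $g$ of $f$ as your reference, whereas the paper shrinks $V$ to a subset of $\bigcap_i V_i$ and uses the given holomorphic chart map $F_1$ directly.
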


\begin{proof}
Using $e$ \eqref{open} to identify the two bundles $(\Gr\times\Gr)^\circ$ and $T\Gr$ over $\Gr$, any map $(a,G)\colon X\to(\Gr\times\Gr)^\circ$ induces a map $e^{-1}\circ(a,G)\colon X\to T\Gr$ which we denote by $(a,\wt G)$. (So $\wt G$ is a section of $a^*(T\Gr)$.) In this notation, \eqref{Fdef} becomes
\beq{Ftilde}
\wt F\=\sum\nolimits_ih_i\;\wt F_{i\;}.
\eeq

Fix any (local) holomorphic function $g$ on $\Gr$. We are required to show that $F^*g\in\cA_1$. Pulling back $g$ from the second factor gives $1\otimes g$ on $\Gr\times\Gr$ and so $\wt g:=e^*(1\otimes g)$ on $T\Gr$. These satisfy
$$
\wt F^*\;\wt g\=\(e^{-1}\circ(s,F)\)^*e^*(1\otimes g)\=(s,F)^*(1\otimes g)\=F^*g.
$$
So we just need to show that $\wt F^*\;\wt g\in\cA_1$ locally about a point $x\in V$. Discarding any $V_i$ whose closure does not contain $x$ we may replace $V$ by an open subset of $\bigcap_iV_i$. Now $F_1$ is holomorphic so $\wt F_1^*\;\wt g=F_1^*g$ is too. Thus it is sufficient to prove that $F^*g-F_1^*g=\wt F^*\;\wt g-\wt F_1^*\;\wt g\in I_1$. (This will also show that $F|_U\equiv F_1|_U\equiv f$, proving the second claim of the Proposition.) Applying the Hadamard Lemma fibrewise on $s^*(T\Gr)\to V$ gives
$$
\wt F^*\;\wt g-\wt F_1^*\;\wt g\=\Langle G,\,\wt F-\wt F_1\Rangle+\Big\langle\,\overline{\!H},\,\overline{\wt F-\wt F_1}\,\Big\rangle
$$
for some $T^*\Gr$-valued functions $G,H$ made from the holomorphic (respectively antiholomorphic) derivative of $\wt g$ down the fibres of $T\Gr$. But $\wt g=e^*(1\otimes g)$ is holomorphic on the fibres of $T\Gr$ because $e$ is, so $H\equiv0$. Thus \eqref{Ftilde} gives
\beq{end}
\wt F^*\;\wt g-\wt F_1^*\;\wt g\=\Big\langle G,\,\sum\nolimits_ih_i\(\wt F_{i\;}-\wt F_1\)\Big\rangle\=\sum\nolimits_{ij}h_i\,G_j\(\wt F_{i\;}-\wt F_1\)_j\;.
\eeq
For the last equality we have picked a holomorphic trivialisation of $T\Gr$ over $\im(s)$ (after shrinking $V$ again) and written $G,\,\wt F_{i\;}-\wt F_1$ in terms of components. By \eqref{end} we are left with showing $\(\wt F_{i\;}-\wt F_1\)_j\in I_1$ for all $i,\;j$.

For this we also pick a local \emph{holomorphic}\footnote{So not \eqref{open}, which is only holomorphic on the fibres.} trivialisation of the bundle $(\Gr\times\Gr)^\circ\to\Gr$ in a neighbourhood of $\Delta|_{\im(s)}$. Thus we can also express the $F_i$ in terms of components $(F_i)_k$. Then by the Hadamard Lemma down the fibres of $s^*(\Gr\times\Gr)^\circ\to V$,
\beq{last}
\(\wt F_{i\;}-\wt F_1\)_j\=\sum\nolimits_kE_{jk}\big[(F_i)_k-(F_1)_k\big],
\eeq
where the $E_{jk}$ are made from the holomorphic derivatives $\partial_k$ of the components of $e^{-1}_s$ down the fibres of $s^*(\Gr\times\Gr)^\circ\to V$. Since $e^{-1}_s$ is fibrewise holomorphic (because $e_s$ is) there are no complex conjugate terms.

Finally then, $F_i|_U\equiv F_1|_U$ means $(F_i)_k-(F_1)_k\in I$, so that \eqref{last} lies in $I_1$ as required.
\end{proof}

\bibliographystyle{halphanum}
\bibliography{references}

\bigskip\noindent
{\tt{j.oh@imperial.ac.uk} \\ \tt{richard.thomas@imperial.ac.uk}} \medskip

\noindent Department of Mathematics \\
Imperial College London\\
London SW7 2AZ \\
United Kingdom

\end{document}